\newtheorem{theorem}{Theorem}[section]
\newtheorem{lemma}[theorem]{Lemma}
\newtheorem{proposition}[theorem]{Proposition}
\newtheorem{corollary}[theorem]{Corollary}
\newtheorem{remark}[theorem]{Remark}
\theoremstyle{definition}
\newtheorem{definition}[theorem]{Definition} 
\newtheorem{example}[theorem]{Example}
\titlespacing*{\section}{0pt}{14pt}{4pt}
\titlespacing*{\subsection}{0pt}{8pt}{3pt}
\numberwithin{equation}{section}  
\DeclareMathOperator{\supp}{supp} %
\newcommand*{\numbersys}[1]{\ensuremath{\mathbb{#1}}}
\newcommand*{\C}{\numbersys{C}}
\newcommand*{\R}{\numbersys{R}}
\newcommand*{\Q}{\numbersys{Q}}
\newcommand*{\Z}{\numbersys{Z}}
\newcommand*{\N}{\numbersys{N}}
\newcommand*{\T}{\numbersys{T}}
\newcommand*{\cF}{\mathcal{F}}
\newcommand{\ie}{i.e., } 
\newcommand{\eg}{e.g., } 
\newlength{\dhatheight}
\newcommand{\doublehat}[1]{%
	\settoheight{\dhatheight}{\ensuremath{\widehat{#1}}}%
	\addtolength{\dhatheight}{-0.35ex}%
    	\widehat{\vphantom{\rule{1pt}{\dhatheight}}%
    	\smash{\hspace{-2pt} \widehat{#1}}}}
\newlength{\dtildeheight}
\newcommand{\ghat}{\widehat{G}}
\newcommand{\ghhat}{\doublehat{G}}
\newcommand{\SO}{\textnormal{\textbf{S}}_{0}}
\newcommand{\SOprime}{\textnormal{\textbf{S}}'_{0}}
\def\blfootnote{\xdef\@thefnmark{}\@footnotetext} 
\def\subjclass{\xdef\@thefnmark{}\@footnotetext}
\long\def\symbolfootnote[#1]#2{\begingroup%
\def\thefootnote{\fnsymbol{footnote}}\footnote[#1]{#2}\endgroup} 
  \renewenvironment{abstract}{%
      \titlepage
      \null\vfil
      \@beginparpenalty\@lowpenalty
      \begin{center}%
        \bfseries \abstractname
        \@endparpenalty\@M
      \end{center}}%
     {\par\vfil\null\endtitlepage}
  \renewenvironment{abstract}{%
      \if@twocolumn
        \section*{\abstractname}%
      \else
        \small
        \list{}{%
          \settowidth{\labelwidth}{\textbf{\abstractname:}}
          \setlength{\leftmargin}{50pt}
          \setlength{\rightmargin}{50pt}
          \setlength{\itemindent}{\labelwidth}
          \addtolength{\itemindent}{\labelsep}
        }
        \item[\textbf{\abstractname:}]

      \fi}
      {\if@twocolumn\else\endlist\fi}
\begin{document}

\title{On a (no longer) new Segal algebra \\ --- a review of the Feichtinger algebra
}

\date{\today}

 \author{Mads Sielemann Jakobsen\footnote{NTNU, Department of Mathematical Sciences, Trondheim, Norway, E-mail: \protect\url{mads.jakobsen@ntnu.no}}}

\blfootnote{2010 {\it Mathematics Subject Classification.} Primary 43A15  Secondary 43-02} \blfootnote{{\it Key words.} Feichtinger algebra, test functions, generalized functions,  Schwartz-Bruhat space}

\maketitle

\thispagestyle{plain}

\begin{abstract} 
Since its invention in 1979, the Feichtinger algebra has become a very useful Banach space of functions with applications in time-frequency analysis, the theory of pseudo-differential operators and several other topics. It is easily defined on locally compact abelian groups and, in comparison with the Schwartz(-Bruhat) space, the Feichtinger algebra allows for more general results with easier proofs. 
This review paper gives a linear and comprehensive deduction of the theory of the Feichtinger algebra and its favourable properties. The material gives an entry point into the subject, but it will also give new insight to the expert. 
A main goal of this paper is to show the equivalence of the many different characterizations of the Feichtinger algebra known in the literature. This task naturally guides the paper through basic properties of functions that belong to the space, over operators on it and to aspects of its dual space. Further results include a seemingly forgotten theorem by Reiter on operators which yield Banach space isomorphisms of the Feichtinger algebra; a new identification of the Feichtinger algebra as the unique Banach space in $L^{1}$ with certain properties; and the kernel theorem for the Feichtinger algebra. A historical description of the development of the theory, its applications and related function space constructions is included.
\end{abstract}

\section{Introduction}

In 1979 at the ``International Workshop on Topological Groups and Group Algebras'' at the University of Vienna, H.\ G.\ Feichtinger presented the function space $\SO$ as ``a new Segal algebra with applications in harmonic analysis'' \cite{fe79-5}. Further results on $\SO$ followed in subsequent papers by Feichtinger   
\cite{fe80}, Losert \cite{lo80}, and Poguntke \cite{po80-1}. Together with the significant ``On a new Segal algebra'' paper by Feichtinger \cite{Feichtinger1981} these publications showed many important properties of $\SO$, and gave it a firm footing in the zoo of function spaces. 

In order to define the space, let  
$G$ be a locally compact (Hausdorff) abelian group (\eg the Euclidean space, a discrete abelian group, the torus, the field of $p$-adic numbers or the adele ring of the rational field $\Q$) and let $\ghat$ be its dual group. Furthermore, let $E_{\omega}$ denote multiplication by an element $\omega \in \ghat$, i.e., $E_{\omega}f(x) = \omega(x) f(x)$ for $f\in L^{1}(G)$. The space $\SO(G)$ consists exactly of all integrable functions $f\in L^{1}(G)$ that satisfy 
\begin{equation} \label{eq:0201afly} \int_{\ghat} \Vert E_{\omega} f * f \Vert_{1} \, d\omega < \infty.\end{equation}
Fix any non-zero function $g\in \SO(G)$, then $\Vert f \Vert_{\SO} = \int_{\ghat} \Vert E_{\omega} f * g \Vert_{1} \, d\omega$ is a norm on $\SO(G)$. This defines a Banach space of continuous functions, which nowadays is called the \emph{Feichtinger algebra}.\footnote{With the definition of $\SO(G)$ given here, there are already some immediate questions: Why are functions $f\in L^{1}(G)$ that satisfy \eqref{eq:0201afly} continuous? Why is the set of functions satisfying \eqref{eq:0201afly} a subspace of $L^{1}(G)$? Why is the integral in the norm well-defined?} 

As it turns out, the Feichtinger algebra is a useful replacement for the Schwartz space of smooth functions on the Euclidean space $\R^n$ with rapid decay of all derivatives  
\cite{MR0209834,MR1996773}, and, especially, for its cumbersome generalization to 
locally compact abelian groups, the Schwartz-Bruhat 
space $\mathcal{S}(G)$ \cite{br61,os75}. Indeed, the Feichtinger algebra is, just as the Schwartz-Bruhat space,  invariant under automorphisms, invariant under the Fourier transform, the Poisson formula is valid, the space has the tensor factorization property (cf.\ Theorem~\ref{th:tensor-property-s0}) and it has a kernel theorem (cf.\ Lemma~\ref{le:SO-tensor-operator} and 
Theorem~\ref{th:SO-kernel-theorem}). 
Hence the Feichtinger algebra exhibits many properties which make the Schwartz-Bruhat space convenient to work with. In fact, one can show that the Feichtinger algebra contains the Schwartz-Bruhat space. These properties of $\SO$, together with the fact that it is a Banach space\footnote{If $G$ is an \emph{elementary} locally compact abelian group, i.e., $G\cong \R^{l}\times \Z^{m}\times \T^{n}\times F$ for some finite abelian group $F$ and with $l,m,n\in\N_{0}$, then the Schwartz-Bruhat space $\mathcal{S}(G)$ is a Fr\'echet space. Otherwise, the Schwartz-Bruhat space is a locally convex inductive limit of Fr\'echet spaces, i.e., a so-called LF-space.}, makes it possible, in comparison with the Schwartz-Bruhat space, to prove more general results with less involved proofs.

The original paper \cite{Feichtinger1981} as well as \cite{fezi98} and the relevant book chapters in \cite{MR1843717} and \cite{re89} comprise the core results of the established theory of $\SO$ on $\R^{n}$ and on general locally compact abelian groups. Other literature on $\SO$, sometimes focusing on specific aspects, include\cite{fe79-5,fe80,fe89-2,fe03-1,fegr92-1,feko98,go11,lo80,po80-1} and \cite{ho89,ma87,qu89}. 

The main intention of this paper is to give a comprehensive and linear deduction with proofs of many of the key properties and characterizations of $\SO$ shown over the time. In this way, the paper aims at popularizing the Feichtinger algebra and to make the space $\SO$ and its theory accessible to a large group of readers. Without a doubt, the Feichtinger algebra is mostly used for analysis on $\R^{n}$, in particular in time-frequency analysis. However, beyond the Euclidean space there are other locally compact abelian groups of interest where the Feichtinger algebra hopefully will turn our to be useful. In particular, for analysis on the $p$-adic fields or the adele ring the Feichtinger algebra proposes an, without a doubt, more tractable function space in comparison to the Schwartz-Bruhat space. 

Next to the well-known properties of the Feichtinger algebra that will be presented in this paper, new results include the characterization of $\SO$ given in Theorem \ref{th:s0-ABC}, the inequalities in 
Corollary~\ref{co:f-in-s0-properties}(vi)-(x), the results on operators on $\SO$ in 
Theorem~\ref{th:s0-invariant-under-some-automorphisms} and its application 
in Examples~\ref{ex:unitaries-on-S0}, the results on the short-time Fourier transform as an operator on $\SO(G)$ in Theorem~\ref{th:stft-and-s0}, characterizations of operators defined on $\SO$ or $L^{2}$ that can be extended to the dual space $\SOprime$ in Lemma \ref{le:banach-vs-hilbert-adjoint} and Lemma \ref{le:SOop-to-SOprime}, a new characterization of $\SO$ among Banach spaces in $L^{1}$ in 
Theorem~\ref{th:S0-Banach-Space-Characterization} and the kernel theorem for the Feichtinger algebra in Lemma~\ref{le:SO-tensor-operator} and 
Theorem~\ref{th:SO-kernel-theorem}, which has not been proven in this generality before.

A further objective of this paper is to show that the sets $\mathscr{A}$ to $\mathscr{U}$, below, in Definition~\ref{def:S0-bigdef}, coincide and all characterize $\SO(G)$. We will show that the sets $\mathscr{G}$ to $\mathscr{U}$ induce norms on $\SO(G)$ in a natural way, which in fact are all equivalent. 
Almost all the characterizations in Definition~\ref{def:S0-bigdef} are well known, see, \eg \cite{Feichtinger1981,fezi98,MR1843717}. 
Other characterizations of $\SO$ (which will not be considered here) can be found in
\cite{cogr03-1,ja05,ja06-2}.

An aspect of the theory which is not part of the story told in this paper is that of the weighted Feichtinger algebra, which is commonly studied as a special case of the weighted modulation spaces. For results on this aspect of the theory we refer to, e.g., \cite{fe79,fe03-1,MR1843717,gr07}.  

Next to just mentioned modulation spaces, the Feichtinger algebra is related to a variety of other function spaces. In Section~\ref{sec:history} we give an overview of the history of $\SO(G)$, its relation to other function space constructions and their applications. The remainder of the paper is structured as follows: 

In Section~\ref{sec:prelim} we recall some basic theory on locally compact abelian groups and set the notation and definitions for the rest of the paper.


In Section~\ref{sec:basic-prop-of-so} we show properties of functions that belong to $\SO(G)$ and we show that $\SO(G)$ is a Banach algebra with respect to multiplication and convolution. Furthermore, this section establishes that the sets $\mathscr{A}-\mathscr{I}$ from Definition~\ref{def:S0-bigdef} coincide. 

Section~\ref{sec:mappings-on-s0} contains results on operators on $\SO$. In particular, results on the Fourier transform, automorphisms on $G$, metaplectic operators, the short-time Fourier transform, and the restriction and periodization operators with respect to a closed subgroup are considered here. We also prove that the Poisson formula hold for all functions in $\SO$. 

In Section~\ref{sec:s0prime} we consider the dual space of $\SO$. 
We show how the operators on $\SO(G)$ considered in Section~\ref{sec:mappings-on-s0} extend to $\SOprime(G)$ and prove that $\SO(G)$ is weak$^{*}$-dense in $\SOprime$. We also show that $\SO(G)$ can be characterized by the sets $\mathscr{J}$ and $\mathscr{K}$. 

The results from the previous sections allow us to prove key properties of the Feichtinger algebra in Section \ref{sec:minimal}. In particular, this sections contains the  characterization of $\SO$ by series representations given by the set $\mathscr{L}$ and a proof of the minimality of the Feichtinger algebra. Furthermore, this section is concerned with the tensor factorization property of $\SO$, it contains a new characterization of the Feichtinger algebra among all Banach spaces on locally compact abelian groups. which are subspaces of $L^{1}$. Lastly, we also prove a characterization of $\SO$ by use of open subgroups, which is particularly useful for all locally compact abelian groups but the Euclidean space. 

Section \ref{sec:series-expansions} is concerned with the characterization of $\SO(G)$ by the series representation given in the sets $\mathscr{M}$ to $\mathscr{R}$ from Definition \ref{def:S0-bigdef}. 
We also establish the characterization of $\SO(G)$ by bounded uniform partition of unities (BUPUs) given by the sets $\mathscr{T}$ and $\mathscr{U}$. Finally, in Section~\ref{sec:kernel} we show the kernel theorem for the Feichtinger algebra. 

It is recommended to read Sections~\ref{sec:basic-prop-of-so}-\ref{sec:minimal} in order. Sections~\ref{sec:series-expansions} and \ref{sec:kernel} can be read independently of one another.

For later reference we now state the collection of different sets which we will show are all equal and define the Feichtinger algebra.

\begin{definition} \label{def:S0-bigdef} For a locally compact abelian group  $G$ we define the following sets:
\begin{enumerate}[]
\item $ \mathscr{A} = \{ f\in L^{1}(G) \, : \, \exists \, g\in L^{1}(G)\backslash\{0\} \text{ s.t. } \textstyle\int_{\ghat} \Vert E_{\omega} f * g \Vert_{1} \, d\omega < \infty\}$,
\item $  \mathscr{B} =\{ f\in L^{2}(G) \, : \, \exists \, g\in L^{2}(G)\backslash\{0\} \, \ \text{s.t.} \ \int_{G} \int_{\ghat} \vert \mathcal{V}_{g}f(x,\omega) \vert \, d\omega \, dx < \infty\}$,
\item $  \mathscr{C} = \{ f\in A(G) \, : \, \exists \, g\in A(G)\backslash\{0\} \, \ \text{s.t.} \ \int_{G} \Vert T_{x} f \cdot g \Vert_{A(G)} \, dx < \infty \}$,
\item $ \mathscr{D} = \{ f \in L^{1}(G) \, : \, \int_{\ghat} \Vert E_{\omega} f * f \Vert_{1} \, d\omega< \infty\}$,
\item $ \mathscr{E} = \{ f \in L^{2}(G) \, : \, \int_{G} \int_{\ghat} \vert \mathcal{V}_{f}f(x,\omega) \vert \, d\omega \, dx < \infty\}$,
\item $ \mathscr{F} = \{ f \in A(G) \, : \, \int_{G} \Vert T_{x} f \cdot f \Vert_{A(G)} < \infty\}$.
\end{enumerate}
In the next six items, we fix a $g\in\SO(G)\backslash\{0\}$ and define
\begin{enumerate}[]
\item $ \mathscr{G} = \{ f \in L^{1}(G) \, : \, \int_{\ghat} \Vert E_{\omega} f * g \Vert_{1} \, d\omega< \infty\}$,
\item $ \mathscr{H} = \{ f \in L^{2}(G) \, : \, \int_{G} \int_{\ghat} \vert \mathcal{V}_{g}f(x,\omega) \vert \, d\omega \, dx < \infty\}$,
\item $ \mathscr{I} = \{ f \in A(G) \, : \, \int_{G} \Vert T_{x} f \cdot g \Vert_{A(G)} < \infty\}$,
\item $ \mathscr{J} = \{ \sigma \in \SOprime(G) \, : \, \widetilde{\mathcal{V}}_{g} \sigma \in \SO(G\times\ghat)\}$,
\item $ \mathscr{K} = \{ \sigma \in \SOprime(G) \, : \, \widetilde{\mathcal{V}}_{g} \sigma \in L^{1}(G\times\ghat)\}$, 
\item $ \mathscr{L} = \{ f \in L^{1}(G) \, : \, f = \sum_{n\in\N} c_{n} E_{\omega_{n}}T_{x_{n}} g \ \text{with} \ (c_{n})_{n\in \N} \in \ell^{1}(\N), \ (x_{n},\omega_n)_{n\in \N}\subseteq G\times\ghat\}$.
\end{enumerate}
We fix a compact set $K$ in $G$ with non-void interior and define the set
\begin{enumerate}[]
\item $ \mathscr{M} = \{ f \in L^{1}(G) \, : \, f = \sum\limits_{n\in \N} T_{x_{n}} g_{n} , \ (g_{n})_{n\in\N}\subseteq A(G), \ \supp\,g_{n} \subseteq K \ \text{for all} \ n\in\N$ \\
\phantom{m} \hfill with $ (x_{n})_{n\in \N} \subseteq G \text{ and } \sum\limits_{n\in \N} \Vert g_{n} \Vert_{A(G)} < \infty\}$.
\end{enumerate}
We fix a compact set $\tilde{K}$ in $\ghat$ with non-void interior and define the set
\begin{enumerate}[]
\item $ \mathscr{N} = \{ f \in A(G) \, : \, f = \sum\limits_{n\in \N} E_{\omega_{n}} g_{n} , \ (g_{n})_{n\in\N}\subseteq L^{1}(G), \ \supp\,\hat{g}_{n} \subseteq \tilde{K} \ \text{for all} \ n\in\N$ \\
\phantom{m} \hfill with $ (\omega_{n})_{n\in \N} \subseteq \ghat \text{ and } \sum\limits_{n\in \N} \Vert g_{n} \Vert_{1} < \infty\}.$
\end{enumerate}
In the next four items, we fix a function $g\in \SO(G)\backslash\{0\}$ and define
\begin{enumerate}[]
\item $ \mathscr{O} = \{ f \in L^{1}(G) \, : \, f = \sum\limits_{n\in \N} f_{n} * E_{\omega_{n}} g, \ (f_{n})_{n\in\N}\subseteq L^{1}(G), \ (\omega_{n})_{n\in \N} \subseteq \ghat, \sum\limits_{n\in \N} \Vert f_{n} \Vert_{1} < \infty\}$,
\item $ \mathscr{P} = \{ f \in A(G) \, : \, f = \sum\limits_{n\in \N} f_{n} \cdot T_{x_{n}} g, \ (f_{n})_{n\in\N}\subseteq A(G), \ (x_{n})_{n\in \N} \subseteq G, \sum\limits_{n\in \N} \Vert f_{n} \Vert_{A(G)} < \infty\}$,
\item $ \mathscr{Q} = \{ f \in L^{1}(G) \, : \, f = \Big( \sum\limits_{n\in\N} T_{\omega_{n}}\mathcal{V}_{\hat{g}}\hat{f}_{n} \Big) \Big\vert_{\{0\}\times G}, \ (f_{n})_{n\in \N} \subseteq \SO(G), (\omega_{n})_{n\in\N} \subseteq \ghat$ \\
\phantom{m} \hfill with $\sum\limits_{n\in \N} \Vert f_{n} \Vert_{\SO,g} < \infty\} $,
\item $\mathscr{R} = \{ f \in L^{1}(G) \, : \, f = \sum\limits_{n\in\N} f_{n}*g_{n}, \ (f_{n})_{n\in\N},(g_{n})_{n\in\N} \subseteq \SO(G)$ with $\sum\limits_{n\in\N} \Vert f_{n} \Vert_{\SO,g} \, \Vert g_{n} \Vert_{\SO,g} < \infty\} $.
\end{enumerate}
We say a family of functions $(\psi_{i})_{i\in I}\subseteq A(G)$ is a \emph{bounded uniform partition of unity of $G$} if there exists a compact set $W\subseteq G$ and a discrete subset $(x_{i})\subseteq G$ such that
\begin{enumerate}[]
\item [(a.i)] $\sum_{i\in I} \psi_{i} (x) = 1 $ for all $x\in G$,
\item [(a.ii)]$\sup_{i\in I} \Vert \psi_{i} \Vert_{A(G)} < \infty$,
\item [(a.iii)]$\supp \psi_{i} \subseteq x_{i} + W$ for all $i\in I$,
\item [(a.iv)]$\sup_{x\in G} \# \{ i\in I \, : \, (x +K) \cap (x_{i}+ W) \ne \emptyset\} < \infty$ for any compact set $K\subseteq G$.
\end{enumerate} 
If $(\psi_{i})_{i\in I}\subset A(G)$ is a bounded uniform partition of unity of $G$ then we define the set
\begin{enumerate}[]
\item $\mathscr{T} = \{ f \in A(G) \, : \, \sum_{i\in I} \Vert f \psi_{i} \Vert_{A(G)} < \infty\}$.
\end{enumerate}
Similarly, a family of functions $(\varphi_{i})_{i\in I}\subseteq L^{1}(G)$ is a \emph{bounded uniform partition of unity of $\ghat$} if there exists a compact set $V\subseteq \ghat$ and a discrete subset $(\omega_{i})\subseteq \ghat$ such that
\begin{enumerate}[]
\item [(b.i)]$\sum_{i\in I} \hat\varphi_{i} (\omega) = 1 $ for all $\omega\in \ghat$,
\item [(b.ii)]$\sup_{i\in I} \Vert \varphi_{i} \Vert_{1} < \infty$,
\item [(b.iii)]$\supp  \hat\psi_{i} \subseteq \omega_{i} + V$ for all $i\in I$,
\item [(b.iv)]$\sup_{\omega\in \ghat} \# \{ i\in I \, : \, (\omega + K) \cap (\omega_{i}+ V) \ne \emptyset\} < \infty$ for any compact set $K\subseteq \ghat$.
\end{enumerate} 
If $(\varphi_{i})_{i\in I}\subset L^{1}(G)$ is a bounded uniform partition of unity of $\ghat$ we define the set
\begin{enumerate}[]
\item $\mathscr{U} = \{ f \in L^{1}(G) \, : \, \sum_{i\in I} \Vert f *\varphi_{i} \Vert_{1} < \infty\}$.
\end{enumerate}
\end{definition}

\section{Applications and history of the Feichtinger algebra}
\label{sec:history}

This section is concerned with the history of the Feichtinger algebra and its relation to other Banach space constructions. 
Furthermore, we mention some applications of $\SO$.

Let us begin with a central property of $\SO(G)$ that was recognized early on by Feichtinger \cite{fe79-5,Feichtinger1981}. The space $\SO(G)$ is the smallest, so-called, Segal algebra $S(G)$ on a locally compact abelian groups $G$ (see Definition~\ref{def:segal-algebra}) which is invariant under the multiplication of characters such that $\Vert E_{\omega}f \Vert_{S} = \Vert f \Vert_{S}$ for all $f\in S(G)$ and $\omega\in \ghat$. This is the reason for the symbol $\SO$; the $\textbf{S}$ stands for Segal algebra and the index ``$0$'' indicates its minimality.
The minimality of the Feichtinger algebra among a family of function spaces was extended significantly in \cite{fe87-1,fe89-1} and can also be found in \cite{fezi98,MR1843717}. The result can be formulated as follows: Let $B$ be a Banach space with the properties that
\begin{enumerate}[(a)]
\item there exists a non-zero function $g\in \SO(G)$ which also belongs to $B$ such that all time-frequency shifts of $g$, $\{E_{\omega}T_{x}g \, : \, (x,\omega)\in G\times\ghat\}$ belong to $B$ as well (for example $g$ could be a Schwartz-Bruhat function or a compactly supported function with integrable Fourier transform),
\item for $g$ as in (a) there exists a constant $c>0$ for which $\Vert E_{\omega} T_{x} g \Vert_{B} \le c \, \Vert g \Vert_{B}$ for all $(x,\omega)\in G\times\ghat$,
\end{enumerate}
then not only all time-frequency shifts of $g$ but all of $\SO(G)$ is a subset of $B$. That is, $\SO(G)$ is the smallest among all Banach spaces which satisfy (a) and (b). In this paper this result can be found in Theorem~\ref{th:SO-minimality-bochner}. Well-known examples of Banach spaces which satisfy assumptions (a) and (b) are the $L^{p}$-spaces, $C_{0}(G)$ and the Fourier algebra. 

The idea of a minimal algebra of functions in the sense of $\SO$ as above, has been extended to locally compact (non-abelian) groups by Spronk \cite{sp07} and also to homogeneous spaces by Parthasarathy and Shravan Kumar\cite{kupa15}. 
In fact, the construction of $\SO(G)$ has been extended and applied to generalized stochastic processes on hypergroups, for more on this see the book contribution by Heyer \cite{hey14}.
The use of $\SO$ as a setting for generalized stochastic processes was first investigated by H\"ormann in his thesis \cite{ho89}. In particular $\SO(G)$ is used as a replacement for the (non-Fourier invariant) space of smooth functions with compact support as it is used by Gelfand, Vilenkin \cite{MR0173945} and It\^{o}\cite{it54}. This idea has been continued in cooperation with Feichtinger \cite{feho90,feho14} and is also the subject of the thesis by Keville \cite{ke03} and of work by Wahlberg \cite{wa05}.

The Feichtinger algebra finds its most prominent use in the theory of pseudo-differential operators\label{footnotepage}\footnote{see, for example, \cite{ta94,grhe99,cz03,gr06-3,gr06-6,coni10,to10,begrheok05,beok04,cogr03-1,
fehelaleto06,go11,fe02,
grhe03,grst07,towozh07,bo04-2,MR1843717,
cotawa13,coni10,la01-1,to04-2,to04-3}} and in time-frequency analysis, especially in the theory of Gabor frames\footnote{see, for example,  \cite{fe89-1,fegrwa92,fegr97,fezi98,feka04,
feko98,felu06,gr07-2,gr07-3,MR1843717,grle04,
gr14,grorro15,he07,lu09}}. For 
an introduction to the general theory of frames and Gabor analysis 
see the books by Christensen \cite{ch16newbook}, Gr\"ochenig 
\cite{MR1843717} and Heil \cite{he11}. As mentioned in the introduction, the Feichtinger algebra and the Schwartz-Bruhat space have very similar properties. This aspect of $\SO$ is used by Reiter \cite{re89} to     
extend results of Weil \cite{we64} 
on metaplectic operators on the Schwartz-Bruhat space to $\SO(G)$. Similarly, 
Luef \cite{lu07-2} extends results by Rieffel \cite{ri81,ri88} on non-commutative tori from 
the Schwarz-Bruhat space to the Feichtinger algebra. Furthermore,  Feichtinger and Gr\"ochenig \cite{fegr97} extend results from Janssen \cite{ja95} in Gabor frame theory with generators in $\mathcal{S}(\R)$ to generators in $\SO(\R)$. These publications show that $\SO(G)$, compared with the Schwartz-Bruhat space, allows for more general statements with easier proofs. 

Furthermore, the Feichtinger algebra is the setting in the work of Kailath, Pfander, and Walnut \cite{pfwa06,pf13-1,kapfwa15} and in the thesis of Civan \cite{ci15} on operator identification. 
Kaiblinger shows that $\SO$ is a suitable domain for results on interpolation operators \cite{ka05,feka07}.
Further, the Feichtinger algebra is used in the thesis by Querenberger \cite{qu89} on spectral synthesis. For more on spectral synthesis see the book by Benedetto \cite{be75}. 
Feichtinger and Weisz \cite{fewe06,fewe06-1} have shown that all classical summability kernels belong to the Feichtinger algebra $\SO(\R)$. Also, the so-called Wilson bases form an unconditional bases for $\SO(\R^n)$ \cite{fegrwa92}. 

Finally, $\SO(G)$ plays an integral part in the rigged Hilbert space, also known as a Gelfand triple, formed by the  spaces $\SO,L^{2}$ and $\SOprime$. This triple has been strongly advocated by Feichtinger and others over the past years, see \eg \cite{feko98,dofegr06,feluwe07,cofelu08,fe09}. The theory of rigged Hilbert spaces plays a decisive role in the mathematical formulation of quantum mechanics \cite{an98-2}.
 

In the following we mention some of the function space constructions which yield $\SO$ as a special case. 
 
\begin{itemize}
\item $\SO(G)$ can be realized as the space $\ell^{1}(A)$ by Bertrandias et al.\ in \cite{bedadu78,be82,be84-1}. In fact, this construction predates Feichtinger's discovery of $\SO(G)$. However, the properties of this space were first recognized by Feichtinger.
\item Inspired by the characterization of $\SO$ by bounded uniform partitions of unity in \cite{Feichtinger1981} (see the sets $\mathscr{T}$ and $\mathscr{U}$ in Definition~\ref{def:S0-bigdef}), Feichtinger introduced the Wiener amalgam spaces in \cite{fe81-1,MR751019}. The general setup of Wiener amalgam spaces allows for the construction of a wide variety of Banach spaces and includes, e.g., the Wiener algebra and the usual $L^{p}$-spaces. The Feichtinger algebra is the Wiener amalgam space with local component in the Fourier algebra $A(G)$ and global component in $L^{1}(G)$, denoted by $W(A(G),L^{1})$. For more on Wiener amalgam space see, \eg \cite{fe92-3} and the paper by Heil \cite{he03}.
\item One of the characterizations of $\SO(G)$ in \cite{Feichtinger1981} inspired Feichtinger to define the \emph{modulation spaces} \cite{fe83-4}, see Definition~\ref{def:mod-space}. However, the original manuscript remained unpublished until 2003, where an essentially unchanged version was published \cite{fe03-1}. Modulation spaces have been described in, e.g., \cite{fe83-1,fe89-1,fegr92-1} and in the book by Gr\"ochenig \cite{MR1843717} and de Gosson \cite{go11}. The Feichtinger algebra coincides with the modulation space $M^{1}$. The reader is referred to \cite{fe06} for more information on the history of $\SO(G)$, its role in the construction of the modulation spaces and its relation to them. 
As indicated by the references in the footnotes on page \pageref{footnotepage}, the modulation spaces provide an incredible fruitful environment for results in the theory of pseudo-differential operators
and the theory of Gabor frames. 
 
\item In the mid 80s it became clear 
that $\SO(G)$ was connected to the Schr\"odinger representation of 
the Heisenberg group. This, together with the advent of wavelet theory inspired Feichtinger and Gr\"ochenig to 
establish the \emph{coorbit space theory}
\cite{MR942257,MR1021139,MR1026614}. The theory associates to each 
integrable representation of a locally compact group a family of 
Banach spaces, the so-called coorbit spaces, in which one can achieve 
suitable series representations. In this way the 
theory connects to the field of frame theory. 
In particular the coorbit theory yields $
\SO$ and the aforementioned modulation spaces if one uses the 
Schr\"odinger representation of the Heisenberg group.   
See also the papers \cite{gr91,fegr92-1,ch96-2}.  
\item The space $\SO(G)$ can be obtained from the very general setting of \emph{decomposition spaces}, introduced in 
\cite{fegr85} and \cite{fe87}.
\item $\SO(G)$ coincides with the minimal homogeneous Banach space $(A(G))_{\text{min}}$ introduced in \cite{fe81}, and it is an example of a minimal Banach space as described in \cite{fe87-1}. 
\item As mentioned earlier, the space $\SO(G)$ is also a Segal algebra, i.e., a translation invariant dense subalgebra of $L^{1}(G)$ under convolution, which is continuously embedded into $L^{1}(G)$. For more on Segal algebras see, e.g., \cite{re71} and \cite{MR1802924}.
\end{itemize}

Recent literature on coorbit space theory, 
decomposition spaces and modulation spaces include   
\cite{MR2966135,daforastte08,dastte04-1,fu15,fuvo15,raul11} and \cite{vo15}.

\section{Setup and notation}
\label{sec:prelim}
This section contains definitions, notation and results that will be used throughout this paper. The material presented here can be found in text books on functional analysis (\eg \cite{MR3289046,ru91}) and in books concerning Fourier analysis on locally compact abelian groups (\eg \cite{hani98,hero63,hero70,fo95,ru62,MR1802924}).

Throughout this paper we let $G$ denote a locally compact Hausdorff abelian group. Examples of such are the real line $\R$, the integers $\Z$, the torus $\T\cong[0,1[$, finite abelian groups, for some prime number $p$ the discrete Pr\"ufer $p$-group $\mathbb{Z}(p^{\infty})= \{ z\in \C \, : \, z^n = 1, n= p^{k}, k\in \N \}$, the (non-compact group of) $p$-adic numbers, the (compact group of) $p$-adic integers and finite products thereof. A rather long list of locally compact abelian groups can be found on pages $161$-$163$ in \cite{hani98}. To $G$ we associate
its dual group $\ghat$ which consists of all characters of $G$, i.e., all continuous group homomorphisms from $G$
into the torus $\T = \{z\in\C \, : \, \vert z \vert =1\}$. The dual group $\ghat$ is a locally compact abelian group under pointwise multiplication and the compact-open
topology for continuous functions between topological spaces. Throughout the paper we denote the group operation as addition $+$, hence $-x$ denotes the inverse element of $x\in G$. By the Pontryagin-van Kampen duality theorem of locally compact abelian groups, the dual group
of $\ghat$ is isomorphic to $G$ as a topological group, i.e., $\ghhat \cong G$. 

Every locally compact abelian group $G$ carries a translation invariant measure, the so-called Haar measure $\mu_G$. This measure is unique up to a positive constant. We define the $L^p$-spaces
over the complex field with the measure $\mu_G$ in the usual way. We will usually shorten notation and write $\int_{G} \, dx$ instead of $\int_{G} \, d\mu_{G}(x)$. The space $L^{2}(G)$ is equipped with the usual inner product 
$\langle f, g\rangle = \textstyle \int_{G} f(x) \overline{g(x)} \, dx$, which is linear in the first entry. 
We let $C_{0}(G)$ and $C_{b}(G)$ denote the space of continuous functions that vanish at infinity and the space of bounded continuous functions on $G$, respectively. 


We define the Fourier transform of functions $f\in L^1(G)$ by
\[ 
\cF f(\omega) = \hat{f}(\omega) = \int_{G} f(x) \, \overline{\omega(x)} \, d\mu_G(x), \ \
\omega \in \ghat.
\] 


By the Riemann-Lebesgue Lemma the Fourier transform maps $L^{1}(G)$ into $C_{0}(\ghat)$ and $\Vert \hat{f} \Vert_{\infty} \le \Vert f \Vert_{1}$ for all $f\in L^{1}(G)$. If the Haar measure $\mu_{G}$ on $G$ is given, then the measure $\mu_{\ghat}$ on the dual group $\ghat$ can be normalized uniquely such that, for all $f\in L^1(G)$ with $\hat{f} \in L^1(\ghat)$, the function $f$ can be recovered from
$\hat{f}$ by the inverse Fourier transform
\[
f(x) = \cF^{-1}\hat{f}(x) = \int_{\ghat} \hat{f}(\omega) \, \omega(x) \, d\mu_{\ghat}(\omega),
\ \ a.e.\ \ x\in G.
\]
If, in addition, $f$ is continuous, then the inversion of the Fourier transform holds pointwise. If the Fourier inversion formula holds, 
then we refer to $\mu_{G}$ and $\mu_{\ghat}$ as \emph{dual measures}. We always assume that the measures on $G$ and $\ghat$ are related by duality. Under this convention, the Fourier transform $\mathcal{F}$ extends from $L^{1}(G)\cap L^{2}(G)$ to
a unitary operator from $L^2(G)$ onto $L^2(\ghat)$.

With the help of the Fourier transform we define the Fourier algebra
\begin{equation} \label{eq:0211a} A(G) = \{ f \in C_{0}(G) \, : \, \exists \, h \in L^{1}(\ghat) \ \text{s.t.} \ \mathcal{F}_{\ghat}h = f \}.\end{equation}
Here $\mathcal{F}_{\ghat}$ denotes the Fourier transform from $L^{1}(\ghat)$ into $C_{0}(G)$. The Fourier algebra becomes a Banach space under the norm $\Vert f \Vert_{A(G)} = \Vert h \Vert_{1}$, with $h$ as in \eqref{eq:0211a}.
For two functions $f,g\in L^1(G)$ we define their convolution product by
\[ (f* g)(x) = \int_{G} f(s) g(x-s) \, ds \ \ \text{for} \ a.e. \ x\in G.\]
More general, the convolution of two measurable functions is well-defined as a bilinear mapping between suitable $L^{p}$-spaces. Indeed, Young's inequality states that, for $p,q,r\in[1,\infty]$ such that
$ \tfrac{1}{p} + \tfrac{1}{q} = \tfrac{1}{r} + 1$, 
one has the norm estimate
\begin{equation} \label{eq:2910a}
 \Vert f * g \Vert_{r} \le \Vert f \Vert_{p} \Vert g \Vert_{q}.
\end{equation}
For a complex valued function $f: G\to \C$ we define its involution $f^{\dagger}$ and reflection $f^{r}$ by
\[ f^{\dagger}(x) = \overline{f(-x)}, \ \ f^{r}(x) = f(-x).\] 
One shows easily that
$ \mathcal{F}(f^{\dagger}) = \overline{\mathcal{F}f}, \ \mathcal{F}(f^{r}) = (\mathcal{F}f)^{r} = \mathcal{F}^{-1} f, \  \mathcal{F}(\overline{f}) = \overline{\mathcal{F}^{-1} f} = (\mathcal{F}f)^{\dagger}$.

We recall the convolution theorem for the Fourier transform: if $f,g\in L^{1}(G)$, then $\mathcal{F}(f * g) = \mathcal{F}f \cdot \mathcal{F}g$ and if $f,g\in L^{2}$, then $\mathcal{F}(f \cdot g) = \mathcal{F}f * \mathcal{F} g$.

Similar to the Fourier transform, the partial Fourier transform
\[ \mathcal{F}_{1} F(\omega,t) = \int_{G_{1}} F(x,t) \, \overline{\omega(x)} \, dx \ \ \text{for all} \ F\in L^{1}(G_{1}\times G_{2}), \ \omega\in \ghat_{1} \ \text{and}\ a.e.\ t\in G_{2},\]
can be extended to a unitary operator from $L^{2}(G_{1}\times G_{2})$ onto $L^{2}(\ghat_{1}\times G_{2})$.
The index indicates that the Fourier transform is taken with respect to the first argument. In a similar way one defines $\mathcal{F}_{2}$.

Let $X$ be a Banach space which is equipped with a multiplication $\odot: X\times X \to X$ such that for all $x,y,z\in X$, $\alpha\in \C$ it holds that $ (x+y) \odot z = (x\odot z) + (y \odot z)$,  
\[ (x\odot y) \odot z = x\odot (y \odot z)  \ \text{ and } \ \alpha \cdot (x\odot y) = (\alpha \cdot x ) \odot y = x \odot(\alpha \cdot y ).\]
The Banach space $X$ is a \emph{Banach algebra} if $\Vert x \odot y \Vert \le \Vert x \Vert \, \Vert y \Vert$ for all $x,y\in X$.
The Banach spaces $L^{1}(G)$ and $A(G)$ form Banach algebras under convolution and pointwise multiplication, respectively. 

Let $X,Y$ be two normed vector spaces. If $X\subseteq Y$, then we say that $X$ is continuously embedded into $Y$ if there is a constant $c>0$ such that $\Vert x \Vert_{Y} \le c \, \Vert x \Vert_{x}$ for all $x\in X$.

For $x\in G$ and $\omega\in \ghat$ we define the translation operator $T_{x}$ and the modulation operator $E_{\omega}$ by 
\begin{align*}
 T_{x}f(s) = f(s-x), \ \ E_{\omega}f(s) = \omega(s) f(s) .\end{align*}
It is straightforward to verify that the translation and modulation operator are isometries on the Fourier algebra $A(G)$ and any of the $L^{p}$-spaces for $p\in [1,\infty]$. On $L^{2}(G)$ the operators $T_{x}$ and $E_{\omega}$ are unitary. Furthermore, for $f$ in $L^{p}(G)$ (or $A(G)$) the mappings $x\mapsto T_{x}f$ and $\omega \mapsto E_{\omega} f$ are continuous from $G$ into $L^{p}(G)$ (or $A(G)$) and from $\ghat$ into $L^{p}(G)$ (or $A(G)$), respectively. It is useful to note that 
\[ E_{\omega} T_{x} = \omega(x) T_{x} E_{\omega}, \ \ \mathcal{F} T_{x} = E_{-x} \mathcal{F}, \ \ \mathcal{F}E_{\omega} = T_{\omega}\mathcal{F},\]
and furthermore, that $E_{\omega} (f * g) = (E_{\omega}f) * (E_{\omega}g)$ and $T_{x} (f*g) = (T_{x}f)*g = f*(T_{x}g)$, whenever the convolution of $f$ and $g$ is well-defined.
We will often use the time-frequency shift operator $E_{\omega}T_{x}$. For convenience we therefore define
\[ \pi(\chi) = \pi(x,\omega) = E_{\omega}T_{x} \ \ \text{for} \ \ \chi = ( x,\omega) \in G\times \ghat.\] 
For any $p\in [1,\infty]$ we define the \emph{asymmetric coordinate transform}
\begin{equation} \label{eq:assym-cord-trans} 
\tau_{a}: L^{p}(G\times G) \to L^{p}(G\times G), \ \tau_{a}f(x,t) = f(t,t-x).\end{equation}
It is easy to check that $\tau_{a}$ is an isometry and that its inverse is $\tau_{a}^{-1} f(x,t) = f(x-t,x)$. For $p=2$ the operator $\tau_{a}$ is unitary.
For two locally compact abelian groups $G_{1}$ and $G_{2}$ and $p\in [1,\infty]$ we define the \emph{tensor product of functions}
\[ \otimes: L^{p}(G_{1})\times L^{p}(G_{2}) \to L^{p}(G_{1}\times G_{2}), \ ( f_{1} \otimes f_{2} )(x_{1},x_{2}) = f_{1}(x_{1}) \cdot f_{2}(x_{2}).\]

By use of the partial Fourier transform $\mathcal{F}_{2}$, the asymmetric coordinate transform $\tau_{a}$ and the tensor product $\otimes$, we define the \emph{short-time Fourier transform} with respect to a function $g\in L^{2}(G)$: 
\[ \mathcal{V}_{g} : L^{2}(G) \to L^{2}(G\times \ghat), \ \mathcal{V}_{g}f(x,\omega) = \mathcal{F}_{2} \tau_{a} (f \otimes \overline{g})(x,\omega) = \langle f, E_{\omega} T_{x} g\rangle.\]
Since $\mathcal{F}_{2}$ and $\tau_{a}$ are unitary operators on $L^{2}$ it is straightforward to show the bi-orthogonality relations for the short-time Fourier transform: for all $f_{1},f_{2},g_{1},g_{2} \in L^{2}(G)$
\begin{equation} \label{eq:STFT-dual}
 \langle \mathcal{V}_{g_{1}} f_{1}, \mathcal{V}_{g_{2}}f_{2} \rangle = \langle \mathcal{F}_{2} \tau_{a} (f_{1}\otimes \overline{g_{1}}) , \mathcal{F}_{2} \tau_{a} (f_{2}\otimes \overline{g_{2}}) \rangle = \langle f_{1}\otimes \overline{g_{1}} , f_{2}\otimes \overline{g_{2}} \rangle = \langle g_{2}, g_{1}\rangle \langle f_{1}, f_{2}\rangle.  
\end{equation}
From this follows that $\mathcal{V}_{h}^{*} \mathcal{V}_{g} f = \langle h,g\rangle \, f$ for all $f,g,h\in L^{2}(G)$.

\section{Basic properties of $\SO$}
\label{sec:basic-prop-of-so}
We begin our excursion into the Feichtinger algebra with the set $\mathscr{A}$ from 
Definition~\ref{def:S0-bigdef} and define 
\begin{enumerate}[]
\item $\quad \SO(G) = \mathscr{A} = \{ f\in L^{1}(G) \, : \, \exists \, g\in L^{1}(G)\backslash\{0\} \text{ s.t. } \textstyle\int_{\ghat} \Vert E_{\omega} f * g \Vert_{1} \, d\omega < \infty\}$.
\end{enumerate} 

The aim of this section is to show various properties of the functions that belong to $\SO(G)$. An important ingredient of these results are 
Theorem~\ref{th:s0-ABC} and Theorem~\ref{th:SO-six-def} in which we show that $\SO(G)$ is equal to the sets $\mathscr{B}$--$\mathscr{I}$ from Definition~\ref{def:S0-bigdef}. These characterizations allow us to give sufficient conditions for functions to belong to $\SO(G)$ and they allow us to show that $\SO(G)$ forms a Banach algebra with respect to pointwise multiplication and convolution. We will also prove that $\SO(G)\subseteq L^{1}(G) \cap A(G) \subseteq C_{0}(G)$, and that $\SO(G)$ is a Segal algebra.

\subsection{Preliminary observations}
\label{sec:preliminary-results}

In our first result, Theorem~\ref{th:s0-ABC}, we show that $\SO(G)$ coincides with the sets $\mathscr{B}$ and $\mathscr{C}$ from Definition~\ref{def:S0-bigdef}.
\begin{enumerate}[]
\item $\quad \mathscr{B} =\{ f\in L^{2}(G) \, : \, \exists \, g\in L^{2}(G)\backslash\{0\} \, \ \text{s.t.} \ \int_{G\times\ghat} \vert \mathcal{V}_{g}f(\chi) \vert \, d\chi < \infty\}$,
\item $\quad \mathscr{C} = \{ f\in A(G) \, : \, \exists \, g\in A(G)\backslash\{0\} \, \ \text{s.t.} \ \int_{G} \Vert T_{x} f \cdot g \Vert_{A(G)} \, dx < \infty \}$.
\end{enumerate}
The proof of Theorem~\ref{th:s0-ABC} is lengthy, however it consists 
of simple manipulations. As a pay-off we are rewarded with a variety 
of useful statements, which we collect in 
Corollary~\ref{co:f-in-s0-properties} below. The characterization of $\SO(G)$ via the set 
$\mathscr{B}$ appears in \cite{MR1843717}. The sets $\mathscr{A}$ and 
$\mathscr{C}$ have not been used in the literature to describe $\SO(G)$, yet they bear resemblance to characterizations of $\SO(G)$ that appear in \cite{Feichtinger1981}.

\begin{theorem} \label{th:s0-ABC}
For any locally compact abelian group $G$ it holds that $\SO(G) = \mathscr{A} = \mathscr{B} = \mathscr{C}$.

\end{theorem}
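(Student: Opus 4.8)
The plan is to treat $\mathscr{B}$ as the hub and reduce the whole statement to the single equality $\mathscr{A}=\mathscr{B}$, proved for \emph{every} locally compact abelian group; the equality $\mathscr{B}=\mathscr{C}$ then follows for free by Fourier duality. Two computational ``dictionary'' identities drive everything. First, unwinding $\mathcal{V}_{g}f(x,\omega)=\langle f,E_{\omega}T_{x}g\rangle$ gives, for $f,g\in L^{2}(G)$, the pointwise identity $\mathcal{V}_{g}f(x,\omega)=\big((E_{-\omega}f)*g^{\dagger}\big)(x)$, whence by Tonelli and the substitution $\omega\mapsto-\omega$
\[ \int_{G\times\ghat}\vert\mathcal{V}_{g}f(\chi)\vert\,d\chi=\int_{\ghat}\Vert E_{\omega}f*g^{\dagger}\Vert_{1}\,d\omega\qquad(\star) \]
as an identity in $[0,\infty]$. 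Second, since $\Vert u\Vert_{L^{1}(G)}=\Vert\hat u\Vert_{A(\ghat)}$ by definition of the $A$-norm, and $\mathcal{F}(E_{\omega}f*g)=T_{\omega}\hat f\cdot\hat g$, one has $\Vert E_{\omega}f*g\Vert_{1}=\Vert T_{\omega}\hat f\cdot\hat g\Vert_{A(\ghat)}$ for $f,g\in L^{1}(G)$, so that $f\in\mathscr{A}(G)$ with window $g$ is exactly $\hat f\in\mathscr{C}(\ghat)$ with window $\hat g$, and dually $f\in\mathscr{C}(G)\Leftrightarrow\hat f\in\mathscr{A}(\ghat)$. Combined with the Fourier covariance $\vert\mathcal{V}_{g}f(x,\omega)\vert=\vert\mathcal{V}_{\hat g}\hat f(\omega,-x)\vert$ (so that $\mathscr{B}$ is Fourier invariant), once $\mathscr{A}=\mathscr{B}$ is known on every group I get $f\in\mathscr{C}(G)\Leftrightarrow\hat f\in\mathscr{A}(\ghat)=\mathscr{B}(\ghat)\Leftrightarrow f\in\mathscr{B}(G)$.

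For $\mathscr{A}\subseteq\mathscr{B}$ I first gain regularity. If $f\in L^{1}(G)$ and $\int_{\ghat}\Vert E_{\omega}f*g\Vert_{1}\,d\omega<\infty$ with $g\in L^{1}\setminus\{0\}$, then picking $\omega_{0}$ with $\hat g(\omega_{0})\ne0$ and using $\Vert\cdot\Vert_{A(\ghat)}\ge\Vert\cdot\Vert_{\infty}$ in the second identity yields $\int_{\ghat}\vert\hat f(\omega_{0}-\omega)\vert\,d\omega<\infty$, i.e.\ $\hat f\in L^{1}(\ghat)$ and hence $f\in A(G)$; with $f\in L^{1}$ this gives $f\in L^{1}\cap L^{\infty}\subseteq L^{2}(G)$. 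To manufacture an $L^{2}$ window I convolve the given $L^{1}$ window: for $\phi\in L^{1}(G)$, associativity and submultiplicativity of $\Vert\cdot\Vert_{1}$ give $\int_{\ghat}\Vert E_{\omega}f*(g*\phi)\Vert_{1}\,d\omega\le\Vert\phi\Vert_{1}\int_{\ghat}\Vert E_{\omega}f*g\Vert_{1}\,d\omega<\infty$. Choosing $\phi\in L^{1}\cap L^{2}$ from an approximate identity makes $w:=g*\phi$ nonzero with $w^{\dagger}=g^{\dagger}*\phi^{\dagger}\in L^{1}*L^{2}\subseteq L^{2}$, and reading $(\star)$ with window $w^{\dagger}$ (note $(w^{\dagger})^{\dagger}=w$) certifies $f\in\mathscr{B}$.

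The substantive direction is $\mathscr{B}\subseteq\mathscr{A}$. Let $f\in L^{2}$ with $\int\vert\mathcal{V}_{g}f\vert<\infty$. The inversion formula $\mathcal{V}_{h}^{*}\mathcal{V}_{g}f=\langle h,g\rangle f$ expresses $f$ as the superposition $f=\langle h,g\rangle^{-1}\int_{G\times\ghat}\mathcal{V}_{g}f(\chi)\,\pi(\chi)h\,d\chi$; for $h\in L^{1}\cap L^{2}$ with $\langle h,g\rangle\ne0$ this converges absolutely as a Bochner integral in $L^{1}(G)$ since $\Vert\pi(\chi)h\Vert_{1}=\Vert h\Vert_{1}$, and for $h\in A\cap L^{2}$ it converges in $A(G)$ since $\Vert\pi(\chi)h\Vert_{A(G)}=\Vert h\Vert_{A(G)}$; matching these against test functions identifies both limits with $\langle h,g\rangle f$, so $f\in L^{1}\cap A\cap L^{2}$. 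To extract an $\mathscr{A}$-witness I must switch to a window whose cross-ambiguity is controlled, and here a concrete test function enters: writing the superposition with a fixed $h_{0}$ satisfying $\langle h_{0},g\rangle\ne0$, the covariance $\vert\langle\pi(\chi)h_{0},\pi(\zeta)h_{0}\rangle\vert=\vert\mathcal{V}_{h_{0}}h_{0}(\zeta-\chi)\vert$ and Young's inequality on $G\times\ghat$ give $\int\vert\mathcal{V}_{h_{0}}f\vert\le\vert\langle h_{0},g\rangle\vert^{-1}\Vert\mathcal{V}_{g}f\Vert_{1}\int\vert\mathcal{V}_{h_{0}}h_{0}\vert$. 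If $h_{0}\in L^{1}\cap L^{2}\cap A(G)$ has $\mathcal{V}_{h_{0}}h_{0}\in L^{1}(G\times\ghat)$, then $\int\vert\mathcal{V}_{h_{0}}f\vert<\infty$, and $(\star)$ delivers the $L^{1}$ window $h_{0}^{\dagger}$ for $\mathscr{A}$.

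Thus the whole argument rests on exhibiting one nonzero test function $h_{0}\in L^{1}\cap L^{2}\cap A(G)$ with integrable ambiguity function $\mathcal{V}_{h_{0}}h_{0}$, and I expect this to be the main obstacle. I would build it from the structure theorem $G\cong\R^{a}\times G_{0}$, with $G_{0}$ containing a compact open subgroup $K$, as a tensor product: on the $\R^{a}$ factor take a Gaussian, whose ambiguity function is again Gaussian and hence in $L^{1}$; on $G_{0}$ take the indicator $\mathbf{1}_{K}$, for which a direct computation gives $\mathcal{V}_{\mathbf{1}_{K}}\mathbf{1}_{K}=c\,\mathbf{1}_{K\times K^{\perp}}$ with $c>0$ and $K^{\perp}\subseteq\ghat$ the (compact, open) annihilator, so this is compactly supported and therefore in $L^{1}$; the tensor behaviour $\mathcal{V}_{h_{1}\otimes h_{2}}(h_{1}\otimes h_{2})=\mathcal{V}_{h_{1}}h_{1}\otimes\mathcal{V}_{h_{2}}h_{2}$ then yields $\mathcal{V}_{h_{0}}h_{0}\in L^{1}$. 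Since every $\pi(\zeta)h_{0}$ shares the same $\vert\mathcal{V}_{h_{0}}h_{0}\vert$, one can always arrange $\langle h_{0},g\rangle\ne0$. This completes $\mathscr{B}\subseteq\mathscr{A}$, and together with the reduction of the first paragraph gives $\mathscr{A}=\mathscr{B}=\mathscr{C}$.
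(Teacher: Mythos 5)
Your argument is correct, but it takes a genuinely different route from the paper's. The paper proves the three pairwise inclusions $\mathscr{A}\subseteq\mathscr{B}\cap\mathscr{C}$, $\mathscr{B}\subseteq\mathscr{A}\cap\mathscr{C}$, $\mathscr{C}\subseteq\mathscr{A}\cap\mathscr{B}$ by one uniform, entirely elementary device: pointwise sup-estimates of the type $\vert\hat f(\xi)\hat g(\omega)\vert\le\Vert E_{\omega-\xi}f*g\Vert_1$, integrated in one variable, force \emph{both} $f$ \emph{and its given window} $g$ into $L^{1}\cap L^{2}\cap A$, after which the three-way identity \eqref{eq:0703c} (your $(\star)$ together with its $A$-side twin) converts the one finite integral into the other two using the \emph{same} window up to $\dagger$, reflection, or conjugation --- no window change and no exhibition of a nontrivial test function is ever needed. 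You instead make $\mathscr{B}$ the hub and change the window to a universal $h_{0}$ with $\mathcal{V}_{h_{0}}h_{0}\in L^{1}(G\times\ghat)$, which imports two things the paper defers: the covariance/Young inequality (the paper's Lemma~\ref{le:stft-S0-inequality} and Proposition~\ref{pr:s0-Vgf-well-defined}, used there only for the sets $\mathscr{D}$--$\mathscr{I}$), and an existence result for elements of $\SO(G)$ (the paper's Lemma~\ref{le:sufficient-cond-to-be-in-S0}), which you obtain from the structure theorem $G\cong\R^{a}\times G_{0}$. That is the one place I would push back: invoking structure theory is against the grain of the whole enterprise --- a selling point of $\SO$ over the Schwartz--Bruhat space is precisely that none is needed --- and it is avoidable even within your scheme, since $h_{0}=\mathds{1}_{V}*\mathds{1}_{V}^{\dagger}$ for a relatively compact open $V$ lies in $L^{1}\cap L^{2}\cap A(G)$ and satisfies $\int_{G}\Vert T_{x}h_{0}\cdot\overline{h_{0}}\Vert_{A(G)}\,dx\le\mu_{G}(\overline{V-V})\,\Vert h_{0}\Vert_{A(G)}^{2}<\infty$, which by the $A$-side analogue of $(\star)$ gives $\Vert\mathcal{V}_{h_{0}}h_{0}\Vert_{1}<\infty$ with no structure theory. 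As for what each approach buys: yours gets window-independence of the characterization essentially for free (the paper needs Theorem~\ref{th:SO-six-def} for that) and a cleaner logical skeleton via the $\mathscr{A}$--$\mathscr{C}$ Fourier dictionary; the paper's gets, as a by-product of the same computation, the whole family of explicit inequalities $\Vert f\Vert_{1}\Vert g\Vert_{\infty}\le\Vert\mathcal{V}_{g}f\Vert_{1}$, etc.\ of Corollary~\ref{co:f-in-s0-properties}(vi)--(x), which are reused throughout the rest of the theory.
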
 
\begin{proof}
By definition $\SO(G) = \mathscr{A}$.  We now show that (i) $\mathscr{A}\subseteq \mathscr{B}\cap \mathscr{C}$, (ii) $\mathscr{B}\subseteq \mathscr{A}\cap \mathscr{C}$ and (iii) $\mathscr{C}\subseteq \mathscr{A}\cap \mathscr{B}$. \\
(i). Let $f,g\in L^{1}(G)$ and assume that $\int_{\ghat} \Vert E_{\omega} f * g \Vert_{1} \, d\omega < \infty$. By the Riemann-Lebesgue Lemma $\hat
f,\hat{g} \in C_{0}(\ghat)$, and we can thus evaluate $\hat f$ and $\hat{g}$ pointwise. 
For all $\omega,\xi\in \ghat$ we can make the following estimate:
\begin{align}
 \vert \hat{f}(\xi) \, \hat{g}(\omega) \vert & \le \max_{s\in \ghat}\vert \hat{f}(s-\omega+\xi) \,\hat{g}(s) \vert = \max_{s\in \ghat} \vert \big(T_{\omega-\xi}\hat{f}\big)(s) \, \hat{g}(s) \vert \nonumber
\\
& = \max_{s\in \ghat} \vert ( \mathcal{F}E_{\omega-\xi} f)(s) \cdot (\mathcal{F}g)(s) \vert = \Vert \mathcal{F} (E_{\omega-\xi}f * g) \Vert_{\infty} \le \Vert E_{\omega-\xi}f * g \Vert_{1} . \label{eq:integration-step}
\end{align}
Integrating over $\omega \in \ghat$ yields that
\[ \vert \hat{f}(\xi) \vert \int_{\ghat} \vert \hat{g}(\omega)\vert \, d\omega \le \int_{\ghat} \Vert E_{\omega-\xi}f * g \Vert_{1} \, d\omega. \]
By use of the translation invariance of the Haar measure we find that for all $\xi \in \ghat$
\[ \vert \hat{f}(\xi) \vert \Vert \hat{g} \Vert_{1} \le \int_{\ghat} \Vert E_{\omega}f * g \Vert_{1} \, d\omega < \infty. \]
The right side of this inequality does not depend on $\xi$, therefore, taking the maximum over $\xi\in \ghat$,  we arrive at the estimate 
\[ \Vert \hat{f} \Vert_{\infty} \Vert \hat{g} \Vert_{1} \le \int_{\ghat} \Vert E_{\omega}f * g \Vert_{1} \, d\omega < \infty. \]
This shows that $\hat{g}\in L^{1}(\ghat)$, i.e., $g\in A(G)$. Integrating \eqref{eq:integration-step} over $\xi\in \ghat$ and taking the maximum over $\omega\in \ghat$ yields the inequality
\[ \Vert \hat{g} \Vert_{\infty} \Vert \hat{f} \Vert_{1} \le \int_{\ghat} \Vert E_{\omega}f * g \Vert_{1} \, d\omega < \infty. \]
Hence also $\hat{f}\in L^{1}(G)$ and thus $f\in A(G)$. In particular this implies that $\hat{f},\hat{g}\in L^{1}(\ghat)\cap L^{\infty}(\ghat)\subseteq L^{2}(\ghat)$. Since the Fourier transform is a unitary operator from $L^{2}(G)$ onto $L^{2}(\ghat)$ we conclude that $f,g\in L^{2}(G)$. For functions $f,g\in L^{1}(G) \cap L^{2}(G) \cap A(G)$ it is straightforward (essentially a matter of notation and Fubini's theorem) to show that
\begin{align}
 \int_{\ghat} \Vert E_{\omega} f * g \Vert_{1} \, d\omega = \int_{G\times\ghat} \vert \mathcal{V}_{g^{\dagger}}f(\chi)  \vert \, d\chi = \int_{G} \Vert T_{x} f \cdot g^{r} \Vert_{A(G)} \, dx.\label{eq:0703c}
\end{align}
The equalities in \eqref{eq:0703c} together with the assumption $f\in \mathscr{A}$ implies that $f$ belongs to $\mathscr{B}$ and $\mathscr{C}$, that is, we have shown that $\mathscr{A} \subseteq \mathscr{B} \cap \mathscr{C}$. \\
(ii). Assume now that $f,g\in L^{2}(G)$ satisfy
\[ \int_{G} \int_{\ghat} \vert \mathcal{V}_{g}f(\chi) \vert \, d\omega \, dx = \int_{G} \Vert \mathcal{F}(f\cdot T_{x}\overline{g}) \Vert_{1} \, dx < \infty.\]
This implies that the mapping $x \mapsto \Vert \mathcal{F}(f\cdot T_{x}\overline{g}) \Vert_{1}$ belongs to $L^{1}(G)$ and thus $\Vert \mathcal{F}(f\cdot T_{x}\overline{g}) \Vert_{1}$ is finite for almost every $x \in G$. Therefore, for almost every $x \in G$ the function $\omega \mapsto \mathcal{F}(f\cdot T_{x}\overline{g})(\omega)$ belongs to $L^{1}(\ghat)$. By the Fourier inversion formula we have the equality
\[ f \cdot T_{x} \overline{g} = \mathcal{F}^{-1} \mathcal{F}(f\cdot T_{x} \overline{g})\]
for almost every $x\in G$.
In particular, for almost every $x\in G$ we have that $(f \cdot T_{x} \overline{g})\in C_{0}(G)$.
Now, for almost every $x,\xi\in G$, we conclude that
\begin{align}
\vert f(x) \overline{g(\xi)} \vert & = \vert f(x) \overline{g(x-x+\xi)} \vert \le \sup_{s\in G} \vert f(s) \overline{g(s-x+\xi)} \vert \nonumber \\
& = \Vert f \cdot T_{x-\xi} \overline{g} \Vert_{\infty} = \Vert \mathcal{F}^{-1} \mathcal{F} (f \cdot T_{x-\xi} \overline{g}) \Vert_{\infty} \le \Vert \mathcal{F} (f \cdot T_{x-\xi} \overline{g}) \Vert_{1}. \label{eq:0703a}
\end{align}
By integrating the previous inequality over $x\in G$ and using translation invariance of the Haar measure, we find that 
\[ \int_{G} \vert f(x) \vert \, dx \, \vert g(\xi) \vert \le \int_{G} \Vert \mathcal{F} (f \cdot T_{x-\xi} \overline{g}) \Vert_{1} \, dx = \int_{G} \Vert \mathcal{F} (f \cdot T_{x} \overline{g}) \Vert_{1} \, dx = \int_{G\times\ghat} \vert \mathcal{V}_{g}f(\chi) \vert \, d\chi\]
for all $\xi\in G$. 
By taking the supremum over $\xi\in G$ we find that
\[ \Vert f \Vert_{1} \Vert g \Vert_{\infty} \le \int_{G\times\ghat} \vert \mathcal{V}_{g} f(\chi) \vert \, d\chi<\infty.\]
An integration of \eqref{eq:0703a} with respect to $\xi\in G$ and taking the supremum over $x\in G$ yields the inequality 
\[ \Vert f \Vert_{\infty} \Vert g \Vert_{1} \le \int_{G\times \ghat} \vert \mathcal{V}_{g}f(\chi) \vert \, d\chi< \infty.\]
This implies that $f,g\in L^{1}(G)$. 
Since the Fourier transform is a unitary operator from $L^{2}(G)$ onto $L^{2}(\ghat)$ it is a straightforward calculation to show that
$\int_{G\times\ghat} \vert \mathcal{V}_{g}f(\chi) \vert \, d\chi = \int_{\ghat\times G} \vert \mathcal{V}_{\hat{g}} \hat{f}(\tilde\chi) \vert \, d\tilde\chi$. 
Repeating the above argument on the Fourier side, yields 
\begin{equation} \label{eq:2903a} \Vert \hat{f} \Vert_{1} \Vert \hat{g} \Vert_{\infty} \le \int_{G\times\ghat} \vert \langle f, \pi(\chi) g \rangle \vert \, d\chi< \infty \ \ \text{and} \ \ \Vert \hat{g} \Vert_{1} \Vert \hat{f} \Vert_{\infty} \le \int_{G\times\ghat} \vert \langle f, \pi(\chi) g \rangle \vert \, d\chi< \infty. \end{equation}
We conclude that $f,g\in \mathscr{B} \subseteq L^{1}(G) \cap A(G)$. 
Replacing $g$ by $g^{\dagger}$ in \eqref{eq:0703c} yields
\begin{align} \label{eq:2103a}
 \int_{\ghat} \Vert E_{\omega} f * g^{\dagger} \Vert_{1} \, d\omega = \int_{G\times\ghat} \vert \mathcal{V}_{g}f(\chi) \vert \, d\chi = \int_{G} \Vert T_{x} f \cdot \overline{g} \Vert_{A(G)} \, dx.
\end{align}
for $f,g\in L^{1}(G) \cap L^{2}(G) \cap A(G)$. Arguing as in (i) we prove that $\mathscr{B} \subseteq \mathscr{A} \cap \mathscr{C}$. \\
(iii). As in (i) and (ii) one can show that for $f,g\in A(G)$ and all $x,\xi\in G$ that 
\[ \vert f(x) g(\xi) \vert \le \Vert T_{x-\xi} f \cdot g \Vert_{\infty}.\]
For $f,g\in A(G)$ there exists $h_{f},h_{g}\in L^{1}(\ghat)$ such that $\mathcal{F}h_{f} = f, \mathcal{F}h_{g} = g$ and so
$ \mathcal{F}(E_{x-\xi}h_{f}*h_{g}) = T_{x-\xi} f \cdot g$
and $\Vert T_{x-\xi} f \cdot g \Vert_{A(G)} = \Vert E_{x-\xi}h_{f}*h_{g} \Vert_{1}$.
We can thus establish the estimate
\[ \vert f(x) g(\xi) \vert \le \Vert \mathcal{F}(E_{x-\xi}h_{f}*h_{g}) \Vert_{\infty} \le \Vert E_{x-\xi}h_{f}*h_{g} \Vert_{1} = \Vert T_{x-\xi} f \cdot g \Vert_{A(G)}. \]
An integration over $x\in G$ and the translation invariance of the Haar measure yields that for all $f\in A(G)$ and $\xi\in G$
\[ \int_{G} \vert f(x) \vert \, dx \, \vert g(\xi) \vert\le \int_{G} \Vert T_{x}f \cdot g \Vert_{A(G)} \, 
dx .\]
The right hand side does not depend on $\xi\in G$, hence taking the supremum over $\xi\in G$ gives the inequality
\[ \Vert f \Vert_{1} \, \Vert g \Vert_{\infty} \le 
\int_{G} \Vert T_{x}f \cdot g \Vert_{A(G)} \, dx < \infty
\ \ \text{for all} \ \ f\in \mathscr{C}.\]
Interchanging the roles of $f$ and $g$ we also establish the inequality $\Vert f \Vert_{\infty} \Vert g \Vert_{1} \le 
\int_{G} \Vert T_{x}f \cdot g \Vert_{A(G)} < \infty$.
Hence $f,g\in L^{1}(G)\cap L^{\infty}(G)$ and, in particular, $f,g\in L^{2}(G)$. Replacing $g$ by $g^{r}$ in \eqref{eq:0703c} yields that 
\begin{align} \label{eq:0703b}
 \int_{\ghat} \Vert E_{\omega} f * g^{r} \Vert_{1} \, d\omega = \int_{G\times\ghat} \vert \mathcal{V}_{\overline{g}}f(\chi) \vert \, d\chi = \int_{G} \Vert T_{x} f \cdot g \Vert_{A(G)} \, dx
\end{align}
for $f,g\in L^{1}(G)\cap L^{2}(G)\cap A(G)$.
The assumption that $f\in \mathscr{C}$ together with \eqref{eq:0703b} implies that $f\in \mathscr{A}\cap \mathscr{B}$. This shows that $\mathscr{C}\subseteq \mathscr{A}\cap \mathscr{B}$.
\end{proof}

From the proof of Theorem~\ref{th:s0-ABC} we draw several conclusions which we summarize in Corollary~\ref{co:f-in-s0-properties} below. The result in Corollary~\ref{co:f-in-s0-properties}(i)-(iv) are well-known and can be found in \cite{Feichtinger1981}. Statement (v) appears in \cite{MR1843717}. The results in Corollary~\ref{co:f-in-s0-properties}(vi)-(x) are new. The inequalities in Corollary~\ref{co:f-in-s0-properties}(vii) are related to results by Lieb \cite{li90-1}.

\begin{corollary} \label{co:f-in-s0-properties} \label{co:s0-invariant-under-rel-inv-comp-conj} Suppose $f\in \SO(G)$, \ie $f$ belongs to $L^{2}(G)$ and $\int_{G\times\ghat} \vert \mathcal{V}_{g}f(\chi) \vert \, d\chi < \infty$, for some non-zero $g\in L^{2}(G)$. Then the following holds:
\begin{enumerate}[(i)]
\item $\hat{f}\in \SO(\ghat)$, in particular $\Vert \mathcal{V}_{g}f \Vert_{1} = \Vert \mathcal{V}_{\hat{g}}\hat{f} \Vert_{1}$,
\item $E_{\omega}T_{x} f \in \SO(G)$ for all $(x,\omega)\in G\times\ghat$,
\item $f \in L^{1}(G) \cap A(G) \subseteq C_{0}(G)$,
\item $f^{r},\overline{f},f^{\dagger}\in \SO(G)$,
\item If $f\ne 0$, then $g\in \SO(G)$,
\item[(vi.a)] $\Vert f \Vert_{p} \le \Vert g \Vert_{1}^{-1+1/p} \, \Vert g \Vert_{\infty}^{-1/p} \, \Vert \mathcal{V}_{g} f \Vert_{1} 
$ for all $p\in [1,\infty]$,
\item[(vi.b)] $\Vert \hat{f} \Vert_{p} \le \Vert \hat{g} \Vert_{1}^{-1+1/p} \, \Vert \hat{g} \Vert_{\infty}^{-1/p} \, \Vert \mathcal{V}_{g} f \Vert_{1} 
$ for all $p\in [1,\infty]$,
\item[(vii.a)] $\Vert f \Vert_{p} \, \Vert g \Vert_{q} \le \Vert \mathcal{V}_{g} f \Vert_{1} 
$ for all $p,q\in [1,\infty]$ with $1/p+1/q=1$,
\item[(vii.b)] $\Vert \hat{f} \Vert_{p} \, \Vert \hat{g} \Vert_{q} \le \Vert \mathcal{V}_{g} f \Vert_{1} 
$ for all $p,q\in [1,\infty]$ with $1/p+1/q=1$,
\setcounter{enumi}{7}
\item $\Vert f \Vert_{p} \, \Vert \hat{g} \Vert_{p} \le \Vert \mathcal{V}_{g}f \Vert_{1}$ for all $p\in [2,\infty]$.
\item $\vert \langle f,g\rangle \vert \le \Vert \mathcal{V}_{g} f \Vert_{1} 
$,
\item $\Vert \mathcal{V}_{g}f \Vert_{p} \le \Vert \mathcal{V}_{g} f \Vert_{1} 
$ for all $p\in [1,\infty]$.
\end{enumerate}
\end{corollary}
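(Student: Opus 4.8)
The plan is to read off all ten statements from the four fundamental norm estimates produced inside the proof of Theorem~\ref{th:s0-ABC}, supplemented by the elementary covariance properties of the short-time Fourier transform and by the log-convexity (Lyapunov) inequality for $L^{p}$-norms. Writing $C = \Vert \mathcal{V}_{g} f\Vert_{1} = \int_{G\times\ghat} \vert \mathcal{V}_{g}f(\chi)\vert\,d\chi$, the proof of Theorem~\ref{th:s0-ABC}(ii) already established $\Vert f\Vert_{1}\,\Vert g\Vert_{\infty}\le C$ and $\Vert f\Vert_{\infty}\,\Vert g\Vert_{1}\le C$, together with their Fourier mirror images $\Vert \hat f\Vert_{1}\,\Vert \hat g\Vert_{\infty}\le C$ and $\Vert \hat f\Vert_{\infty}\,\Vert \hat g\Vert_{1}\le C$ in \eqref{eq:2903a}. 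These four inequalities, in which all four $g$-factors are strictly positive, are the engine for everything that follows.

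I would first dispatch the closure and symmetry statements (i)--(v). For (i) and (iii) one merely collects what the theorem's proof already yields: the membership $f\in L^{1}(G)\cap A(G)\subseteq C_{0}(G)$ is exactly the content of the four estimates above, while the identity $\Vert\mathcal{V}_{g}f\Vert_{1}=\Vert\mathcal{V}_{\hat g}\hat f\Vert_{1}$ used there gives $\hat f\in\SO(\ghat)$ once one notes $\hat g\neq 0$. Statements (ii), (iv) and (v) each follow from a covariance identity combined with translation invariance of the Haar measure: the relation $\vert\mathcal{V}_{g}(\pi(\chi')f)(\chi)\vert=\vert\mathcal{V}_{g}f(\chi-\chi')\vert$ gives $\Vert\mathcal{V}_{g}(E_{\omega}T_{x}f)\Vert_{1}=C<\infty$, hence (ii); the intertwining of $\mathcal{V}$ with reflection and conjugation (e.g. $\vert\mathcal{V}_{g^{r}}f^{r}(x,\omega)\vert=\vert\mathcal{V}_{g}f(-x,-\omega)\vert$, and analogously for $\overline{f}$ and $f^{\dagger}$, choosing the correspondingly transformed window) gives (iv); and the fundamental symmetry $\vert\mathcal{V}_{g}f(x,\omega)\vert=\vert\mathcal{V}_{f}g(-x,-\omega)\vert$ shows $\int\vert\mathcal{V}_{f}g\vert=C<\infty$, so that $g\in\SO(G)$ with the nonzero window $f$, giving (v).

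For the quantitative inequalities (vi)--(x) the key tool is the Lyapunov inequality $\Vert h\Vert_{p}\le\Vert h\Vert_{1}^{1/p}\,\Vert h\Vert_{\infty}^{1-1/p}$, valid for any $p\in[1,\infty]$. For (vii.a), applying it to $f$ and to $g$ (with $1/p+1/q=1$, so $1-1/p=1/q$) gives $\Vert f\Vert_{p}\Vert g\Vert_{q}\le(\Vert f\Vert_{1}\Vert g\Vert_{\infty})^{1/p}(\Vert f\Vert_{\infty}\Vert g\Vert_{1})^{1/q}\le C^{1/p+1/q}=C$; statement (vii.b) is the Fourier mirror using \eqref{eq:2903a}. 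For (vi.a), inserting $\Vert f\Vert_{1}\le C\Vert g\Vert_{\infty}^{-1}$ and $\Vert f\Vert_{\infty}\le C\Vert g\Vert_{1}^{-1}$ into Lyapunov yields $\Vert f\Vert_{p}\le C\,\Vert g\Vert_{\infty}^{-1/p}\Vert g\Vert_{1}^{-1+1/p}$, and (vi.b) again mirrors on the Fourier side. Statement (ix) is Cauchy--Schwarz together with (vii.a) at $p=q=2$, giving $\vert\langle f,g\rangle\vert\le\Vert f\Vert_{2}\Vert g\Vert_{2}\le C$. For (viii) I would interpolate with exponent $2/p$ between $p=2$, where $\Vert f\Vert_{2}\Vert\hat g\Vert_{2}=\Vert f\Vert_{2}\Vert g\Vert_{2}\le C$ by Plancherel and (vii.a), and $p=\infty$, where $\Vert f\Vert_{\infty}\Vert\hat g\Vert_{\infty}\le\Vert f\Vert_{\infty}\Vert g\Vert_{1}\le C$ since $\Vert\hat g\Vert_{\infty}\le\Vert g\Vert_{1}$. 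Finally (x) follows from the pointwise bound $\Vert\mathcal{V}_{g}f\Vert_{\infty}\le\Vert f\Vert_{2}\Vert g\Vert_{2}\le C$ (Cauchy--Schwarz plus (vii.a)) together with $\Vert\mathcal{V}_{g}f\Vert_{p}\le\Vert\mathcal{V}_{g}f\Vert_{1}^{1/p}\Vert\mathcal{V}_{g}f\Vert_{\infty}^{1-1/p}\le C$.

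The norm inequalities are entirely routine once the four basic estimates are in hand, so the only place demanding care is the bookkeeping behind (ii), (iv) and (v): one must track the phase factors and reflected windows in the covariance and symmetry identities of $\mathcal{V}$ precisely enough to see that the moduli, and hence the $L^{1}(G\times\ghat)$-integrals, are preserved. As the proof of Theorem~\ref{th:s0-ABC} already observed for the analogous identities in \eqref{eq:0703c}, this is essentially a matter of notation, substitution and Fubini's theorem, so I expect no genuine difficulty here beyond attentive index-chasing.
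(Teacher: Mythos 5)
Your proposal is correct and follows essentially the same route as the paper: extract the four basic inequalities $\Vert f\Vert_{1}\Vert g\Vert_{\infty},\ \Vert f\Vert_{\infty}\Vert g\Vert_{1},\ \Vert \hat f\Vert_{1}\Vert \hat g\Vert_{\infty},\ \Vert \hat f\Vert_{\infty}\Vert \hat g\Vert_{1}\le\Vert\mathcal{V}_{g}f\Vert_{1}$ from the proof of Theorem~\ref{th:s0-ABC}, handle (i)--(v) by the covariance and symmetry identities of the short-time Fourier transform, and obtain (vi)--(x) from the log-convexity inequality $\Vert h\Vert_{p}\le\Vert h\Vert_{1}^{1/p}\Vert h\Vert_{\infty}^{1-1/p}$. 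The only cosmetic deviations are that you reprove the $p\in[2,\infty]$ estimate in (viii) by interpolating the $p=2$ and $p=\infty$ endpoints rather than invoking Hausdorff--Young directly, and you use the Cauchy--Schwarz pair $(2,2)$ instead of $(1,\infty)$ in (ix) and (x); both are equivalent in substance.
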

\begin{proof} (i). Because $\mathcal{F}$ is a unitary operator from $L^{2}(G)$ onto $L^{2}(\ghat)$ and $E_{x}T_{\omega} \mathcal{F} = \omega(x) \mathcal{F} E_{\omega} T_{-x}$ for all $(x,\omega)\in G\times\ghat$, it is an easy exercise to show the equality
$ \textstyle\int_{\ghat \times G} \vert \mathcal{V}_{\hat{g}} \hat{f}(\tilde\chi) \vert \, d \tilde{\chi} = \int_{G\times\ghat} \vert \mathcal{V}_{g}f(\chi) \vert \, d\chi < \infty$. This equality together with Theorem~\ref{th:s0-ABC} implies that $\hat{f}$ belongs to $\SO(\ghat)$. \\
(ii). For all $\nu\in G\times\ghat$ we find that 
\[ \textstyle \int_{G\times\ghat} \vert \mathcal{V}_{g}\pi(\nu)f (\chi) \vert \, d\chi = \int_{G\times \ghat} \vert \langle f, \pi(\chi-\nu) g \rangle \vert \, d\chi = \int_{G\times\ghat} \vert \mathcal{V}_{g}f(\chi) \vert \, d\chi < \infty.\]
By Theorem~\ref{th:s0-ABC} the function $\pi(\nu) f$ belongs to $\SO(G)$.\\
(iii). The definition of $\mathscr{A}$ and $\mathscr{C}$ together with Theorem \ref{th:s0-ABC} imply the inclusion. \\
(iv). The result follows from Theorem~\ref{th:s0-ABC} together with the equalities
\begin{align*} 
\textstyle \int_{G\times\ghat} \vert \langle f, \pi(\chi) g \rangle \vert \, d\chi & = \int_{G\times\ghat} \vert \langle f^{r}, \pi(\chi) g^{r} \rangle \vert \, d\chi \\
& = \textstyle \int_{G\times\ghat} \vert \langle \overline{f}, \pi(\chi) \overline{g} \rangle \vert \, d\chi = \int_{G\times\ghat} \vert \langle f^{\dagger}, \pi(\chi) g^{\dagger} \rangle \vert \, d\chi.
\end{align*}
(v). This follows from the equality $\int_{G\times \ghat} \vert \langle f, \pi(\chi) g \rangle \vert \, d\chi = \int_{G\times \ghat} \vert \langle g, \pi(\chi) f \rangle \vert \, d\chi$. \\
(vi). For the next result we note that the proof of Theorem \ref{th:s0-ABC} establishes the inequalities
\begin{equation} \label{eq:lieb1infntyF} \Vert \hat{f} \Vert_{1} \Vert \hat{g} \Vert_{\infty} \le \Vert \mathcal{V}_{g}f \Vert_{1} \ \ \text{and} \ \ \Vert \hat{g} \Vert_{1} \Vert \hat{f} \Vert_{\infty} \le \Vert  \mathcal{V}_{g}f \Vert_{1}, \end{equation}
and the inequalities
\begin{equation} \label{eq:lieb1infnty} \Vert f \Vert_{1} \Vert g \Vert_{\infty} \le \Vert \mathcal{V}_{g}f \Vert_{1} \ \ \text{and} \ \ \Vert g \Vert_{1} \Vert f \Vert_{\infty} \le \Vert  \mathcal{V}_{g}f \Vert_{1}. \end{equation}
Furthermore, recall the following implication of H\"older's inequality
\begin{equation}\label{eq:lp-conexity} \Vert f \Vert_{p} \le \Vert f \Vert_{1}^{1/p} \, \Vert f \Vert_{\infty}^{1-1/p} \ \text{for all } f\in L^{1}(G)\cap L^{\infty}(G) \ \ \text{and} \ \ p\in [1,\infty].\end{equation}
Combining \eqref{eq:lieb1infntyF},\eqref{eq:lieb1infnty} and \eqref{eq:lp-conexity} implies that for all $f\in \SO(G)$
\begin{align*}
& \Vert f \Vert_{p} \le \Vert f \Vert_{1}^{1/p} \, \Vert f \Vert_{\infty}^{1-1/p} \le \Vert g \Vert_{\infty}^{-1/p} \, \Vert \mathcal{V}_{g}f \Vert_{1}^{1/p} \, \Vert g \Vert_{1}^{-1+1/p} \, \Vert \mathcal{V}_{g}f \Vert_{1}^{1-1/p}, \\
& \Vert \hat{f} \Vert_{p} \le \Vert \hat{f} \Vert_{1}^{1/p} \, \Vert \hat{f} \Vert_{\infty}^{1-1/p} \le \Vert \hat{g} \Vert_{\infty}^{-1/p} \, \Vert \mathcal{V}_{g}f \Vert_{1}^{1/p} \, \Vert \hat{g} \Vert_{1}^{-1+1/p} \, \Vert \mathcal{V}_{g}f \Vert_{1}^{1-1/p}. 
\end{align*}
(vii). Note that $\Vert g \Vert_{q} \le \Vert g \Vert_{1}^{1-1/p} \, \Vert g \Vert_{\infty}^{1/p}$. This together with (vi) implies the result. \\
(viii) This follows from (vii) together with the Hausdorff-Young inequality $\Vert \hat{g} \Vert_{p} \le \Vert g \Vert_{q}$ for $p\in[2,\infty]$, $1/p+1/q=1$.\\
(ix). Using the inequality in (vii) we find that
\[ \vert \langle f, g\rangle \vert \le \Vert f \Vert_{1} \, \Vert g \Vert_{\infty} \le \Vert \mathcal{V}_{g}f \Vert_{1}.\] 
(x). The inequality in (ix) yields that $\Vert \mathcal
V_{g} f \Vert_{\infty} \le \Vert f \Vert_{1} \, \Vert g \Vert_{\infty} \le \Vert \mathcal
V_{g}f \Vert_{1}$.
H\"older's inequality implies that, for all $p\in[1,\infty]$,
\[ \Vert \mathcal
V_{g} f \Vert_{p} \le \Vert \mathcal
V_{g} f \Vert_{1}^{1/p} \, \Vert \mathcal
V_{g} f \Vert_{\infty}^{1-1/p} \le \Vert \mathcal
V_{g} f \Vert_{1}.\]
 \end{proof}

Corollary~\ref{co:f-in-s0-properties} states several convenient properties for functions in $\SO(G)$. However, from the description of $\SO(G)$ so far, it is not yet clear that any non-trivial function belongs to the Feichtinger algebra.  
Lemma~\ref{le:sufficient-cond-to-be-in-S0} settles this question positively, and it gives sufficient conditions for functions to belong to $\SO(G)$. Furthermore, Lemma~\ref{le:sufficient-cond-to-be-in-S0} shows that $\SO(G)$ contains approximate identities for $L^{1}(G)$ with respect to convolution and approximate identities for $A(G)$ with respect to pointwise multiplication.

\begin{lemma} \label{le:sufficient-cond-to-be-in-S0} \label{le:s0-contains-special-functions} 
\label{le:S0-contains-piecewise-constant-functions} $ \ $ 
\begin{enumerate}[(i)]
\item If $f\in C_{c}(G)$ and $\hat{f} \in L^{1}(\ghat)$, then $f\in \SO(G)$. 
\item If $f\in L^{1}(G)$ and $\hat{f}\in C_{c}(\ghat)$, then $f\in \SO(G)$.
\item If $g,h\in L^{2}(G)$ have compact support, then $g*h \in \SO(G)$.
\item For any compact set $K$ in $G$ there exists a function $f\in \SO(G)$ such that $f\in C_{c}(G)$ and $f\vert_{K} = 1$.
\item There exists a function $f\in \SO(G)$ such that $\Vert f \Vert_{1} = \Vert f \Vert_{\infty} = \Vert \hat{f} \Vert_{1} = \Vert \hat{f} \Vert_{\infty} = 1$ and $f,\hat{f}\ge 0$.
\item For any neighbourhood $U$ of the identity there exists a function $f\in \SO(G)$ such that $\supp\, f \subseteq U$, $\Vert f \Vert_{1} = 1$ and $f,\hat{f}\ge 0$.
\item For any $\epsilon>0$ and any $f\in L^{1}(G)$, there exists a function $g\in \SO(G)$ such that $\Vert g*f - f\Vert_{1} < \epsilon$. Moreover, $g$ can be taken to be compactly supported, non-negative and such that $\Vert g \Vert_{1} = 1$.
\item For any $\epsilon>0$ and any $f\in A(G)$ there exists a function $g\in \SO(G)$ such that $\Vert g\cdot f - f \Vert_{A(G)} < \epsilon$. Moreover, $g$ can be taken such that $\hat{g}$ is compactly supported, non-negative and $\Vert g \Vert_{A(G)} = 1$.
\end{enumerate}
\end{lemma}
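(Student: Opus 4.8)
The plan is to take (i) as the base case and reduce all the other statements to it, using the three characterizations $\SO(G)=\mathscr{A}=\mathscr{B}=\mathscr{C}$ of Theorem~\ref{th:s0-ABC} together with the Fourier and reflection invariance recorded in Corollary~\ref{co:f-in-s0-properties}(i),(iv). For (i), note first that $f\in C_{c}(G)$ with $\hat f\in L^{1}(\ghat)$ already lies in $A(G)$, so by Theorem~\ref{th:s0-ABC} it suffices to produce a single admissible window in the set $\mathscr{C}$. I would simply take the window to be $f$ itself (the case $f=0$ being trivial). Since $A(G)$ is a Banach algebra under pointwise multiplication and translation is isometric on $A(G)$, one has $\Vert T_{x}f\cdot f\Vert_{A(G)}\le \Vert f\Vert_{A(G)}^{2}$; moreover $T_{x}f\cdot f$ vanishes unless $x\in\supp f-\supp f$, a compact set of finite Haar measure. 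Hence $\int_{G}\Vert T_{x}f\cdot f\Vert_{A(G)}\,dx\le \Vert f\Vert_{A(G)}^{2}\,\mu_{G}(\supp f-\supp f)<\infty$, so $f\in\mathscr{C}=\SO(G)$. This ``self-window'' observation is the one genuine idea in the lemma.

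Statement (ii) is the Fourier dual of (i): if $f\in L^{1}(G)$ and $\hat f\in C_{c}(\ghat)$, then $\hat f\in C_{c}(\ghat)$ has Fourier transform $f^{r}\in L^{1}$, so (i) applied on the group $\ghat$ gives $\hat f\in\SO(\ghat)$, and Corollary~\ref{co:f-in-s0-properties}(i),(iv) then yield $f\in\SO(G)$. For (iii) I would observe that when $g,h\in L^{2}(G)$ have compact support, $g*h$ is continuous with compact support, while $\widehat{g*h}=\hat g\,\hat h\in L^{1}(\ghat)$ because $\hat g,\hat h\in L^{2}(\ghat)$; thus $g*h$ falls under (i). Statement (iv) follows from (iii) by building an explicit plateau: for a compact symmetric neighbourhood $V$ of the identity, the function $\mu_{G}(V)^{-1}\,\mathbf 1_{K+V}*\mathbf 1_{V}$ is a convolution of two compactly supported $L^{2}$ functions, equals $1$ on $K$, and has compact support.

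For (v) and (vi) the idea is to force both $f\ge0$ and $\hat f\ge0$ by using an autocorrelation: with $V$ a compact symmetric neighbourhood of the identity, set $f_{0}=\mathbf 1_{V}*\mathbf 1_{V}$, which lies in $\SO(G)$ by (iii), satisfies $f_{0}\ge0$, and has $\widehat{f_{0}}=\vert\widehat{\mathbf 1_{V}}\vert^{2}\ge0$. Because $f_{0},\widehat{f_{0}}\ge0$ one checks directly that $\Vert f_{0}\Vert_{\infty}=f_{0}(0)$, $\Vert\widehat{f_{0}}\Vert_{\infty}=\widehat{f_{0}}(0)$, $\Vert f_{0}\Vert_{1}=\widehat{f_{0}}(0)$ and $\Vert\widehat{f_{0}}\Vert_{1}=f_{0}(0)$. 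For (vi) only $\Vert f\Vert_{1}=1$ is required, which is achieved by a single scalar multiple once $V$ is chosen so small that $V+V\subseteq U$. For (v) all four norms must equal $1$ simultaneously, which by the identities above reduces to arranging $f(0)=\int_{G}f=1$; this is the fiddly point and is handled by adjusting the Haar measure of $V$ (or, if an indicator does not suffice, by replacing $\mathbf 1_{V}$ with a suitably shaped nonnegative window) together with a scalar normalisation.

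Finally, (vii) exhibits an approximate identity for $L^{1}(G)$ inside $\SO(G)$: taking for each neighbourhood $U$ the function supplied by (vi) and using $\Vert g*f-f\Vert_{1}\le \sup_{y\in U}\Vert T_{y}f-f\Vert_{1}$ together with continuity of translation in $L^{1}(G)$, one makes the error below $\epsilon$ by choosing $U$ small. Statement (viii) is the Fourier transform of (vii): under $\mathcal{F}$, multiplication by $g$ in $A(G)$ becomes convolution by $\hat g$ in $L^{1}(\ghat)$, so applying (vii) on the group $\ghat$ to $\hat f\in L^{1}(\ghat)$ and pulling the resulting window back through the Fourier invariance of $\SO$ produces the required $g$. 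The main obstacle is the exact normalisation in (v); everything else is either the self-window observation behind (i) or a routine reduction via Fourier duality and standard approximate-identity estimates.
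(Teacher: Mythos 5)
Your proposal is correct and follows essentially the same route as the paper: (i) via the self-window $g=f$ in the set $\mathscr{C}$, (ii) and (viii) by Fourier duality, (iii) by reduction to (i), (iv) by convolving an indicator of a fattened copy of $K$ with a normalised bump, (v)--(vi) via autocorrelations of indicators of compact (symmetric) sets, and (vii) by the standard approximate-identity estimate $\Vert g*f-f\Vert_{1}\le\int g(s)\Vert T_{s}f-f\Vert_{1}\,ds$. The one point you flag as fiddly in (v) is resolved in the paper simply by choosing the compact set $K$ with $\mu_{G}(K)=1$ and taking $f=\mathds{1}_{K}*\mathds{1}_{K}^{\dagger}$ with no further scalar: then $f(0)=\Vert \hat f\Vert_{1}=\mu_{G}(K)=1$ and $\int_{G}f=\hat f(0)=\mu_{G}(K)^{2}=1$ all at once, which matches your observation that a scalar alone cannot reconcile the four norms.
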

\begin{proof} 
(i). By assumption $f\in A(G)$. 
Since $f$ has compact support, there is a compact set $K\subset G$ such that $s\mapsto f(s-x)f(s)$ is equal to zero for $x\in G\backslash K$. We therefore have that
\begin{align*}
& \int_{G} \Vert T_{x} f \cdot f \Vert_{A(G)} \, dx = \int_{K} \Vert T_{x} f \cdot f \Vert_{A(G)} \, dx \\
& \le \int_{K} \Vert T_{x} f \Vert_{A(G)} \cdot \Vert f \Vert_{A(G)} \, dx  = \Vert f \Vert_{A(G)}^{2} \int_{K} \, dx < \infty.
\end{align*}
Hence $f\in\mathscr{C}$ where $g=f$. By Theorem \ref{th:s0-ABC} $f$ is a function in $\SO(G)$. \\
(ii). The combination of (i) and Corollary~\ref{co:f-in-s0-properties}(i) yields the result. \\
(iii). Note that $g*h\in C_{c}(G)$ \cite{ru62}. Since $g,h\in L^{2}(G)$ and since the Fourier transform is a unitary operator on $L^{2}(G)$, we have that $\hat{g},\hat{h}\in L^{2}(G)$. Thus $\mathcal{F}(g*h) = \hat{g}\cdot \hat{h} \in L^{1}(G)$. The result now follows from (i).\\
(iv). Take $g$ to be in $L^{\infty}(G)$ and have compact support such that $\int_{G} g(x) \, dx = 1$. Furthermore, let $h$ be the indicator function on the set $K-\supp \, g$ and define $f = g*h$. Note that $f\in C_{c}(G)$ and by (iii) also $f\in \SO(G)$.
Furthermore, for $x\in K$ we have that
\[ f(x) = \int_{G} g(s) h(x-s) \, ds = \int_{\supp \, g} g(s) h(x-s) \, ds = \int_{\supp\, g} g(s) \, ds = 1.\]
(v). Let $K$ be a compact set such that $\mu_{G}(K)=1$. Let $h=
\mathds{1}_{K}$ and define $f=h * h^{\dagger}$. It follows by (iii) that $f\in \SO(G)$. Note that $ f(x) = \int_{G} \mathds{1}_{K}(s) \mathds{1}_{K}(s-x) \, ds$. It is therefore clear that $\Vert f \Vert_{\infty} = f(0) = 1$. Furthermore, 
\[ \int_{G} \vert f(x) \vert \, dx = \int_{G}  \int_{G} \mathds{1}_{K}(s) \mathds{1}_{K}(s-x) \, ds \, dx = \int_{G} \mathds{1}_{K}(s) \int_{G} \mathds{1}_{K}(s-x) \, dx \, ds = 1.\]
The interchange of the order of the integrals is allowed since the integrand is positive. Concerning the Fourier transform $\hat{f}$ we find that
\[ \hat{f}(\omega) = \mathcal{F}(h*h^{\dagger})(\omega) = \vert \hat{h}(\omega) \vert^2 \ge 0 \ \ \text{for all } \, \omega\in \ghat.\]
Because the Fourier transform is a unitary operator from $L^{2}(G)$ onto $L^{2}(\ghat)$ and $h\in L^{2}(G)$, we easily find that
\[ \int_{\ghat} \vert \hat{f}(\omega) \vert \, d\omega = \int_{\ghat} \vert \hat{h}(\omega) \vert^2 \, d\omega = \Vert \hat{h} \Vert_{2}^{2} = \Vert h \Vert_{2}^{2} = \int_{G} \vert \mathds{1}_{K}(x) \vert^2 \, dx = 1.\]  
This shows that $\Vert \hat{f} \Vert_{1} = 1$. Concerning $\Vert \hat{f} \Vert_{\infty}$ observe that due to the fact that $f\ge 0$  we have the inequality 
\[ \vert \hat{f}(\omega) \vert \le \int_{G} \vert f(x) \, \omega(x) \vert \, dx = \int_{G} f(x) \,dx = \hat{f}(0) = 1 \ \ \text{for all }  \, \omega\in \ghat.\]
This shows that $\Vert \hat{f} \Vert_{\infty} = 1$.\\
(vi). Take $V$ to be a neighbourhood around the identity in $G$ such that $V+(-V) \subseteq U$. Then let $h = \mathds{1}_{V}$ and define $f = c^{-1} \cdot h * h^{\dagger}$, where $c= \Vert h * h^{\dagger} \Vert_{1}$. Note that $\supp \, f \subseteq V + (-V) \subseteq U$. It is clear that $f(x) \ge 0$ for all  $x\in G$ and $\Vert f \Vert_{1} = \int_{G} f(x) \, dx = 1$. Furthermore, $\hat{f}(\omega) = c^{-1} \, \vert \hat{h}(\omega)\vert^{2} \ge 0$ for all $\omega\in \ghat$. Finally, by (iii) we have that $f\in \SO(G)$. \\
(vii). Given $\epsilon>0$ there exists a neighbourhood $U_{\epsilon}$ of the identity in $G$ such that $\Vert T_{s} f - f \Vert_{1} \le \epsilon$ for all $s\in U_{\epsilon}$ \cite{MR1802924}. Take now $g\in \SO(G)$ as in (vi), \ie $\supp\, g\subseteq U_{\epsilon}$ and $g\ge 0$ with $\int_{G} g(x) \, dx = 1$. We then find that
\begin{align*}
& \Vert g * f - f \Vert_{1} = \int_{G} \vert (g*f)(x) - f(x) \vert \, dx = \int_{G} \Big\vert \int_{G} g(s) f(x-s) \, ds - \int_{G} g(s) \, ds \, f(x) \Big\vert\, dx \\
& \le \int_{G} g(s) \int_{G} \vert f(x-s)-f(x) \vert \, dx \, ds = \int_{U_{\epsilon}} g(s) \, \Vert T_{s} f - f \Vert_{1} \, ds \le \epsilon.
\end{align*}
(viii). Let $f\in A(G)$. There exists a function $h_{f} \in L^{1}(\ghat)$ such that $\mathcal{F}h_{f} = f$. Furthermore, by (vii) there exists a function $h_{g} \in \SO(\ghat)$ such that $\Vert h_{g} * h_{f} - h_{f} \Vert_{1} \le \epsilon$. Define $g= \mathcal{F}h_{g}$. By Corollary~\ref{co:f-in-s0-properties} the function $g$ belongs to $\SO(G)$ and, indeed, we find the desired estimate,
\[ \Vert g \cdot f - f \Vert_{A(G)} = \Vert \mathcal{F} (h_{g} * h_{f} - h_{f}) \Vert_{A(G)} \stackrel{ \text{(def)}}{=} \Vert h_{g} * h_{f} - h_{f} \Vert_{1} \le \epsilon.\]
\end{proof}

Next to the sufficient conditions given in Lemma~\ref{le:sufficient-cond-to-be-in-S0}, we mention that Pogunkte \cite{po80-1} has shown that the Schwartz functions are contained in $\SO(\R^n)$. In \cite{Feichtinger1981} it was shown that for any locally compact abelian group $G$ the Schwartz-Bruhat space is contained in $\SO(G)$. A proof of this can also be found in the book by Reiter \cite{re89}. Furthermore, Gr\"ochenig \cite{gr96} has shown that enforcing decay conditions on a function in $L^{2}(\R^n)$ and its Fourier transform also yields sufficient conditions to belong to $\SO(\R^n)$. See also the papers by Okoudjou \cite{ok04} and Hogan and Lakey \cite{hola01}.

\subsection{$\SO$ as a normed vector space}
\label{sec:s0-normed-space}

In this subsection, among other results, we continue with further characterizations of $\SO(G)$.  In Theorem~\ref{th:SO-six-def} below we show that the Feichtinger algebra coincides with the sets $\mathscr{D},\mathscr{E},\mathscr{F},\mathscr{G},\mathscr{H}$ and $\mathscr{I}$ from Definition~\ref{def:S0-bigdef}. These characterizations allow us to define a norm on $\SO(G)$. Furthermore, in Theorem~\ref{th:s0-banach-space} and Corollary~\ref{co:S0-ideal-of-L1-AG}, we show that $\SO(G)$ is a Banach space with respect to this norm and in fact a Banach algebra under pointwise multiplication and convolution. 

Before we can continue, we need the following lemma concerning the short-time Fourier transform. This lemma is essential and appears in a similar form in, e.g., \cite{fegr92-1,MR1843717,go11}

\begin{lemma} \label{le:stft-S0-inequality} For $f_{1},f_{2},g_{1},g_{2}\in L^2(G)$ and $\nu\in G \times \ghat$
\[ \vert \langle g_{1},g_{2}\rangle\vert \cdot \vert \mathcal{V}_{f_{2}}f_{1}(\nu)  \vert \le \int_{G\times\ghat} \big\vert \mathcal{V}_{g_{1}}f_{1}(\chi) \cdot \mathcal{V}_{g_{2}}f_{2}(\chi-\nu) \big\vert \, d\chi .\]
\end{lemma}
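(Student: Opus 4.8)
The plan is to read the right-hand side as the modulus of an $L^2(G\times\ghat)$ inner product of two short-time Fourier transforms, which can then be evaluated \emph{exactly} by the bi-orthogonality relations \eqref{eq:STFT-dual}. Concretely, I would first move the phase-space translation hidden in the factor $\mathcal{V}_{g_2}f_2(\chi-\nu)$ onto the analysed function, rewriting it as a short-time Fourier transform evaluated at $\chi$. The integral on the right then becomes $\int_{G\times\ghat} |\mathcal{V}_{g_1}f_1(\chi)|\cdot|\mathcal{V}_{g_2}(\pi(\nu)f_2)(\chi)|\,d\chi$, and this dominates $|\langle \mathcal{V}_{g_1}f_1,\mathcal{V}_{g_2}(\pi(\nu)f_2)\rangle|$ by the triangle inequality for integrals.

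The first step is the covariance property: for all $\chi,\nu\in G\times\ghat$ one has $|\mathcal{V}_{g}(\pi(\nu)f)(\chi)| = |\mathcal{V}_{g}f(\chi-\nu)|$. I would derive this from the commutation relation $E_{\omega}T_{x} = \omega(x)T_{x}E_{\omega}$ recalled above: since $\pi(\nu)$ is unitary on $L^2(G)$, one has $\pi(\nu)^{*}\pi(\chi) = \alpha\,\pi(\chi-\nu)$ with a unimodular constant $\alpha=\alpha(\chi,\nu)$, whence
\[ \mathcal{V}_{g}(\pi(\nu)f)(\chi) = \langle \pi(\nu)f, \pi(\chi)g\rangle = \langle f, \pi(\nu)^{*}\pi(\chi)g\rangle = \overline{\alpha}\,\langle f, \pi(\chi-\nu)g\rangle = \overline{\alpha}\,\mathcal{V}_{g}f(\chi-\nu), \]
and $|\alpha|=1$ makes the phase disappear under the modulus.

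The second step applies \eqref{eq:STFT-dual} with the analysed function $f_2$ replaced by $\pi(\nu)f_2\in L^2(G)$, which yields the exact identity
\[ \langle \mathcal{V}_{g_1}f_1, \mathcal{V}_{g_2}(\pi(\nu)f_2)\rangle = \langle g_2, g_1\rangle\,\langle f_1, \pi(\nu)f_2\rangle = \langle g_2, g_1\rangle\,\mathcal{V}_{f_2}f_1(\nu). \]
Taking moduli and using $|\langle g_2,g_1\rangle| = |\langle g_1,g_2\rangle|$ turns the left-hand side into the very quantity to be bounded. Since \eqref{eq:STFT-dual} also guarantees $\mathcal{V}_{g_1}f_1$ and $\mathcal{V}_{g_2}(\pi(\nu)f_2)$ lie in $L^2(G\times\ghat)$, their inner product is the integral of the product, so
\[ |\langle g_1,g_2\rangle|\cdot|\mathcal{V}_{f_2}f_1(\nu)| = \Big| \int_{G\times\ghat} \mathcal{V}_{g_1}f_1(\chi)\,\overline{\mathcal{V}_{g_2}(\pi(\nu)f_2)(\chi)}\,d\chi \Big| \le \int_{G\times\ghat} |\mathcal{V}_{g_1}f_1(\chi)|\cdot|\mathcal{V}_{g_2}(\pi(\nu)f_2)(\chi)|\,d\chi, \]
and rewriting the last integrand by the covariance identity of the first step gives exactly the claimed right-hand side.

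I expect the only real bookkeeping to be in the first step: correctly extracting the unimodular cocycle $\alpha(\chi,\nu)$ from $\pi(\nu)^{*}\pi(\chi)$ and keeping the conventions straight (the inner product linear in its first slot, and $\pi(\nu)^{*}=\pi(\nu)^{-1}$ by unitarity). Because every phase factor that appears is unimodular, it vanishes under the absolute values, so no sharpness is lost and the remainder is a clean application of \eqref{eq:STFT-dual} and the triangle inequality; there is no genuine analytic obstacle here, only the need to track phases carefully.
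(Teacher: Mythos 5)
Your proposal is correct and follows essentially the same route as the paper: apply the bi-orthogonality relation \eqref{eq:STFT-dual} with $f_{2}$ replaced by $\pi(\nu)f_{2}$, bound the resulting inner product by the integral of the modulus of the product, and finish with the covariance identity $\vert \mathcal{V}_{g_{2}}(\pi(\nu)f_{2})(\chi)\vert = \vert \mathcal{V}_{g_{2}}f_{2}(\chi-\nu)\vert$. The only difference is that you spell out the unimodular cocycle behind that covariance identity, which the paper simply states as an observation.
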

\begin{proof} The orthogonality relation of the short-time Fourier transform \eqref{eq:STFT-dual} easily yields the inequality
\[ \vert \langle g_{1},g_{2}\rangle \langle f_{1},\pi(\nu) f_{2} \rangle \vert \le \int_{G\times\ghat} \big\vert \big( \mathcal{V}_{g_{1}}f_{1}\big)(\chi) \cdot \big(\mathcal{V}_{g_{2}}\pi(\nu) f_{2}\big)(\chi) \big\vert \, d\chi .\]
The observation that $\vert (\mathcal{V}_{g_{2}}\pi(\nu) f_{2})(\chi) \vert =
 \vert \mathcal{V}_{g_{2}}f_{2}(\chi-\nu) \vert$
finishes the proof.
\end{proof}

The characterization of $\SO(G)$ by the set $\mathscr{B}$ together with Corollary \ref{co:f-in-s0-properties}(v), implies that for any given $f\in \SO(G)$ there exists \emph{some} non-zero $g\in \SO(G)$ such that $\mathcal{V}_{g}f \in L^{1}(G\times\ghat)$. The inequality in Lemma~\ref{le:stft-S0-inequality} allows us to prove the following. 

\begin{proposition} \label{pr:s0-Vgf-well-defined}
If $f,g\SO(G)$, then $\mathcal{V}_{g}f\in L^{1}(G\times\ghat)$.
\end{proposition}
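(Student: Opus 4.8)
The plan is to exploit the $\mathscr{B}$-characterization $\SO(G)=\mathscr{B}$ from Theorem~\ref{th:s0-ABC} together with the change-of-window inequality in Lemma~\ref{le:stft-S0-inequality}. If either $f$ or $g$ is zero the claim is trivial, so I assume $f,g\neq 0$. Since $f,g\in\SO(G)=\mathscr{B}$, there exist non-zero windows $\gamma_{1},\gamma_{2}\in L^{2}(G)$ with $\mathcal{V}_{\gamma_{1}}f\in L^{1}(G\times\ghat)$ and $\mathcal{V}_{\gamma_{2}}g\in L^{1}(G\times\ghat)$. The idea is to apply Lemma~\ref{le:stft-S0-inequality} with $f_{1}=f$ and $f_{2}=g$, so that the left-hand side features $\mathcal{V}_{g}f$, and with $g_{1},g_{2}$ taken among $\gamma_{1},\gamma_{2}$, so that the right-hand integrand factors into the two integrable quantities $\mathcal{V}_{\gamma_{1}}f$ and $\mathcal{V}_{\gamma_{2}}g$.

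Concretely, Lemma~\ref{le:stft-S0-inequality} gives, for every $\nu\in G\times\ghat$,
\[ |\langle \gamma_{1},\gamma_{2}\rangle|\cdot|\mathcal{V}_{g}f(\nu)|\le \int_{G\times\ghat}|\mathcal{V}_{\gamma_{1}}f(\chi)\cdot\mathcal{V}_{\gamma_{2}}g(\chi-\nu)|\,d\chi. \]
Integrating in $\nu$, invoking Tonelli's theorem (the integrand is non-negative) and the translation invariance of the Haar measure on $G\times\ghat$, the right-hand side collapses to $\Vert\mathcal{V}_{\gamma_{1}}f\Vert_{1}\,\Vert\mathcal{V}_{\gamma_{2}}g\Vert_{1}<\infty$. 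Hence, \emph{provided} $\langle\gamma_{1},\gamma_{2}\rangle\neq 0$, one concludes $\Vert\mathcal{V}_{g}f\Vert_{1}<\infty$, that is, $\mathcal{V}_{g}f\in L^{1}(G\times\ghat)$.

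The only obstacle is that the windows supplied by $\mathscr{B}$-membership may happen to be orthogonal, making the factor $\langle\gamma_{1},\gamma_{2}\rangle$ vanish; this is exactly the non-degeneracy point I expect to require care, and it is resolved by the completeness of time-frequency shifts. I would replace $\gamma_{2}$ by a suitable shift $\pi(\nu_{0})\gamma_{2}$. Such a replacement costs nothing in integrability: from the commutation relation $\pi(\chi)\pi(\nu_{0})=\overline{\omega_{0}(x)}\,\pi(\chi+\nu_{0})$ (for $\chi=(x,\omega)$, $\nu_{0}=(x_{0},\omega_{0})$) one gets $|\mathcal{V}_{\pi(\nu_{0})\gamma_{2}}g(\chi)|=|\mathcal{V}_{\gamma_{2}}g(\chi+\nu_{0})|$, so translation invariance yields $\Vert\mathcal{V}_{\pi(\nu_{0})\gamma_{2}}g\Vert_{1}=\Vert\mathcal{V}_{\gamma_{2}}g\Vert_{1}<\infty$. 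Moreover a good $\nu_{0}$ exists: by the orthogonality relation \eqref{eq:STFT-dual}, $\Vert\mathcal{V}_{\gamma_{2}}\gamma_{1}\Vert_{2}^{2}=\Vert\gamma_{1}\Vert_{2}^{2}\,\Vert\gamma_{2}\Vert_{2}^{2}>0$, so $\mathcal{V}_{\gamma_{2}}\gamma_{1}$ is not identically zero and there is $\nu_{0}$ with $\langle\gamma_{1},\pi(\nu_{0})\gamma_{2}\rangle=\mathcal{V}_{\gamma_{2}}\gamma_{1}(\nu_{0})\neq 0$. Applying the displayed estimate with $g_{1}=\gamma_{1}$ and $g_{2}=\pi(\nu_{0})\gamma_{2}$ then gives $|\langle\gamma_{1},\pi(\nu_{0})\gamma_{2}\rangle|\cdot\Vert\mathcal{V}_{g}f\Vert_{1}\le\Vert\mathcal{V}_{\gamma_{1}}f\Vert_{1}\,\Vert\mathcal{V}_{\gamma_{2}}g\Vert_{1}<\infty$, and dividing by the non-zero scalar finishes the proof.
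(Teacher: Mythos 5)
Your proof is correct, and it rests on the same engine as the paper's: Lemma~\ref{le:stft-S0-inequality}, followed by integration in $\nu$ and Tonelli together with translation invariance of the Haar measure. The organization differs, though. The paper proceeds in two stages: it first proves the special case $\mathcal{V}_{f}f\in L^{1}(G\times\ghat)$ (applying the lemma with $f_{1}=f_{2}=f$ and $g_{1}=g_{2}=h$, where $h$ is the window furnished by the $\mathscr{B}$-characterization), and then deduces the general case from $\vert\langle f,g\rangle\vert\,\Vert\mathcal{V}_{g}f\Vert_{1}\le\Vert\mathcal{V}_{f}f\Vert_{1}\,\Vert\mathcal{V}_{g}g\Vert_{1}$; you instead go in a single step by putting the two auxiliary windows $\gamma_{1},\gamma_{2}$ into the lemma directly. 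The genuine divergence is in the degenerate case where the inner-product prefactor vanishes: the paper switches to an auxiliary $h\in\SO(G)$ with $\langle h,f\rangle\ne 0$ and $\langle h,g\rangle\ne 0$ but does not justify that such an $h$ exists (one can take $h=f+g$ when $\langle f,g\rangle=0$), whereas you replace $\gamma_{2}$ by $\pi(\nu_{0})\gamma_{2}$ and \emph{prove} that a suitable $\nu_{0}$ exists from the orthogonality relation \eqref{eq:STFT-dual}, after checking that this replacement preserves $\Vert\mathcal{V}_{\gamma_{2}}g\Vert_{1}$ via the modulus identity $\vert\mathcal{V}_{\pi(\nu_{0})\gamma_{2}}g(\chi)\vert=\vert\mathcal{V}_{\gamma_{2}}g(\chi+\nu_{0})\vert$. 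Both of those intermediate claims check out, and your resolution of the orthogonal case is the more self-contained of the two. The only thing the paper's two-stage route records along the way is the explicit bound $\Vert\mathcal{V}_{f}f\Vert_{1}\le\Vert h\Vert_{2}^{-2}\Vert\mathcal{V}_{h}f\Vert_{1}^{2}$, but nothing downstream is lost by your version, since $\mathcal{V}_{f}f\in L^{1}$ follows from the proposition itself with $g=f$.
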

\begin{proof} Let us first show that if  $f\in \SO(G)$, then $\Vert \mathcal{V}_{f}f\Vert_{1}< \infty$. If $f\in \SO(G)$, then by 
Theorem~\ref{th:s0-ABC} there exists a function $h\in L^{2}(G)\backslash\{0\}$ such that
$ \Vert \mathcal{V}_{h}f \Vert_{1} < \infty$. By use of Lemma~\ref{le:stft-S0-inequality} we find that
\[ \Vert h \Vert_{2}^{2} \, \vert \mathcal{V}_{f} f(\nu) \vert \le \int_{G\times\ghat} \vert \mathcal{V}_{h} f(\chi) \vert \, \vert \mathcal{V}_{h}f(\chi-\nu) \vert \, d\chi \ \ \text{for all} \ \ \nu\in G\times\ghat.\]
Integrating over $\nu\in G\times\ghat$ and using Fubini yields the inequality
$ \Vert \mathcal{V}_{f}f \Vert_{1}\le \Vert h \Vert_{2}^{-2} \Vert \mathcal{V}_{h}f\Vert_{1}^{2}$.
Hence $f\in \SO(G)$ implies that $\mathcal{V}_{f}f\in L^{1}(G\times\ghat)$. Assume now that $f,g\in \SO(G)$. By a similar calculation as above, we then find that
\[ \vert \langle f,g\rangle \vert \, \Vert \mathcal{V}_{g}f \Vert_{1} \le \Vert \mathcal{V}_{f} f \Vert_{1} \, \Vert \mathcal{V}_{g}g \Vert_{1} < \infty.\]
Hence if $\langle f,g\rangle \ne 0$ then $\Vert \mathcal{V}_{g}f \Vert_{1} < \infty$.
If $\langle f,g\rangle = 0$ then for any $h\in \SO(G)$ such that $\langle h, f\rangle \ne 0$ and $\langle h, g\rangle \ne 0$ one has that
\[ \Vert \mathcal{V}_{g}f \Vert_{1} \le \big( \, \Vert h \Vert_{2}^{2} \, \vert \langle h,f\rangle \langle h,g\rangle \vert \, \big)^{-1} \Vert \mathcal{V}_{h}h\Vert_{1}^{2} \, \Vert \mathcal{V}_{f}f\Vert_{1} \, \Vert \mathcal{V}_{g}g\Vert_{1} < \infty.\]\end{proof}

The result of Proposition~\ref{pr:s0-Vgf-well-defined} together with 
Corollary~\ref{co:f-in-s0-properties}(x) allows us to show an uncertainty result concerning the time-frequency concentration of the short-time Fourier transform $\mathcal
V_{g}f$ of functions $f,g\in \SO(G)$. In particular, 
Proposition~\ref{pr:uncertain} shows that there is a lower bound to the area in the time-frequency plane which the short-time Fourier transform can occupy. For $\R^d$ a sharper lower bound is possible, see \cite{li90-1,gr98,gr03-2}. For further results on uncertainty principles see, \eg the book by Hogan and Lakey \cite{hola05}.
\begin{proposition} \label{pr:uncertain} Let $f,g\in \SO(G)\backslash\{0\}$. If a subset $\mathcal{U}\subseteq G\times\ghat$ is given such that
\[ (1-\epsilon) \, \Vert \mathcal{V}_{g}f \Vert_{1} \le \textstyle\int_{\mathcal{U}} \vert \mathcal{V}_{g}f(\chi) \vert \, d\mu_{G\times\ghat}(\chi),\]
for some $\epsilon > 0$, then $\mu_{G\times\ghat} (\mathcal{U}) \ge (1-\epsilon)$.
\end{proposition}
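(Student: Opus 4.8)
The plan is to combine the pointwise bound $\Vert \mathcal{V}_{g}f \Vert_{\infty} \le \Vert \mathcal{V}_{g}f \Vert_{1}$ supplied by Corollary~\ref{co:f-in-s0-properties}(x) with the elementary estimate of an integral over $\mathcal{U}$ by the measure of $\mathcal{U}$ times the supremum of the integrand. This turns the hypothesis into an inequality between $\mu_{G\times\ghat}(\mathcal{U})$ and a ratio of two norms of $\mathcal{V}_{g}f$ that cancel.

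First I would record that, since $f,g\in\SO(G)$, Proposition~\ref{pr:s0-Vgf-well-defined} guarantees $\mathcal{V}_{g}f\in L^{1}(G\times\ghat)$, so $\Vert \mathcal{V}_{g}f \Vert_{1}$ is finite; moreover it is strictly positive, because the orthogonality relation \eqref{eq:STFT-dual} gives $\Vert \mathcal{V}_{g}f \Vert_{2}^{2} = \Vert f \Vert_{2}^{2}\,\Vert g \Vert_{2}^{2} > 0$ when $f,g\ne 0$, so that $\mathcal{V}_{g}f$ is not the zero function. Next I would bound the right-hand side of the standing hypothesis by
\[ \int_{\mathcal{U}} \vert \mathcal{V}_{g}f(\chi) \vert \, d\mu_{G\times\ghat}(\chi) \le \Vert \mathcal{V}_{g}f \Vert_{\infty} \, \mu_{G\times\ghat}(\mathcal{U}) \le \Vert \mathcal{V}_{g}f \Vert_{1} \, \mu_{G\times\ghat}(\mathcal{U}), \]
where the second step is Corollary~\ref{co:f-in-s0-properties}(x) with $p=\infty$. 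Chaining this with the assumed lower bound yields $(1-\epsilon)\,\Vert \mathcal{V}_{g}f \Vert_{1} \le \Vert \mathcal{V}_{g}f \Vert_{1}\,\mu_{G\times\ghat}(\mathcal{U})$, and dividing through by the positive number $\Vert \mathcal{V}_{g}f \Vert_{1}$ gives $\mu_{G\times\ghat}(\mathcal{U}) \ge 1-\epsilon$, as desired.

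There is essentially no obstacle here beyond bookkeeping: the only point demanding care is the verification that $\Vert \mathcal{V}_{g}f \Vert_{1}>0$, so that the final division is legitimate, and this is precisely where the hypothesis $f,g\ne 0$ enters, through the orthogonality relation. The mild conceptual content is the comparability $\Vert \mathcal{V}_{g}f \Vert_{\infty}\le\Vert \mathcal{V}_{g}f \Vert_{1}$ from Corollary~\ref{co:f-in-s0-properties}(x); it is exactly this one-sided comparison of the $L^{1}$ and $L^{\infty}$ norms that makes the dimensionless conclusion $\mu_{G\times\ghat}(\mathcal{U}) \ge 1-\epsilon$ emerge cleanly, with no normalization of $f$ or $g$ required.
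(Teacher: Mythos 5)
Your argument is exactly the paper's: Hölder over $\mathcal{U}$, then the comparison $\Vert \mathcal{V}_{g}f \Vert_{\infty} \le \Vert \mathcal{V}_{g}f \Vert_{1}$ from Corollary~\ref{co:f-in-s0-properties}(x), then division by $\Vert \mathcal{V}_{g}f \Vert_{1}$. Your explicit check that $\Vert \mathcal{V}_{g}f \Vert_{1}>0$ via the orthogonality relation is a small piece of care the paper leaves implicit, but otherwise the two proofs coincide.
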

\begin{proof}
By assumption and the H\"older inequality, we can estimate that
\[ (1-\epsilon) \, \Vert \mathcal{V}_{g}f \Vert_{1} \le 
\int_{\mathcal{U}} \vert \mathcal{V}_{g}f(\chi) \vert \, d\mu_{G\times\ghat}(\chi) \le \mu_{G\times\ghat}(\mathcal{U}) \, \Vert \mathcal{V}_{g} f \Vert_{\infty} \stackrel{\textnormal{Cor.\ref{co:f-in-s0-properties}(x)} }{\le} \mu_{G\times\ghat}(\mathcal{U}) \, \Vert \mathcal{V}_{g} f \Vert_{1}. \]
\end{proof}

A combination of Theorem~\ref{th:s0-ABC} and Proposition~\ref{pr:s0-Vgf-well-defined} allows us to show that $\SO(G)$ coincides with the sets $\mathscr{D,E,F,G,H}$ and $\mathscr{I}$ from Definition~\ref{def:S0-bigdef}.
In particular, either of the characterization of $\SO(G)$ via the set $\mathscr{G}$, $\mathscr{H}$ or $\mathscr{I}$ implies that the Feichtinger algebra is a linear vector space. The set $\mathscr{H}$ is often taken as the definition of $\SO(G)$. However, this requires prior knowledge of a function $g\in \SO(G)$. If $G=\R^n$ this initial function $g$ is usually taken to be the Gaussian $g(x) = e^{-\pi x\cdot x}$. 
\begin{theorem} \label{th:SO-six-def} For any locally compact abelian group $G$ it holds that $\SO(G) = \mathscr{D} = \mathscr{E} = \mathscr{F} = \mathscr{G} = \mathscr{H} = \mathscr{I}$.
\end{theorem}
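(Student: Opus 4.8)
The plan is to prove all six equalities at once by showing that each of $\mathscr{D},\mathscr{E},\mathscr{F},\mathscr{G},\mathscr{H},\mathscr{I}$ is both contained in and contains $\SO(G)$, using Theorem~\ref{th:s0-ABC} and Proposition~\ref{pr:s0-Vgf-well-defined} as the two engines. The structural observation organizing everything is that $\mathscr{D},\mathscr{G}$ are mere subsets of $\mathscr{A}$, that $\mathscr{E},\mathscr{H}$ are subsets of $\mathscr{B}$, and that $\mathscr{F},\mathscr{I}$ are subsets of $\mathscr{C}$: the only difference is that the existential quantifier ``$\exists\,g$'' appearing in $\mathscr{A},\mathscr{B},\mathscr{C}$ is replaced by a fixed test function, namely $f$ itself in the cases $\mathscr{D},\mathscr{E},\mathscr{F}$ and the prescribed $g\in\SO(G)\setminus\{0\}$ in the cases $\mathscr{G},\mathscr{H},\mathscr{I}$.

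First I would dispose of the inclusions $\mathscr{D},\mathscr{E},\mathscr{F},\mathscr{G},\mathscr{H},\mathscr{I}\subseteq\SO(G)$. For $\mathscr{G},\mathscr{H},\mathscr{I}$ this is immediate: since $\SO(G)\subseteq L^{1}(G)\cap L^{2}(G)\cap A(G)$ by Corollary~\ref{co:f-in-s0-properties}(iii) (the $L^{2}$-membership being built into the description $\SO(G)=\mathscr{B}$), the prescribed nonzero $g$ is an admissible witness, so $\mathscr{G}\subseteq\mathscr{A}$, $\mathscr{H}\subseteq\mathscr{B}$ and $\mathscr{I}\subseteq\mathscr{C}$, each of which equals $\SO(G)$ by Theorem~\ref{th:s0-ABC}. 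For $\mathscr{D},\mathscr{E},\mathscr{F}$ the same argument applies with $f$ in the role of the witness; the only point to check is the non-vanishing requirement, which is harmless because $f=0$ already lies in $\SO(G)$ while for $f\neq0$ the function $f$ itself is a legitimate nonzero witness.

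The substance is the reverse inclusion, and here Proposition~\ref{pr:s0-Vgf-well-defined} does all the work: for any $f$ and any test function both in $\SO(G)$, the short-time Fourier transform is integrable. Concretely, for the fixed $g\in\SO(G)\setminus\{0\}$ and arbitrary $f\in\SO(G)$, Proposition~\ref{pr:s0-Vgf-well-defined} gives $\mathcal{V}_{g}f\in L^{1}(G\times\ghat)$, which is exactly $f\in\mathscr{H}$; taking $g=f$ gives $f\in\mathscr{E}$. To reach $\mathscr{G}$ and $\mathscr{I}$ I would feed identity~\eqref{eq:0703c} a suitably reflected test function: since $f\in\SO(G)\subseteq L^{1}\cap L^{2}\cap A$ and, by Corollary~\ref{co:f-in-s0-properties}(iv), $g^{\dagger},g^{r},\overline{g}\in\SO(G)$, we obtain
\[ \int_{\ghat}\Vert E_{\omega}f*g\Vert_{1}\,d\omega=\int_{G\times\ghat}\vert\mathcal{V}_{g^{\dagger}}f(\chi)\vert\,d\chi \quad\text{and}\quad \int_{G}\Vert T_{x}f\cdot g\Vert_{A(G)}\,dx=\int_{G\times\ghat}\vert\mathcal{V}_{\overline{g}}f(\chi)\vert\,d\chi, \]
where the second identity comes from replacing $g$ by $g^{r}$ in \eqref{eq:0703c} and using $(g^{r})^{\dagger}=\overline{g}$ together with $(g^{r})^{r}=g$. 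Both right-hand sides are finite by Proposition~\ref{pr:s0-Vgf-well-defined} applied to $g^{\dagger},\overline{g}\in\SO(G)$, so $f\in\mathscr{G}$ and $f\in\mathscr{I}$. The memberships $f\in\mathscr{D}$ and $f\in\mathscr{F}$ follow in exactly the same manner with $f$ in the role of the test function, invoking $f^{\dagger},\overline{f}\in\SO(G)$.

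The genuine analytic difficulty has already been absorbed into Proposition~\ref{pr:s0-Vgf-well-defined}, so what remains is bookkeeping. The one step requiring care is the translation between the three integral expressions in \eqref{eq:0703c}: one must reflect or conjugate the test function appropriately (replacing $g$ by $g^{\dagger}$, $g^{r}$ or $\overline{g}$) so that the target integrand is $\mathcal{V}_{g}f$, $E_{\omega}f*g$, or $T_{x}f\cdot g$ with exactly the prescribed $g$, and then confirm that the reflected function still lies in $\SO(G)$ so that Proposition~\ref{pr:s0-Vgf-well-defined} applies. I expect this involution bookkeeping, rather than any new estimate, to be the only place where an error could plausibly creep in.
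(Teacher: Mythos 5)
Your proposal is correct and follows essentially the same route as the paper's proof: the forward inclusions come from Theorem~\ref{th:s0-ABC} by reading $\mathscr{D},\mathscr{G}\subseteq\mathscr{A}$, $\mathscr{E},\mathscr{H}\subseteq\mathscr{B}$, $\mathscr{F},\mathscr{I}\subseteq\mathscr{C}$, and the reverse inclusions come from Proposition~\ref{pr:s0-Vgf-well-defined} combined with the identity \eqref{eq:0703c} applied to the windows $f^{\dagger},f,\overline{f},g^{\dagger},g,\overline{g}$, exactly as in the paper. Your involution bookkeeping (in particular $(g^{r})^{\dagger}=\overline{g}$ for the set $\mathscr{I}$) is carried out correctly and is, if anything, spelled out more explicitly than in the published argument.
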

\begin{proof} If $f$ belongs to any of the sets  $\mathscr{D}, \mathscr{E}, \mathscr{F}, \mathscr{G} , \mathscr{H}$ or $\mathscr{I}$, then the characterizations of $\SO(G)$ in Theorem~\ref{th:s0-ABC} imply that $f\in \SO(G)$. Conversely, if $f,g\in \SO(G)$, then Corollary~\ref{co:f-in-s0-properties} implies that $\overline{f},f^{\dagger},\overline{g},g^{\dagger}\in \SO(G)$. Now Proposition~\ref{pr:s0-Vgf-well-defined} implies that the function $\mathcal{V}_{f^{\dagger}}f$ belongs to $L^{1}(G\times \ghat)$. Recall that we in Theorem \ref{th:s0-ABC} showed that
\begin{align}
 \int_{\ghat} \Vert E_{\omega} f * g \Vert_{1} \, d\omega = \int_{G\times\ghat} \vert \mathcal{V}_{g^{\dagger}}f(\chi)  \vert \, d\chi = \int_{G} \Vert T_{x} f \cdot g^{r} \Vert_{A(G)} \, dx\label{eq:0703ccc}
\end{align} for all $f,g\in L^{1}(G)\cap L^{2}(G) \cap A(G)$, thus in particular for $f,g\in\SO(G)$. By equation \eqref{eq:0703ccc} the function $\mathcal{V}_{f^{\dagger}}f$ belongs to $L^{1}(G\times \ghat)$ if and only if $\int_{\ghat} \Vert E_{\omega} f * f \Vert_{1} \, d\omega < \infty$, hence $\SO(G)\subseteq \mathscr{D}$. We have thus shown that $\SO(G) = \mathscr{D}$. In a similar way, Proposition~\ref{pr:s0-Vgf-well-defined} implies that $\mathcal{V}_{f}f $, $\mathcal{V}_{\overline{f}}f$, $\mathcal{V}_{g^{\dagger}}f$, $\mathcal{V}_{g}f$ and $\mathcal{V}_{\overline{g}}f$ belong to $L^{1}(G\times \ghat)$. This together with the relations in \eqref{eq:0703ccc} implies that $\SO(G)$ is contained in $\mathscr{E},\mathscr{F},\mathscr{G},\mathscr{H}$ and $\mathscr{I}$, respectively. Hence $\SO(G)$ coincides with all the sets $\mathscr{D}$-$\mathscr{I}$.\end{proof}

\begin{remark} In the definition of the sets $\mathscr{G},\mathscr{H}$ and $\mathscr{I}$ it is important that $g\in \SO(G)$. In fact, if $g\notin \SO(G)$, then the sets $\mathscr{G},\mathscr{H}$ and $\mathscr{I}$ only contain the zero function on $G$.
\end{remark}

We are now in the position to define a norm on $\SO(G)$.
\begin{definition} \label{def:s0-norm} For a fixed function $g\in \SO(G)\backslash \{0\}$ we define the $\SO$-norm with respect to $g$ by
\[ \Vert \cdot \Vert_{\SO(G),g}: \SO(G) \to \R^{+}_{0} , \ \ \Vert f \Vert_{\SO(G),g} = \Vert \mathcal{V}_{g}f\Vert_{1}.\]
\end{definition}
By Proposition~\ref{pr:s0-Vgf-well-defined} the mapping $\Vert \cdot \Vert_{\SO(G),g}:\SO(G)\to \R_{0}^{+}$ is well-defined. We leave it to the reader to verify that $\Vert \cdot \Vert_{\SO(G),g}$ satisfies the norm axioms.
To ease notation we may write $\Vert \cdot \Vert_{\SO,g}$ or $\Vert \cdot \Vert_{\SO}$ instead of $\Vert \cdot \Vert_{\SO(G),g}$.
From the definition of the norm $\Vert \cdot \Vert_{\SO,g}$ one can easily verify the following relations:
\begin{equation} \label{eq:2711a} \Vert f \Vert_{\SO,g} = \Vert g \Vert_{\SO,f} = \Vert \overline{f} \Vert_{\SO,\overline{g}} = \Vert f^{\dagger} \Vert_{\SO,g^{\dagger}} = \Vert \hat{f} \Vert_{\SO(\ghat),\hat{g}} \ \text{ for all } f\in \SO(G).\end{equation}
Moreover, time-frequency shifts leave the norm invariant, i.e, 
\begin{equation} \label{eq:0812a} \Vert \pi(\nu_{1}) f \Vert_{\SO, \pi(\nu_{2})g} = \Vert f \Vert_{\SO,g}  \  \text{ for all } \nu_{1},\nu_{2}\in G\times\ghat \text{ and } f\in \SO(G).\end{equation}
Also, for all $f\in \SO(G)$ one can show that
\begin{align}
  \Vert f \Vert_{\SO,g} & = \textstyle\int_{\ghat} \Vert f * E_{\omega} g^{\dagger} \Vert_{1} \, d\omega = \textstyle\int_{\ghat} \Vert E_{\omega} f * g^{\dagger} \Vert_{1} \, d\omega = \textstyle\int_{\ghat} \Vert E_{\omega} f^{\dagger} * g \Vert_{1} \, d\omega \nonumber \\
 & = \textstyle\int_{G} \Vert f \cdot T_{x} \overline{g} \Vert_{A(G)} \, dx = \textstyle\int_{G} \Vert T_{x}f \cdot \overline{g} \Vert_{A(G)} = \textstyle\int_{G} \Vert T_{x}\overline{f} \cdot g \Vert_{A(G)}.\label{eq:s0-norm-with-modulation-space-and-wiener-amalgam}
\end{align}

Even though the norm $\Vert \cdot \Vert_{\SO,g}$ depends on the function $g\in \SO(G)\backslash\{0\}$, it turns out that all norms constructed as in Definition~\ref{def:s0-norm} are equivalent. 

\begin{proposition}\label{pr:equivalent-norm-on-s0} Let $g_{1},g_{2}\in \SO(G)\backslash\{0\}$. The norms $\Vert \cdot \Vert_{\SO,g_{1}}$ and $\Vert \cdot \Vert_{\SO,g_{2}}$ on $\SO(G)$ are equivalent.
To be precise, for all $f\in \SO(G)$ one has the inequalities 
\[ c \, \Vert f \Vert_{\SO,g_{2}} \le \Vert f \Vert_{\SO,g_{1}} \le C \, \Vert f \Vert_{\SO,g_{2}}, \]
with $c = \Vert g_{1} \Vert_{2}^{2} \, \Vert g_{2} \Vert_{\SO,g_{1}}^{-1}$ and $ C = \Vert g_{2}\Vert_{2}^{-2} \, \Vert g_{1} \Vert_{\SO,g_{2}}$.
\end{proposition}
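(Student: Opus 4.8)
The plan is to extract everything from the pointwise inequality in Lemma~\ref{le:stft-S0-inequality}, which is precisely designed to compare short-time Fourier transforms taken with different windows. Recall that $\Vert f \Vert_{\SO,g_i} = \Vert \mathcal{V}_{g_i} f \Vert_1$, so the claim is exactly that $\Vert \mathcal{V}_{g_1} f \Vert_1$ and $\Vert \mathcal{V}_{g_2} f \Vert_1$ are comparable. First I would apply Lemma~\ref{le:stft-S0-inequality} with the four $L^2$-arguments chosen so that the left-hand transform is the one I want to bound and the right-hand transforms involve the other window: taking $f_1 = f$, $f_2 = g_2$, and both window slots equal to $g_1$ (so that the inner product on the left becomes $\langle g_1,g_1\rangle = \Vert g_1 \Vert_2^2 \neq 0$), I obtain for every $\nu \in G\times\ghat$ the estimate
\[ \Vert g_1 \Vert_2^2 \, \vert \mathcal{V}_{g_2} f(\nu) \vert \le \int_{G\times\ghat} \vert \mathcal{V}_{g_1} f(\chi) \vert \, \vert \mathcal{V}_{g_1} g_2(\chi-\nu) \vert \, d\chi. \]

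Next I would integrate this inequality over $\nu \in G\times\ghat$. Since $f, g_1, g_2 \in \SO(G)$, Proposition~\ref{pr:s0-Vgf-well-defined} guarantees that $\mathcal{V}_{g_1} f$ and $\mathcal{V}_{g_1} g_2$ lie in $L^1(G\times\ghat)$, so the double integral is finite and Fubini's theorem applies. Interchanging the order of integration and using the translation invariance of the Haar measure on $G\times\ghat$ lets the right-hand side factorise as $\Vert \mathcal{V}_{g_1} f \Vert_1 \cdot \Vert \mathcal{V}_{g_1} g_2 \Vert_1$. Rewriting the two transform norms as $\SO$-norms gives
\[ \Vert g_1 \Vert_2^2 \, \Vert f \Vert_{\SO,g_2} \le \Vert f \Vert_{\SO,g_1} \, \Vert g_2 \Vert_{\SO,g_1}, \]
and since $g_2 \neq 0$ forces $0 < \Vert g_2 \Vert_{\SO,g_1} < \infty$, dividing through yields the lower bound $c \, \Vert f \Vert_{\SO,g_2} \le \Vert f \Vert_{\SO,g_1}$ with $c = \Vert g_1 \Vert_2^2 \, \Vert g_2 \Vert_{\SO,g_1}^{-1}$, exactly as stated.

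The upper bound then requires no new idea: interchanging the roles of $g_1$ and $g_2$ in the argument above produces $\Vert g_2 \Vert_2^2 \, \Vert f \Vert_{\SO,g_1} \le \Vert f \Vert_{\SO,g_2} \, \Vert g_1 \Vert_{\SO,g_2}$, which rearranges to $\Vert f \Vert_{\SO,g_1} \le C \, \Vert f \Vert_{\SO,g_2}$ with $C = \Vert g_2 \Vert_2^{-2} \, \Vert g_1 \Vert_{\SO,g_2}$. The computation is genuinely short; the only points demanding care are the bookkeeping of which function occupies the window slot versus the transform slot in Lemma~\ref{le:stft-S0-inequality}, and the justification of the Fubini interchange, which is where the finiteness supplied by Proposition~\ref{pr:s0-Vgf-well-defined} is essential. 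I do not anticipate any serious obstacle beyond keeping these substitutions straight so that the constants $c$ and $C$ come out in the asserted form.
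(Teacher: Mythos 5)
Your proof is correct and follows essentially the same route as the paper: both apply Lemma~\ref{le:stft-S0-inequality} with the two window slots set to the same function so that the inner product becomes $\Vert g_i\Vert_2^2$, integrate over $\nu$ using Fubini and translation invariance, and then swap the roles of $g_1$ and $g_2$ for the other inequality. The only difference is the order in which the two bounds are derived, and your explicit appeal to Proposition~\ref{pr:s0-Vgf-well-defined} to justify the Fubini step is a welcome (if routine) addition the paper leaves implicit.
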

\begin{proof} By use of Lemma~\ref{le:stft-S0-inequality} we find that
\[ 
\Vert g_{2} \Vert_{2}^{2} \, \vert \mathcal{V}_{g_{1}}f(\nu)  \vert \le \textstyle\int_{G\times\ghat} \big\vert  \mathcal{V}_{g_{2}}f(\chi) \cdot \mathcal{V}_{g_{2}}g_{1}(\chi-\nu) \big\vert \, d\chi.  \]
An integration over $\nu\in G\times\ghat$ yields the inequality $\Vert g_{2} \Vert_{2}^{2} \, \Vert f \Vert_{\SO,g_{1}} \le \Vert g_{1} \Vert_{\SO,g_{2}} \, \Vert f \Vert_{\SO,g_{2}}$.
Interchanging the role of $g_{1}$ and $g_{2}$ yields the relation $\Vert g_{1} \Vert_{2}^{2} \, \Vert f \Vert_{\SO,g_{2}} \le \Vert g_{2} \Vert_{\SO,g_{1}} \, \Vert f \Vert_{\SO,g_{1}}$.
A combination of these two inequalities gives the desired result.
\end{proof}

By Corollary \ref{co:f-in-s0-properties}(iii) we have the inclusion $\SO(G)\subseteq L^{1}(G)\cap A(G)$. If $G$ is discrete or compact, then it is easy to show that equality holds.
\begin{lemma} \label{le:s0-discrete-compact} $ \ $
\begin{enumerate}
\item[(i)] Let $G$ be a discrete abelian group equipped with the counting measure. If $g=\delta_{0}$, then $\Vert f \Vert_{\SO,g} = \Vert f \Vert_{1}$. Hence $\SO(G) = \ell^{1}(G) \ (=\ell^{1}(G)\cap A(G))$. 
\item[(ii)] Let $G$ be a compact abelian group equipped with its normalized Haar measure. If $g = \mathds{1}_{G}$, then $\Vert f \Vert_{\SO,g} = \Vert f \Vert_{A(G)}$. Hence $\SO(G) = A(G) \ (=L^{1}(G)\cap A(G))$.
\end{enumerate}
\end{lemma}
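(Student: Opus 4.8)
The plan is to read off the two identities from the equivalent expressions for the $\SO$-norm collected in \eqref{eq:s0-norm-with-modulation-space-and-wiener-amalgam}, choosing in each case the form best adapted to the special generator $g$. In both parts the first task is to verify that the proposed $g$ actually lies in $\SO(G)\setminus\{0\}$, so that Definition~\ref{def:s0-norm} and the characterizations of Theorem~\ref{th:SO-six-def} are available; this will be immediate from Lemma~\ref{le:sufficient-cond-to-be-in-S0}. Once the norm identity $\Vert f\Vert_{\SO,g}=\Vert f\Vert_{1}$ (respectively $\Vert f\Vert_{A(G)}$) is established for every $f$ in the ambient space, the set equality $\SO(G)=\ell^{1}(G)$ (respectively $A(G)$) follows by inspecting the defining set $\mathscr{G}$ (respectively $\mathscr{I}$) from Definition~\ref{def:S0-bigdef}.

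For (i), I would first record that for discrete $G$ with counting measure the dual $\ghat$ is compact and, under the duality normalization, has total mass one: since $\widehat{\delta_{0}}\equiv 1$, Fourier inversion at the identity forces $\mu_{\ghat}(\ghat)=1$. As $\delta_{0}\in C_{c}(G)$ with $\widehat{\delta_{0}}=1\in L^{1}(\ghat)$, Lemma~\ref{le:sufficient-cond-to-be-in-S0}(i) gives $\delta_{0}\in\SO(G)$. Because $\delta_{0}^{\dagger}=\delta_{0}$ and $h*\delta_{0}=h$ for all $h$, the convolution form of the norm yields $\Vert f\Vert_{\SO,\delta_{0}}=\int_{\ghat}\Vert E_{\omega}f*\delta_{0}\Vert_{1}\,d\omega=\int_{\ghat}\Vert f\Vert_{1}\,d\omega=\Vert f\Vert_{1}$, using that $E_{\omega}$ is an $L^{1}$-isometry and $\mu_{\ghat}(\ghat)=1$. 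Reading this back into $\mathscr{G}$ gives $\SO(G)=\mathscr{G}=\ell^{1}(G)$.

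For (ii), the symmetric observation is that for compact $G$ with normalized Haar measure the dual $\ghat$ is discrete with counting measure, and $\widehat{\mathds{1}_{G}}=\delta_{0}$ has compact support in $\ghat$, so $\mathds{1}_{G}\in\SO(G)$ by Lemma~\ref{le:sufficient-cond-to-be-in-S0}(ii). Taking $g=\mathds{1}_{G}$, so that $\overline{g}=\mathds{1}_{G}$, in the Fourier-algebra form of the norm and using that $T_{x}$ is an isometry on $A(G)$ together with $\mu_{G}(G)=1$, I get $\Vert f\Vert_{\SO,\mathds{1}_{G}}=\int_{G}\Vert T_{x}f\cdot\mathds{1}_{G}\Vert_{A(G)}\,dx=\int_{G}\Vert f\Vert_{A(G)}\,dx=\Vert f\Vert_{A(G)}$, whence $\SO(G)=\mathscr{I}=A(G)$.

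The two parenthetical equalities require no separate work: combining the inclusion $\SO(G)\subseteq L^{1}(G)\cap A(G)$ from Corollary~\ref{co:f-in-s0-properties}(iii) with the equalities just proved gives $\ell^{1}(G)=\SO(G)\subseteq \ell^{1}(G)\cap A(G)$ in the discrete case and $A(G)=\SO(G)\subseteq L^{1}(G)\cap A(G)$ in the compact case, forcing equality. The only points demanding genuine care are the dual-measure normalizations $\mu_{\ghat}(\ghat)=1$ and $\mu_{G}(G)=1$; once these are in place everything is a direct substitution into \eqref{eq:s0-norm-with-modulation-space-and-wiener-amalgam}, so I expect no real obstacle.
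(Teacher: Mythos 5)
Your proof is correct. The paper itself gives no proof of this lemma (it is prefaced only by ``it is easy to show that equality holds''), and your argument supplies exactly the intended details: you verify that $\delta_{0}$ and $\mathds{1}_{G}$ lie in $\SO(G)$ via Lemma~\ref{le:sufficient-cond-to-be-in-S0}, pin down the dual-measure normalizations $\mu_{\ghat}(\ghat)=1$ and $\mu_{G}(G)=1$ by Fourier inversion (the one point that genuinely needs checking), and then read the norm identities off \eqref{eq:s0-norm-with-modulation-space-and-wiener-amalgam}, using $\delta_{0}^{\dagger}=\delta_{0}$ and $\overline{\mathds{1}_{G}}=\mathds{1}_{G}$ so that the sets $\mathscr{G}$ and $\mathscr{I}$ collapse to $\ell^{1}(G)$ and $A(G)$ respectively.
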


Let us now show that $\SO(G)$ is a Banach space. The proof of 
Theorem~\ref{th:s0-banach-space} is an adaptation of a proof for the same result for Wiener amalgam spaces given by Heil in \cite{he03}.

\begin{theorem} \label{th:s0-banach-space}
For any $g\in \SO(G)\backslash\{0\}$ the vector space $\SO(G)$ is complete with respect to the norm $\Vert \cdot \Vert_{\SO,g}$. 
\end{theorem}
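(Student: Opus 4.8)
The plan is to exploit the fact that, for the fixed window $g$, the map $f \mapsto \mathcal{V}_{g}f$ is by Definition~\ref{def:s0-norm} a \emph{linear isometry} of $(\SO(G),\Vert\cdot\Vert_{\SO,g})$ into $L^{1}(G\times\ghat)$, and to transfer completeness from the (complete) target space $L^{1}(G\times\ghat)$ back to $\SO(G)$. Let $(f_{n})_{n\in\N}$ be a Cauchy sequence in $(\SO(G),\Vert\cdot\Vert_{\SO,g})$. First I would produce a candidate limit. By Corollary~\ref{co:f-in-s0-properties}(vi.a) applied with $p=2$ one has $\Vert f_{n}-f_{m}\Vert_{2} \le \Vert g\Vert_{1}^{-1/2}\,\Vert g\Vert_{\infty}^{-1/2}\,\Vert f_{n}-f_{m}\Vert_{\SO,g}$, so $(f_{n})$ is Cauchy in $L^{2}(G)$; by completeness of $L^{2}(G)$ there is an $f\in L^{2}(G)$ with $f_{n}\to f$ in $L^{2}(G)$. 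Simultaneously, since $\mathcal{V}_{g}$ is an isometry into $L^{1}(G\times\ghat)$, the sequence $(\mathcal{V}_{g}f_{n})$ is Cauchy there, so by completeness of $L^{1}(G\times\ghat)$ there is an $F\in L^{1}(G\times\ghat)$ with $\mathcal{V}_{g}f_{n}\to F$ in $L^{1}(G\times\ghat)$.

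The crux is to show $F=\mathcal{V}_{g}f$ almost everywhere, which will simultaneously place $f$ in $\SO(G)$ and identify the limit. The orthogonality relation \eqref{eq:STFT-dual} gives $\Vert \mathcal{V}_{g}h\Vert_{2}=\Vert g\Vert_{2}\,\Vert h\Vert_{2}$ for all $h\in L^{2}(G)$, so $\mathcal{V}_{g}:L^{2}(G)\to L^{2}(G\times\ghat)$ is bounded; hence $f_{n}\to f$ in $L^{2}(G)$ forces $\mathcal{V}_{g}f_{n}\to \mathcal{V}_{g}f$ in $L^{2}(G\times\ghat)$. I now have one sequence, $(\mathcal{V}_{g}f_{n})$, converging to $F$ in $L^{1}(G\times\ghat)$ and to $\mathcal{V}_{g}f$ in $L^{2}(G\times\ghat)$. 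Passing to a subsequence along which the $L^{1}$-convergence holds pointwise almost everywhere, and then to a further subsequence along which the $L^{2}$-convergence holds pointwise almost everywhere, the two pointwise limits must agree, so $F=\mathcal{V}_{g}f$ almost everywhere on $G\times\ghat$.

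It remains to read off the two conclusions. Since $f\in L^{2}(G)$ and $\mathcal{V}_{g}f=F\in L^{1}(G\times\ghat)$, the function $f$ lies in the set $\mathscr{H}$, which equals $\SO(G)$ by Theorem~\ref{th:SO-six-def}; in particular $f\in\SO(G)$. Finally, by linearity of $\mathcal{V}_{g}$,
\[ \Vert f_{n}-f\Vert_{\SO,g} = \Vert \mathcal{V}_{g}f_{n}-\mathcal{V}_{g}f\Vert_{1} = \Vert \mathcal{V}_{g}f_{n}-F\Vert_{1} \longrightarrow 0,\]
so $f_{n}\to f$ in $\SO$-norm and $\SO(G)$ is complete. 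I expect the only genuinely delicate point to be the identification $F=\mathcal{V}_{g}f$: the limit $F$ is manufactured purely as an $L^{1}$-limit of short-time Fourier transforms and, a priori, need not be the transform of anything. The device that rescues it is the auxiliary $L^{2}$-topology, in which both the functions $f_{n}$ converge (to $f$) and $\mathcal{V}_{g}$ is continuous, allowing the two limits to be matched on a common subsequence.
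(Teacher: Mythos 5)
Your argument is correct, but it follows a different route from the one in the paper. The paper reduces completeness to the convergence of absolutely convergent series, pulls the candidate limit $\Phi=\sum_n f_n$ out of the Banach space $A(G)$ via the embedding $\Vert f\Vert_{A(G)}\le \Vert\hat g\Vert_\infty^{-1}\Vert f\Vert_{\SO,g}$ from Corollary~\ref{co:f-in-s0-properties}(vi.b), and then verifies $\Phi\in\SO(G)$ by a termwise estimate of $\int_G\Vert\Phi\cdot T_x\overline{g}\Vert_{A(G)}\,dx$, i.e.\ by the Fourier-algebra characterization. You instead work with an arbitrary Cauchy sequence, manufacture the candidate limit $f$ in $L^2(G)$ via Corollary~\ref{co:f-in-s0-properties}(vi.a), track the images $\mathcal{V}_g f_n$ in $L^1(G\times\ghat)$, and reconcile the $L^1$-limit $F$ with $\mathcal{V}_g f$ by exploiting the $L^2$-boundedness of $\mathcal{V}_g$ (from \eqref{eq:STFT-dual}) together with a.e.-convergent subsequences; this is essentially the short-time-Fourier-transform proof that the paper attributes to \cite{MR1843717} in the remark following its own argument. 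The trade-off: the paper's route avoids any limit-identification step because the series converges termwise in a space ($A(G)$) where the $\SO$-membership test is a single integral estimate, at the cost of invoking the series criterion for completeness; your route handles general Cauchy sequences directly and makes transparent that $\mathcal{V}_g(\SO(G))$ is a closed subspace of $L^1(G\times\ghat)$, at the cost of the subsequence-matching argument, which is indeed the only delicate point and which you handle correctly.
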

\begin{proof} It suffices to show that every absolute convergent series in $\SO(G)$ has a limit in $\SO(G)$. Let therefore $\{f_{n}\}_{n\in\N}$ be a sequence in $\SO(G)$ such that $\sum_{n\in\N} \Vert f_{n} \Vert_{\SO,g}< \infty$. We now wish to show that there exists a function $\Phi \in \SO(G)$ such that
\begin{equation} \label{eq:1910a} \big\Vert \, \Phi - \textstyle\sum_{n=1}^{N} f_{n} \,  \big\Vert_{\SO,g} \to 0 \text{ as } N\to \infty.\end{equation}
By Corollary \ref{co:f-in-s0-properties}(vi.b) for $p=1$ it follows that
$\sum_{n\in \N} \Vert f_{n} \Vert_{A(G)} \le c \sum_{n\in \N} \Vert f_{n} \Vert_{\SO,g} < \infty$,
with $c = \Vert \hat{g} \Vert_{\infty}^{-1}$. Hence $\{ f_{n} \}_{n\in \N}$ is absolutely convergent in the Banach space $A(G)$ and therefore $\sum_{n\in \N} f_{n}$ is convergent in $A(G)$.
We now want to show that $\Phi = \sum_{n\in \N} f_{n}$ belongs to $\SO(G)$.
By Theorem~\ref{th:SO-six-def} the function $\Phi\in A(G)$ belongs to $\SO(G)$ if, and only if $ \int_{G} \Vert \Phi \cdot T_{x} \overline{g }\Vert_{A(G)} \, dx<\infty$.
We have that
\[
\int_{G} \Vert \Phi \cdot T_{x}\overline{g} \Vert_{A(G)} \, dx \le \sum_{n\in \N} \int_{G} \Vert  f_{n} \cdot T_{x}\overline{g} \Vert_{A(G)} \, dx = \sum_{n\in \N} \Vert f_{n} \Vert_{\SO,g} < \infty.\]
This shows that $\Phi \in \SO(G)$. It is now straightforward to show that \eqref{eq:1910a} holds. 
\end{proof}
A similar proof of Theorem~\ref{th:s0-banach-space} is possible with the use of the equality $\Vert \Phi \Vert_{\SO,g} = \int_{\ghat} \Vert \Phi * E_{\omega} g^{\dagger} \Vert_{1} \, d\omega$. See also \cite{MR1843717} for a proof of Theorem~\ref{th:s0-banach-space} based on properties of the short-time Fourier transform. In the original paper \cite{Feichtinger1981}, the characterization of $\SO(G)$ via the set $\mathscr{M}$ is used to show that the Feichtinger algebra is a Banach space.

The equalities in \eqref{eq:s0-norm-with-modulation-space-and-wiener-amalgam} concerning the $\SO$-norm give us an easy way to show that $\SO(G)$ is an ideal of $L^{1}(G)$ and $A(G)$ under convolution and pointwise multiplication, respectively.
\begin{proposition} \label{pr:s0-l1-ag-ideal} Suppose $g\in \SO(G)\backslash\{0\}$. Then the following holds:
\begin{enumerate}[(i)]
\item $\Vert h * f \Vert_{\SO,g} \le \Vert h \Vert_{1} \Vert f \Vert_{\SO,g}$ for all $h\in L^{1}(G)$ and $f\in \SO(G)$. 
\item $\Vert h \cdot f \Vert_{\SO,g} \le \Vert h \Vert_{A(G)} \Vert f \Vert_{\SO,g}$ for all $h\in A(G)$ and $f\in \SO(G)$. 
\end{enumerate}
\end{proposition}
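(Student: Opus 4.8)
The plan is to read both inequalities off the alternative expressions for the norm collected in \eqref{eq:s0-norm-with-modulation-space-and-wiener-amalgam}, exploiting that $L^{1}(G)$ is a Banach algebra under convolution for part (i) and that $A(G)$ is a Banach algebra under pointwise multiplication for part (ii). For (i) I would work with the representation $\Vert u\Vert_{\SO,g}=\int_{\ghat}\Vert E_{\omega}u * g^{\dagger}\Vert_{1}\,d\omega$, and for (ii) with $\Vert u\Vert_{\SO,g}=\int_{G}\Vert T_{x}u\cdot\overline{g}\Vert_{A(G)}\,dx$; the two parts are mirror images of each other, interchanged by the Fourier transform. Observe first that $h*f\in L^{1}(G)$ by Young's inequality \eqref{eq:2910a} and $h\cdot f\in A(G)$ because $A(G)$ is a Banach algebra, so in each case the object whose norm is to be bounded is at least an admissible candidate.

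For part (i) I would use the identity $E_{\omega}(h*f)=(E_{\omega}h)*(E_{\omega}f)$ together with associativity of convolution to rewrite $E_{\omega}(h*f)*g^{\dagger}=(E_{\omega}h)*\big(E_{\omega}f * g^{\dagger}\big)$. Young's inequality \eqref{eq:2910a} with $p=q=r=1$, combined with the fact that $E_{\omega}$ is an $L^{1}$-isometry, then gives the pointwise (in $\omega$) bound $\Vert E_{\omega}(h*f)*g^{\dagger}\Vert_{1}\le\Vert h\Vert_{1}\,\Vert E_{\omega}f * g^{\dagger}\Vert_{1}$. Integrating over $\ghat$ yields $\int_{\ghat}\Vert E_{\omega}(h*f)*g^{\dagger}\Vert_{1}\,d\omega\le\Vert h\Vert_{1}\,\Vert f\Vert_{\SO,g}$. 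Since $g^{\dagger}$ is non-zero and lies in $\SO(G)$ by Corollary~\ref{co:f-in-s0-properties}(iv), finiteness of the left-hand integral shows via the characterisation $\SO(G)=\mathscr{G}$ (with generator $g^{\dagger}$) from Theorem~\ref{th:SO-six-def} that $h*f\in\SO(G)$; moreover that same integral equals $\Vert h*f\Vert_{\SO,g}$ by \eqref{eq:s0-norm-with-modulation-space-and-wiener-amalgam}, which is precisely the claimed estimate.

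Part (ii) runs entirely in parallel: using $T_{x}(h\cdot f)=(T_{x}h)\cdot(T_{x}f)$ and associativity of pointwise multiplication I would write $T_{x}(h\cdot f)\cdot\overline{g}=(T_{x}h)\cdot\big(T_{x}f\cdot\overline{g}\big)$, and submultiplicativity of the $A(G)$-norm together with the fact that $T_{x}$ is an $A(G)$-isometry gives $\Vert T_{x}(h\cdot f)\cdot\overline{g}\Vert_{A(G)}\le\Vert h\Vert_{A(G)}\,\Vert T_{x}f\cdot\overline{g}\Vert_{A(G)}$. Integrating over $G$ and invoking $\SO(G)=\mathscr{I}$ (with generator $\overline{g}$) to secure $h\cdot f\in\SO(G)$ then produces $\Vert h\cdot f\Vert_{\SO,g}\le\Vert h\Vert_{A(G)}\,\Vert f\Vert_{\SO,g}$. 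I do not expect any genuine obstacle: both bounds are immediate once the appropriate representation of the norm is selected, and the only point requiring a remark is that the convolution, respectively the product, again belongs to $\SO(G)$ so that the left-hand norm is defined — which is guaranteed by the finiteness of the relevant integral via Theorem~\ref{th:SO-six-def}.
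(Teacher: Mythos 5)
Your proof is correct and follows essentially the same route as the paper: both read the estimate off an alternative expression for the norm from \eqref{eq:s0-norm-with-modulation-space-and-wiener-amalgam} and pull out $\Vert h\Vert_{1}$ (resp.\ $\Vert h\Vert_{A(G)}$) via Young's inequality (resp.\ submultiplicativity of the $A(G)$-norm); the only cosmetic difference is that you place the modulation/translation on the function rather than on the window, which costs you the extra (standard) identities $E_{\omega}(h*f)=(E_{\omega}h)*(E_{\omega}f)$ and $T_{x}(h\cdot f)=(T_{x}h)\cdot(T_{x}f)$. Your explicit check that $h*f$ and $h\cdot f$ actually lie in $\SO(G)$, via the sets $\mathscr{G}$ and $\mathscr{I}$, is a point the paper leaves implicit.
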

\begin{proof} 
Recall that $ \Vert h * f \Vert_{1} \le \Vert h \Vert_{1} \, \Vert f \Vert_{1}$ for all $f,h\in L^{1}(G)$. Hence
\[ \Vert h*f \Vert_{\SO,g} \stackrel{\eqref{eq:s0-norm-with-modulation-space-and-wiener-amalgam}}{=} \textstyle\int_{\ghat} \Vert h * f * E_{\omega}g^{\dagger} \Vert_{1} \, d\omega \le \Vert h \Vert_{1} \Vert f \Vert_{\SO,g}. \]
This shows (i). For (ii) we use that $\Vert h \cdot f \Vert_{A(G)} \le \Vert h \Vert_{A(G)} \, \Vert f \Vert_{A(G)}$ for all $f,h\in A(G)$ and establish the inequality 
\begin{align*}
\Vert h\cdot f\Vert_{\SO,g} \stackrel{\eqref{eq:s0-norm-with-modulation-space-and-wiener-amalgam}}{=} \textstyle\int_{G} \Vert h\cdot f \, \cdot \overline{T_{x}g} \Vert_{A(G)} \, dx \le \Vert h \Vert_{A(G)} \, \Vert f \Vert_{\SO,g}.\end{align*}
\end{proof}

The inequalities established in Proposition~\ref{pr:s0-l1-ag-ideal} show that $\SO(G)$ is a $L^{1}(G)$-Banach module with respect to convolution and an $A(G)$-Banach module with respect to pointwise multiplication.
In particular, since $\SO(G)\subseteq L^{1}(G)\cap A(G)$ it follows that $\SO(G)$ is a Banach algebra under pointwise multiplication and convolution. This accounts for the name of $\SO(G)$ as the Feichtinger \emph{algebra}. 
\begin{corollary} \label{co:S0-ideal-of-L1-AG} If $f_{1},f_{2}\in \SO(G)$, then $f_{1}\cdot f_{2},\,f_{1}*f_{2}\in \SO(G)$. In fact, for $g\in \SO(G)\backslash\{0\}$,
\begin{align*}
& \Vert f_{1} * f_{2} \Vert_{\SO,g} \le c \, \Vert f_{1} \Vert_{\SO,g} \, \Vert f_{2} \Vert_{\SO,g} \ \ \text{with} \ c= \Vert g \Vert_{\infty}^{-1}, \\ & \Vert f_{1} \cdot f_{2} \Vert_{\SO,g} \le c \, \Vert f_{1} \Vert_{\SO,g} \, \Vert f_{2} \Vert_{\SO,g} \ \ \text{with} \ c= \Vert \hat{g} \Vert_{\infty}^{-1}.
\end{align*}
If $g$ is chosen with the normalization as in Lemma~\ref{le:S0-contains-piecewise-constant-functions}(v), then in both cases $c=1$.
\end{corollary}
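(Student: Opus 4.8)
The plan is to obtain both inequalities by combining the module estimates of Proposition~\ref{pr:s0-l1-ag-ideal} with the embedding inequalities collected in Corollary~\ref{co:f-in-s0-properties}(vi). The key structural observation is that, since $\SO(G)\subseteq L^{1}(G)\cap A(G)$ by Corollary~\ref{co:f-in-s0-properties}(iii), every function in $\SO(G)$ is simultaneously an admissible multiplier for the convolution $L^{1}(G)$-module structure and for the pointwise $A(G)$-module structure. Thus in each product I can let one factor play the role of the multiplier in Proposition~\ref{pr:s0-l1-ag-ideal} and the other the role of the $\SO$-element, and then convert the resulting $L^{1}$- or $A(G)$-norm of the multiplier back into an $\SO$-norm.

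For the convolution bound, I would first apply Proposition~\ref{pr:s0-l1-ag-ideal}(i) with $h=f_{1}\in L^{1}(G)$ and $f=f_{2}\in \SO(G)$ to obtain $\Vert f_{1}*f_{2}\Vert_{\SO,g}\le \Vert f_{1}\Vert_{1}\,\Vert f_{2}\Vert_{\SO,g}$. It then remains to control $\Vert f_{1}\Vert_{1}$ by the $\SO$-norm, which is exactly Corollary~\ref{co:f-in-s0-properties}(vi.a) at $p=1$: it reads $\Vert f_{1}\Vert_{1}\le \Vert g\Vert_{\infty}^{-1}\,\Vert \mathcal{V}_{g}f_{1}\Vert_{1}=\Vert g\Vert_{\infty}^{-1}\,\Vert f_{1}\Vert_{\SO,g}$. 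Chaining the two inequalities yields the claimed estimate with $c=\Vert g\Vert_{\infty}^{-1}$.

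For the pointwise-multiplication bound I would argue symmetrically on the Fourier side. Applying Proposition~\ref{pr:s0-l1-ag-ideal}(ii) with $h=f_{1}\in A(G)$ and $f=f_{2}\in \SO(G)$ gives $\Vert f_{1}\cdot f_{2}\Vert_{\SO,g}\le \Vert f_{1}\Vert_{A(G)}\,\Vert f_{2}\Vert_{\SO,g}$. Since $\Vert f_{1}\Vert_{A(G)}=\Vert \hat{f}_{1}\Vert_{1}$ by definition of the Fourier algebra norm, the remaining factor is controlled by Corollary~\ref{co:f-in-s0-properties}(vi.b) at $p=1$, namely $\Vert \hat{f}_{1}\Vert_{1}\le \Vert \hat{g}\Vert_{\infty}^{-1}\,\Vert f_{1}\Vert_{\SO,g}$, producing the estimate with $c=\Vert \hat{g}\Vert_{\infty}^{-1}$. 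Finally, choosing $g$ as in Lemma~\ref{le:S0-contains-piecewise-constant-functions}(v) gives $\Vert g\Vert_{\infty}=\Vert \hat{g}\Vert_{\infty}=1$, so both constants collapse to $c=1$.

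I do not anticipate any genuine obstacle: the result is a clean corollary of the two module estimates and the $p=1$ cases of the embedding inequalities already proved. The only point requiring care is the bookkeeping, that is, matching the correct $L^{1}(G)$- or $A(G)$-factor appearing in Proposition~\ref{pr:s0-l1-ag-ideal} with the correct power $\Vert g\Vert_{\infty}^{-1}$ or $\Vert \hat{g}\Vert_{\infty}^{-1}$ furnished by Corollary~\ref{co:f-in-s0-properties}(vi), and noting that the membership $f_{1}\cdot f_{2}\in A(G)$ and $f_{1}*f_{2}\in L^{1}(G)$ needed to even invoke those estimates is guaranteed by the embedding $\SO(G)\subseteq L^{1}(G)\cap A(G)$.
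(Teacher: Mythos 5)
Your proof is correct and is exactly the argument the paper intends: its proof of this corollary is the single line ``Apply Corollary~\ref{co:f-in-s0-properties}(vi) with $p=1$ to the inequalities in Proposition~\ref{pr:s0-l1-ag-ideal}.'' Your write-up simply makes explicit the bookkeeping ($\Vert f_{1}\Vert_{1}\le \Vert g\Vert_{\infty}^{-1}\Vert f_{1}\Vert_{\SO,g}$ and $\Vert f_{1}\Vert_{A(G)}=\Vert \hat{f}_{1}\Vert_{1}\le \Vert \hat{g}\Vert_{\infty}^{-1}\Vert f_{1}\Vert_{\SO,g}$) that the paper leaves to the reader.
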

\begin{proof} Apply Corollary \ref{co:f-in-s0-properties}(vi) with $p=1$ to the inequalities in Proposition \ref{pr:s0-l1-ag-ideal}.
\end{proof}

As is the case for, e.g., $L^{1}(G)$, the following lemma shows that the translation and modulation operators are continuous mappings from $G$ and $\ghat$, respectively, into $\SO(G)$.
\begin{lemma} \label{le:time-freq-shift-continuous-on-SO} For any $f\in \SO(G)$ and $\epsilon>0$ the following holds:
\begin{enumerate}[(i)]
\item There exists a neighbourhood $U_{\epsilon} \subseteq G$ of the identity such that 
\[ \Vert T_{x} f - f \Vert_{\SO(G),g} < \epsilon \ \ \textnormal{ for all } x\in U_{\epsilon}.\]
\item There exists a neighbourhood $V_{\epsilon} \subseteq \ghat$ of the identity such that 
\[ \Vert E_{\omega} f - f \Vert_{\SO(G),g} < \epsilon \ \ \textnormal{ for all } \omega\in V_{\epsilon}.\]
\end{enumerate}
\end{lemma}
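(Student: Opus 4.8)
The plan is to derive both continuity statements from the covariance of the short-time Fourier transform, reducing each to the (standard) continuity of translation in $L^{1}$ on the product group $G\times\ghat$. Throughout I write $F=\mathcal{V}_{g}f$, which belongs to $L^{1}(G\times\ghat)$ by Proposition~\ref{pr:s0-Vgf-well-defined}, so that $\Vert h\Vert_{\SO,g}=\Vert\mathcal{V}_{g}h\Vert_{1}$ for every $h\in\SO(G)$. Note that the lemma only asks for continuity at the identity, which is exactly what the argument below produces.

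I would treat (ii) first, since modulation becomes a pure translation on the transform side. A direct computation from $\mathcal{V}_{g}h(y,\nu)=\langle h,E_{\nu}T_{y}g\rangle$ gives, for every $\omega\in\ghat$,
\[ \mathcal{V}_{g}(E_{\omega}f)(y,\nu)=\langle f,E_{\nu-\omega}T_{y}g\rangle=\mathcal{V}_{g}f(y,\nu-\omega),\]
that is, $\mathcal{V}_{g}(E_{\omega}f)=T_{(0,\omega)}F$, where $T_{(0,\omega)}$ is translation by $(0,\omega)$ on $G\times\ghat$. Hence $\Vert E_{\omega}f-f\Vert_{\SO,g}=\Vert T_{(0,\omega)}F-F\Vert_{1}$, and since $F\in L^{1}(G\times\ghat)$ and translation is continuous in $L^{1}$, the right-hand side tends to $0$ as $\omega\to 0$; this furnishes the neighbourhood $V_{\epsilon}$.

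For (i) I would sidestep the extra character factor present in the translation covariance of $\mathcal{V}_{g}$ by passing to the Fourier side. Since $\mathcal{F}T_{x}=E_{-x}\mathcal{F}$ and, by \eqref{eq:2711a}, $\Vert h\Vert_{\SO(G),g}=\Vert\hat{h}\Vert_{\SO(\ghat),\hat{g}}$ for all $h\in\SO(G)$, applying this to $h=T_{x}f-f$ yields $\Vert T_{x}f-f\Vert_{\SO(G),g}=\Vert E_{-x}\hat{f}-\hat{f}\Vert_{\SO(\ghat),\hat{g}}$. Now $\hat{f}\in\SO(\ghat)$ and $\hat{g}\in\SO(\ghat)\backslash\{0\}$ by Corollary~\ref{co:f-in-s0-properties}(i), and under the topological isomorphism $\widehat{\ghat}\cong G$ the modulation parameter $-x$ tends to the identity as $x\to 0$. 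Thus part (ii), applied on the group $\ghat$ with window $\hat{g}$, provides a neighbourhood $U_{\epsilon}$ of the identity with $\Vert E_{-x}\hat{f}-\hat{f}\Vert_{\SO(\ghat),\hat{g}}<\epsilon$ for $x\in U_{\epsilon}$, which is exactly the claim.

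The two covariance identities are routine (a matter of tracking character values and argument shifts), and the only external input is the continuity of translation in $L^{1}$ on an LCA group, already used in the proof of Lemma~\ref{le:sufficient-cond-to-be-in-S0}(vii); so there is no serious obstacle. The one step calling for care is the reduction in (i): alternatively one may argue (i) directly from $\mathcal{V}_{g}(T_{x}f)(y,\nu)=\overline{\nu(x)}\,\mathcal{V}_{g}f(y-x,\nu)=E_{\eta_{x}}T_{(x,0)}F$, where $\eta_{x}$ is the character $(y,\nu)\mapsto\overline{\nu(x)}$ of $G\times\ghat$. Inserting $E_{\eta_{x}}F$ and using that $E_{\eta_{x}}$ is an $L^{1}$-isometry bounds the difference by $\Vert T_{(x,0)}F-F\Vert_{1}+\Vert E_{\eta_{x}}F-F\Vert_{1}$, both of which vanish as $x\to 0$ by continuity of translation and of modulation in $L^{1}(G\times\ghat)$ (the latter composed with the continuous map $x\mapsto\eta_{x}$ sending $0$ to the trivial character). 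The Fourier route simply packages this phase factor away.
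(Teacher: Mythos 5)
Your proof is correct and rests on exactly the same ingredients as the paper's: the covariance identities $\mathcal{V}_{g}E_{\omega}=T_{(0,\omega)}\mathcal{V}_{g}$ and $\mathcal{V}_{g}T_{x}=E_{(0,-x)}T_{(x,0)}\mathcal{V}_{g}$ together with continuity of translation and modulation in $L^{1}(G\times\ghat)$; indeed your ``alternative'' argument for (i) at the end is precisely the paper's proof. The primary route you give for (i), passing to the Fourier side via \eqref{eq:2711a} to recast translation as modulation, is only a cosmetic repackaging of the same phase-factor bookkeeping, so no further comment is needed.
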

\begin{proof} We follow the proof in \cite{fezi98}. Note that $\mathcal{V}_{g} T_{x} = E_{(0,-x)}T_{(x,0)} \mathcal{V}_{g}$. This implies that
$\Vert T_{x}f -f \Vert_{\SO(G),g} = \Vert E_{(0,-x)} T_{(x,0)} \mathcal{V}_{g} f - \mathcal{V}_{g} f\Vert_{1}$. However, for $h\in L^{1}(G)$ the mappings $x\mapsto T_{x}h$ and $\omega \mapsto E_{\omega}h$ are continuous. Therefore there exists a neighbourhood $U_{\epsilon}\subseteq G$ around the identity such that
(i) holds. Statement (ii) follows in the same fashion by use of the identity $\mathcal{V}_{g} E_{\omega} = T_{(0,\omega)} \mathcal{V}_{g}$.
\end{proof}

The continuity of the translation operator in Lemma~\ref{le:time-freq-shift-continuous-on-SO} allows us to prove that the space of functions
\[ C_{c}(G)\cap A(G) = \{ f \in \SO(G) \, : \, f\in C_{c}(G) \} \] is dense in $\SO(G)$ and that it contains approximate identities for $\SO(G)$ with respect to both convolution and pointwise multiplication.
\begin{proposition} \label{pr:s0-and-ccfl1} $ \ $
\begin{enumerate}[(i)]
\item For any $f\in \SO(G)$ and $\epsilon>0$ there exists a function $h\in C_{c}(G)\cap A(G)\subseteq \SO(G)$ such that
\[ \Vert h * f - f \Vert_{\SO,g} < \epsilon.\]
\item For any $f\in \SO(G)$ and $\epsilon>0$ there exists a function $h\in C_{c}(G)\cap A(G)\subseteq \SO(G)$ such that
\[ \Vert h \cdot f - f \Vert_{\SO,g} < \epsilon.\]
\item $C_{c}(G)\cap A(G)$ is dense in $\SO(G)$.
\end{enumerate}
The same statements hold for functions $h\in L^{1}(G)$ with $\hat{h}\in C_{c}(G)$.
\end{proposition}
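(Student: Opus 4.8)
The plan is to establish (i), (ii), (iii) in that order, the third being an immediate consequence of the second. In every case the workhorse is the observation that if $h\in L^{1}(G)$ satisfies $\int_{G}h=1$, then $h*f-f=\int_{G}h(s)\,(T_{s}f-f)\,ds$ pointwise, so that inserting the amalgam expression $\Vert\cdot\Vert_{\SO,g}=\int_{\ghat}\Vert\,\cdot\,*E_{\omega}g^{\dagger}\Vert_{1}\,d\omega$ from \eqref{eq:s0-norm-with-modulation-space-and-wiener-amalgam}, estimating the inner $L^{1}$-norm by Minkowski's integral inequality and interchanging the integrations by Tonelli (the integrand being non-negative) yields the master estimate
\[ \Vert h*f-f\Vert_{\SO,g}\le\int_{G}h(s)\,\Vert T_{s}f-f\Vert_{\SO,g}\,ds. \]
For (i) I would take $h$ as in Lemma~\ref{le:sufficient-cond-to-be-in-S0}(vi): non-negative, $\Vert h\Vert_{1}=1$, and $\supp h\subseteq U$ for a prescribed neighbourhood $U$ of the identity. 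Such an $h$ is compactly supported and lies in $\SO(G)\subseteq A(G)$, hence in $C_{c}(G)\cap A(G)$. Choosing $U=U_{\epsilon}$ as in Lemma~\ref{le:time-freq-shift-continuous-on-SO}(i) so that $\Vert T_{s}f-f\Vert_{\SO,g}<\epsilon$ for $s\in U_{\epsilon}$, the master estimate gives $\Vert h*f-f\Vert_{\SO,g}\le\epsilon\int_{G}h=\epsilon$.

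For (ii) I would transport the problem to the dual group: by \eqref{eq:2711a} and the convolution theorem, $\Vert hf-f\Vert_{\SO(G),g}=\Vert\hat h*\hat f-\hat f\Vert_{\SO(\ghat),\hat g}$, and $\hat f\in\SO(\ghat)$ by Corollary~\ref{co:f-in-s0-properties}(i), so this is a convolution approximation on $\ghat$ of the type treated in (i). The constraint is that I must keep $h$ compactly supported in $G$ (so that $hf$ stays compactly supported, which is what (iii) needs), and this forces $\hat h$ to be \emph{spread out} over $\ghat$ rather than compactly supported. I would therefore use the Fej\'er-type kernel $h=\mu_{G}(V)^{-1}\,\mathds{1}_{V}*\mathds{1}_{V}^{\dagger}$ for a large, relatively compact neighbourhood $V$ of the identity in $G$; then $h\in C_{c}(G)$, $\hat h=\mu_{G}(V)^{-1}\,|\widehat{\mathds{1}_{V}}|^{2}\ge0$ with $\int_{\ghat}\hat h=\mu_{G}(V)^{-1}\Vert\mathds{1}_{V}\Vert_{2}^{2}=1$ by Plancherel, so $h\in C_{c}(G)\cap A(G)\subseteq\SO(G)$. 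The master estimate on $\ghat$ then bounds $\Vert\hat h*\hat f-\hat f\Vert_{\SO(\ghat),\hat g}$ by $\int_{\ghat}\hat h(\eta)\,\Vert T_{\eta}\hat f-\hat f\Vert_{\SO(\ghat),\hat g}\,d\eta$, and, using the neighbourhood $V_{\epsilon}$ of the identity in $\ghat$ from Lemma~\ref{le:time-freq-shift-continuous-on-SO}(i), I would split this integral: over $V_{\epsilon}$ the integrand is $<\epsilon$ and contributes at most $\epsilon$, while over $\ghat\setminus V_{\epsilon}$ the factor $\Vert T_{\eta}\hat f-\hat f\Vert_{\SO(\ghat),\hat g}$ is bounded by $2\Vert\hat f\Vert_{\SO,\hat g}$.

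The main obstacle is the last step: one needs to know that $(\hat h)_{V}$ is a summability kernel on $\ghat$, that is, $\int_{\ghat\setminus V_{\epsilon}}\hat h\to0$ as $V$ exhausts $G$, so that the contribution over $\ghat\setminus V_{\epsilon}$ becomes negligible. This is the only genuinely non-formal point; it amounts to the statement that the positive densities $\mu_{G}(V)^{-1}|\widehat{\mathds{1}_{V}}|^{2}$ concentrate their mass at the identity of $\ghat$ (equivalently, that $h\to1$ uniformly on compacta as $V$ grows), and I would establish it from the standard theory of summability kernels on locally compact abelian groups, choosing $V$ along a suitable exhausting net.

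Finally, (iii) follows at once from the form of (ii) just proved: for $f\in\SO(G)$ and $\epsilon>0$ the function $hf$ produced there is compactly supported (since $h$ is) and lies in $A(G)$ (since $A(G)$ is an algebra and $h,f\in A(G)$), hence $hf\in C_{c}(G)\cap A(G)$ with $\Vert hf-f\Vert_{\SO,g}<\epsilon$. The two alternative statements with $h\in L^{1}(G)$ and $\hat h\in C_{c}(\ghat)$ follow by the same scheme with the roles of $G$ and $\ghat$ interchanged via \eqref{eq:2711a}: the convolution statement is obtained by applying the multiplication statement on $\ghat$ to $\hat f$ and transporting the multiplier back by $\mathcal{F}^{-1}$, and the multiplication statement is obtained by applying the convolution statement on $\ghat$ and doing the same.
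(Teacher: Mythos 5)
Your part (i) is exactly the paper's argument (the master estimate $\Vert h*f-f\Vert_{\SO,g}\le\int_G h(s)\Vert T_sf-f\Vert_{\SO,g}\,ds$ with $h$ from Lemma~\ref{le:sufficient-cond-to-be-in-S0}(vi) supported in the $U_\epsilon$ of Lemma~\ref{le:time-freq-shift-continuous-on-SO}), and your part (iii) is the paper's deduction verbatim. Part (ii) is where you diverge, and where the one genuine gap sits. You propose a direct Fej\'er-type construction $h=\mu_G(V)^{-1}\mathds{1}_V*\mathds{1}_V^\dagger$ and reduce everything to the claim that $\hat h_V=\mu_G(V)^{-1}\vert\widehat{\mathds{1}_V}\vert^2$ concentrates its mass near the identity of $\ghat$ as $V$ exhausts $G$. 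You correctly flag this as the non-formal point, but it is not merely a routine citation: for an arbitrary exhausting net of compact sets the claim is false, and what is actually needed is that $V$ be chosen along a F\o lner net, so that $\mu_G(V\cap(V+x))/\mu_G(V)\to1$ uniformly on compacta, i.e.\ $h_V\to1$ boundedly; one then still has to convert this into $\int_{\ghat\setminus V_\epsilon}\hat h_V\to0$, e.g.\ via the multiplication formula $\int_{\ghat}\hat h_V k=\int_G h_V\,\mathcal F_{\ghat}k$ tested against a bump $k$ supported in $V_\epsilon$ with $k(0)=1$. Existence of F\o lner nets holds for every locally compact abelian group (amenability), but this is a substantial external input that your sketch does not supply, and ``standard summability kernel theory'' in the textbook sense covers $\R^n$ and $\T^n$, not general $G$.

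The paper avoids all of this with a cheaper mechanism that your proposal is missing: it first upgrades (i) to allow approximate identities $v\in L^1(G)$ with $\hat v\in C_c(\ghat)$, by perturbing the $h$ of part (i) in $L^1$-norm --- using the density of band-limited functions in $L^1(G)$ (\cite[Proposition~5.4.1]{MR1802924}) together with the module inequality $\Vert v*f-h*f\Vert_{\SO,g}\le\Vert v-h\Vert_1\,\Vert f\Vert_{\SO,g}$ from Proposition~\ref{pr:s0-l1-ag-ideal}(i). With both variants of (i) in hand on every group, (ii) and the ``same statements hold for $\hat h\in C_c(\ghat)$'' clause all follow by a single application of \eqref{eq:2711a} on the dual group, exactly as in the last step of your own outline. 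I would recommend replacing your Fej\'er construction by this perturbation step; alternatively, if you want to keep your route, you must prove the F\o lner-based concentration lemma rather than defer it.
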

\begin{proof} As shown in Lemma~\ref{le:time-freq-shift-continuous-on-SO} there exists a neighbourhood $U_{\epsilon}$ of $0$ such that $\Vert T_{x} f - f \Vert_{\SO,g} < \epsilon$ for all $x\in U_{\epsilon}$. Take now $u\in C_{c}(G) \cap A(G) \subseteq \SO(G)$ as in Lemma~\ref{le:S0-contains-piecewise-constant-functions}(vi), \ie $u$ is a positive, continuous function with support in $U_{\epsilon}$ and with $\Vert u \Vert_{1} = 1$. Then $(u * f - f)(t) = \int_{G} u(s) (f(t-s)-f(t)) \, ds$.
This allows us to conclude that
\begin{equation}
\label{eq:2801a} \Vert u * f - f \Vert_{\SO,g} \le \int_{G} u(s) \, \Vert T_{s} f - f \Vert_{\SO,g} \, ds < \epsilon.\end{equation}
Indeed, with the equality $\Vert f \Vert_{\SO,g} = \int_{\ghat} \Vert f * E_{\omega} g^{\dagger} \Vert_{1} \, d\omega$ it is straightforward to argue for the reordering of integrals in \eqref{eq:2801a}. This shows (i). Now, by \cite[Proposition~5.4.1]{MR1802924} the set of functions in $L^{1}(G)$ with compactly supported Fourier transform are dense in $L^{1}(G)$, therefore there exists a function $v\in L^{1}(G)$ with $\hat{v} \in C_{c}(\ghat)$ such that $\Vert u - v \Vert_{1} < \epsilon \, \Vert f \Vert_{\SO,g}^{-1}$. These considerations yield the following estimates:
\begin{align*}
\Vert v * f - f \Vert_{\SO,g} & \le \Vert v * f - u*f \Vert_{\SO,g} + \Vert u*f-f \Vert_{\SO,g} \\
& \le \Vert v - u \Vert_{1}  \Vert f \Vert_{\SO,g} + \Vert u * f -f \Vert_{\SO,g} \le 2 \epsilon.
\end{align*}
This proves statement (i) for functions in $L^{1}(G)$ with compactly supported Fourier transform. Using the invariance of $\SO$ and its norm under the Fourier transform \eqref{eq:2711a} yields statement (ii) for functions in $C_{c}(G)\cap A(G)$ and for functions in $L^{1}(G)$ with compactly supported Fourier transform.
Statement (iii) follows from (ii) together with the observation that $h\cdot f \in C_{c}(G)\cap A(G)$. 
\end{proof}

From the inclusion $\SO(G)\subseteq L^{1}(G)\cap A(G)\subseteq C_{0}(G)$ it is clear that $\SO(G)\subseteq L^{p}(G)$ for all $p\in [1,\infty]$. In fact, we have the following result.

\begin{lemma} \label{le:cont-embedded-in-Lp} The Banach space $\SO(G)$ is continuously embedded into and dense in $C_{0}(G)$, the Fourier algebra $A(G)$ and $L^{p}(G)$ for $p\in[1,\infty[$.
Specifically, for $g\in \SO(G)\backslash\{0\}$,
\begin{enumerate}[(i)]
\item $\Vert f \Vert_{p} \le c \, \Vert f \Vert_{\SO,g}$ for all $f\in \SO(G)$, $p\in [1,\infty]$, with $c = \Vert g \Vert_{1}^{-1+1/p} \, \Vert g \Vert_{\infty}^{-1/p}$,
\item $\Vert f \Vert_{A(G)} \le c \, \Vert f \Vert_{\SO,g}$ for all $f\in \SO(G)$, with $c = \Vert \hat{g} \Vert_{\infty}^{-1}$.
\end{enumerate}
If $g$ is chosen with the normalization as in Lemma~\ref{le:S0-contains-piecewise-constant-functions}(v), then in both cases $c=1$.
\end{lemma}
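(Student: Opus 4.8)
The plan is to read off the two norm inequalities from Corollary~\ref{co:f-in-s0-properties} and then obtain the embeddings and density statements as consequences. For (i), since $\Vert f \Vert_{\SO,g} = \Vert \mathcal{V}_{g}f \Vert_{1}$ by Definition~\ref{def:s0-norm}, the claim is verbatim Corollary~\ref{co:f-in-s0-properties}(vi.a). For (ii), I recall from the proof of Theorem~\ref{th:s0-ABC} that $f\in A(G)$ precisely when $\hat{f}\in L^{1}(\ghat)$, with $\Vert f\Vert_{A(G)}=\Vert\hat{f}\Vert_{1}$; specializing Corollary~\ref{co:f-in-s0-properties}(vi.b) to $p=1$ then gives $\Vert f\Vert_{A(G)}=\Vert\hat f\Vert_{1}\le\Vert\hat g\Vert_{\infty}^{-1}\Vert\mathcal{V}_{g} f\Vert_{1}=\Vert\hat g\Vert_{\infty}^{-1}\Vert f\Vert_{\SO,g}$. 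The continuous embeddings follow at once: into $L^{p}(G)$ for $p\in[1,\infty]$ and into $C_{0}(G)$ (the case $p=\infty$, whose norm is $\Vert\cdot\Vert_{\infty}$) from (i), and into $A(G)$ from (ii). Substituting the normalization of Lemma~\ref{le:S0-contains-piecewise-constant-functions}(v) reduces every constant to $1$.

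For the density assertions I would treat $L^{1}(G)$ and $A(G)$ first, where earlier results do almost all the work. Given $f\in L^{1}(G)$, Lemma~\ref{le:sufficient-cond-to-be-in-S0}(vii) supplies $u\in\SO(G)$ with $\Vert u*f-f\Vert_{1}<\epsilon$, and Proposition~\ref{pr:s0-l1-ag-ideal}(i) (applied to $u*f=f*u$ with $f\in L^{1}(G)$) guarantees $u*f\in\SO(G)$; similarly, given $f\in A(G)$, Lemma~\ref{le:sufficient-cond-to-be-in-S0}(viii) supplies $u\in\SO(G)$ with $\Vert u\cdot f-f\Vert_{A(G)}<\epsilon$, and Proposition~\ref{pr:s0-l1-ag-ideal}(ii) gives $u\cdot f\in\SO(G)$. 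For $C_{0}(G)$ and $L^{p}(G)$ with $p\in[1,\infty[$, I would exploit that $C_{c}(G)$ is dense in each of these spaces and therefore reduce to approximating a single $\varphi\in C_{c}(G)$. Using Lemma~\ref{le:S0-contains-piecewise-constant-functions}(vi) I pick a net $(u_{\alpha})\subseteq\SO(G)$ of non-negative functions normalized by $\Vert u_{\alpha}\Vert_{1}=1$ whose supports shrink to the identity. Then $u_{\alpha}*\varphi\in\SO(G)$ by Proposition~\ref{pr:s0-l1-ag-ideal}(i), and from the identity $u_{\alpha}*\varphi-\varphi=\int_{G}u_{\alpha}(s)\,(T_{s}\varphi-\varphi)\,ds$ Minkowski's integral inequality yields $\Vert u_{\alpha}*\varphi-\varphi\Vert\le\sup_{s\in\supp u_{\alpha}}\Vert T_{s}\varphi-\varphi\Vert$ in either the $\Vert\cdot\Vert_{p}$ or the $\Vert\cdot\Vert_{\infty}$ norm, which tends to $0$ as the supports contract.

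The step I expect to require the most care is this last convergence: for $L^{1}(G)$ and $A(G)$ the density is handed to me directly, but for $C_{0}(G)$ and for $L^{p}(G)$ with $p>1$ there is no prepackaged statement, and I must justify that $u_{\alpha}*\varphi\to\varphi$ in the correct topology. The decisive fact is that a compactly supported continuous $\varphi$ is uniformly continuous, so $s\mapsto T_{s}\varphi$ is continuous not only in $L^{p}$ for finite $p$ but also in the sup-norm, which is exactly what powers the estimate for $C_{0}(G)$. I would finally remark that the exclusion of $p=\infty$ and of $C_{b}(G)$ from the density claim is necessary, since $C_{c}(G)$ is not dense in either.
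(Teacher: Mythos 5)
Your proof is correct. The norm inequalities are obtained exactly as in the paper, by reading them off from Corollary~\ref{co:f-in-s0-properties}(vi) (with the identification $\Vert f\Vert_{A(G)}=\Vert\hat f\Vert_{1}$), and the density in $L^{1}(G)$ and $A(G)$ follows the same route via the approximate identities of Lemma~\ref{le:sufficient-cond-to-be-in-S0}(vii)--(viii) together with the module properties of Proposition~\ref{pr:s0-l1-ag-ideal}. The only genuine divergence is in how density in $C_{0}(G)$ and in $L^{p}(G)$, $p\in[1,\infty[$, is finished: the paper invokes that $L^{1}(G)\cap L^{p}(G)$ is dense in $L^{p}(G)$ and that $A(G)$ is dense in $C_{0}(G)$ (citing Reiter--Stegeman for the latter), whereas you reduce to a fixed $\varphi\in C_{c}(G)$ and mollify, showing $u_{\alpha}*\varphi\to\varphi$ in the relevant norm via Minkowski's integral inequality and the uniform continuity of $\varphi$. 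Your version is self-contained and makes explicit the continuity-of-translation point that the paper's citation conceals; the paper's version is shorter but leans on an external fact. Both arguments are valid.
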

\begin{proof} The inequalities follow from Corollary~\ref{co:f-in-s0-properties}(vi). The results in Corollary~\ref{co:S0-ideal-of-L1-AG} together with 
Lemma~\ref{le:S0-contains-piecewise-constant-functions}(vii) and (viii) imply that $\SO(G)$ is dense in $L^{1}(G)$ and $A(G)$. Since $L^{1}(G)\cap L^{p}(G)$ is dense in $L^{p}(G)$ for all $p\in [1,\infty[$, it follows that $\SO(G)$ is dense in those $L^{p}$-spaces. That $\SO(G)$ is dense in $C_{0}(G)$ follows from the fact that $A(G)$ is dense in $C_{0}(G)$ (see \cite[Proposition 5.4.4]{MR1802924}).
\end{proof}

While the statement of Lemma \ref{le:cont-embedded-in-Lp} is well-known, the specific constants for the continuous embedding are new. 

We end this section by showing that $\SO(G)$ is a \emph{Segal algebra}.

\begin{definition} \label{def:segal-algebra} Let $G$ be a locally compact abelian group. A linear and normed subspace $(S(G), \Vert \cdot \Vert_{S})$ of $L^{1}(G)$ is a \emph{Segal algebra} if it satisfies the following conditions:
\begin{enumerate}[(i)]
\item $S(G)$ is dense in and continuously embedded into $L^{1}(G)$.
\item $(S(G), \Vert \cdot \Vert_{S})$ is a Banach space.
\item If $f\in S(G)$, then $T_{x} f \in S(G)$ for all $x\in G$.
\item $\Vert f \Vert_{S} = \Vert T_{x} f \Vert_{S}$ for all $f\in S(G)$ and $x\in G$.
\item The translation operator is continuous from $G$ into $S(G)$, i.e., for all $f\in S(G)$ and all $\epsilon>0$ there exists a neighbourhood $U_{\epsilon}\subseteq G$ of the identity such that 
\[ \Vert T_{x} f - f \Vert_{S} < \epsilon \ \ \text{for all} \ \ x\in U_{\epsilon}.\]
\end{enumerate}
\end{definition}

Indeed, one can show that the definition implies that a Segal algebra is a Banach algebra and an ideal of $L^{1}(G)$ under convolution. For more on Segal algebras see the books by Reiter \cite{re71,re89} and Reiter and Stegeman \cite{MR1802924}.

\begin{theorem} \label{th:S0-is-Segal} The Feichtinger algebra $\SO(G)$ is a Segal algebra.
\end{theorem}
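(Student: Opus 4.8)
The strategy is to verify each of the five Segal-algebra axioms in Definition~\ref{def:segal-algebra} by invoking the structural results already established for $\SO(G)$ earlier in this section. Almost all of the work has been done; the proof amounts to a checklist. First I would note that $\SO(G)$ is by construction a linear subspace of $L^{1}(G)$, and equip it with the norm $\Vert\cdot\Vert_{\SO,g}$ for a fixed $g\in\SO(G)\backslash\{0\}$ (by Proposition~\ref{pr:equivalent-norm-on-s0} the choice is immaterial up to equivalence).

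For axiom~(i), the continuous embedding $\Vert f\Vert_{1}\le c\,\Vert f\Vert_{\SO,g}$ is exactly Lemma~\ref{le:cont-embedded-in-Lp}(i) with $p=1$, and density of $\SO(G)$ in $L^{1}(G)$ is also part of Lemma~\ref{le:cont-embedded-in-Lp}. Axiom~(ii), completeness, is precisely Theorem~\ref{th:s0-banach-space}. For axioms~(iii) and~(iv), the translation invariance of the set $\SO(G)$ and the isometry $\Vert T_{x}f\Vert_{\SO,g}=\Vert f\Vert_{\SO,g}$ follow from Corollary~\ref{co:f-in-s0-properties}(ii) (which gives $\pi(\nu)f\in\SO(G)$, in particular $T_{x}f\in\SO(G)$) together with the norm-invariance under time-frequency shifts recorded in \eqref{eq:0812a}: taking $\nu_{1}=(x,0)$ and $\nu_{2}=(0,0)$ there gives $\Vert T_{x}f\Vert_{\SO,g}=\Vert f\Vert_{\SO,g}$. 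Finally, axiom~(v), continuity of translation from $G$ into $\SO(G)$, is exactly Lemma~\ref{le:time-freq-shift-continuous-on-SO}(i).

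Since every axiom is a direct citation of a previously proved statement, there is genuinely no obstacle here; the only point requiring a word of care is that the norm $\Vert\cdot\Vert_{\SO,g}$ depends a priori on $g$, so one should remark that the Segal-algebra structure is independent of this choice thanks to Proposition~\ref{pr:equivalent-norm-on-s0}. I would therefore present the proof as a short enumeration matching the axioms of Definition~\ref{def:segal-algebra} one-to-one against Lemma~\ref{le:cont-embedded-in-Lp}, Theorem~\ref{th:s0-banach-space}, Corollary~\ref{co:f-in-s0-properties}(ii), equation~\eqref{eq:0812a}, and Lemma~\ref{le:time-freq-shift-continuous-on-SO}(i), respectively.
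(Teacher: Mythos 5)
Your proof is correct and follows exactly the same route as the paper: each Segal-algebra axiom is checked against the same previously established results (Lemma~\ref{le:cont-embedded-in-Lp}, Theorem~\ref{th:s0-banach-space}, Corollary~\ref{co:f-in-s0-properties}(ii), equation~\eqref{eq:0812a}, and Lemma~\ref{le:time-freq-shift-continuous-on-SO}). The remark about independence of the choice of $g$ via Proposition~\ref{pr:equivalent-norm-on-s0} is a sensible, if optional, addition.
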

\begin{proof} In Theorem~\ref{th:s0-banach-space} and Lemma~\ref{le:cont-embedded-in-Lp} we established that $\SO(G)$ is a Banach space which is continuously embedded and dense in $L^{1}(G)$. Corollary~\ref{co:f-in-s0-properties}(ii) shows that $f\in \SO(G)$ implies that also $T_{x}f\in \SO(G)$ for all $x\in G$. The relations in \eqref{eq:0812a} show that the norm of a function $f\in \SO(G)$ is invariant under translation. Finally, Lemma~\ref{le:time-freq-shift-continuous-on-SO} shows that the mapping $x\mapsto T_{x}f$ is continuous. 
\end{proof}

One may ask if there exists a smallest among all Segal algebras. The answer to this question is negative.  
The intersection of all Segal algebras on a locally compact abelian group is exactly the space of functions in $L^{1}(G)$ with compactly supported Fourier transform, which is not a Segal algebra (unless $G$ is discrete, but then $\ell^{1}(G)$ is the only Segal algebra; compare with Lemma \ref{le:s0-discrete-compact}). 
Feichtinger discovered that there is a smallest among all Segal algebras if one adds a condition related to the modulation operator.

\begin{proposition} \label{pr:min-segal} Among all Segal algebras $S(G)$ on a locally compact abelian group $G$ with the additional property that 
\[ \Vert E_{\omega}f \Vert_{S} = \Vert f \Vert_{S}\ \ \text{for all} \ \ f\in S(G),\ \omega\in\ghat,\] the Feichtinger algebra $\SO(G)$ is the smallest. That is, $\SO(G)$ is continuously embedded into any other Segal algebra with this additional property.
\end{proposition}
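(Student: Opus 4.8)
The plan is to deduce the statement from the general minimality theorem (Theorem~\ref{th:SO-minimality-bochner}), whose hypotheses only constrain the time-frequency shifts of a \emph{single} fixed window. Thus it suffices to exhibit a non-zero $g\in\SO(G)$ that also lies in $S(G)$ and whose entire orbit $\{E_{\omega}T_{x}g : (x,\omega)\in G\times\ghat\}$ is contained in $S(G)$ with uniformly bounded $S$-norms. Once this is done, condition~(a) of that theorem holds by construction, and condition~(b) holds with $c=1$: since $S(G)$ is by hypothesis isometric under modulation and, being a Segal algebra, isometric under translation (Definition~\ref{def:segal-algebra}(iv)), one has $\Vert E_{\omega}T_{x}g\Vert_{S}=\Vert T_{x}g\Vert_{S}=\Vert g\Vert_{S}$ for every $(x,\omega)$. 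Theorem~\ref{th:SO-minimality-bochner} then yields $\SO(G)\subseteq S(G)$ together with the asserted continuous embedding.

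The heart of the argument is therefore to produce a non-zero function in the intersection $\SO(G)\cap S(G)$. Here I would combine two facts. First, as recalled just before Definition~\ref{def:segal-algebra}, every Segal algebra is an ideal of $L^{1}(G)$ under convolution, so $h*f_{0}\in S(G)$ whenever $h\in L^{1}(G)$ and $f_{0}\in S(G)$. Second, $\SO(G)$ contains $L^{1}$-approximate identities (Lemma~\ref{le:sufficient-cond-to-be-in-S0}(vii)): for every $\epsilon>0$ there is a $g_{\epsilon}\in\SO(G)$ with $\Vert g_{\epsilon}*f_{0}-f_{0}\Vert_{1}<\epsilon$. Now fix any non-zero $f_{0}\in S(G)$, which exists because $S(G)$ is dense in $L^{1}(G)$. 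The convolution $g_{\epsilon}*f_{0}$ lies in $S(G)$ by the ideal property, and it also lies in $\SO(G)$ by Proposition~\ref{pr:s0-l1-ag-ideal}(i), viewing it as the convolution of the $L^{1}$-function $f_{0}$ with the $\SO$-function $g_{\epsilon}$. Choosing $\epsilon<\Vert f_{0}\Vert_{1}$ forces $g_{\epsilon}*f_{0}\ne 0$, so $g:=g_{\epsilon}*f_{0}$ is the desired window in $\SO(G)\cap S(G)$.

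It then remains only to check that the orbit of $g$ stays inside $S(G)$: translation invariance (Definition~\ref{def:segal-algebra}(iii)) gives $T_{x}g\in S(G)$, and the additional modulation-invariance hypothesis (which in particular entails $E_{\omega}S(G)\subseteq S(G)$, since otherwise the identity $\Vert E_{\omega}f\Vert_{S}=\Vert f\Vert_{S}$ would be meaningless) gives $\pi(x,\omega)g=E_{\omega}T_{x}g\in S(G)$ for all $(x,\omega)$. Together with the norm computation above, this verifies conditions~(a) and~(b) of Theorem~\ref{th:SO-minimality-bochner}, completing the proof.

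I expect the only genuinely delicate point in this plan to be the non-triviality of $\SO(G)\cap S(G)$, that is, guaranteeing that the window fed into the minimality theorem actually belongs to the target algebra $S(G)$; everything else follows at once from the isometric invariance of $\Vert\cdot\Vert_{S}$. The deeper content—that the uniformly bounded orbit of a single window already forces \emph{all} of $\SO(G)$ into $S(G)$—is packaged inside Theorem~\ref{th:SO-minimality-bochner}, whose proof realises an arbitrary $f\in\SO(G)$ as an absolutely convergent superposition of time-frequency shifts of $g$ and estimates the $S$-norm term by term. Notably, this discrete route avoids having to know that modulation acts \emph{continuously} on $S(G)$, a regularity the hypotheses of the proposition do not provide.
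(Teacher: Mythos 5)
Your proposal is correct and takes exactly the route the paper intends: the paper's own ``proof'' of Proposition~\ref{pr:min-segal} is a one-line reduction to Theorem~\ref{th:SO-minimality-bochner}, and you carry out that reduction in full. The one detail the paper leaves implicit --- producing a non-zero window in $\SO(G)\cap S(G)$ --- you handle correctly by convolving a non-zero $f_{0}\in S(G)$ with an $\SO$-approximate identity from Lemma~\ref{le:sufficient-cond-to-be-in-S0}(vii), using the ideal properties of $S(G)$ and of $\SO(G)$ (Proposition~\ref{pr:s0-l1-ag-ideal}(i)) and the choice $\epsilon<\Vert f_{0}\Vert_{1}$ to guarantee non-triviality; the verification of hypotheses (a) and (b) with $c=1$ via translation and modulation isometry is likewise correct.
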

\begin{proof}
This result follows from the more general result in Theorem \ref{th:SO-minimality-bochner}. We therefore omit the proof. \end{proof}

As is the case for $\SO(G)$, the space of functions in $L^{1}(G)$ with compactly supported Fourier transform is dense in any Segal algebra. 
Improving on this result, Reiter showed in \cite{re93-3} that all Segal algebras on a locally compact abelian group contain the space of Schwartz-Bruhat functions with compactly supported Fourier transform as a dense subspace.\footnote{In the same publication Reiter calls $\SO$ the \emph{canonical} Segal algebra.} As shown by Poguntke \cite{po80-1} and Feichtinger \cite{Feichtinger1981}, $\SO(G)$ contains the entire Schwartz-Bruhat space.

\section{Operators on $\SO(G)$} \label{sec:mappings-on-s0}

In this section we consider operators acting on the Feichtinger algebra. In particular, in Theorem~\ref{th:s0-invariant-under-some-automorphisms} we investigate Banach space isomorphisms on $\SO$. In Theorem~\ref{th:stft-and-s0} we consider the short-time Fourier transform $\mathcal{V}_{h}$ with $h\in \SO(G)$ and its Hilbert space adjoint in relation to the Feichtinger algebra. Lastly, in Section \ref{sec:restriction} we take a closer at the restriction, periodization and the zero-extension operator, as well as the Poisson formula.

\subsection{Isomorphisms of the Feichtinger algebra}

We begin with a remark on the Fourier transform: 
the relationship of the $\SO$-norm with the Fourier transform stated in \eqref{eq:2711a}, which followed from the identity in Corollary \ref{co:f-in-s0-properties}(i), implies that the Fourier transform is a linear and bounded operator from $\SO(G)$ into $\SO(\ghat)$. The same holds for the inverse Fourier transform.  Hence the Fourier transform is a Banach space isomorphism from $\SO(G)$ onto $\SO(\ghat)$. The crucial properties of the Fourier transform for \eqref{eq:2711a} to hold are that it commutes with time-frequency shifts in a ``nice'' way, i.e., $E_{x}T_{\omega} \mathcal{F} = \omega(x) \mathcal{F} E_{\omega} T_{-x}$ for all $(x,\omega)\in G\times\ghat$ and that it is a unitary operator from $L^{2}(G)$ onto $L^{2}(\ghat)$.

We generalize this observation in Theorem~\ref{th:s0-invariant-under-some-automorphisms} below. We then apply this theorem in Example~\ref{ex:unitaries-on-S0}  to verify rather easily that a variety of operators on $L^{2}$, such as the partial Fourier transform and automorphisms of $G$, give rise to Banach space isomorphisms of the Feichtinger algebra. 
Theorem~\ref{th:s0-invariant-under-some-automorphisms} is the (seemingly unnoticed) \emph{automorphism theorem} of $\SO(G)$ by Reiter \cite[\S 4.5]{re89}, which is a generalization of a similar result by Weil \cite{we64} on the Schwartz-Bruhat space $\mathcal{S}(G)$. 

In order to state the result we need the following fact, which easily follows from the uniqueness of the Haar measure up to a positive multiplicative constant. Let $G_{1}$ and $G_{2}$ be two locally compact abelian groups with fixed Haar measure. For any topological group isomorphism $\alpha:G_{1}\to G_{2}$, there exists a unique positive constant, denoted by $\vert \alpha\vert$, such that
\begin{equation} \label{eq:isomorphism-integral} \int_{G_{2}} f(x_{2} ) \, dx_{2} = \vert \alpha \vert \int_{G_{1}} f(\alpha (x_{1})) \, dx_{1}, \ \ \text{for all} \ \ f\in L^{1}(G_{2}). \end{equation}
The constant $\vert \alpha\vert$ is the \emph{modulus} of the isomorphism $\alpha$. One can show that $\vert \alpha^{-1} \vert = \vert \alpha \vert^{-1}$. 

\begin{theorem} \label{th:s0-invariant-under-some-automorphisms} Let $G_{1}$ and $G_{2}$ be locally compact abelian groups, and let $T$ be a linear and bounded bijection from $L^{2}(G_{1})$ onto $L^{2}(G_{2})$. If $T$ satisfies
\begin{equation} \label{eq:0611a} \pi(\chi_{2}) T = c(\chi_{2}) \, T \pi(\alpha(\chi_{2})) \ \ \text{for all } \chi_{2}\in G_{2}\times\ghat_{2}, \end{equation} for some topological group isomorphism $\alpha$ from $G_{2}\times\ghat_{2}$ onto $G_{1}\times\ghat_{1}$ and a mapping $c:G_{2}\times\ghat_{2} \to \mathbb{T}$, then the operator $T$ and its $L^{2}$-Hilbert space adjoint operator $T^{*}$ are Banach space isomorphisms between $\SO(G_{1})$ and $\SO(G_{2})$. In fact, for all $f_{1},g_{1}\in \SO(G_{1})$ and $f_{2},g_{2}\in \SO(G_{2})$, we have the equalities
\[ \begin{matrix*}[l]
& \Vert Tf_{1} \Vert_{\SO(G_{2}),(T^{*})^{-1}g_{1}} = \vert \alpha \vert^{-1} \, \Vert f_{1} \Vert_{\SO(G_{1}),g_{1}} \, , & \Vert T^{-1} f_{2} \Vert_{\SO(G_{1}),T^{*}g_{2}} = \vert \alpha \vert \, \Vert f_{2} \Vert_{\SO(G_{2}),g_{2}}, \\
& \Vert T^{*} f_{2} \Vert_{\SO(G_{1}),T^{-1}g_{2}} = \vert \alpha \vert \, \Vert f_{2} \Vert_{\SO(G_{2}),g_{2}} \, ,  & \Vert (T^{*})^{-1} f_{1} \Vert_{\SO(G_{2}),T g_{1}} = \vert \alpha \vert^{-1} \, \Vert f_{1} \Vert_{\SO(G_{1}),g_{1}}. 
\end{matrix*} \]
If, furthermore, $T$ is a unitary operator, then
\[ \Vert T f_{1} \Vert_{\SO(G_{2}),Tg_{1}} = \vert \alpha \vert^{-1} \Vert f_{1} \Vert_{\SO(G_{1}),g_{1}} \ \ \text{and} \ \ \Vert T^{*} f_{2} \Vert_{\SO(G_{1}),T^{*}g_{2}} = \vert \alpha \vert \Vert f_{2} \Vert_{\SO(G_{2}),g_{2}}. \]
\end{theorem}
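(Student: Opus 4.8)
The plan is to reduce all four norm identities to a single computation carried out first for $T$ and then recycled for $T^{*}$, $T^{-1}$ and $(T^{*})^{-1}$, each of which satisfies a relation of the same shape as \eqref{eq:0611a}. The heart of the matter is to evaluate the short-time Fourier transform of $Tf_{1}$ against the window $(T^{*})^{-1}g_{1}$. Using that $\pi(\chi_{2})$ is unitary on $L^{2}(G_{2})$, the rearrangement $\pi(\chi_{2})^{-1}T = c(\chi_{2})^{-1}T\pi(\alpha(\chi_{2}))^{-1}$ of \eqref{eq:0611a}, and $T^{*}(T^{*})^{-1}=\mathrm{id}$, I would compute
\[ \langle Tf_{1},\pi(\chi_{2})(T^{*})^{-1}g_{1}\rangle = c(\chi_{2})^{-1}\langle f_{1},\pi(\alpha(\chi_{2}))g_{1}\rangle = c(\chi_{2})^{-1}\,\mathcal{V}_{g_{1}}f_{1}(\alpha(\chi_{2})). \]
Since $|c(\chi_{2})|=1$, taking absolute values and integrating over $\chi_{2}\in G_{2}\times\ghat_{2}$, the change-of-variables formula \eqref{eq:isomorphism-integral} for the isomorphism $\alpha$ of modulus $|\alpha|$ gives
\[ \int_{G_{2}\times\ghat_{2}}\!\big|\mathcal{V}_{(T^{*})^{-1}g_{1}}(Tf_{1})(\chi_{2})\big|\,d\chi_{2} = |\alpha|^{-1}\!\int_{G_{1}\times\ghat_{1}}\!|\mathcal{V}_{g_{1}}f_{1}(\chi_{1})|\,d\chi_{1} = |\alpha|^{-1}\,\Vert f_{1}\Vert_{\SO(G_{1}),g_{1}}. \]

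The next point is the bootstrapping that avoids assuming in advance that $(T^{*})^{-1}g_{1}$ lies in $\SO(G_{2})$. Fixing $f_{1}\in\SO(G_{1})$ and $g_{1}\in\SO(G_{1})\backslash\{0\}$, the right-hand side above is finite, and $(T^{*})^{-1}g_{1}$ is a nonzero element of $L^{2}(G_{2})$ since $T^{*}$ is a bijection. Hence $Tf_{1}\in L^{2}(G_{2})$ has an integrable short-time Fourier transform with respect to \emph{some} nonzero $L^{2}$-window, so the characterization $\SO(G_{2})=\mathscr{B}$ from Theorem~\ref{th:s0-ABC} yields $Tf_{1}\in\SO(G_{2})$. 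Moreover, for $f_{1}\ne 0$, applying Corollary~\ref{co:f-in-s0-properties}(v) to the same integral shows that the window $(T^{*})^{-1}g_{1}$ itself belongs to $\SO(G_{2})$, so the left-hand side is a genuine $\SO$-norm and the first identity of the theorem is established.

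For the remaining three identities I would exploit that \eqref{eq:0611a} is stable under passing to adjoints and inverses. Taking Hilbert adjoints in \eqref{eq:0611a} and relabelling $\chi_{1}=\alpha(\chi_{2})$ shows that $T^{*}$ satisfies a relation of exactly the form \eqref{eq:0611a}, but with $\alpha$ replaced by $\alpha^{-1}$ (of modulus $|\alpha|^{-1}$) and $c$ replaced by a suitable $\mathbb{T}$-valued phase; likewise $T^{-1}$ and $(T^{*})^{-1}$ satisfy such relations with $\alpha^{-1}$ and $\alpha$, respectively. Applying the computation above verbatim to each of these operators, and simplifying the resulting windows via $T^{**}=T$ and $(T^{-1})^{*}=(T^{*})^{-1}$, produces the identities for $T^{-1}$, $T^{*}$ and $(T^{*})^{-1}$ with the factors $|\alpha|$, $|\alpha|$ and $|\alpha|^{-1}$ exactly as stated. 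The unitary case is then immediate: when $T^{*}=T^{-1}$ one has $(T^{*})^{-1}=T$, so the windows $(T^{*})^{-1}g_{1}$ and $T^{-1}g_{2}$ collapse to $Tg_{1}$ and $T^{*}g_{2}$. Finally, that $T$ and $T^{*}$ are Banach space isomorphisms follows by combining these identities with the equivalence of all $\SO$-norms (Proposition~\ref{pr:equivalent-norm-on-s0}): the identities bound $\Vert Tf_{1}\Vert_{\SO(G_{2}),g_{2}}$ by a constant multiple of $\Vert f_{1}\Vert_{\SO(G_{1}),g_{1}}$ for arbitrary fixed windows, and the identity for $T^{-1}$ supplies the bounded two-sided inverse.

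I expect the main obstacle to be conceptual rather than computational: correctly organizing the bootstrap so that $Tf_{1}\in\SO(G_{2})$ is deduced from the $\mathscr{B}$-characterization using the \emph{a priori} merely $L^{2}$ window $(T^{*})^{-1}g_{1}$, and then carefully tracking how the phase $c$ and the isomorphism $\alpha$ transform under adjoint and inverse so that the four windows and the powers of $|\alpha|$ emerge precisely as claimed.
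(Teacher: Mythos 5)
Your proposal is correct and follows essentially the same route as the paper: the key step in both is to pass the time-frequency shift through $(T^{*})^{-1}$ via the adjoint/inverse form of \eqref{eq:0611a}, reduce $\vert\mathcal{V}_{(T^{*})^{-1}g_{1}}(Tf_{1})(\chi_{2})\vert$ to $\vert\mathcal{V}_{g_{1}}f_{1}(\alpha(\chi_{2}))\vert$, integrate using the modulus $\vert\alpha\vert$, and invoke the $\mathscr{B}$-characterization from Theorem~\ref{th:s0-ABC}. Your explicit bootstrap via Corollary~\ref{co:f-in-s0-properties}(v) to confirm that the window $(T^{*})^{-1}g_{1}$ itself lies in $\SO(G_{2})$ is a small but welcome refinement that the paper leaves implicit.
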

\begin{proof} 
It is straightforward to recast \eqref{eq:0611a} as an equivalent statement for the (Hilbert space) adjoint operator $T^{*}$
\begin{equation} \label{eq:1012a} T^{*} \pi(\chi_{2})  = \tilde{c}(\chi_{2}) \pi(\alpha(\chi_{2})) T^{*} \ \ \text{for all } \chi_{2}\in G_{2}\times\ghat_{2}, \end{equation} 
where $\vert \tilde{c}(\chi_{2}) \vert = 1$ for all $\chi_{2} \in G_{2}\times \ghat_{2}$ and $\alpha$ is as in \eqref{eq:0611a}. 
From the relation in \eqref{eq:1012a}, we deduce the equalities
\begin{align*}
& \int_{G_{2}\times\ghat_{2}} \vert \langle Tf_{1}, \pi(\chi_{2}) (T^{*})^{-1} g_{1} \rangle \vert \, d\chi_{2} = \int_{G_{2}\times\ghat_{2}} \vert \langle f_{1}, T^{*} \pi(\chi_{2}) (T^{*})^{-1} g_{1} \rangle \vert \, d\chi_{2} \\
& = \int_{G_{2}\times\ghat_{2}} \vert \langle f_{1}, \tilde{c}(\chi_{2}) \pi(\alpha(\chi_{2})) g_{1} \rangle \vert \, d\chi_{2} = \vert \alpha \vert^{-1} \int_{G_{1}\times\ghat_{1}} \vert \langle f_{1}, \pi(\chi_{1})) g_{1} \rangle \vert \, d\chi_{1} < \infty.
\end{align*}
This shows that $\Vert Tf_{1} \Vert_{\SO(G_{2}),(T^{*})^{-1}g_{1}} = \vert \alpha \vert^{-1} \, \Vert f_{1} \Vert_{\SO(G_{1}),g_{1}}$. Via the characterization $\SO(G)=\mathscr{B}$ from Theorem~\ref{th:s0-ABC}, we conclude that $T$ maps $\SO(G_{1})$ into $\SO(G_{2})$. Similar calculations for the operators $T^{-1}$, $T^{*}$ and $(T^{*})^{-1}$ yield the desired equalities and $T$ and $T^{*}$ are thus Banach space isomorphisms. The furthermore part follows easily.\end{proof}

Note that Theorem~\ref{th:s0-invariant-under-some-automorphisms} states sufficient conditions for an operator to be a Banach space isomorphism of $\SO$. One example of a Banach space isomorphism from $\SO(G)$ onto itself which does not satisfy 
\eqref{eq:0611a} is the so-called Gabor frame operator, see 
Example~\ref{ex:unitaries-on-S0}(ix) below.

\begin{example} \label{ex:unitaries-on-S0}
(i). Consider the unitary \emph{time-frequency shift} operator
\[ \pi(x,\omega) : L^{2}(G) \to L^{2}(G), \pi(x,\omega) f(s) = \omega(s) f(s-x), \, s\in G,\ (x,\omega)\in G\times \ghat.\] It satisfies \eqref{eq:0611a} with $\alpha$ being the identity on $G\times\ghat$ and $\vert \alpha \vert = 1$. Hence $\pi(x,\omega)$ for any $(x,\omega)\in G\times \ghat$ is a Banach space isomorphism on $\SO(G)$.  \\
(ii). (\hspace{1sp}\cite{lo80,Feichtinger1981}) Let $\gamma$ be a topological group \emph{isomorphism} from $G_{2}$ onto $G_{1}$ and define the operator
\[ U_{\gamma}:L^{2}(G_{1})\to L^{2}(G_{2}),\ U_{\gamma}f(x_{2}) = 
\vert \gamma \vert^{1/2} 
f(\gamma(x_{2})), \ \ x_{2}\in G_{2}.\]
One easily shows that $U_{\gamma}$ is a unitary operator with $(U_{\gamma})^{-1} = U_{\gamma^{-1}}$.
Moreover, $U_{\gamma}$ satisfies \eqref{eq:0611a} with $\alpha:G_{2}\times\ghat_{2}\to G_{1}\times\ghat_{1}, (x,\omega)\mapsto (\gamma(x),\omega\circ \gamma^{-1})$ and $\vert \alpha \vert = 1$. This shows that any isomorphism from $G_{2}$ onto $G_{1}$ induces a Banach space isomorphism from $\SO(G_{1})$ onto $\SO(G_{2})$ via $U_{\gamma}$. In particular, $\SO(G)$ is invariant under group automorphisms. As two special examples of such operators we mention (a) the \emph{asymmetric coordinate transform} and (b) the \emph{dilation operator}. Concerning (a), the asymmetric coordinate transform $\tau_{a}f(x,s)= f(s,s-x)$, $s,x\in G$, is induced by the automorphism $\gamma:(x,s)\mapsto(s,s-x)$ on $G\times G$. It therefore is a Banach space isomorphism from $\SO(G\times G)$ onto itself. We used $\tau_{a}$ in Section~\ref{sec:prelim} to define the short-time Fourier transform. We will use this knowledge in Theorem \ref{th:stft-and-s0} to show that the short-time Fourier transform is a mapping from $\SO(G)$ into $\SO(G\times\ghat)$.\\
(b). For $G=\R^d$ the unitary \emph{dilation operator} 
\[ D_{C}: L^{2}(\R^d) \to L^{2}(\R^d), \ D_{C}f(x) = \sqrt{\vert \det \, C \vert} \, f(Cx), \ \ C\in \text{GL}_{\R}(d), \ x\in \R^d\]
is induced by the automorphism $\gamma: x\mapsto C x$.
The dilation operator satisfies \eqref{eq:0611a} with $\alpha: \R^{d} \times \widehat{\R}^{d} \to \R^{d} \times \widehat{\R}^{d}, \ \alpha(x,\omega) = (C x,(C^{\top})^{-1} \omega)$ and $\vert \alpha \vert = 1$. Therefore the dilation operator $D_{C}$ is a Banach space isomorphism from $\SO(\R^d)$ onto itself. \\
(iii). (\hspace{1sp}\cite{Feichtinger1981}) The \emph{Fourier transform} $\mathcal{F}$ is a unitary operator from $L^{2}(G)$ onto $L^{2}(\ghat)$ and satisfies \eqref{eq:0611a} with $\alpha: \ghat\times G \to G \times \ghat,\ \alpha(\omega,x) = (-x,\omega)$. One can check that $\vert \alpha \vert = 1$.
Therefore $\mathcal{F}$ is a Banach space isomorphism from $\SO(G)$ onto $\SO(\ghat)$. \\
(iv). (\hspace{1sp}\cite{Feichtinger1981}) The \emph{partial Fourier transform} $\mathcal{F}_{2}$ is a unitary operator from $L^{2}(G_{1} \times G_{2})$ onto $L^{2}(G_{1}\times \ghat_{2})$. On functions in $L^{1}(G\times G)$, it is given by
\[ \textstyle\mathcal{F}_{2}f(x,\omega) = \int_{G} f(x,t) \, \overline{\omega(t)} \, dt, \ (x,\omega)\in G\times\ghat.\]
The partial Fourier transform satisfies \eqref{eq:0611a} with 
\[ \alpha : G_{1}\times \ghat_{2} \times \ghat_{1} \times G_{2} \to G_{1} \times G_{2} \times \ghat_{1} \times \ghat_{2}, \ (\lambda,\gamma, \xi,t)\mapsto (\lambda,-t,\xi,\gamma) \] 
and $\vert \alpha \vert = 1$. Therefore, the partial Fourier transform $\mathcal{F}_{2}$ is a Banach space isomorphism from $\SO(G_{1}\times G_{2})$ onto $\SO(G_{1}\times \ghat_{2})$. A similar statement holds for the partial Fourier transform $\mathcal{F}_{1}$. \\
(v). (\hspace{1sp}\cite{feko98}) For functions in $L^{1}(G\times \ghat)$ we define the \emph{symplectic Fourier transform} as
\[ \mathcal{F}_{s}f(x,\omega) = \int_{G} \int_{\ghat} f(t,\xi) \overline{\omega(t)} \xi(x) \, d\xi \, dt, \ \ (x,\omega) \in G\times\ghat. \]
As with the ordinary Fourier transform, also the symplectic Fourier transform can be extended by continuity from the intersection of $L^{1}$ and $L^{2}$ to a unitary operator on $L^{2}(G\times \ghat)$. The symplectic Fourier transform satisfies \eqref{eq:0611a} with
\[ \alpha: G\times \ghat \times \ghat\times G \to G\times \ghat \times \ghat\times G, \ (\lambda,\gamma,\xi,t) \mapsto (-t,\xi,\gamma,-\lambda) \]
and $\vert \alpha \vert = 1$. Thus $\mathcal{F}_{s}$ is a Banach space isomorphism from $\SO(G\times\ghat)$ onto $\SO(G\times \ghat)$. \\
(vi). (\hspace{1sp}\cite{Feichtinger1981}) Let $\psi$ be a \emph{second degree character} on $G$, \ie a continuous function $\psi: G\to \mathbb{T}$ such that
\[ \psi(x+y) = \psi(x) \cdot \psi(y) \cdot B(x,y) \ \ \text{for all} \ \ x,y\in G,\]
where $B(x,y) = (\rho(y))(x)$ for some topological group homomorphism $\rho:G\to\ghat$. $B$ is a so-called  bicharacter on $G$ \cite{re78}. We then define multiplication by a second degree character  
\[ U_{\psi}: L^{2}(G)\to L^{2}(G), \, U_{\psi}f(x) = \psi(x) f(x).\]
With the above definitions one can show that
\[ \pi(x,\omega) U_{\psi} = \psi(-x) U_{\psi} \pi(x,\omega-\rho(x)) \ \ \text{for all} \ \ (x,\omega)\in G\times\ghat.\] 
This shows that the operator $U_{\psi}$ satisfies \eqref{eq:0611a} with $\alpha: G\times \ghat \to G\times\ghat$ given by $\alpha(x,\omega) = (x,\omega-\rho(x))$, and one finds that $\vert \alpha\vert = 1$. We conclude that multiplication by a second degree character is a Banach space isomorphisms from $\SO(G)$ onto itself. In particular, for $G=\R^d$ this shows that $\SO(\R^d)$ is invariant under chirp-multiplication
\[ U_{M}: L^{2}(\R^d) \to L^{2}(\R^d), \ (U_{M}f)(x) = e^{\pi i \langle x, M x \rangle} f(x),\]
where $M$ is a real-valued $d\times d$-matrix. \\
(vii). The examples in (ii), (iii) and (vi) show that the dilation operator, the Fourier transform and the chirp-multiplication operator are Banach space isomorphisms on $\SO(\R^d)$. Since any \emph{metaplectic operator} on $L^{2}(\R^d)$ is a finite composition of these operators any metaplectic operator is a Banach space isomorphism from $\SO(\R^d)$ onto itself. In particular the fractional Fourier transform is a Banach space isomorphisms on $\SO(\R)$. Symplectic matrices and metaplectic operators are important in time-frequency analysis and physics. For more on this we refer to \cite{fo89,go11}. Concerning the theory of metaplectic operators on locally compact abelian groups, we refer the interested reader to \cite{we64,ca64-1,re78,re89}.\\
(viii). As a toy example the reader may verify that also the \emph{complex conjugation, reflection, and involution} satisfy \eqref{eq:0611a}. Of course, since those operators are self-inverse and \eqref{eq:2711a} shows that they are linear and bounded on $\SO(G)$, it already follows that they are isomorphisms of the Feichtinger algebra.   \\
(ix). In this final example we consider the Gabor frame operator. Given a function $g\in \SO(G)$ and a closed subgroup $\Delta\subseteq G\times\ghat$ we say that the \emph{Gabor system} $\mathcal{G}(g,\Delta)=\{ \pi(\nu) g \}_{\nu\in\Delta}$ is a \emph{Gabor frame} for $L^{2}(G)$ if there exists constant $A,B>0$ such that
\[ A \, \Vert f \Vert_{2}^{2} \le \int_{\Delta} \vert \langle f, \pi(\nu) g \rangle \vert^{2} \, d\mu_{\Delta}(\nu) \le B \, \Vert f \Vert_{2}^{2} \ \ \text{for all} \ \ f\in L^{2}(G).\]
If $\mathcal{G}(g,\Delta)$ is a Gabor frame for $L^{2}(G)$, then the \emph{Gabor frame operator} $S_{g,\Delta}$, given weakly by 
\[ \langle S_{g,\Delta}f_{1},f_{2}\rangle = \int_{\Delta} \langle f_{1}, \pi(\nu) g \rangle \langle \pi(\nu) g , f_{2}\rangle \, d\mu_{\Delta} (\nu) \ \ \text{for all} \ \ f_{1},f_{2}\in L^{2}(G),\]
is a linear, bounded, self-adjoint, and invertible operator on $L^{2}(G)$.  
The Gabor frame operator $S_{g,\Delta}$ does not satisfy \eqref{eq:0611a}. However, it does satisfy the following, similar, relationships with time-frequency shifts:
\begin{enumerate}[(a)]
\item $\pi(\chi) S_{g,\Delta} = S_{\pi(\chi)g,\Delta} \pi(\chi)$ for all $\chi\in G\times\ghat$,
\item $\pi(\chi) S_{g,\Delta} = S_{g,\Delta} \pi(\chi)$ for all $\chi\in \Delta$.
\end{enumerate}
One can show that $S_{g,\Delta}$ maps $\SO(G)$ into $\SO(G)$. Surprisingly, it is in fact true (and much more difficult to prove) that $S_{g,\Delta}$ also maps \emph{onto} $\SO(G)$. This is a remarkable and celebrated result by Gr\"ochenig and Leinert \cite{grle04}. For more on this result we refer the interested reader to \cite{fegr97,ba06,bacahela06,bacahela06-1,gr07-2} and Follands review paper \cite{fo06-1}. 
For more on frame theory and aspects of Gabor analysis we refer to the books \cite{ch16newbook,MR1843717,he11}. 

\end{example}

\subsection{The short-time Fourier transform on $\SO$}

As mentioned in Section \ref{sec:prelim}, for $h\in L^{2}(G)$ the short-time Fourier transform 
\[ \mathcal{V}_{h} : f \mapsto \mathcal{F}_{2}\tau(f\otimes\overline{h}), \ \ \mathcal{F}_{2}\tau_{a}(f\otimes\overline{h})(x,\omega) = \langle f,E_{\omega}T_{x}h\rangle, \ (x,\omega)\in G\times\ghat, \]
is a linear and bounded operator from $L^{2}(G)$ into $L^{2}(G\times\ghat)$.
By the characterization of $\SO(G)$ via the set $\mathscr{H}$ it follows that, if $h\in \SO(G)$, then $\mathcal{V}_{h}$ is a linear and bounded operator from $\SO(G)$ into $L^{1}(G\times\ghat)$. In this section we show that $\mathcal{V}_{h}$, $h\in \SO(G)$, is, in fact, a linear and bounded operator from $\SO(G)$ into $\SO(G\times\ghat)$.

\begin{theorem} \label{th:stft-and-s0} Let $G,G_{1}$ and $G_{2}$ be locally compact abelian groups.
\begin{enumerate}[(i)] 
\item The tensor product
\[ \otimes : \SO(G_{1})\times \SO(G_{2})\to \SO(G_{1}\times G_{2}), \ ( f_{1}\otimes f_{2}) (x_{1},x_{2}) = f_{1}(x_{1})\cdot f_{2}(x_{2})\]
is a bilinear and bounded operator. Specifically, for all $f_{i}\in \SO(G_{i})$ and for $g_{i}\in \SO(G_{i})\backslash\{0\}$, $i=1,2$, 
\begin{equation} \label{eq:0403} \Vert f_{1} \otimes f_{2} \Vert_{\SO(G_{1}\times G_{2}),g_{1}\otimes g_{2}} = \Vert f_{1} \Vert_{\SO(G_{1}),g_{1}} \, \Vert f_{2} \Vert_{\SO(G_{2}),g_{2}}.\end{equation}
\item For any $h\in \SO(G)$ the short-time Fourier transform
\[ \mathcal{V}_{h} : \SO(G) \to \SO(G\times \ghat), \ \mathcal{V}_{h}f(x,\omega) = \mathcal{F}_{2} \tau_{a} (f\otimes \overline{h})(x,\omega) = \langle f,E_{\omega}T_{x} h\rangle \]
is a linear and bounded operator. Specifically, for all $f,h\in \SO(G)$ and $g_{1},g_{2}\in \SO(G)\backslash\{0\}$, 
\[ \Vert \mathcal{V}_{h}f \Vert_{\SO(G\times\ghat), \mathcal{V}_{g_{2}}g_{1}} = \Vert f \Vert_{\SO(G),g_{1}} \, \Vert h \Vert_{\SO(G),g_{2}}.\]
\item[(iii)] For any non-zero $h\in \SO(G)$ the $L^{2}$-Hilbert space adjoint operator of $\mathcal{V}_{h}$, $\mathcal{V}_{h}^{*}:L^{2}(G\times\ghat)\to L^{2}(G)$ extends from $L^{1}(G\times\ghat)\cap L^{2}(G\times\ghat)$ to a linear and bounded operator from $L^{1}(G\times\ghat)$ onto $\SO(G)$. In particular, 
\[ \Vert \mathcal{V}_{h}^{*}F \Vert_{\SO(G),g} \le \Vert h \Vert_{\SO(G),g} \, \Vert F \Vert_{1} \ \ \text{for all} \ \ F\in L^{1}(G\times\ghat).\]
\end{enumerate} 
\end{theorem}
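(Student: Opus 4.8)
The plan is to establish (i), (ii), (iii) in that order, leaning on the structural results already available; the real content lies in (iii). For (i) I would exploit the multiplicativity of the short-time Fourier transform on tensors: under the identification $\widehat{G_{1}\times G_{2}}\cong\ghat_{1}\times\ghat_{2}$ the inner product splits, so that $\mathcal{V}_{g_{1}\otimes g_{2}}(f_{1}\otimes f_{2})(\chi_{1},\chi_{2})=\mathcal{V}_{g_{1}}f_{1}(\chi_{1})\cdot\mathcal{V}_{g_{2}}f_{2}(\chi_{2})$. Since the modulus is a nonnegative product, Tonelli's theorem yields
\[ \Vert f_{1}\otimes f_{2}\Vert_{\SO,g_{1}\otimes g_{2}}=\Vert\mathcal{V}_{g_{1}}f_{1}\Vert_{1}\,\Vert\mathcal{V}_{g_{2}}f_{2}\Vert_{1}=\Vert f_{1}\Vert_{\SO,g_{1}}\,\Vert f_{2}\Vert_{\SO,g_{2}}, \]
which is \eqref{eq:0403}; finiteness, together with $g_{1}\otimes g_{2}\in L^{2}(G_{1}\times G_{2})\setminus\{0\}$, shows $f_{1}\otimes f_{2}\in\mathscr{B}=\SO(G_{1}\times G_{2})$ via Theorem~\ref{th:s0-ABC}.

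For (ii) I would factor $\mathcal{V}_{h}=U\circ(\,\cdot\,\otimes\overline{h})$, where $U=\mathcal{F}_{2}\tau_{a}$ is a unitary operator from $L^{2}(G\times G)$ onto $L^{2}(G\times\ghat)$. Since $\tau_{a}$ (Example~\ref{ex:unitaries-on-S0}(ii)) and $\mathcal{F}_{2}$ (Example~\ref{ex:unitaries-on-S0}(iv)) each satisfy \eqref{eq:0611a} with modulus $1$, so does their composition $U$, and the unitary case of Theorem~\ref{th:s0-invariant-under-some-automorphisms} gives $\Vert U\phi\Vert_{\SO,U\psi}=\Vert\phi\Vert_{\SO,\psi}$ for $\phi,\psi\in\SO(G\times G)$. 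Choosing $\phi=f\otimes\overline{h}$ and $\psi=g_{1}\otimes\overline{g_{2}}$, noting $U\psi=\mathcal{V}_{g_{2}}g_{1}$, and combining part (i) with the identity $\Vert\overline{h}\Vert_{\SO,\overline{g_{2}}}=\Vert h\Vert_{\SO,g_{2}}$ from \eqref{eq:2711a} then delivers the asserted norm equality.

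For (iii) I start from $\langle\mathcal{V}_{h}^{*}F,f\rangle=\int_{G\times\ghat}F(\chi)\,\langle\pi(\chi)h,f\rangle\,d\chi$, so that $\mathcal{V}_{h}^{*}$ acts weakly by $F\mapsto\int F(\chi)\pi(\chi)h\,d\chi$. I would first record that $\Vert\pi(\chi)h\Vert_{\SO,g}=\Vert h\Vert_{\SO,g}$ for every $\chi$ (from \eqref{eq:0812a}, or from $\vert\mathcal{V}_{g}(\pi(\chi)h)\vert=\vert\mathcal{V}_{g}h(\,\cdot-\chi)\vert$ and translation invariance of the Haar measure) and that $\chi\mapsto\pi(\chi)h$ is continuous from $G\times\ghat$ into $\SO(G)$ (combine the two parts of Lemma~\ref{le:time-freq-shift-continuous-on-SO} with this norm invariance). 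Then, for $F\in L^{1}(G\times\ghat)$, the map $\chi\mapsto F(\chi)\pi(\chi)h$ is Bochner integrable into $\SO(G)$ since $\int\Vert F(\chi)\pi(\chi)h\Vert_{\SO,g}\,d\chi=\Vert h\Vert_{\SO,g}\Vert F\Vert_{1}<\infty$, and its Bochner integral $B\in\SO(G)$ obeys $\Vert B\Vert_{\SO,g}\le\Vert h\Vert_{\SO,g}\Vert F\Vert_{1}$. The decisive step is to identify $B$ with the adjoint: for $F\in L^{1}\cap L^{2}$ the functional $\langle\,\cdot\,,f\rangle$ passes through the Bochner integral, giving $\langle B,f\rangle=\int F(\chi)\langle\pi(\chi)h,f\rangle\,d\chi=\langle\mathcal{V}_{h}^{*}F,f\rangle$ for all $f\in L^{2}(G)$, whence $B=\mathcal{V}_{h}^{*}F$ and the bound holds on $L^{1}\cap L^{2}$; density of $L^{1}\cap L^{2}$ in $L^{1}$ then extends $\mathcal{V}_{h}^{*}$ uniquely to $L^{1}(G\times\ghat)$ with the same estimate. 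For surjectivity I would take $g\in\SO(G)$ with $\langle h,g\rangle\ne0$ (say $g=h$), note $\mathcal{V}_{g}f\in L^{1}\cap L^{2}$ for $f\in\SO(G)$ by Proposition~\ref{pr:s0-Vgf-well-defined} and \eqref{eq:STFT-dual}, and invoke the reproducing identity $\mathcal{V}_{h}^{*}\mathcal{V}_{g}=\langle h,g\rangle\,\mathrm{Id}$ to obtain $\mathcal{V}_{h}^{*}\big(\langle h,g\rangle^{-1}\mathcal{V}_{g}f\big)=f$.

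The main obstacle is precisely this identification of the $L^{2}$-Hilbert space adjoint with the $\SO$-valued Bochner integral and the justification of the interchange of functional and integral; the only delicate point there is the Bochner measurability of $\chi\mapsto F(\chi)\pi(\chi)h$, which is handled by the continuity of $\chi\mapsto\pi(\chi)h$ into $\SO(G)$. Once this is in place, both the norm bound and the surjectivity follow formally.
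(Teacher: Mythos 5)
Your proof is correct. Parts (i) and (ii) follow the paper's own argument essentially verbatim: the factorization of $\mathcal{V}_{g_{1}\otimes g_{2}}(f_{1}\otimes f_{2})$ together with Tonelli for (i), and the unitary case of Theorem~\ref{th:s0-invariant-under-some-automorphisms} applied to $\mathcal{F}_{2}\tau_{a}$ for (ii). Part (iii) is where you diverge: the paper never forms a vector-valued integral, but instead verifies membership of $\mathcal{V}_{h}^{*}F$ in $\mathscr{B}$ directly, computing $\int_{G\times\ghat}\vert\langle \mathcal{V}_{h}^{*}F,\pi(\chi)g\rangle\vert\,d\chi=\int\vert\langle F,\mathcal{V}_{h}\pi(\chi)g\rangle\vert\,d\chi$ and bounding it by $\Vert F\Vert_{1}\Vert h\Vert_{\SO,g}$ via a purely scalar Fubini--Tonelli interchange and translation invariance of the $\SO$-norm. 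Your Bochner-integral route reaches the same bound and additionally packages the convergence of $\int F(\chi)\pi(\chi)h\,d\chi$ as norm convergence in $\SO(G)$ (something the paper only recovers later, in a weak sense, in Remark~\ref{rem:adjoint-Vgf-as-integral} and Lemma~\ref{le:STFT-on-SOprime}). The price is the strong measurability of $\chi\mapsto F(\chi)\pi(\chi)h$: continuity of $\chi\mapsto\pi(\chi)h$ gives weak measurability, but for Bochner integrability you also need the function to be essentially separably valued, and for a general locally compact abelian group $G\times\ghat$ need not be $\sigma$-compact or separable. This is repairable --- by inner regularity, $F\in L^{1}(G\times\ghat)$ vanishes outside a $\sigma$-compact set up to a null set, so the range is essentially contained in the separable continuous image of that set --- but you should say so explicitly; the paper's scalar estimate sidesteps the issue entirely. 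The identification $B=\mathcal{V}_{h}^{*}F$ by pairing against $L^{2}$, the density extension, and the surjectivity via $\mathcal{V}_{h}^{*}\mathcal{V}_{g}=\langle h,g\rangle\,\mathrm{Id}$ all match the paper and are sound.
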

\begin{proof}
(i). Note that $\langle f_{1}\otimes f_{2}, E_{(\omega_{1},\omega_{2})} T_{(x_{1},x_{2})} (g_{1} \otimes g_{2})\rangle = \langle f_{1}, E_{\omega_{1}}T_{x_{1}} g_{1}\rangle \, \langle f_{2}, E_{\omega_{2}}T_{x_{2}} g_{2}\rangle$.
By use of this equality we easily show that
\begin{align*}
& \quad \ \Vert f_{1}\otimes f_{2}\Vert_{\SO(G_{1}\times G_{2}),g_{1} \otimes g_{2}} \\
& = \int_{G_{1}\times G_{2}\times \ghat_{1}\times \ghat_{2}} \vert \langle f_{1} \otimes f_{2}, E_{(\omega_{1},\omega_{2})} T_{(x_{1},x_{2})} (g_{1}\otimes g_{2} )\rangle \vert \, d(x_{1},x_{2},\omega_{1},\omega_{2}) \\
& = \bigg( \int_{G_{1}\times \ghat_{1}} \vert \langle f_{1}, E_{\omega_{1}} T_{x_{1}} g_{1} \rangle \vert \, d(x_{1},\omega_{1}) \bigg) \, \bigg( \int_{G_{2}\times \ghat_{2}} \vert \langle f_{2}, E_{\omega_{2}} T_{x_{2}} g_{2} \rangle \vert \, d(x_{2},\omega_{2}) \bigg) \\
& = \Vert f_{1} \Vert_{\SO(G_{1}),g_{1}} \, \Vert f_{2} \Vert_{\SO(G_{2}),g_{2}}.
\end{align*}
(ii). In Example~\ref{ex:unitaries-on-S0} we showed that the partial Fourier transform $\mathcal{F}_{2}$ and the asymmetric coordinate transform $\tau_{a}$ are Banach space isomorphisms of the Feichtinger algebra. 
We just proved that the tensor product operator maps $\SO(G)$ into $\SO(G\times G)$. 
Therefore $\mathcal{V}_{h}:f \mapsto \mathcal{F}_{2}\tau_{a} (f\otimes \overline{h})$ maps $\SO(G)$ into $\SO(G\times \ghat)$. By virtue of Theorem~\ref{th:s0-invariant-under-some-automorphisms} and 
Example~\ref{ex:unitaries-on-S0} we establish the equalities
\begin{align*}
& \Vert \mathcal{V}_{h} f \Vert_{\SO(G\times \ghat),\mathcal{V}_{g_{2}}g_{1}} = \Vert \mathcal{F}_{2} \tau_{a} (f\otimes \overline{h})  \Vert_{\SO(G\times \ghat),\mathcal{F}_{2} \tau_{a} g_{1}\otimes \overline{g_{2}}} = \Vert \tau_{a} (f\otimes \overline{h}) \Vert_{\SO(G\times G), \tau_{a} g_{1}\otimes \overline{g_{2}}} \\
& = \Vert f\otimes \overline{h} \Vert_{\SO(G\times G), g_{1}\otimes \overline{g_{2}}} = \Vert f \Vert_{\SO(G), g_{1}} \,  \Vert \overline{h} \Vert_{\SO(G), \overline{g_{2}}} = \Vert f \Vert_{\SO(G), g_{1}} \,  \Vert h \Vert_{\SO(G), g_{2}}.
\end{align*}
(iii).  Let $F$ be a function in $L^{1}(G\times\ghat)\cap L^{2}(G\times\ghat)$ and fix some non-zero function $g\in \SO(G)$. Using the definition of the $L^{2}$-Hilbert space adjoint operator $\mathcal{V}_{h}^{*}$ we establish 
\begin{align*}
& \int_{G\times\ghat} \vert \langle \mathcal{V}_{h}^{*}F, \pi(\chi)g\rangle \vert \, d\chi = \int_{G\times\ghat} \vert \langle F, \mathcal{V}_{h}\pi(\chi)g\rangle \vert \, d\chi \\ 
& = \int_{G\times\ghat} \Big\vert \int_{G\times\ghat} F(\widetilde{\chi}) \, \langle \pi(\widetilde{\chi})h, \pi(\chi) g\rangle \, d\widetilde{\chi} \, \Big\vert \, d\chi \le \int_{G\times\ghat} \, \vert F(\widetilde{\chi}) \vert \int_{G\times\ghat} \vert \langle \pi(\widetilde{\chi})h, \pi(\chi) g\rangle \vert \, d\chi \, d\widetilde{\chi} \\
& = \int_{G\times\ghat} \, \vert F(\widetilde{\chi}) \vert \, \Vert \pi(\widetilde{\chi}) h \Vert_{\SO,g} \, d\widetilde{\chi} = \Vert F \Vert_{1} \, \Vert h \Vert_{\SO,g} < \infty.
\end{align*}
This shows that $\mathcal{V}_{h}^{*}F\in \SO(G)$ and that $\Vert \mathcal{V}_{h}^{*}F \Vert_{\SO,g} \le \Vert h \Vert_{\SO,g} \, \Vert F \Vert_{1}$. Since $L^{1}(G\times\ghat)\cap L^{2}(G\times\ghat)$ is dense in $L^{1}(G\times\ghat)$ the operator $\mathcal{V}_{h}^{*}$ extends to a linear and bounded operator from all of $L^{1}(G\times\ghat)$ into $\SO(G)$. That the operator is onto can be shown in the following way. Let $f$ be any function in $\SO(G)$ and take $g\in \SO(G)$ such that $\langle h,g\rangle = 1$. It follows from \eqref{eq:STFT-dual} that $\mathcal{V}_{h}^{*}\mathcal{V}_{g} f = f$.  
\end{proof}

\begin{remark} \label{rem:adjoint-Vgf-as-integral}
We will show later, in Lemma \ref{le:STFT-on-SOprime}, that $\mathcal{V}_{h}^{*} F $, $F\in L^{1}(G\times\ghat)$, $h\in \SO(G)$, is the unique element in $\SO(G)$ such that
\[ (\mathcal{V}_{h}^{*} F,\sigma)_{\SO,\SOprime} = \int_{G\times\ghat} F(\chi) \, (\pi(\chi)h,\sigma)_{\SO,\SOprime} \, d\chi \ \ \text{for all} \ \ \sigma\in \SOprime(G).\]
In that sense, we write $\mathcal{V}_{h}^{*} F = \int_{G\times\ghat} F(\chi) \, \pi(\chi)h \, d\chi$.
\end{remark}

The norm-equality in Theorem~\ref{th:stft-and-s0}(ii) immediately yields the following result.

\begin{corollary} \label{cor:SO-char-Vgf-in-SO} Let $g\in \SO(G)\backslash\{0\}$ be given. Then
\begin{enumerate}[(i)]
\item the mapping $f\mapsto \Vert \mathcal{V}_{g}f \Vert_{\SO(G\times\ghat), \mathcal{V}_{g}g}$ defines a norm on $\SO(G)$ which is equivalent to $\Vert \cdot \Vert_{\SO(G),g}$,
\item $f\in L^{2}(G)$ belongs to $\SO(G)$ if and only if $\mathcal{V}_{g}f \in \SO(G\times\ghat)$,
\item $f\in L^{2}(G)$ belongs to $\SO(G)$ if and only if $\mathcal{V}_{f}f \in \SO(G\times\ghat)$.
\end{enumerate}

\end{corollary}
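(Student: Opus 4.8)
The plan is to read off everything from the norm-equality in Theorem~\ref{th:stft-and-s0}(ii),
\[ \Vert \mathcal{V}_{h}f \Vert_{\SO(G\times\ghat), \mathcal{V}_{g_{2}}g_{1}} = \Vert f \Vert_{\SO(G),g_{1}} \, \Vert h \Vert_{\SO(G),g_{2}}, \]
specialised to suitable windows, together with the inclusion $\SO(G\times\ghat)\subseteq L^{1}(G\times\ghat)$ from Corollary~\ref{co:f-in-s0-properties}(iii) (applied to the group $G\times\ghat$). For part (i) I would set $h=g_{1}=g_{2}=g$. First note that $\mathcal{V}_{g}g$ is a legitimate window, since $\mathcal{V}_{g}g\in\SO(G\times\ghat)$ by Theorem~\ref{th:stft-and-s0}(ii) and $\mathcal{V}_{g}g(0,0)=\Vert g\Vert_{2}^{2}>0$ forces $\mathcal{V}_{g}g\neq 0$. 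The specialised identity then reads $\Vert \mathcal{V}_{g}f\Vert_{\SO(G\times\ghat),\mathcal{V}_{g}g}=\Vert g\Vert_{\SO,g}\,\Vert f\Vert_{\SO,g}$, so the mapping in question is simply the fixed positive scalar $\Vert g\Vert_{\SO,g}$ times $\Vert\cdot\Vert_{\SO,g}$. A positive multiple of a norm is again a norm, and equivalence is immediate with both constants equal to $\Vert g\Vert_{\SO,g}$.

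For part (ii) the forward implication is again Theorem~\ref{th:stft-and-s0}(ii) with $h=g$: any $f\in\SO(G)$ has $\mathcal{V}_{g}f\in\SO(G\times\ghat)$. For the converse I would start from $f\in L^{2}(G)$ with $\mathcal{V}_{g}f\in\SO(G\times\ghat)$; the inclusion $\SO(G\times\ghat)\subseteq L^{1}(G\times\ghat)$ gives $\int_{G\times\ghat}\vert\mathcal{V}_{g}f(\chi)\vert\,d\chi<\infty$, which is exactly the defining condition of the set $\mathscr{H}$ (recall $g\in\SO(G)\backslash\{0\}$ is fixed), whence $f\in\mathscr{H}=\SO(G)$ by Theorem~\ref{th:SO-six-def}. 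Part (iii) follows the identical pattern with the window $h=f$ and the set $\mathscr{E}$ replacing $\mathscr{H}$: if $f\in\SO(G)$ then Theorem~\ref{th:stft-and-s0}(ii) yields $\mathcal{V}_{f}f\in\SO(G\times\ghat)$ (the case $f=0$ being trivial), while conversely $f\in L^{2}(G)$ with $\mathcal{V}_{f}f\in\SO(G\times\ghat)\subseteq L^{1}(G\times\ghat)$ means $f\in\mathscr{E}=\SO(G)$.

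I do not expect a genuine obstacle here, as the statement is engineered to be an immediate consequence of Theorem~\ref{th:stft-and-s0}(ii). The only point requiring a little care is that the reverse implications in (ii) and (iii) presuppose $f\in L^{2}(G)$, so that $\mathcal{V}_{g}f$ and $\mathcal{V}_{f}f$ are defined as honest $L^{2}(G\times\ghat)$-functions in the first place; granting this, the argument is purely a matter of invoking $\SO(G\times\ghat)\subseteq L^{1}(G\times\ghat)$ to land in the previously established characterizations $\mathscr{H}$ and $\mathscr{E}$. Hence no delicate estimate is needed beyond what Theorem~\ref{th:stft-and-s0} already supplies.
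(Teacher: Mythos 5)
Your proof is correct and follows exactly the route the paper intends: the paper states that Corollary~\ref{cor:SO-char-Vgf-in-SO} is an immediate consequence of the norm-equality in Theorem~\ref{th:stft-and-s0}(ii), and your argument simply spells out that deduction, using the inclusion $\SO(G\times\ghat)\subseteq L^{1}(G\times\ghat)$ and the characterizations $\mathscr{H}$ and $\mathscr{E}$ from Theorem~\ref{th:SO-six-def} for the reverse implications. No gaps; the observation that $\mathcal{V}_{g}g\neq 0$ (so that it is an admissible window) is the only detail worth making explicit, and you did.
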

\begin{remark} Let $f,g\in L^{2}(G)$. The characterization of $\SO(G)$ via the set $\mathscr{B}$ and $\mathscr{H}$ together with Corollary~\ref{cor:SO-char-Vgf-in-SO} implies that $\mathcal{V}_{g}f \in L^{1}(G\times\ghat)$ if and only if $\mathcal{V}_{g}f \in \SO(G\times\ghat)$. \end{remark}

\subsection{The restriction, periodization and extension operator on $\SO$}
\label{sec:restriction} 
 
In order to formulate and prove the results of this section we need to recall some theory which can be found in, e.g., \cite{fo95,MR1802924}. If $H$ is a closed subgroup of a locally compact abelian group $G$, then $H$ with the subspace topology, as well as the quotient group $G/H$ with the quotient topology are also locally compact abelian groups. Given some Haar measure $\mu_{G}$ on $G$ and $\mu_{H}$ on $H$, then there is a unique Haar measure $\mu_{G/H}$ on $G/H$ such that
\begin{equation}
\label{eq:0202a}
 \int_{G} f(x) \, d\mu_{G}(x) = \int_{G/H} \, \int_{H} f(x+h) \, d\mu_{H}(h) \, d\mu_{G/H}(\dot{x})
\end{equation}
for all $f\in L^{1}(G)$ and where $\dot{x}=x+H\in G/H$, $x\in G$. The relation in \eqref{eq:0202a} is called \emph{Weil's formula} and if \eqref{eq:0202a} holds, we say that the measures $\mu_{G}$, $\mu_{H}$ and $\mu_{G/H}$ are \emph{canonically related}. We shall always assume that the measures on $G,H$ and $G/H$ are related in this way. For a closed subgroup $H$ in $G$ we define its \emph{annihilator} by
\[ H^{\perp} = \{ \gamma\in \ghat \, : \, \gamma(h) = 1 \ \text{for all} \ h\in H \}.\]
The annihilator $H^{\perp}$ is itself a closed subgroup of $\ghat$. Moreover, we have the isomorphisms $H^{\perp} \cong \widehat{G/H}$ and $\widehat{H} \cong \ghat/H^{\perp}$. 
If the measures $\mu_{G}$, $\mu_{H}$ and $\mu_{G/H}$ are canonically related and we require the Fourier inversion between functions on $G$ and $\ghat$, $H$ and $\ghat/H^{\perp}$, and between functions on $G/H$ and $H^{\perp}$ to hold, then the Haar measures $\mu_{\ghat}$, $\mu_{H^{\perp}}$ and $\mu_{\ghat/H^{\perp}}$ are uniquely determined and they are also canonically related. The in this way determined measure $\mu_{H^{\perp}}$ is called the \emph{orthogonal measure} of $\mu_{H}$.

For a closed subgroup $H$ in $G$, it is well-known that the periodization operator 
\[ P_{H} f(\dot{x}) = \int_{H} f(x+h) \, d\mu_{H}(h), \ \dot{x}=x+H, \, x\in G,\]
is a well-defined, linear and bounded operator from $L^{1}(G)$ onto $L^{1}(G/H)$. 
 Moreover, $P_{H}$ has the following properties: for all $f,g\in L^{1}(G)$
\begin{align} 
\Vert P_{H} f\Vert_{L^{1}(G/H)} \le \Vert f\Vert_{L^{1}(G)}, \ \ (P_{H}(f))^{\dagger} = P_{H}(f^{\dagger}), \ \ P_{H}f *^{G/H} P_{H}g = P_{H}(f*^{G} g),\label{eq:0102a}
\end{align}
where $*^{G}$ and $*^{G/H}$ denote convolution over $G$ and $G/H$, respectively, and $f^{\dagger}(x) = \overline{f(-x)}$. 



We are now ready to show that the periodization and restriction of a function in the Feichtinger algebra with respect to a closed subgroup $H$ is again a function in $\SO(G/H)$ and $\SO(H)$, respectively. 
These results were already mentioned in the early papers \cite{fe79-5}, \cite{fe80} and then proven in \cite{Feichtinger1981}.
\begin{theorem} \label{th:periodization-restricion-map-in-SO}
Let $H$ be closed subgroup of $G$. 
\begin{enumerate}[(i)]
\item The periodization operator 
\[ P_{H}: \SO(G) \to \SO(G/H), \ P_{H}f(\dot{x}) = \int_{H} f(x+h) \, d\mu_{H}(h), \ \dot{x}=x+H, \ x\in G,\] 
is a linear and bounded operator from $\SO(G)$ onto $\SO(G/H)$. 
\item The restriction operator 
\[ R_{H}: \SO(G) \to \SO(H), \ R_{H}f(x) = f(x), \   x\in H,\]
is a linear and bounded operator from $\SO(G)$ onto $\SO(H)$.
\item For all functions $f\in \SO(G)$ the Poisson formula holds, \ie
\[ \int_{H} f(h) \, d\mu_{H}(h) = \int_{H^{\perp}} \hat{f}(\gamma) \, d\mu_{H^{\perp}}(\gamma).\]
\end{enumerate}
\end{theorem}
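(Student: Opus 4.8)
My plan is to reduce all three parts to a single \emph{subgroup-integrability lemma}: for every locally compact abelian group $\Gamma$, every closed subgroup $K\subseteq\Gamma$, and every $F\in\SO(\Gamma)$ one has $\int_{K}\vert F\vert\,d\mu_{K}\le c\,\Vert F\Vert_{\SO(\Gamma)}$ for a constant $c=c(K)$. To prove this lemma I would fix a compact $Q\subseteq\Gamma$ with non-void interior and, using Lemma~\ref{le:S0-contains-piecewise-constant-functions}(iv), a function $\gamma\in\SO(\Gamma)$ with $\gamma\equiv 1$ on $Q$. For the local maximal function $W_{Q}F(y)=\sup_{z\in y+Q}\vert F(z)\vert$ the bump gives $W_{Q}F(y)\le\Vert F\cdot T_{y}\overline{\gamma}\Vert_{A(\Gamma)}$, whence the Fourier-algebra form of the norm in \eqref{eq:s0-norm-with-modulation-space-and-wiener-amalgam} yields $\Vert W_{Q}F\Vert_{L^{1}(\Gamma)}\le\Vert F\Vert_{\SO,\gamma}$. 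Next I would pick a symmetric relatively compact neighbourhood $V$ of $0$ and a maximal $V$-separated set $(y_{i})\subseteq K$; maximality makes it $Q$-dense in $K$ once $Q\supseteq\overline{V+V}$, while $V$-separation together with translation invariance of $\mu_{K}$ (so $\mu_{K}(K\cap(y_{i}+Q))=\mu_{K}(K\cap Q)=:M<\infty$ for all $i$, since $y_{i}\in K$) gives $\int_{K}\vert F\vert\,d\mu_{K}\le M\sum_{i}W_{Q}F(y_{i})\le M\,\mu_{\Gamma}(V)^{-1}\Vert W_{V+Q}F\Vert_{L^{1}(\Gamma)}\le c\,\Vert F\Vert_{\SO(\Gamma)}$. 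This maximal-function/covering estimate is the technical heart and the main obstacle; it is precisely the statement that $\SO$ embeds into the Wiener amalgam $W(C_{0},L^{1})$ and that the latter restricts to closed subgroups.

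Granting the lemma, restriction is immediate. For $f\in\SO(G)$ and a fixed $g\in\SO(G)$ with $g\vert_{H}\ne 0$ (again via Lemma~\ref{le:S0-contains-piecewise-constant-functions}(iv)), the contractivity of the restriction map $A(G)\to A(H)$ combined with the $A(G)$-form of the norm gives
\[ \Vert R_{H}f\Vert_{\SO(H),R_{H}g}\le\int_{H}\Vert f\cdot T_{x}\overline{g}\Vert_{A(G)}\,d\mu_{H}(x)=\int_{H\times\ghat}\vert\mathcal{V}_{g}f\vert\,d(x,\omega), \]
where I used $\Vert f\cdot T_{x}\overline{g}\Vert_{A(G)}=\int_{\ghat}\vert\mathcal{V}_{g}f(x,\omega)\vert\,d\omega$ and Tonelli. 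Since $\mathcal{V}_{g}f\in\SO(G\times\ghat)$ by Theorem~\ref{th:stft-and-s0}(ii) and $H\times\ghat$ is a closed subgroup of $G\times\ghat$, the subgroup-integrability lemma bounds the right-hand side by $c\,\Vert\mathcal{V}_{g}f\Vert_{\SO(G\times\ghat)}\lesssim\Vert f\Vert_{\SO(G),g}$, so $R_{H}$ maps boundedly into $\SO(H)$.

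Periodization then follows by Fourier duality, avoiding a second covering argument. Weil's formula \eqref{eq:0202a} gives, for $\gamma\in\widehat{G/H}\cong H^{\perp}$, the identity $\mathcal{F}_{G/H}(P_{H}f)(\gamma)=\hat{f}(\gamma)$, i.e.\ $\mathcal{F}_{G/H}P_{H}=R_{H^{\perp}}\mathcal{F}_{G}$; since the Fourier transform is a Banach space isomorphism of $\SO(G)$ onto $\SO(\ghat)$ (the remark following Theorem~\ref{th:s0-invariant-under-some-automorphisms}), boundedness of $P_{H}:\SO(G)\to\SO(G/H)$ is equivalent to that of $R_{H^{\perp}}:\SO(\ghat)\to\SO(H^{\perp})$, which is the restriction statement just proved on $\ghat$. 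For surjectivity I would construct a bounded linear right inverse of $R_{H}$: using that $A(G)\to A(H)$ is onto and the $A(G)$-module structure of $\SO(G)$ from Proposition~\ref{pr:s0-l1-ag-ideal}, a compactly supported bump transverse to $H$ extends any $\phi\in\SO(H)$ to some $f\in\SO(G)$ with $f\vert_{H}=\phi$; dualizing gives that $P_{H}$ is onto. I expect surjectivity to be routine but slightly fiddly compared with the boundedness estimate.

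Finally, the Poisson formula drops out of (i). For $f\in\SO(G)$ the periodization $P_{H}f$ lies in $\SO(G/H)\subseteq A(G/H)$, hence is continuous with a pointwise-valid Fourier inversion. Evaluating at the identity coset $\dot{0}\in G/H$ in two ways—directly $(P_{H}f)(\dot{0})=\int_{H}f(h)\,d\mu_{H}(h)$, and via inversion on $G/H$ using $\widehat{P_{H}f}=\hat{f}\vert_{H^{\perp}}$ (the Weil-formula identity above), which is integrable on $H^{\perp}$ by the restriction result applied to $\hat{f}\in\SO(\ghat)$, so that $(P_{H}f)(\dot{0})=\int_{H^{\perp}}\hat{f}(\gamma)\,d\mu_{H^{\perp}}(\gamma)$—equates the two expressions and yields the stated identity.
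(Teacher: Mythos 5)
Your treatment of the boundedness statements takes a genuinely different route from the paper's, and it works. The paper proves boundedness of $P_{H}$ \emph{directly}: it writes $\Vert P_{H}f\Vert_{\SO(G/H),P_{H}g}$ as $\int_{\widehat{G/H}}\Vert E_{\gamma}P_{H}f*^{G/H}(P_{H}g)^{\dagger}\Vert_{1}\,d\gamma$, uses $E_{\gamma}P_{H}=P_{H}E_{\gamma}$ for $\gamma\in H^{\perp}$ and $P_{H}f*^{G/H}P_{H}g=P_{H}(f*^{G}g)$ to dominate this by $\int_{G}\int_{H^{\perp}}\vert(E_{\gamma}f*^{G}g^{\dagger})(x)\vert\,d\mu_{H^{\perp}}(\gamma)\,d\mu_{G}(x)$, and then controls the inner $H^{\perp}$-integral by the full $\ghat$-integral using a \emph{compactly supported} window $g$ together with the estimate $\int_{H^{\perp}}\vert\mathcal{F}h(\gamma-\omega)\vert\,d\mu_{H^{\perp}}\le C$ from \cite[Lemma 5.5.5]{MR1802924}; the restriction operator is then obtained from $R_{H}=\mathcal{F}_{H}^{-1}P_{H^{\perp}}\mathcal{F}_{G}$. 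You invert this order: your subgroup-integrability lemma (proved by a maximal-function/covering argument) gives boundedness of $R_{H}$ first, by applying it to $\mathcal{V}_{g}f\in\SO(G\times\ghat)$ on the closed subgroup $H\times\ghat$ (Theorem~\ref{th:stft-and-s0}(ii) is indeed available at this point, so there is no circularity), and $P_{H}$ is recovered by Fourier conjugation. Your covering lemma is in substance the same localization that the paper outsources to \cite[Lemma 5.5.5]{MR1802924}, so the two proofs are of comparable depth; yours has the merit of isolating the reusable embedding of $\SO$ into the amalgam $W(C_{0},L^{1})$ and its restriction property. The Poisson formula argument is essentially identical in both.

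The genuine gap is surjectivity. The proposed extension step --- ``a compactly supported bump transverse to $H$ extends any $\phi\in\SO(H)$ to $f\in\SO(G)$ with $f\vert_{H}=\phi$'' --- does not survive scrutiny for a general closed subgroup of a general locally compact abelian group: $H$ need not admit any complement, so ``transverse'' has no meaning; a single bump $u\in\SO(G)$ cannot satisfy $u\vert_{H}\equiv 1$ when $H$ is non-compact, since $u\in C_{0}(G)$; and patching local extensions along $H$ by a partition of unity requires control of $\sum_{i}\Vert\phi\psi_{i}\Vert_{A(H)}$, i.e.\ the amalgam characterization of $\SO(H)$ via the set $\mathscr{T}$, which is only established in Section~\ref{sec:series-expansions}. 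Moreover, ``dualizing'' a right inverse of $R_{H}$ does not by itself yield surjectivity of $P_{H}$; the correct transfer is again the conjugation $P_{H}=\mathcal{F}_{G/H}^{-1}R_{H^{\perp}}\mathcal{F}_{G}$, which merely shifts the same unproved extension problem to $\ghat$. This difficulty is real: the paper itself cannot prove surjectivity at this stage and explicitly defers it to Section~\ref{sec:apply-minimal}, where it follows from the minimality theorem (Theorem~\ref{th:SO-minimality-bochner}) applied to $P_{H}(\SO(G))$ equipped with the quotient norm, which is a complete, time-frequency shift invariant space sitting inside $\SO(G/H)$; alternatively one can lift the atomic decomposition $\phi=\sum_{n}c_{n}\pi(\chi_{n})R_{H}g$ of Theorem~\ref{th:SO-expansion} from $H$ to $G$ by choosing coset representatives of characters. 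Either way, machinery from Section~\ref{sec:minimal} is needed, and your sketch does not supply a substitute for it.
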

\begin{proof}
(i). So far we have not emphasized any specific choice of function $g$ with respect to which we define the $\SO$-norm. However, for this proof it is important to choose a compactly supported function in $\SO(G)$, the reason for this will become clear in a moment. Let now $f$ be any function in $\SO(G)$. Since $\SO(G)\subseteq L^{1}(G)$, we have that $P_{H}f,P_{H}g\in L^{1}(G/H)$. If we can show that
\[ \Vert P_{H} f \Vert_{\SO(G/H), P_{H}g} = \int_{\widehat{G/H}} \Vert E_{\gamma} P_{H}f *^{G/H} (P_{H}g)^{\dagger} \Vert_{L^{1}(G/H)} \, d\mu_{\widehat{G/H}}(\gamma) < \infty,\] then Theorem~\ref{th:s0-ABC} implies that $P_{H}f, P_{H}g\in \SO(G/H)$. Note that $E_{\gamma} P_{H} = P_{H} E_{\gamma}$ for all $\gamma\in H^{\perp} \cong \widehat{G/H}$. Using this, together with \eqref{eq:0102a}, yields the inequality
\begin{align}
& \phantom{=} \ \int_{\widehat{G/H}} \Vert E_{\gamma} P_{H}f *^{G/H} (P_{H}g)^{\dagger} \Vert_{L^{1}(G/H)} \, d\mu_{\widehat{G/H}}(\gamma) \nonumber \\ & \le \int_{H^{\perp}} \Vert E_{\gamma}f*^{G} g^{\dagger} \Vert_{L^{1}(G)} \, d\mu_{H^{\perp}}(\gamma) \nonumber \\
& =  \int_{G} \int_{H^{\perp}} \vert (E_{\gamma}f*^{G} g^{\dagger})(x) \vert \, d\mu_{H^{\perp}}(\gamma) \, d\mu_{G}(x). \label{eq:period-1}
\end{align}
Consider now the function $\varphi_{x,f}(s)= (g\cdot T_{-x} \overline{f})(s)$, $s,x\in G$. The reader may verify that $\mathcal{F}\varphi_{x,f}(\omega) = \omega(x) \overline{(E_{\omega}f*^{G}g^{\dagger})(x)}$. Note that the support of $\varphi_{x,f}$ is always a subset of the compact set $\supp \, g$.
Take now a function $h\in C_{c}(G)$ such that $h(s)=1$ for  all $s\in \supp \, g$. Since $h$ is equal to $1$ on the support of $g$ we easily get the identities $\varphi_{x,f} = \varphi_{x,f} \cdot h$ and $\mathcal{F} \varphi_{x,f} = \mathcal{F} \varphi_{x,f}  *^{\ghat} \mathcal{F}h$. We can thus make the following calculation:
\begin{align}
& \int_{H^{\perp}} \vert (E_{\gamma} f*^{G} g^{\dagger})(x) \vert \, d\mu_{H^{\perp}}(\gamma) = \int_{H^{\perp}} \vert \mathcal{F} \varphi_{x,f}(\gamma) \vert \, d\mu_{H^{\perp}}(\gamma) \nonumber \\
& = \int_{H^{\perp}} \vert \mathcal{F} \varphi_{x,f}*^{\ghat} \mathcal{F}h(\gamma) \vert \, d\mu_{H^{\perp}}(\gamma) \le \int_{\ghat} \vert \mathcal{F}\varphi_{x,f}(\omega) \vert \int_{H^{\perp}} \vert \mathcal{F}h(\gamma-\omega) \vert \, d\mu_{H^{\perp}} \, d\mu_{\ghat}(\omega) .\label{eq:period-2}
\end{align}
By virtue of \cite[Lemma 5.5.5]{MR1802924}, we can bound $\int_{H^{\perp}} \vert \mathcal{F}h(\gamma-\omega) \vert \, d\mu_{H^{\perp}}$ by a constant $C$, which is independent of $\omega$. Indeed, as detailed in the reference: take $h_{1}$ and $h_{2}$ to be functions in $C_{c}(G)$ such that $h_{1}*h_{2}$ is equal to $1$ on the compact set $\supp\,g$, then one can take $C= \Vert P_{H}(\vert h_{1}\vert)\Vert_{2} \, \Vert P_{H}(\vert h_{2}\vert)\Vert_{2}$. Continuing the calculation from above yields the desired estimate:
\begin{align*}
 \Vert P_{H}f \Vert_{\SO(G/H),P_{H}g}   & \stackrel{\eqref{eq:period-1}}{\le} \int_{G} \int_{H^{\perp}} \vert (E_{\gamma} f*^{G} g^{\dagger})(x) \vert \, d\mu_{H^{\perp}}(\gamma) \, d\mu_{G}(x) 
\\ & \stackrel{\eqref{eq:period-2}}{\le} C \int_{G} \int_{\ghat} \vert (E_{\omega} f*^{G} g^{\dagger})(x) \vert \, d\mu_{\ghat}(\omega) \, d\mu_{G}(x) \\
& \ \, = C \, \Vert f \Vert_{\SO(G),g}.
\end{align*}
This shows that $P_{H}$ is a bounded operator from $\SO(G)$ into $\SO(G/H)$. The proof that the periodization operator is surjective will have to wait until Section \ref{sec:apply-minimal}. \\
(ii). It is a straightforward calculation to verify that, for all $f\in L^{1}(G)$ with $\hat{f}\in L^{1}(\ghat)$,
\begin{equation} \label{eq:poisson-derivation} R_{H} f= \mathcal{F}_{H}^{-1} P_{H^{\perp}} \mathcal{F}_{G}f.\end{equation}
In particular this is true for all $f\in \SO(G)$. Here, $\mathcal{F}_{G}$ and $\mathcal{F}_{H}$ are the Fourier transform on $L^{1}(G)$ and $L^{1}(H)$, respectively.
It follows from the result in (i) that the periodization with respect to the closed subgroup $H^{\perp}$ is a linear and bounded operator from $\SO(\ghat)$ onto $\SO(\ghat/H^{\perp})$. By Example~\ref{ex:unitaries-on-S0} we know that the Fourier transform is a unitary Banach space isomorphism on $\SO$. We conclude that the restriction operator $R_{H} = \mathcal{F}_{H}^{-1} P_{H^{\perp}} \mathcal{F}_{G}$ is linear and bounded from $\SO(G)$ onto $\SO(H)$. \\
(iii). Since the Fourier transform is a Banach space isomorphism on $\SO$ and $R_{H}$ and $P_{H^{\perp}}$ are operators on $\SO$, it follows from \eqref{eq:poisson-derivation} that $\mathcal{F}_{H} R_{H}f = P_{H^{\perp}} \mathcal{F}_{G}f$ for all $f\in \SO(G)$. From this equality we find that, for all $f\in \SO(G)$,  
\[ \mathcal{F}_{H} R_{H} f(0) = P_{H^{\perp}} \mathcal{F}_{G}f(0), \ \ \text{i.e.,} \ \ \int_{H} f(h) \, d\mu_{H}(h) = \int_{H^{\perp}} \hat{f}(\gamma) \, d\mu_{H^{\perp}}(\gamma),\]
which is the the Poisson formula.
\end{proof}

\begin{remark} Let $G_{1}$ and $G_{2}$ be two locally compact abelian groups. Theorem~\ref{th:periodization-restricion-map-in-SO}(i) applied to $\SO(G_{1}\times G_{2})$ with $H=\{0\}\times G_{2}$ shows that
\[ T: \SO(G_{1}\times G_{2}) \to \SO(G_{1}), \ T f(x_{1}) = \int_{G_{2}} f (x_{1},x_{2}) \, dx_{2}, \ \ x_{1}\in G_{1},\]
is a linear and bounded operator from $\SO(G_{1}\times G_{2})$ onto $\SO(G_{1})$.
\end{remark}

\begin{remark} If $H$ is a discrete subgroup of $G$, then, by Lemma \ref{le:s0-discrete-compact}, $\SO(H) = \ell^{1}(H)$. Hence, in this case,
\[ R_{H}:\SO(G)\to \ell^{1}(H), \ R_{H}f(x) = f(x), \ x\in H, \]
is a linear and bounded operator from $\SO(G)$ onto $\ell^{1}(H)$. As a concrete example, this shows that the restriction of any function in $\SO(\R^{n})$ to $M \Z^{n}$ for some $n\times n$-matix $M$ is a sequence in $\ell^{1}(M\Z^{n})$.
\end{remark}

\begin{remark} \label{rem:poisson} Assume that $H$ is a closed subgroup in $G$ such that the quotient group $G/H$ is compact (equivalently the closed subgroup $H^{\perp}$ is discrete). Then we define the size of $H$ by $s(H)= \mu_{G/H}(G/H)$. In this case the orthogonal measure on $\mu_{H^{\perp}}$ satisfies
\[ \int_{H^{\perp}} \hat{f}(\gamma) \, d\mu_{H^{\perp}}(\gamma) = \frac{1}{s(H)} \sum_{\gamma\in H^{\perp}} \hat{f}(\gamma)\] 
and the Poisson formula takes the form
\[ \int_{H} f(h) \, d\mu_{H}(h) = \frac{1}{s(H)} \sum_{\gamma\in H^{\perp}} \hat{f}(\gamma) \ \ \text{for all} \ f\in \SO(G).\]
If $G/H$ is compact, $H$ is discrete and $H$ is equipped with the counting measure, then we arrive at the familiar  Poisson \emph{summation} formula
\[ \sum\limits_{h\in H} f(h) = \frac{1}{s(H)} \sum\limits_{\gamma\in H^{\perp}} \hat{f}(\gamma) \ \ \text{for all} \ f\in \SO(G). \]
For example, let $G=\R^{n}$ be equipped with the usual Lebesgue measure and let $H=M\Z^{n}$ with $M\in \textnormal{GL}_{\R}(n)$ be equipped with the counting measure. Furthermore, let the Fourier transform on $\SO(\R^{n})$ be the one given by
\[ \hat{f} (\omega) = \int_{\R^{n}} f(x) \, e^{2\pi i x\cdot \omega} \, dx, \ \omega\in \R^{n}.\]
Then $s(H) = \vert \textnormal{det} \, M \vert$ and the Poisson formula takes the form 
\[ \sum_{k\in M\Z^{n}} f(k) = \frac{1}{\vert \textnormal{det} \, M \, \vert} \, \sum_{k\in (M^{-1})^{\top} \Z^{n}} \hat{f}(k),\]
which is valid for all $f\in \SO(\R^{n})$.
\end{remark}

That a function $f$ belongs to the Feichtinger algebra is merely a sufficient condition for the Poisson formula to hold. Indeed, Poisson's formula holds for functions outside of $\SO(G)$ (see for example \cite{MR1802924}), however, in general, one has to be very careful with the validity of the formula. The failure of the Poisson formula is demonstrated 
in \cite[Chap.\ VI, Sec.\ 1, Exercise 15]{MR2039503}. There, a continuous function $f\in L^{1}(
\R)$ is constructed so that $\hat{f}\in L^{1}(\R)$ and both sides of the Poisson formula are finite, yet the equality fails. Further references to the Poisson formula are \cite{kale94,bezi97,hola05}. See also the paper by Gr\"ochenig \cite{gr96} in which the relationship between $\SO(\R^n)$, functions in $L^{2}(\R^n)$ with decay conditions in time and frequency, and the Poisson formula is investigated. For remarks on the importance and usefulness of the Poisson formula see, e.g.,  \cite[Chap.\ 2, \S 9.9, \S 9.10]{hani98} and the two appendices in that section.

The results in this section have so far been concerned with \emph{closed} subgroups in $G$. We will now add the assumption that $H$ should also be an \emph{open} subgroup (indeed, if a subgroup of a topological group is open, then it is also closed). If $H$ is an open subgroup, then $G/H$ is discrete. Without loss of generality we will assume that the Haar measure on $G/H$ is the usual counting measure. Hence, given a Haar measure $\mu_{G}$ on $G$ and the counting measure on the discrete group $G/H$, there is a unique measure on the open subgroup $H$ such that the measures $\mu_{G}$, $\mu_{H}$ and $\mu_{G/H}$ are canonically related. In particular, for all $f\in L^{1}(G)$, Weil's formula takes the form
\[ \int_{G} f(x) \, d\mu_{G}(x) = \sum_{\dot{x}\in G/H} \int_{H} f(x+h) \, d\mu_{H}(h), \ \dot{x} = x + H, \ x\in G.\]
We note that for the Euclidean space the only open subgroup is the space itself. A far less mundane situation occurs for the group of the $p$-adic numbers, here all subgroups are open.  

The following result is from \cite{Feichtinger1981} and is also proven in \cite{re89}.

\begin{proposition} \label{pr:zero-ext} Let $H$ be an open subgroup of a locally compact abelian group $G$. Then the zero-extension operator
\[ Q_{H} : \SO(H) \to \SO(G), \ Q_{H}f(x) = \begin{cases} f(x) & x\in H, \\ 0 & x\notin H,\end{cases} \ \ x\in G,\]
is well-defined, linear and bounded. In particular, for any non-zero $g\in \SO(H)$, 
\[ \Vert Q_{H} f \Vert_{\SO(G), Q_{H}g} = \Vert f \Vert_{\SO(H),g} \ \ \text{for all} \ \ f\in \SO(H).\]
\end{proposition}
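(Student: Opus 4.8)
The plan is to compute the short-time Fourier transform of the zero-extension directly and to read off the norm identity from the $L^{1}$-description of the $\SO$-norm in Definition~\ref{def:s0-norm}, namely $\Vert F \Vert_{\SO(G),w} = \Vert \mathcal{V}_{w} F \Vert_{1}$. Fix nonzero $f,g\in \SO(H)$. Since $H$ is open, the canonical measure on $G/H$ is the counting measure, so $\mu_{H}$ is simply the restriction of $\mu_{G}$ to $H$; in particular $Q_{H}f, Q_{H}g\in L^{2}(G)$. I first observe that $Q_{H}f$ is supported in $H$ while $E_{\omega}T_{x}Q_{H}g$ is supported in $x+H$, and because $H$ is a subgroup these overlap precisely when $x\in H$. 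Hence $\mathcal{V}_{Q_{H}g}(Q_{H}f)(x,\omega)=\langle Q_{H}f, E_{\omega}T_{x}Q_{H}g\rangle$ vanishes for $x\notin H$, and for $x\in H$ a direct calculation gives $\mathcal{V}_{Q_{H}g}(Q_{H}f)(x,\omega)=\langle f, E_{\omega|_{H}}T_{x}g\rangle_{L^{2}(H)}=\mathcal{V}^{H}_{g}f(x,\omega|_{H})$, where $\mathcal{V}^{H}_{g}$ denotes the short-time Fourier transform on $H$ and $\omega|_{H}\in \widehat{H}\cong \ghat/H^{\perp}$ is the restricted character.

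Next I integrate the modulus over $G\times\ghat$. Applying Weil's formula on $G$ with counting measure on $G/H$, the vanishing off $H$ collapses the sum over cosets to the single coset of the identity, leaving an integral over $x\in H$. For the frequency integral I use that $\omega\mapsto |\mathcal{V}^{H}_{g}f(x,\omega|_{H})|$ depends on $\omega$ only through its coset $\omega+H^{\perp}$, so it is $H^{\perp}$-periodic; Weil's formula on $\ghat$ with respect to $H^{\perp}$ therefore yields
\[ \int_{\ghat} |\mathcal{V}^{H}_{g}f(x,\omega|_{H})| \, d\omega = \mu_{H^{\perp}}(H^{\perp}) \int_{\ghat/H^{\perp}} |\mathcal{V}^{H}_{g}f(x,\dot\omega)| \, d\mu_{\ghat/H^{\perp}}(\dot\omega). \]
The decisive measure-theoretic point is that $\mu_{H^{\perp}}(H^{\perp})=1$: since $H$ is open, $H^{\perp}\cong \widehat{G/H}$ is compact, and under the canonical and dual conventions the measure $\mu_{H^{\perp}}$ is the one dual to the counting measure on the discrete group $G/H$, which has total mass one. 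Identifying $\mu_{\ghat/H^{\perp}}$ with the dual measure on $\widehat{H}$, the two applications of Weil's formula combine to give $\Vert \mathcal{V}_{Q_{H}g}(Q_{H}f) \Vert_{L^{1}(G\times\ghat)} = \Vert \mathcal{V}^{H}_{g}f \Vert_{L^{1}(H\times\widehat{H})} = \Vert f \Vert_{\SO(H),g}$, which is finite.

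With this identity in hand the remaining assertions follow quickly. Taking $f=g$ shows $\Vert \mathcal{V}_{Q_{H}g}(Q_{H}g)\Vert_{1} = \Vert g \Vert_{\SO(H),g} < \infty$, so $Q_{H}g$ lies in $\SO(G)$ by the characterization via $\mathscr{E}$ in Theorem~\ref{th:SO-six-def}; in particular $Q_{H}g$ is an admissible nonzero window. For arbitrary $f\in \SO(H)$ the finiteness of $\Vert \mathcal{V}_{Q_{H}g}(Q_{H}f)\Vert_{1}$ then places $Q_{H}f$ in $\SO(G)$ via the characterization $\mathscr{H}$ (equivalently $\mathscr{B}$ in Theorem~\ref{th:s0-ABC}), and the computed identity is exactly $\Vert Q_{H}f \Vert_{\SO(G),Q_{H}g} = \Vert f \Vert_{\SO(H),g}$. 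Linearity of $Q_{H}$ is immediate, and this norm equality furnishes both well-definedness and boundedness. I expect the only genuine obstacle to be the careful bookkeeping of the dual Haar measures---above all the verification that $\mu_{H^{\perp}}(H^{\perp})=1$ under the paper's canonical normalization---together with the routine but slightly fiddly justification that the support considerations make the two Weil formulas applicable.
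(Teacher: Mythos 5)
Your proposal is correct and follows essentially the same route as the paper: both proofs reduce the $G$-side integral to $H$ via Weil's formula using the support of the zero-extensions, and then collapse the $\ghat$-integral using the constancy on cosets of the compact annihilator $H^{\perp}$ with its normalized (mass-one) Haar measure. The only difference is cosmetic --- you phrase the computation through $\mathcal{V}_{Q_{H}g}(Q_{H}f)$ and the $\mathscr{B}/\mathscr{H}$ characterizations, while the paper works with $\int_{\ghat}\Vert E_{\omega}Q_{H}f * (Q_{H}g)^{\dagger}\Vert_{1}\,d\omega$ and the set $\mathscr{A}$; these quantities coincide by the identities established in Theorem~\ref{th:s0-ABC}.
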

\begin{proof}
In order to show that the operator is well-defined, i.e., $Q_{H}f\in \SO(G)$, we let $f,g\in \SO(H)$, $g\ne 0$ and we will show that
\[ \int_{\ghat} \Vert E_{\omega} Q_{H}f *^{G} (Q_{H}g)^{\dagger} \Vert_{L^{1}(G)} \, d\mu_{\ghat}(\omega) < \infty.\]
We do this in three parts. First, let us consider the function $E_{\omega} Q_{H}f *^{G} (Q_{H}g)^{\dagger}$.
Using Weil's formula with respect to the open subgroup $H$ (as detailed above), we find, for all $x\in G$,
\begin{align*}
 E_{\omega} Q_{H}f *^{G} (Q_{H}g)^{\dagger}(x) & = \int_{G} \omega(s) Q_{H}f(s) (Q_{H}g)^{\dagger}(x-s) \, d\mu_{G}(s) \\
& = \sum_{\dot{s}\in G/H} \int_{H} \omega(s+h) Q_{H}f(s+h) (Q_{H}g)^{\dagger}(x-s-h) \, d\mu_{H}(h) \\
& = \int_{H} \omega(h) f(h) (Q_{H}g)^{\dagger}(x-h) \, d\mu_{H}(h),
\end{align*}
where the last equality is due to the fact that $Q_{H}f(s+h) = 0$ for $s\ne 0$.
Now, using the just shown equality and the same arguments once more we establish that
\begin{align*}
\Vert E_{\omega} Q_{H}f *^{G} (Q_{H}g)^{\dagger} \Vert_{L^{1}(G)} & = \int_{G} \vert Q_{H}f *^{G} (Q_{H}g)^{\dagger}(x) \vert \, d\mu_{G}(x) \\
& = \int_{G} \big\vert \int_{H} \omega(h) f(h) (Q_{H}g)^{\dagger}(x-h) \, d\mu_{H}(h) \, \big\vert \, d\mu_{G}(x) \\
& = \sum_{\dot{x}\in G/H} \int_{H} \big\vert \int_{H} \omega(h) f(h) (Q_{H}g)^{\dagger}(x+h'-h) \, d\mu_{H}(h) \, \big\vert \, d\mu_{H}(h') \\
& = \int_{H} \big\vert \int_{H} \omega(h) f(h) g^{\dagger}(h'-h) \, d\mu_{H}(h) \, \big\vert \, d\mu_{H}(h')
\end{align*}
Recall that the quotient group $G/H$ is discrete and equipped with the counting measure. Therefore its dual group $\widehat{G/H} \cong H^{\perp}$ is compact and has a normalized Haar measure. Using this together with the fact that $\ghat/H^{\perp}\cong \widehat{H}$ we establish the desired estimate. 
\begin{align*}
& \int_{\ghat} \Vert E_{\omega} Q_{H}f *^{G} (Q_{H}g)^{\dagger} \Vert_{L^{1}(G)} \, d\mu_{\ghat}(\omega) \\
& = \int_{\ghat} \int_{H} \big\vert \int_{H} \omega(h) f(h) g^{\dagger}(h'-h) \, d\mu_{H}(h) \, \big\vert \, d\mu_{H}(h') \, d\mu_{\ghat}(\omega) \\
& = \int_{\ghat/H^{\perp}} \int_{H^{\perp}} \int_{H} \big\vert \int_{H} \omega(h) \gamma(h) f(h) g^{\dagger}(h'-h) \, d\mu_{H}(h) \, \big\vert \, d\mu_{H}(h') \, d\mu_{H^{\perp}}(\gamma) \, d\mu_{\ghat/H^{\perp}}(\dot{\omega)}) \\
& = \int_{\ghat/H^{\perp}} \int_{H^{\perp}} \int_{H} \big\vert \int_{H} \omega(h) f(h) g^{\dagger}(h'-h) \, d\mu_{H}(h) \, \big\vert \, d\mu_{H}(h') \, d\mu_{H^{\perp}}(\gamma) \, d\mu_{\ghat/H^{\perp}}(\dot{\omega)}) \\
& = \int_{\ghat/H^{\perp}} \int_{H} \big\vert \int_{H} \omega(h) f(h) g^{\dagger}(h'-h) \, d\mu_{H}(h) \, \big\vert \, d\mu_{H}(h') \, d\mu_{\ghat/H^{\perp}}(\dot{\omega)}) \,  \int_{H^{\perp}} \, d\mu_{H^{\perp}}(\gamma) \\
& = \int_{\widehat{H}} \int_{H} \big\vert \int_{H} \omega(h) f(h) g^{\dagger}(h'-h) \, d\mu_{H}(h) \, \big\vert \, d\mu_{H}(h') \, d\mu_{\widehat{H}}(\omega) \\
& = \int_{\widehat{H}} \Vert E_{\omega} f*^{H} g^{\dagger} \Vert_{L^{1}(H)} \, d\mu_{\widehat{H}}(\omega) = \Vert f \Vert_{\SO(H),g} < \infty.
\end{align*}
By the characterization of $\SO(G)$ by the set $\mathscr{A}$ it follows that $Q_{H}f\in \SO(G)$. Hence $Q_{H}$ is well-defined. The linearity of $Q_{H}$ is clear. Finally, the above calculation together with \eqref{eq:s0-norm-with-modulation-space-and-wiener-amalgam} implies the desired norm equality. \end{proof}

\section{The space of translation bounded quasimeasures -- $\SOprime(G)$}
\label{sec:s0prime}
In this section we consider $\SOprime(G)$ -- the dual space of $\SO(G)$. In \cite{Feichtinger1981} elements in $\SOprime(G)$ are called \emph{translation bounded quasimeasures}. 
Since then they have, as we will do here, also been called \emph{distributions}. The dual of $\SO(G)$ is discussed in, e.g., \cite{fe80,ma87,fe89-2,ho89,MR1843717,go11}. Naturally, $\SOprime(G)$ forms a Banach space with respect to the operator norm 
\[ \Vert \sigma \Vert_{\SOprime,g} = \sup_{\substack{f\in \SO(G) \\ \Vert f \Vert_{\SO,g}=1}} \vert \sigma(f) \vert, \ \sigma \in \SOprime(G),\]
for some fixed $g\in \SO(G)\backslash\{0\}$.
The norm satisfies the inequality $\vert \sigma(f) \vert \le \Vert \sigma \Vert_{\SOprime,g} \, \Vert f \Vert_{\SO,g}$ for all $f\in \SO(G)$. Since all $g\in \SO(G)\backslash\{0\}$ induce equivalent norms on $\SO(G)$ (Proposition \ref{pr:equivalent-norm-on-s0}) it is straight forward to show that they also induce equivalent norms on $\SOprime(G)$.
Besides the norm topology on $\SOprime(G)$, the weak$^{*}$ topology also plays a crucial role. Recall that a net\footnote{The weak$^{*}$ topology of the dual of a Banach space is metrizable only if the Banach space is finite dimensional. If $\SO(G)$ is separable (e.g., if $G$ is $\sigma$-compact and metrizable) then the relative weak$^{*}$ topology on \emph{bounded} sets in $\SOprime(G)$ is metrizable and one can work with sequences. Otherwise nets have to be considered. For more on this, we refer to the book by Megginson \cite{me98}.} $(\sigma_{\lambda})$ in $\SOprime(G)$ converges to $\sigma\in\SOprime(G)$ in the weak$^{*}$-sense if
\[ \lim_{\lambda} \sigma_{\lambda}(f) = \sigma(f) \ \ \text{for all} \ \, f\in \SO(G).\]
We introduce the common notation for the action of functionals 
\[ ( f, \sigma )_{\SO,\SOprime(G)} = \sigma(f) \ \ \text{for all} \ \ f\in \SO(G), \ \sigma\in \SOprime(G).\]
For convenience we shall sometimes write $(f,\sigma)_{\SO,\SOprime}$ or $( f, \sigma)$ instead of $( f, \sigma)_{\SO,\SOprime(G)}$. Note that $(\, \cdot \, , \, \cdot \, )$ is bilinear. For $f,g\in \SO(G)$ and $\sigma\in \SOprime(G)$ we define the following operations:
\begin{align} & ( f , g\cdot \sigma ) = ( f \cdot g, \sigma ), \label{eq:multiplication-distribution}\\
& ( f, g*\sigma ) = ( f * g^{r} ,\sigma ), \ \ g^{r}(x) = g(-x), \label{eq:convolution-distribution}\\ 
& ( f, \overline{\sigma} ) = \overline{( \overline{f} , \sigma )}\label{eq:conjugation-distribution}.\end{align}
All functions $h\in \SO(G)$ induce a distribution $\iota(h)$ given by
\begin{equation} \label{eq:regular-distribution} (f,\iota(h)) = \int_{G} f(x) \, h(x) \, dx, \ \ f\in \SO(G).\end{equation}
Note that $(f,\iota(h))=(h,\iota(f))$. If a distribution $\sigma$ is induced by a function $h$ via \eqref{eq:regular-distribution}, then we may simply write $(f,h)_{\SO,\SOprime}$ rather than $(f,\iota(h))_{\SO,\SOprime}$. In fact, all functions $h$ for which the above integral makes sense induce an element in $\SOprime(G)$ in this way. 

\begin{lemma} \label{le:embeddings-in-SOprime} All functions in $\SO(G)$, $L^{p}(G)$ for $p\in [1,\infty]$, $C_{0}(G)$ and $A(G)$ define elements in $\SOprime(G)$ via \eqref{eq:regular-distribution}. In particular, for $g\in \SO(G)\backslash\{0\}$ the following holds:
\begin{enumerate}[(i)]
\item $\Vert \iota(h) \Vert_{\SOprime,g} \le c \, \Vert h \Vert_{\SO,g} \ $ for all $h\in \SO(G)$ with $c = \Vert g \Vert_{\infty}^{-1} \, \Vert g \Vert_{1}^{-1}$.
\item $\Vert \iota(h) \Vert_{\SOprime,g} \le c \, \Vert h \Vert_{p} \ $ for all $h\in L^{p}(G)$, $p\in [1,\infty]$ with $c = \Vert g \Vert_{1}^{-1/p} \, \Vert g \Vert_{\infty}^{-1+1/p}$.
\item $\Vert \iota(h) \Vert_{\SOprime,g} \le c \, \Vert h \Vert_{A(G)} \ $ for all $h\in A(G)$ with $c= \Vert g \Vert_{\infty}^{-1}$.
\end{enumerate}
If $g$ is chosen with the normalization as in Lemma~\ref{le:S0-contains-piecewise-constant-functions}(v), then $c=1$ in all cases.
\end{lemma}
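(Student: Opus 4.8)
The plan is to note first that for each admissible $h$ the functional $\iota(h)$ is linear in $f$ simply by linearity of the integral, so the whole content of the lemma reduces to the three boundedness estimates (i)--(iii): each of these exhibits $\iota(h)$ as a bounded functional and hence places it in $\SOprime(G)$. Moreover, since $C_{0}(G)\subseteq L^{\infty}(G)$, the claim for $C_{0}(G)$ is already contained in (ii) with $p=\infty$, so it need not be treated separately. Throughout, the common starting point is H\"older's inequality applied to $(f,\iota(h))=\int_{G} f(x)h(x)\,dx$. This integral is well-defined because $\SO(G)\subseteq L^{1}(G)\cap L^{\infty}(G)\subseteq L^{q}(G)$ for every $q$ by Corollary~\ref{co:f-in-s0-properties}(iii), so that $fh\in L^{1}(G)$ whenever $h\in L^{p}(G)$ or $h\in A(G)\subseteq C_{0}(G)$.

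For (ii) I would pick $q$ conjugate to $p$ and write $\vert(f,\iota(h))\vert\le \Vert f\Vert_{q}\,\Vert h\Vert_{p}$. The factor $\Vert f\Vert_{q}$ is then controlled by Corollary~\ref{co:f-in-s0-properties}(vi.a), which gives $\Vert f\Vert_{q}\le \Vert g\Vert_{1}^{-1+1/q}\,\Vert g\Vert_{\infty}^{-1/q}\,\Vert f\Vert_{\SO,g}$; substituting $1/q=1-1/p$ converts the exponents into exactly $\Vert g\Vert_{1}^{-1/p}\,\Vert g\Vert_{\infty}^{-1+1/p}$, which is the claimed constant. Taking the supremum over $\Vert f\Vert_{\SO,g}=1$ yields (ii), and the endpoint cases $p=1$ (pairing $\Vert f\Vert_{\infty}\Vert h\Vert_{1}$) and $p=\infty$ (pairing $\Vert f\Vert_{1}\Vert h\Vert_{\infty}$) are included automatically.

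For (i) and (iii) I would avoid the conjugate-exponent bookkeeping and instead pair $\Vert f\Vert_{1}$ with $\Vert h\Vert_{\infty}$. By Corollary~\ref{co:f-in-s0-properties}(vi.a) with $p=1$ one has $\Vert f\Vert_{1}\le \Vert g\Vert_{\infty}^{-1}\,\Vert f\Vert_{\SO,g}$. For (iii), since $A(G)$ embeds contractively into $C_{0}(G)$ (so $\Vert h\Vert_{\infty}\le \Vert h\Vert_{A(G)}$), this at once gives $\vert(f,\iota(h))\vert\le \Vert g\Vert_{\infty}^{-1}\,\Vert f\Vert_{\SO,g}\,\Vert h\Vert_{A(G)}$ and hence the constant $c=\Vert g\Vert_{\infty}^{-1}$. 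For (i), I would additionally estimate $\Vert h\Vert_{\infty}\le \Vert g\Vert_{1}^{-1}\,\Vert h\Vert_{\SO,g}$ via Corollary~\ref{co:f-in-s0-properties}(vi.a) with $p=\infty$, producing the constant $c=\Vert g\Vert_{\infty}^{-1}\Vert g\Vert_{1}^{-1}$.

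Finally, the normalization remark is immediate: if $g$ is chosen as in Lemma~\ref{le:S0-contains-piecewise-constant-functions}(v), then $\Vert g\Vert_{1}=\Vert g\Vert_{\infty}=\Vert\hat{g}\Vert_{1}=\Vert\hat{g}\Vert_{\infty}=1$, so every constant above collapses to $1$. There is no serious obstacle in this argument; it is a routine combination of H\"older's inequality with the continuous-embedding estimates already established in Corollary~\ref{co:f-in-s0-properties}. The only points requiring a little care are matching the conjugate exponents in (ii) so that the powers of $\Vert g\Vert_{1}$ and $\Vert g\Vert_{\infty}$ come out exactly as stated, and recognizing that for (i) and (iii) the cleaner pairing $\Vert f\Vert_{1}\Vert h\Vert_{\infty}$, rather than passing through the Fourier side, is what produces the advertised constants.
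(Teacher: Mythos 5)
Your proposal is correct and follows essentially the same route as the paper: H\"older's inequality on $(f,\iota(h))=\int_G f h$ combined with the embedding constants $\Vert f\Vert_q\le \Vert g\Vert_1^{-1+1/q}\Vert g\Vert_\infty^{-1/q}\Vert f\Vert_{\SO,g}$ from Corollary~\ref{co:f-in-s0-properties}(vi.a) (equivalently Lemma~\ref{le:cont-embedded-in-Lp}), with the $A(G)$ case handled via $\Vert h\Vert_\infty\le\Vert h\Vert_{A(G)}$. The paper only writes out case (iii) and leaves (i) and (ii) as ``similar calculations''; your version supplies those details, including the correct conjugate-exponent bookkeeping, and is accurate throughout.
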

\begin{proof} Proof of (iii). Let $h\in A(G)$ be given. Then there is a function $\tilde{h}\in L^{1}(\ghat)$ such that $h = \mathcal{F} \tilde{h}$, where $\mathcal{F}$ is the Fourier transform from $\ghat$ to $G$. Thus, by applying 
Lemma~\ref{le:cont-embedded-in-Lp} we find that
\begin{align*} \vert (f,\iota(h))  \vert & \le \textstyle\int_{G} \vert f(x) \vert \, \vert \mathcal{F}\tilde{h}(x) \vert \, dx \le \Vert f \Vert_{1} \Vert \mathcal{F}\tilde{h}\Vert_{\infty} \\
& \le \Vert g \Vert_{\infty}^{-1} \Vert \tilde{h} \Vert_{1} \Vert f \Vert_{\SO,g} = \Vert g \Vert_{\infty}^{-1} \Vert h \Vert_{A(G)} \Vert f \Vert_{\SO,g}.\end{align*}
This implies (iii). Statements (i) and (ii) follow by similar calculations and applications of the estimates from Lemma~\ref{le:cont-embedded-in-Lp}.
\end{proof}
%
%
%
%
%
%
%

The following lemma is a general result for Banach spaces adapted to $\SO$. 

\begin{lemma} \label{le:Bprime-embeddedinto-Soprime} 
Let $B$ be a Banach space which contains $\SO(G)$. The following statements are equivalent.
\begin{enumerate} 
\item[(i)] $\SO(G)$ is continuously embedded into and dense in $B$. 
\item[(ii)] $B'$ is weak$^{*}$-continuously embedded into and weak$^{*}$-dense in $\SOprime(G)$. 
\end{enumerate}
\end{lemma}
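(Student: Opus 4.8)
The plan is to read both conditions as statements about the inclusion map $\iota:\SO(G)\hookrightarrow B$ and its Banach-space adjoint, and then to invoke the standard duality dictionary between an operator and its adjoint (Hahn--Banach together with the basic properties of adjoints; see, e.g., \cite{me98}). First I would record what the adjoint is: assuming $\iota$ is bounded, its adjoint $\iota':B'\to\SOprime(G)$ is exactly the restriction map $\beta\mapsto\beta|_{\SO(G)}$, characterized by $(f,\iota'\beta)_{\SO,\SOprime}=\langle \iota(f),\beta\rangle$ for $f\in\SO(G)$ and $\beta\in B'$. Every adjoint is automatically weak$^{*}$-to-weak$^{*}$ continuous, since for each fixed $f\in\SO(G)$ the functional $\beta\mapsto(f,\iota'\beta)=\langle\iota(f),\beta\rangle$ is $\sigma(B',B)$-continuous. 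This disposes of the ``weak$^{*}$-continuous'' clause of (ii) for free once we identify $\iota'$ as the relevant map, so the real work is to match up the remaining clauses.

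For (i) $\Rightarrow$ (ii) I would argue as follows. Continuity of $\iota$ makes $\iota'$ well-defined and bounded, so $B'$ maps into $\SOprime(G)$. Density of $\iota(\SO(G))$ in $B$ forces $\iota'$ to be injective: if $\iota'\beta=0$ then $\beta$ annihilates the dense subspace $\iota(\SO(G))$, whence $\beta=0$ by Hahn--Banach, so $\iota'$ is an embedding. Finally, $\iota$ is literally an inclusion, hence injective, and the general fact that $T$ injective implies $\mathrm{ran}(T')$ is weak$^{*}$-dense gives weak$^{*}$-density of $B'$ in $\SOprime(G)$; concretely, $f\in {}^{\perp}\mathrm{ran}(\iota')$ means $\langle\iota(f),\beta\rangle=0$ for all $\beta\in B'$, so $\iota(f)=0$ and thus $f=0$, which makes the weak$^{*}$-closure of $\mathrm{ran}(\iota')$ all of $\SOprime(G)$.

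For (ii) $\Rightarrow$ (i) I would realize the embedding in (ii) by the restriction map $\iota'$, a bounded injection of $B'$ into $\SOprime(G)$ by hypothesis. Boundedness of $\iota'$ yields continuity of $\iota$ through the norming formula $\Vert f\Vert_{B}=\sup_{\Vert\beta\Vert_{B'}\le1}|\langle\iota(f),\beta\rangle|=\sup_{\Vert\beta\Vert_{B'}\le1}|(f,\iota'\beta)|\le\Vert\iota'\Vert\,\Vert f\Vert_{\SO,g}$, and injectivity of $\iota'$ yields, by the Hahn--Banach argument above run in reverse, density of $\iota(\SO(G))$ in $B$ (a nonzero $\beta\in B'$ vanishing on $\SO(G)$ would lie in $\ker\iota'\setminus\{0\}$). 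This recovers (i).

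The step needing the most care is the well-definedness and boundedness of the restriction map in the backward direction: for $\beta\in B'$ to restrict to an element of $\SOprime(G)$ one needs an estimate $|\beta(f)|\le C\,\Vert f\Vert_{\SO,g}$, which is precisely continuity of $\iota$. I would therefore make explicit that ``embedded'' includes norm-boundedness of $\iota'$; alternatively, if (ii) is read as merely asserting that restriction lands in $\SOprime(G)$, boundedness of $\iota'$ follows from the closed graph theorem, since $\beta_{n}\to\beta$ in $B'$ and $\iota'\beta_{n}\to\tau$ in $\SOprime(G)$ force $\tau=\iota'\beta$ upon testing against any $f\in\SO(G)$. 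I would also remark that the weak$^{*}$-density clause in (ii) encodes nothing beyond the automatic injectivity of the inclusion $\iota$, so it is present only to make (i) and (ii) genuine conjunctive counterparts.
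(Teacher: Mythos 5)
Your argument is correct and is in substance identical to the paper's: the paper simply cites the corollary of \cite[Theorem 4.12]{ru91}, which is exactly the duality dictionary (adjoint of the inclusion is the restriction map; dense range corresponds to injective adjoint; injectivity corresponds to weak$^{*}$-dense range of the adjoint; adjoints are automatically weak$^{*}$-weak$^{*}$ continuous) that you prove by hand. Your closed-graph remark for recovering boundedness of the restriction map in the backward direction is a reasonable extra precaution but not needed beyond what the cited theorem already packages.
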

\begin{proof} From the corollary of \cite[Theorem 4.12]{ru91} it follows that there is a linear, bounded and injective operator from $\SO(G)$ into $B$ with dense range, if, and only if, there is a linear, bounded and injective operator from $B'$ into $\SOprime(G)$ with weak$^{*}$-dense range, which, furthermore, is continuous with respect to the weak$^{*}$ topologies on both spaces.\end{proof}

Together with Lemma \ref{le:cont-embedded-in-Lp} the statement of Lemma \ref{le:Bprime-embeddedinto-Soprime} implies that the bounded measure $M_{b}(G) = C_{0}'(G)$, the pseudo-measures $PM(G) = A'(G)$, and the $L^{p}$-spaces for $p\in (1,\infty]$ are weak$^{*}$-dense in and continuously embedded into $\SOprime(G)$ (concerning the identification of $(L^{1})'$ with $L^{\infty}$ see the footnote on page \pageref{page:l1-linfty}). In case of the $L^{p}$-spaces, this embedding is of course via the mapping $\iota$ as in \eqref{eq:regular-distribution}.

The following lemma shows that $\SOprime(G)$ contains pointwise evaluations, \ie the Dirac-delta distribution, $\delta_{x}(f) = f(x)$, $x\in G$ belongs to $\SOprime(G)$.

\begin{lemma} \label{le:delta-in-SOprime} For any $x\in G$ the functional $\delta_{x}$ belongs to $\SOprime(G)$. In particular, for $g\in \SO(G)\backslash\{0\}$ it holds that $\Vert \delta_{x} \Vert_{\SOprime,g} \le \Vert g \Vert_{1}^{-1}$.
\end{lemma}
\begin{proof}
Since all functions in $\SO(G)$ are continuous, the pointwise evaluation $( f, \delta_{x})_{\SO,\SOprime(G)} = f(x)$ is well-defined for all $x\in G$ and clearly linear.  By use of Lemma~\ref{le:cont-embedded-in-Lp}, we establish the inequalities
\[ \vert ( f, \delta_{x}) \vert = \vert f(x) \vert \le \Vert f \Vert_{\infty} \le \Vert g \Vert_{1}^{-1} \, \Vert f \Vert_{\SO,g}\] for all $f\in \SO(G), x\in G$ and $g\in \SO(G)\backslash\{0\}$. 
\end{proof}

We now show that any $\sigma\in \SOprime(G)$ can be approximated arbitrarily well in the weak$^{*}$-sense by the distributions induced by functions in $\SO(G)$ or by elements in the linear span of $\{\delta_{x}\}_{x\in G}$. 

\begin{lemma} \label{le:weak-star-dense-soprime} The following holds:
\begin{enumerate}[(i)]
\item $\iota(\SO(G))$ is weak$^{*}$-dense in $\SOprime(G)$.
\item $\textnormal{span}\{\delta_{x}\}_{x\in G}$ is weak$^{*}$-dense in $\SOprime(G)$.
\end{enumerate}
\end{lemma}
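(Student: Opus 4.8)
The plan is to reduce both statements to the standard duality principle that a linear subspace $V$ of the dual $X^{*}$ of a Banach space $X$ is weak$^{*}$-dense in $X^{*}$ if and only if its pre-annihilator ${}^{\perp}V = \{x\in X : (x,v)=0 \text{ for all } v\in V\}$ is trivial. This is a consequence of the Hahn--Banach separation theorem (the bipolar theorem for the weak$^{*}$ topology; see \cite{ru91,me98}): the weak$^{*}$-closure of $V$ equals $({}^{\perp}V)^{\perp}$, which is all of $X^{*}$ precisely when ${}^{\perp}V=\{0\}$. With $X=\SO(G)$ and $X^{*}=\SOprime(G)$ it therefore suffices, in each case, to show that a function $f\in\SO(G)$ which annihilates the relevant subspace must vanish.

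For (i), I would take $f\in\SO(G)$ with $(f,\iota(h))=0$ for every $h\in\SO(G)$, that is $\int_{G} f(x)\,h(x)\,dx=0$ for all $h\in\SO(G)$. By Corollary~\ref{co:f-in-s0-properties}(iv) the conjugate $\overline{f}$ again lies in $\SO(G)$, so I may choose $h=\overline{f}$; this gives $\int_{G}\vert f(x)\vert^{2}\,dx=\Vert f\Vert_{2}^{2}=0$, whence $f=0$. (One could equally invoke that $\SO(G)$ is dense in $L^{2}(G)$ via Lemma~\ref{le:cont-embedded-in-Lp}, but the single test function $h=\overline{f}$ already suffices.) Thus ${}^{\perp}\iota(\SO(G))=\{0\}$ and $\iota(\SO(G))$ is weak$^{*}$-dense in $\SOprime(G)$.

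For (ii), I would take $f\in\SO(G)$ with $(f,\delta_{x})=0$ for all $x\in G$. Since $(f,\delta_{x})=f(x)$, this says $f(x)=0$ for every $x\in G$, and as every function in $\SO(G)$ is continuous (indeed $\SO(G)\subseteq C_{0}(G)$ by Corollary~\ref{co:f-in-s0-properties}(iii)), we conclude $f=0$. Hence the pre-annihilator of $\textnormal{span}\{\delta_{x}\}_{x\in G}$ is trivial and this span is weak$^{*}$-dense, where we use that each $\delta_{x}\in\SOprime(G)$ by Lemma~\ref{le:delta-in-SOprime}.

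The genuinely routine part here is the verification that the two pre-annihilators vanish; the only point requiring care is the correct invocation of the weak$^{*}$ duality principle together with the fact that the pairing $(\,\cdot\,,\,\cdot\,)$ is bilinear rather than sesquilinear, which is exactly why closedness of $\SO(G)$ under conjugation is used in (i) to turn the bilinear integral into the $L^{2}$-norm. A more hands-on alternative would be to produce, for each $\sigma$, an explicit approximating net inside $\iota(\SO(G))$ by regularizing $\sigma$ --- for instance forming $\psi_{\beta}\cdot(\varphi_{\alpha}*\sigma)$ with $\varphi_{\alpha},\psi_{\beta}\in\SO(G)$ approximate identities for convolution and for pointwise multiplication (Lemma~\ref{le:sufficient-cond-to-be-in-S0}(vii),(viii)). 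The main obstacle on that route would be proving that such a regularization is actually represented by a function in $\SO(G)$ and that it converges weak$^{*}$ to $\sigma$; the pre-annihilator argument sidesteps this difficulty entirely.
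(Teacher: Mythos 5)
Your proof is correct and follows essentially the same route as the paper, which also reduces both statements to showing that the pre-annihilator ${}^{\perp}N$ in $\SO(G)$ is trivial and then invokes the standard weak$^{*}$ duality principle (cited there as \cite[Theorem 4.7]{ru91}). You additionally spell out the verification that the pre-annihilators vanish --- in particular the choice $h=\overline{f}$, which correctly accounts for the bilinearity of the pairing --- a detail the paper leaves implicit.
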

\begin{proof} Let $N=\iota(\SO(G))$ and consider $^{\perp}N = \{ f\in \SO(G)\, : \, ( f, \sigma)_{\SO,\SOprime} = 0 \ \ \text{for all} \ \ \sigma \in N\}$. Then $^{\perp}N= \{0\}$. It follows from \cite[Theorem. 4.7]{ru91} that the weak$^{*}$-closure of $\iota(\SO(G))$ is $\SOprime(G)$. If $N=\textnormal{span}\{\delta_{x}\}_{x\in G}$, then again $^{\perp}N=\{0\}$ and (ii) follows.
\end{proof}
In \cite{fezi98} a different proof of Lemma~\ref{le:weak-star-dense-soprime}(ii) is sketched: By using the fact that $(\SOprime(G) * \SO(G))\cdot \SO(G) \subseteq \SO(G)$ one can construct a sequence (or a net) of functions in $\SO(G)$ that converges towards any given $\sigma\in\SOprime(G)$. 
This approach is the same one used for the test functions $C_{c}^{\infty}(\R^n)$ and their dual space in, \eg \cite{MR1721032}.
We prove that $(\SOprime(G) * \SO(G))\cdot \SO(G) \subseteq \SO(G)$ and $(\SOprime(G) \cdot \SO(G))* \SO(G) \subseteq \SO(G)$ in 
Lemma~\ref{le:SOprime-time-SO-conv-S0-is-S0}.

\subsection{Operators on $\SOprime$} \label{sec:banach-space-adjoin}
In this section we will show how and when one can extend operators on $L^{2}$ to operators on $\SOprime$. We will tackle the same question for operators on $\SO$ in Lemma \ref{le:SOop-to-SOprime}. In order to formulate the results we need the notion of the \emph{Banach space adjoint operator}.
The following can be found in, \eg \cite{MR3289046,me98}. Let $T$ be a linear and bounded operator from a Banach space $X$ to a Banach space $Y$. Then its Banach space adjoint
\[ T^{\times} : Y'\to X', \ ( x, T^{\times} y')_{X,X'} = ( T x, y')_{Y,Y'}, \ \ x\in X, \ y'\in Y',\]
is a well-defined linear and bounded operator with $\Vert T^{\times} \Vert_{\textnormal{op}} = \Vert T \Vert_{\textnormal{op}}$. The adjoint operator $T^{\times}$ maps norm convergent sequences in $Y'$ into norm convergent sequences in $X'$. We say that $T^{\times}$ is \emph{norm-norm continuous} from $Y'$ into $X'$. Furthermore, the adjoint operator $T^{\times}$ maps weak$^{*}$-convergent nets in $Y'$ into weak$^{*}$-convergent nets in $X'$. We say that $T^{\times}$ is \emph{weak$^{*}$-weak$^{*}$ continuous} from $Y'$ into $X'$. In fact, every weak$^{*}$-weak$^{*}$ continuous linear operator from $Y'$ into $X'$ is also norm-norm continuous. In general the converse is not true, see \cite[Section~3.1]{me98}. Furthermore, if $S:Y'\to X'$ is a linear and weak$^{*}$-weak$^{*}$ (thus also norm-norm) continuous operator, then
\[ S^{\times}: X\to Y, \ (S^{\times}x,y')_{Y,Y'} = (x,Sy')_{X,X'}, \ \ x\in X, \ y'\in Y'\]
defines a linear and bouned operator which satisfies $(S^{\times})^{\times}=S$. Also, with $T$ as above, $(T^{\times})^{\times} = T$.


In the previous section we established that $L^{2}(G)$ is embedded into $\SOprime(G)$ by the mapping $\iota$ from \eqref{eq:regular-distribution}. Thus, if $T:L^{2}(G_{1}) \to L^{2}(G_{2})$ is a linear and bounded operator, then $\iota \circ T: L^{2}(G_{1}) \to \SOprime(G_{2})$ is also linear and bounded. Since $L^{2}(G_{1})$ is weak$^{*}$ dense in $\SOprime(G_{1})$ we may ask if one can extend $\iota\circ T$ to a weak$^{*}$-weak$^{*}$ continuous operator $\widetilde{T}$ from $\SOprime(G_{1})$ into $\SOprime(G_{2})$ such that, in the sense of $\SOprime(G_{2})$, 
\begin{equation} \label{eq:extension} \widetilde{T} \iota(f_{1}) = \iota( T f_{1}) \ \ \text{for all} \ \ f_{1}\in L^{2}(G_{1}). \end{equation}
The following result characterizes when this is possible.

\begin{lemma} \label{le:banach-vs-hilbert-adjoint} Let $T$ be a linear and bounded operator from $L^{2}(G_{1})$ into $L^{2}(G_{2})$. There is an extension of the operator $\iota \circ T: L^{2}(G_{1})\to \SOprime(G_{2})$ to a weak$^{*}$-weak$^{*}$ continuous operator $\widetilde{T}$ from $\SOprime(G_{1})$ into $\SOprime(G_{2})$, if, and only if, the Hilbert space adjoint operator $T^{*}:L^{2}(G_{2})\to L^{2}(G_{1})$ is a linear and bounded operator from $\SO(G_{2})$ into $\SO(G_{1})$. In this case, the extension is unique, and it is defined by the relation 
\begin{equation} \label{eq:L2extension}
 (f_{2},\widetilde{T}\sigma_{1})_{\SO,\SOprime(G_{2})} = ( \overline{T^{*}\overline{f_{2}}}, \sigma_{1})_{\SO,\SOprime(G_{1})} \ \ \text{for all} \ \ f_{2}\in\SO(G_{2}), \ \sigma_{1}\in\SOprime(G_{1}),
\end{equation}
or, equivalently,
\begin{equation} \label{eq:L2extension-v2} (f_{2},\overline{\widetilde{T}\sigma_{1}})_{\SO,\SOprime(G_{2})} = ( T^{*}f_{2}, \overline{\sigma_{1}})_{\SO,\SOprime(G_{1})} \ \ \text{for all} \ \ f_{2}\in\SO(G_{2}), \ \sigma_{1}\in\SOprime(G_{1}).
\end{equation}
\end{lemma}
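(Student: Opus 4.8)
The plan is to reduce everything to the general duality theory for Banach space adjoints recalled at the start of Section~\ref{sec:banach-space-adjoin}, together with a careful bookkeeping of the bilinear pairing $(\cdot,\cdot)_{\SO,\SOprime}$ against the sesquilinear $L^{2}$ inner product. The guiding observation is that weak$^{*}$-weak$^{*}$ continuous operators from $\SOprime(G_{1})$ into $\SOprime(G_{2})$ are precisely the Banach space adjoints of bounded operators from $\SO(G_{2})$ into $\SO(G_{1})$. Everything then hinges on identifying the correct bounded ``pre-adjoint'' and relating it to $T^{*}$.

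For the ``if'' direction, suppose $T^{*}$ maps $\SO(G_{2})$ boundedly into $\SO(G_{1})$. Since complex conjugation is bounded on $\SO$ by Corollary~\ref{co:f-in-s0-properties}(iv), I would define $S:\SO(G_{2})\to\SO(G_{1})$ by $Sf_{2}=\overline{T^{*}\overline{f_{2}}}$; this is linear and bounded. I then set $\widetilde{T}:=S^{\times}$, its Banach space adjoint, which by the general theory is automatically weak$^{*}$-weak$^{*}$ continuous from $\SOprime(G_{1})$ into $\SOprime(G_{2})$ and satisfies $(f_{2},\widetilde{T}\sigma_{1})=(Sf_{2},\sigma_{1})=(\overline{T^{*}\overline{f_{2}}},\sigma_{1})$, which is exactly \eqref{eq:L2extension}. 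The one genuine computation is the extension property \eqref{eq:extension}: pairing $\widetilde{T}\iota(f_{1})$ against an arbitrary $f_{2}\in\SO(G_{2})$ and unfolding \eqref{eq:regular-distribution}, one rewrites both sides as $L^{2}$ inner products and uses $\langle f_{1},T^{*}\overline{f_{2}}\rangle=\langle Tf_{1},\overline{f_{2}}\rangle$ to obtain $(f_{2},\widetilde{T}\iota(f_{1}))=(f_{2},\iota(Tf_{1}))$ for all $f_{2}$, hence $\widetilde{T}\iota(f_{1})=\iota(Tf_{1})$.

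For the ``only if'' direction, suppose a weak$^{*}$-weak$^{*}$ continuous extension $\widetilde{T}$ exists. By the general theory its pre-adjoint $\widetilde{T}^{\times}:\SO(G_{2})\to\SO(G_{1})$ is a bounded operator. Pairing $\widetilde{T}^{\times}f_{2}$ against $\iota(f_{1})$ for $f_{1}\in L^{2}(G_{1})$ and invoking the extension property \eqref{eq:extension} gives $\langle f_{1},\overline{\widetilde{T}^{\times}f_{2}}\rangle=\langle f_{1},T^{*}\overline{f_{2}}\rangle$ for all $f_{1}\in L^{2}(G_{1})$, whence $T^{*}\overline{f_{2}}=\overline{\widetilde{T}^{\times}f_{2}}\in\SO(G_{1})$. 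As $f_{2}\mapsto\overline{f_{2}}$ is a bijection of $\SO(G_{2})$, this shows $T^{*}$ maps $\SO(G_{2})$ boundedly into $\SO(G_{1})$. Uniqueness of $\widetilde{T}$ is immediate: $L^{2}(G_{1})$, embedded via $\iota$, is weak$^{*}$-dense in $\SOprime(G_{1})$ by Lemma~\ref{le:Bprime-embeddedinto-Soprime} combined with Lemma~\ref{le:cont-embedded-in-Lp}, and a weak$^{*}$-weak$^{*}$ continuous operator is determined by its values on a weak$^{*}$-dense set. The equivalence of \eqref{eq:L2extension} and \eqref{eq:L2extension-v2} follows by substituting $\overline{f_{2}}$ for $f_{2}$ in \eqref{eq:L2extension}, taking complex conjugates, and applying the conjugation rule \eqref{eq:conjugation-distribution}.

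The step I expect to require the most care is the bookkeeping of conjugations: the pairing $(\cdot,\cdot)_{\SO,\SOprime}$ is bilinear and $\iota$ involves no conjugation, whereas $T^{*}$ is defined through the sesquilinear $L^{2}$ inner product, so it is the operator $Sf_{2}=\overline{T^{*}\overline{f_{2}}}$, rather than $T^{*}$ itself, that intertwines the two structures correctly. Keeping these conjugations consistent is exactly what produces the asymmetric-looking formula \eqref{eq:L2extension}, and verifying it is the crux of the argument.
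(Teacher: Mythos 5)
Your proposal is correct and follows essentially the same route as the paper: both directions hinge on identifying the bounded pre-adjoint $f_{2}\mapsto \overline{T^{*}\overline{f_{2}}}$ and invoking the general correspondence between weak$^{*}$-weak$^{*}$ continuous operators and Banach space adjoints recalled at the start of Section~\ref{sec:banach-space-adjoin}, with the same conjugation bookkeeping and the same weak$^{*}$-density argument for uniqueness. The only cosmetic difference is that you conclude $T^{*}\overline{f_{2}}=\overline{(\widetilde{T})^{\times}f_{2}}$ where the paper writes $T^{*}f_{2}=\overline{(\widetilde{T})^{\times}\overline{f_{2}}}$; these are equivalent under the substitution $f_{2}\mapsto\overline{f_{2}}$.
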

\begin{proof} Assume that $T^{*}$ is a linear and bounded operator from $\SO(G_{2})$ into $\SO(G_{1})$. Then we define $\widetilde{T}$ by \eqref{eq:L2extension} as the Banach space adjoint of the operator $f_{2} \mapsto \overline{T^{*} \overline{f_{2}}}$. By this definition $\widetilde{T}$ is weak$^{*}$-weak$^{*}$ continuous. With \eqref{eq:regular-distribution} it is straight forward to establish that, for all $f_{1}\in L^{2}(G_{1})$ and $f_{2}\in \SO(G_{2})$,
\begin{align*} (f_{2},\widetilde{T}\iota(f_{1}))_{\SO,\SOprime(G_{2})} & = (\overline{T^{*}\overline{f_{2}}},\iota(f_{1}))_{\SO,\SOprime(G_{1})} = \langle f_{1}, T^{*}\overline{f_{2}}\rangle_{L^{2}(G_{1})} \\
& = \langle Tf_{1},\overline{f_{2}}\rangle_{L^{2}(G_{2})} = (f_{2},\iota(Tf_{1}))_{\SO,\SOprime(G_{2})}.\end{align*}
This proves \eqref{eq:extension} and so $\widetilde{T}$ is an extension of $\iota\circ T$. 

Conversely, assume now that $\widetilde{T}$ is a weak$^{*}$-weak$^{*}$ continuous extension of $\iota\circ T$. Then 
\[ (\widetilde{T})^{\times} : \SO(G_{2})\to \SO(G_{1}), \ ( ( \widetilde{T})^{\times} f_{2}, \sigma_{1})_{\SO,\SOprime(G_{1})} = ( f_{2}, \widetilde{T} \sigma_{1})_{\SO,\SOprime(G_{2})} ,\]
with $f_{2}\in \SO(G_{2})$ and $\sigma_{1}\in \SOprime(G_{1})$, defines a linear and bounded operator. Furthermore, for all $f_{1}\in \SO(G_{1})$ and $f_{2}\in \SO(G_{2})$ we find that
\begin{align*}
\langle \overline{(\widetilde{T})^{\times} \overline{f_{2}}},f_{1}\rangle_{L^{2}(G_{1})} & = ( \overline{(\widetilde{T})^{\times} \overline{f_{2}}}, \iota(\overline{f_{1}}) )_{\SO,\SOprime(G_{1})} \stackrel{\eqref{eq:conjugation-distribution}}{=} \overline{( (\widetilde{T})^{\times}\overline{f_{2}} , \overline{\iota(\overline{f_{1}})}  )}_{\SO,\SOprime(G_{1})} \\
& = \overline{( \overline{f_{2}} , \widetilde{T}\iota(f_{1})   )}_{\SO,\SOprime(G_{1})} \stackrel{\eqref{eq:extension}}{=} \overline{( \overline{f_{2}} , \iota(T f_{1}) )}_{\SO,\SOprime(G_{1})} \\
& = \overline{\langle Tf_{1}, f_{2} \rangle}_{L^{2}(G_{2})} = \langle T^{*} f_{2}, f_{1}\rangle_{L^{2}(G_{1})}
\end{align*}
We conclude that 
$T^{*} f_{2} = \overline{(\widetilde{T})^{\times} \overline{f_{2}}}$. 
Since $(\widetilde{T})^{\times}$ and complex conjugation are bounded operators on $\SO$, we conclude that also $T^{*}$ is a linear and bounded operator from $\SO(G_{2})$ into $\SO(G_{1})$. The equality $T^{*} f_{2} = \overline{(\widetilde{T})^{\times} \overline{f_{2}}}$ implies the relation in \eqref{eq:L2extension}.
If \eqref{eq:L2extension} holds, then 
\begin{align*}
 (f_{2},\overline{\widetilde{T} \sigma_{1}})_{\SO,\SOprime(G_{2})} \stackrel{\eqref{eq:conjugation-distribution}}{=} \overline{(\overline{f_{2}} , \widetilde{T} \sigma_{1}) }_{\SO,\SOprime(G_{2})} \stackrel{\eqref{eq:L2extension}}{=} \overline{( \overline{T^{*} f_{2}} , \sigma_{1} )}_{\SO,\SOprime(G_{1})} \stackrel{\eqref{eq:conjugation-distribution}}{=} (T^{*}f_{2},\overline{\sigma_{1}} )_{\SO,\SOprime(G_{1})},
\end{align*}
which is \eqref{eq:L2extension-v2}. The converse implication is proven in a similar way. It is left to show that the extension $\widetilde{T}$ is unique. Assume therefore that $S:\SOprime(G_{1})\to\SOprime(G_{2})$ is another weak$^{*}$-weak$^{*}$ continuous operator which is an extension of $\iota\circ T$. Then $S$ must, necessarily, coincide with $\widetilde{T}$ on all distributions induced by functions on $L^{2}(G)$. This is a weak$^{*}$-dense subspace of $\SOprime(G_{2})$. Due to the weak$^{*}$-weak$^{*}$ continuity we conclude that the operators $S$ and $\widetilde{T}$ coincide.
\end{proof}

\begin{corollary} \label{cor:extension-of-unitaries} Let $T$ be a unitary operator from $L^{2}(G_{1})$ onto $L^{2}(G_{2})$. The operator $T$ is a Banach space isomorphism from $\SO(G_{1})$ onto $\SO(G_{2})$, if, and only if, there is an extension of the operator $\iota\circ T: L^{2}(G_{1})\to \SOprime(G_{2})$ to weak$^{*}$-weak$^{*}$ continuous Banach space isomorphism $\widetilde{T}$ from $\SOprime(G_{1})$ onto $\SOprime(G_{2})$. In this case, $(\widetilde{T})^{-1} = (T^{-1})^{\sim}$, and the operators $\widetilde{T}$ and $(\widetilde{T})^{-1}$ are defined by the relations   
\begin{align}
& (T^{-1} f_{2}, \overline{\sigma_{1}})_{\SO,\SOprime(G_{1})} = (f_{2}, \overline{\widetilde{T}\sigma_{1}})_{\SO,\SOprime(G_{2})}, && (Tf_{1}, \overline{\sigma_{2}})_{\SO,\SOprime(G_{2})} = (f_{1}, \overline{(\widetilde{T})^{-1} \sigma_{2}})_{\SO,\SOprime(G_{1})}, \label{eq:L2-extension-unitary}
\end{align}
for all $f_{i}\in \SO(G_{i})$ and $\sigma_{i}\in \SOprime(G_{i})$, $i=1,2$.
\end{corollary}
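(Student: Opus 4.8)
The engine is Lemma~\ref{le:banach-vs-hilbert-adjoint} together with the observation that, since $T$ is unitary, its Hilbert space adjoint coincides with its inverse, $T^{*}=T^{-1}$, and likewise $(T^{-1})^{*}=T$. I will prove the two implications separately and only afterwards read off the explicit formulas.

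For the forward implication, suppose $T$ restricts to a Banach space isomorphism of $\SO(G_{1})$ onto $\SO(G_{2})$. Then $T^{-1}=T^{*}$ is bounded from $\SO(G_{2})$ into $\SO(G_{1})$, so Lemma~\ref{le:banach-vs-hilbert-adjoint} produces the unique weak$^{*}$-weak$^{*}$ continuous extension $\widetilde{T}$ of $\iota\circ T$. Applying the same lemma to the unitary $T^{-1}$, whose Hilbert adjoint $(T^{-1})^{*}=T$ is bounded from $\SO(G_{1})$ into $\SO(G_{2})$, yields a weak$^{*}$-weak$^{*}$ continuous extension $(T^{-1})^{\sim}$ of $\iota\circ T^{-1}$. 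To see these are mutually inverse I would evaluate the compositions on the weak$^{*}$-dense subspace $\iota(L^{2}(G_{1}))\subseteq\SOprime(G_{1})$: using \eqref{eq:extension} (for $T$ and for $T^{-1}$), $(T^{-1})^{\sim}\widetilde{T}\,\iota(f_{1})=(T^{-1})^{\sim}\iota(Tf_{1})=\iota(f_{1})$ for all $f_{1}\in L^{2}(G_{1})$, and symmetrically on $\iota(L^{2}(G_{2}))$. Since both compositions are weak$^{*}$-weak$^{*}$ continuous and agree with the identity on a weak$^{*}$-dense set, they equal the identity everywhere; hence $\widetilde{T}$ is bijective with $(\widetilde{T})^{-1}=(T^{-1})^{\sim}$, and as both it and its inverse are weak$^{*}$-weak$^{*}$ (thus norm-norm) continuous, $\widetilde{T}$ is a weak$^{*}$-weak$^{*}$ continuous Banach space isomorphism. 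The two identities in \eqref{eq:L2-extension-unitary} are then exactly \eqref{eq:L2extension-v2} read off for $T$ and for $T^{-1}$, using $T^{*}=T^{-1}$ and $(T^{-1})^{*}=T$.

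For the converse, assume only that a weak$^{*}$-weak$^{*}$ continuous Banach space isomorphism $\widetilde{T}$ extending $\iota\circ T$ exists. The mere existence of the extension, via Lemma~\ref{le:banach-vs-hilbert-adjoint}, already gives that $T^{*}=T^{-1}$ is bounded from $\SO(G_{2})$ into $\SO(G_{1})$ -- but this bounds $T^{-1}$ in one direction only, so the real work is to promote it to a genuine isomorphism. Here I would use that a weak$^{*}$-weak$^{*}$ continuous operator is the Banach space adjoint of a unique bounded operator between the preduals: writing $R=(\widetilde{T})^{\times}\colon\SO(G_{2})\to\SO(G_{1})$ for that operator, one has $\widetilde{T}=R^{\times}$, and the computation in the proof of Lemma~\ref{le:banach-vs-hilbert-adjoint} identifies $R$ as complex conjugation composed with $T^{-1}$, namely $Rg=\overline{T^{-1}\overline{g}}$. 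Since $\widetilde{T}=R^{\times}$ is bijective, the standard adjoint dictionary (injectivity of $R^{\times}$ forces dense range of $R$; surjectivity of $R^{\times}$ forces $R$ to be bounded below, hence to have closed range) shows that $R$ is a Banach space isomorphism onto $\SO(G_{1})$; as complex conjugation is an isometric isomorphism of $\SO$ by \eqref{eq:2711a}, it follows that $T^{-1}$, and therefore $T$, are Banach space isomorphisms between $\SO(G_{1})$ and $\SO(G_{2})$. This places us back in the situation of the forward implication, whence $(\widetilde{T})^{-1}=(T^{-1})^{\sim}$ and \eqref{eq:L2-extension-unitary} hold.

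I expect the backward direction to be the main obstacle. Boundedness of $T^{-1}$ on $\SO$ comes for free from Lemma~\ref{le:banach-vs-hilbert-adjoint}, but upgrading the weak$^{*}$-isomorphism $\widetilde{T}$ to the statement that $T$ itself maps \emph{onto} $\SO(G_{2})$ is precisely the point where one must pass to the pre-adjoint $(\widetilde{T})^{\times}$ and invoke the closed-range and surjectivity duality for Banach space adjoints (as in \cite{me98}); everything else is bookkeeping on the weak$^{*}$-dense subspace $\iota(L^{2})$.
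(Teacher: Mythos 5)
Your proof is correct and its core coincides with the paper's: both directions rest on Lemma~\ref{le:banach-vs-hilbert-adjoint} applied to $T$ and to $T^{-1}$, using that $T^{*}=T^{-1}$ for a unitary, and the formulas \eqref{eq:L2-extension-unitary} are then just \eqref{eq:L2extension-v2} specialized to $T$ and $T^{-1}$. The differences are in the bookkeeping. For the identity $(T^{-1})^{\sim}\,\widetilde{T}=\mathrm{Id}$ you argue by agreement on the weak$^{*}$-dense subspace $\iota(L^{2}(G_{1}))$ together with weak$^{*}$-weak$^{*}$ continuity of the composition; the paper instead computes directly with the defining relation \eqref{eq:L2extension}, which yields the identity on all of $\SOprime(G_{1})$ in one line and needs no density argument. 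In the converse direction you do genuinely more than the paper: the paper merely asserts that the hypotheses put one in a position to apply Lemma~\ref{le:banach-vs-hilbert-adjoint} to both $T$ and $T^{-1}$ (implicitly treating the weak$^{*}$-weak$^{*}$ continuity of $(\widetilde{T})^{-1}$ as part of the hypothesis), whereas you recover the fact that $T$ maps $\SO(G_{1})$ \emph{onto} $\SO(G_{2})$ from the bijectivity of $\widetilde{T}=R^{\times}$ via the dense-range/bounded-below duality for Banach space adjoints, with $R\,g=\overline{T^{-1}\overline{g}}$ and conjugation an isometry by \eqref{eq:2711a}. That extra step is sound and makes the converse self-contained even under the weaker reading of the hypothesis; what it costs is an appeal to the adjoint dictionary of \cite{me98} that the paper's terser argument avoids.
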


\begin{proof} The assumptions imply that we are in the position to apply Lemma \ref{le:banach-vs-hilbert-adjoint} to $T$ and $T^{-1}$. Let us show that $(T^{-1})^{\sim} = (\widetilde{T})^{-1}$.
Indeed, for all $f_{1}\in\SO(G_{1})$ and $\sigma_{1}\in \SOprime(G_{1})$,
\begin{align*}
(f_{1},(T^{-1})^{\sim} \, \widetilde{T}\sigma_{1})_{\SO,\SOprime(G_{1})} & \stackrel{\eqref{eq:L2extension}}{=} ( \overline{T\overline{f_{1}}}, \widetilde{T}\sigma_{1})_{\SO,\SOprime(G_{2})} \stackrel{\eqref{eq:L2extension}}{=} ( \overline{T^{-1} T \overline{f_{1}}}, \sigma_{1})_{\SO,\SOprime(G_{1})} \\
& = ( f_{1}, \sigma_{1})_{\SO,\SOprime(G_{1})}.
\end{align*}
Hence $(T^{-1})^{\sim} \, \widetilde{T} = \textnormal{Id}_{\SOprime(G_{1})}$. Similarly we show that $\widetilde{T}(T^{-1})^{\sim} = \textnormal{Id}_{\SOprime(G_{2})}$. We conclude that $(T^{-1})^{\sim} = (\widetilde{T})^{-1}$. 
It follows from Lemma \ref{le:banach-vs-hilbert-adjoint} and \eqref{eq:L2extension-v2} that $\widetilde{T}$ and $(\widetilde{T})^{-1}$ are defined by the equalities in  \eqref{eq:L2-extension-unitary}.
\end{proof}

Of course, the equalities in Corollary \ref{cor:extension-of-unitaries} are the generalization of the well-known relations for unitary operators on $L^{2}$, i.e., if $T:L^{2}(G_{1})\to L^{2}(G_{2})$ is a unitary operator, then
\[ \langle T f, h\rangle_{L^{2}(G_{2})} = \langle f , T^{-1} h\rangle_{L^{2}(G_{1})} \ \ \text{for all} \ \ f\in L^{2}(G_{1}), \ h\in L^{2}(G_{2}).\]

Lemma \ref{le:banach-vs-hilbert-adjoint} and Corollary \ref{cor:extension-of-unitaries} can be applied to the operators considered in Example \ref{ex:unitaries-on-S0}.
Let us demonstrate the results on the Fourier transform.

\begin{example} \label{ex:FT} The Fourier transform $\mathcal{F}$ is a unitary operator from $L^{2}(G)$ onto $L^{2}(\ghat)$ and, as shown in Example \ref{ex:unitaries-on-S0}, it is also a Banach space isomorphism from $\SO(G)$ onto $\SO(\ghat)$. By Lemma \ref{le:banach-vs-hilbert-adjoint} and Corollary \ref{cor:extension-of-unitaries} its unique weak$^{*}$-weak$^{*}$ continuous extension $\widetilde{\mathcal{F}}$ to an operator from $\SOprime(G)$ onto $\SOprime(\ghat)$ is
\[ \widetilde{\mathcal{F}}:\SOprime(G)\to\SOprime(\ghat), \ (h,\widetilde{\mathcal{F}}\sigma)_{\SO,\SOprime(\ghat)} =  (\overline{\mathcal{F}^{-1}\overline{h}},\sigma)_{\SO,\SOprime(G)} = (\mathcal{F}_{\ghat}h,\sigma)_{\SO,\SOprime(G)},\]
where $\mathcal{F}_{\ghat}$ is the Fourier transform from $\SO(\ghat)$ onto $\SO(G)$, $h\in \SO(\ghat)$ and $\sigma\in\SOprime(G)$. 
Equivalently, the extended Fourier transform $\widetilde{\mathcal{F}}$ is defined by the relation
\[ ( \mathcal{F}^{-1} h, \overline{\sigma})_{\SO,\SOprime(G)} = (h,\overline{\tilde{\mathcal{F}}\sigma})_{\SO,\SOprime(\ghat)} \ \ \textnormal{for all} \ \ h\in \SO(\ghat), \ \sigma\in \SOprime(G).\]
Using the definition of multiplication and convolution of functions with distributions \eqref{eq:multiplication-distribution}, \eqref{eq:convolution-distribution} it is straight forward to show that, for all $f\in\SO(G)$ and $\sigma\in\SOprime(G)$,
\[ \widetilde{\mathcal{F}}(f \cdot \sigma) = \mathcal{F}f * \widetilde{\mathcal{F}}\sigma \ \ \text{and} \ \ \widetilde{\mathcal{F}}(f * \sigma) = \mathcal{F}f \cdot \widetilde{\mathcal{F}}\sigma.\]
For a given $\omega\in\ghat$ let $e_{\omega}$ denote the distribution in $\SOprime(G)$ induced by the function $x\mapsto \omega(x)$ and let $\delta_{\omega}\in\SOprime(\ghat)$ be the Dirac delta distribution at the point $\omega\in\ghat$. Then, for all $f\in\SO(G)$,
\[ (f,e_{\omega})_{\SO,\SOprime(G)} = \int_{G} f(x) \, \omega(x) \, dx = \mathcal{F}f(-\omega).\] 
Let us find the distributional Fourier transform of $e_{\omega}$. For any $h\in \SO(\ghat)$,
\[ (h, \widetilde{\mathcal{F}}e_{\omega})_{\SO,\SOprime(\ghat)} = (\overline{\mathcal{F}^{-1}\overline{h}}, e_{\omega})_{\SO,\SOprime(G)} = \mathcal{F}( \overline{\mathcal{F}^{-1}\overline{h}})(-\omega) = h(\omega) = (h,\delta_{\omega})_{\SO,\SOprime(\ghat)}.\] 
This shows that $\widetilde{\mathcal{F}}e_{\omega} = \delta_{\omega}$. 
If we let $\delta_{x}\in\SOprime(G)$ be the Dirac delta distribution at the point $x\in G$ and $e_{x}$ be the distribution in $\SOprime(\ghat)$ induced by the function $\omega\mapsto \omega(x)$. Then, for any $h\in\SO(\ghat)$,
\[ (h,\widetilde{\mathcal{F}}\delta_{x})_{\SO,\SOprime(\ghat)} = (\overline{\mathcal{F}^{-1} \overline{h}}, \delta
_{x})_{\SO,\SOprime(G)} = \overline{\mathcal{F}^{-1}\overline{h(x)}} = \int_{\ghat} h(\omega) \, \overline{\omega(x)} \, d\omega = (h,e_{-x})_{\SO,\SOprime(\ghat)}, \]
 hence $\widetilde{\mathcal{F}}\delta_{x} = e_{-x}$.

Let $H$ be a closed subgroup of $G$ and let $H^{\perp}$ be its annihilator (see Section \ref{sec:restriction}). Furthermore, we let $\mu_{H}$ be some Haar measure on $H$ and $\mu_{H^{\perp}}$ be the orthogonal Haar measure on $H^{\perp}$. We can think of $\mu_{H}$ and $\mu_{H^{\perp}}$ as elements in $\SOprime(G)$ and $\SOprime(\ghat)$ in the following way: for all $f\in \SO(G)$,
\[ (f,\mu_{H})_{\SO,\SOprime(G)} = \int_{H} f(h) \, d\mu_{H}(h), \ \ (\hat{f},\mu_{H^{\perp}})_{\SO,\SOprime(\ghat)} = \int_{H^{\perp}} \hat{f}(\gamma) \, \mu_{H^{\perp}}(\gamma).\]
Using the Poisson formula and the Fourier transform $\tilde{\mathcal{F}}$ from $\SOprime(G)$ onto $\SOprime(\ghat)$ as considered in Example \ref{ex:FT}, we can establish that $\widetilde{\mathcal{F}}\mu_{H} = \mu_{H^{\perp}}$.
For example, if we define the Fourier transform on $\SO(\R)$ as in Remark \ref{rem:poisson}, then, in the sense of $\SOprime(\R)$,
\[ \tilde{\mathcal{F}} \, \Big( \sum_{k\in a\Z} \delta_{k} \Big) = \vert a\vert^{-1}   \sum_{k\in a^{-1}\Z} \delta_{k} \ \ \text{for all} \ \ a\in \R\backslash\{0\}.\]
\end{example} 

In Lemma \ref{le:banach-vs-hilbert-adjoint} we characterized when operators defined on $L^{2}$ can be extended to operators on $\SOprime$.
In the following example we extend two operators to $\SOprime$, which, a priori, are only defined on $\SO$. 
\begin{example}
Let $H$ be an open (and therefore also closed) subgroup of $G$. Let 
\[ R_{H}: \SO(G)\to \SO(H) \ \ \text{and} \ \ Q_{H}: \SO(H)\to \SO(G)\]
 be the restriction and zero-extension operator as in Theorem~\ref{th:periodization-restricion-map-in-SO} and Proposition~\ref{pr:zero-ext}.

The unique weak$^{*}$-weak$^{*}$ continuous extension $\widetilde{Q}_{H}$ of 
$\iota \circ Q_{H}: \SO(H) \to \SOprime(G)$ to an operator  from $\SOprime(H)$ into $\SOprime(G)$ is the Banach space adjoint of the restriction operator. That is,
\[ \widetilde{Q}_{H} : \SOprime(H)\to \SOprime(G) , \ ( f, \widetilde{Q}_{H} \sigma_{H})_{\SO,\SOprime(G)} = ( R_{H} f, \sigma_{H} )_{\SO,\SOprime(H)} , \, f\in \SO(G),\,\sigma_{H}\in \SOprime(H).\]
Indeed, for all functions $\varphi \in\SO(H)$ and $f\in \SO(G)$,
\begin{align*}
( f, \iota (Q_{H} \varphi))_{\SO,\SOprime(G)} & = \int_{G} f(x) \, Q_{H} \varphi(x) \, d\mu_{G}(x) \\
& = \sum_{\dot{x}\in G/H} \int_{H} f(x+t) \, Q_{H} \varphi(x+t) \, d\mu_{H}(t) \\
& = \int_{H} f(t) \, \varphi(t) \, d\mu_{H}(t) = ( R_{H} f, \iota(\varphi))_{\SO,\SOprime(H)} \\ & = (f, \widetilde{Q}_{H} \iota(\varphi))_{\SO,\SOprime(G)}.
\end{align*}
Hence $\widetilde{Q}_{H} \iota(\varphi) = \iota(Q_{H} \varphi)$ for all $\varphi\in \SO(H)$ and thus $\widetilde{Q}_{H}$ is an extension of $\iota\circ Q_{H}$.

In a similar way one can show that the Banach space adjoint of the zero-extension operator, $Q_{H}^{\times}$, is the weak$^{*}$-weak$^{*}$ continuous extension of the operator $\iota\circ R_{H}:\SO(G)\to \SOprime(H)$. And we therefore define
\[ \widetilde{R}_{H} : \SOprime(G)\to \SOprime(H), \ (f, \widetilde{R}_{H} \sigma)_{\SO,\SOprime(H)} = ( Q_{H} f, \sigma)_{\SO,\SOprime(G)},\, f\in \SO(H),\, \sigma\in \SOprime(G).\]

\end{example}

The results in this section are related to the fact that $(\SO,L^{2},\SOprime)$ forms a so-called \emph{rigged Hilbert space}, also called a \emph{Gelfand triple}. The idea of rigged Hilbert spaces was introduced by Gelfand in \cite{MR0173945}. Notably, rigged Hilbert spaces find applications in the mathematical formulation of quantum mechanics \cite{an98-2,de05-1}. The triplet $(\SO,L^{2},\SOprime)$ and especially its applications to time-frequency analysis is further explored in \cite{feko98,dofegr06,feluwe07,cofelu08,fe09}.

\subsection{The Short-time Fourier transform on $\SOprime(G)$} \label{sec:STFT-on-SOprime}

In this section we extend the short-time Fourier transform from an operator on $\SO$ and $L^{2}$ to an operator on $\SOprime$. There are two immediate possibilities to do this:

(I) By definition of the short-time Fourier transform 
\[ \mathcal{\mathcal{V}}_{h}f (\chi) = \langle f, \pi(\chi) h \rangle = ( \overline{\pi(\chi) h}, \iota(f) )_{\SO,\SOprime(G)}, \ \ f,h\in \SO(G).\]
We are therefore inclined to define the short-time Fourier transform of a distribution $\sigma\in \SOprime(G)$ with respect to a function $h\in \SO(G)$ to be the function $\chi \mapsto (\overline{\pi(\chi)h},\sigma)_{\SO,\SOprime(G)}$. 

(II) In Theorem \ref{th:stft-and-s0} we proved that $\mathcal{V}_{h}$ with $h\in \SO(G)$ is a linear and bounded operator from $\SO(G)$ into $\SO(G\times\ghat)$. Furthermore, we showed that the $L^{2}$-Hilbert space adjoint $\mathcal{V}_{h}^{*}$ of the short-time Fourier transform $\mathcal{V}_{h}$ defines a linear and bounded operator from $L^{1}(G\times\ghat)$ onto $\SO(G)$ (or, in fact, as an operator from $\SO(G\times\ghat)$ onto $\SO(G)$). With these results and with the methods from the previous section we can extend the short-time Fourier transform and its adjoint to operators on $\SOprime$. 

Below, in Lemma \ref{le:STFT-on-SOprime}, we detail the process of extending the operator $\mathcal{V}_{h}$ and $\mathcal{V}_{h}^{*}$ as mentioned in (II) and show that this coincides with the idea in (I).
In order to state the result we let $j$ be the isometric isomorphism from $L^{\infty}(G\times\ghat)$ onto $(L^{1}(G\times\ghat))'$,
\[ j(H) = F \mapsto \int_{G\times\ghat} F(\chi) \, H(\chi)\, d\chi, \ \ H\in L^{\infty}(G\times\ghat), \ F\in L^{1}(G\times\ghat). \label{page:l1-linfty}
\footnote{If $G$ is $\sigma$-compact and metrizable then the Haar measure on $G\times\ghat$ is $\sigma$-finite. Hence $(L^{1}(G\times\ghat))'$ can be identified with $L^{\infty}(G\times\ghat)$ in the usual way. If $G$ is a general locally compact abelian Hausdorff group, then the Haar measure on $G\times\ghat$ may not be $\sigma$-finite. In order to still have the identification of $(L^{1})'$ with $L^{\infty}$ one redefines $L^{\infty}$ to be the set of all mesureable functions that are locally almost everywhere bounded. I.e., every function in $L^{\infty}$ is bounded except on a set $N$, where for all compact sets $K$ the intersection $N\cap K$ has measure zero. See \cite[\S 2.3]{fo89} and \cite[\S 12]{hero63}.}\]

\begin{lemma} \label{le:STFT-on-SOprime} Let $h$ be a non-zero function in $\SO(G)$.
\begin{enumerate} 
\item[(i)] The operator 
\[ \widetilde{\mathcal{V}}_{h} : \SOprime(G) \to (L^{1}(G\times\ghat))', \ (F,\widetilde{\mathcal{V}}_{h} \sigma)_{L^{1},(L^{1})'} = ( \overline{\mathcal{V}_{h}^{*} \overline{F}}, \sigma)_{\SO,\SOprime(G)}, \ \ F\in L^{1}(G\times\ghat)\]
is the unique weak$^{*}$-weak$^{*}$ continuous extension of the operator $j \circ \mathcal{V}_{h} : \SO(G)\to (L^{1}(G\times\ghat))'$ to an operator from $\SOprime(G)$ into $(L^{1}(G\times\ghat))'$. In particular, in the sense of $(L^{1}(G\times\ghat))'$, 
\[ \widetilde{\mathcal{V}}_{h} \iota (f) = j( \mathcal{V}_{h}f) \ \ \text{for all} \ \ f\in \SO(G).\]
Moreover, for all $F\in L^{1}(G\times\ghat)$ and all $\sigma\in \SOprime(G)$,
\[ (F,\widetilde{\mathcal{V}}_{h}\sigma)_{L^{1},(L^{1})'} = \int_{G\times\ghat} F(\chi) \, (\overline{\pi(\chi)h},\sigma)_{\SO,\SOprime} \, d\chi.\]
That is, the $L^{1}(G\times\ghat)$-functional $\widetilde{\mathcal{V}}_{h}\sigma$ is induced by the uniform continuous and bounded function
\[ G\times\ghat\to \C, \ \chi \mapsto ( \overline{\pi(\chi)h}, \sigma)_{\SO,\SOprime(G)}, \ \ h\in \SO(G), \, \sigma\in\SOprime(G).\]
\item[(ii)] Let $\mathcal{V}_{h}^{*}$ be the linear and bounded operator from $L^{1}(G\times\ghat)$ onto $\SO(G)$ defined in Theorem \ref{th:stft-and-s0}. Then  $\mathcal{V}_{h}^{*}F$, $F\in L^{1}(G\times\ghat)$ is the unique element in $\SO(G)$ such that
\[ (\mathcal{V}_{h}^{*} F,\sigma)_{\SO,\SOprime(G)} = \int_{G\times\ghat} F(\chi) \, (\pi(\chi)h,\sigma)_{\SO,\SOprime} \, d\chi \ \ \text{for all} \ \ \sigma\in \SOprime(G).\]
\item[(iii)] The operator 
\[ \widetilde{\mathcal{V}}_{h}^{*} : \SOprime(G\times\ghat) \to \SOprime(G), \ (f,\widetilde{\mathcal{V}}_{h}^{*}\psi)_{\SO,\SOprime(G)} = ( \overline{\mathcal{V}_{h}\overline{f}}, \psi)_{\SO,\SOprime(G\times\ghat)}, \ \ f\in \SO(G) \]
is the unique weak$^{*}$-weak$^{*}$ continuous extension of the operator $\iota\circ \mathcal{V}_{h}^{*}:L^{1}(G\times\ghat)\to\SOprime(G)$ to an operator from $\SOprime(G\times\ghat)$ onto $\SOprime(G)$. In particular, in the sense of $\SOprime(G)$,
\[ \widetilde{\mathcal{V}}_{h}^{*} \iota(F) = \iota( \mathcal{V}_{h}^{*} F) \ \ \text{for all} 
\ \ F \in L^{1}(G\times\ghat).\]
\item[(iv)] \label{le:STFT-on-SOprime-iii} If $g,h\in \SO(G)$ and $\sigma\in\SOprime(G)$, then $\widetilde{\mathcal{V}}_{h}^{*} \widetilde{\mathcal{V}}_{g} \sigma = \langle h,g\rangle \, \sigma$. Furthermore, for all $f\in \SO(G)$, 
\[ \langle h,g\rangle (f,\sigma)_{\SO,\SOprime(G)} = ( \mathcal{V}_{g} f,\overline{\widetilde{\mathcal{V}}_{h}\overline{\sigma}})_{L^{1},(L^{1})'} = \int_{G\times\ghat} \langle f, \pi(\chi) g\rangle \, ( \pi(\chi)h,\sigma)_{\SO,\SOprime}\, d\chi.\]
\end{enumerate}
\end{lemma}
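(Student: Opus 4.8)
The plan is to derive all four statements from the extension principle of Lemma~\ref{le:banach-vs-hilbert-adjoint}, fed with the mapping properties of $\mathcal{V}_h$ and $\mathcal{V}_h^*$ established in Theorem~\ref{th:stft-and-s0}. The two operators $\mathcal{V}_h\colon\SO(G)\to\SO(G\times\ghat)$ (Theorem~\ref{th:stft-and-s0}(ii)) and $\mathcal{V}_h^*\colon L^1(G\times\ghat)\to\SO(G)$ (Theorem~\ref{th:stft-and-s0}(iii)) are mutually $L^2$-adjoint, so each supplies exactly the $\SO$-boundedness hypothesis that Lemma~\ref{le:banach-vs-hilbert-adjoint} demands of the other.

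I would prove (ii) first, since the integral formulas in (i) and (iv) rest on it. The key observation is that for every $F\in L^1(G\times\ghat)$ the $\SO(G)$-valued map $\chi\mapsto F(\chi)\,\pi(\chi)h$ is Bochner integrable: a short computation using invariance of the Haar measure on $G\times\ghat$ gives $\Vert\pi(\chi)h\Vert_{\SO,g}=\Vert h\Vert_{\SO,g}$ for every $\chi$, whence $\Vert F(\chi)\pi(\chi)h\Vert_{\SO,g}=|F(\chi)|\,\Vert h\Vert_{\SO,g}\in L^1$, while strong measurability follows from continuity of $\chi\mapsto\pi(\chi)h$ into $\SO(G)$ (Lemma~\ref{le:time-freq-shift-continuous-on-SO}). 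Testing the Bochner integral $\int F(\chi)\pi(\chi)h\,d\chi$ against an arbitrary $\psi\in L^2(G)$ and invoking the definition of the $L^2$-adjoint identifies it with $\mathcal{V}_h^*F$ on the dense subspace $L^1\cap L^2$, hence, by boundedness of both operators $L^1\to\SO$, on all of $L^1$. Since a bounded functional $\sigma\in\SOprime(G)$ commutes with a Bochner integral, applying $\sigma$ gives $(\mathcal{V}_h^*F,\sigma)=\int F(\chi)\,(\pi(\chi)h,\sigma)\,d\chi$; uniqueness is immediate because $\delta_x\in\SOprime(G)$ (Lemma~\ref{le:delta-in-SOprime}) forces two $\SO$-representatives with the same action on all of $\SOprime(G)$ to agree pointwise.

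For (iii) I would apply Lemma~\ref{le:banach-vs-hilbert-adjoint} to $T=\mathcal{V}_h^*\colon L^2(G\times\ghat)\to L^2(G)$, whose Hilbert adjoint $\mathcal{V}_h$ is $\SO$-bounded by Theorem~\ref{th:stft-and-s0}(ii); this yields the weak$^*$-weak$^*$ continuous extension with the stated formula $(f,\widetilde{\mathcal{V}}_h^*\psi)=(\overline{\mathcal{V}_h\overline{f}},\psi)$, and $\widetilde{\mathcal{V}}_h^*\iota(F)=\iota(\mathcal{V}_h^*F)$ for all $F\in L^1$ holds since both sides are norm continuous on $L^1$ and agree on the dense subspace $L^1\cap L^2$. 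Statement (i) is the dual refinement: rather than merely landing in $\SOprime(G\times\ghat)$, one notes that $F\mapsto(\overline{\mathcal{V}_h^*\overline F},\sigma)$ is already bounded on $L^1(G\times\ghat)$ by Theorem~\ref{th:stft-and-s0}(iii), so it defines an element of $(L^1)'$; thus $\widetilde{\mathcal{V}}_h$ is exactly the Banach space adjoint of the bounded operator $F\mapsto\overline{\mathcal{V}_h^*\overline F}$ and is automatically weak$^*$-weak$^*$ continuous. That it extends $j\circ\mathcal{V}_h$ is the same $L^2$-adjoint computation as in (ii), uniqueness follows from weak$^*$-density of $\iota(\SO(G))$ (Lemma~\ref{le:weak-star-dense-soprime}), and the explicit representation $\int F(\chi)(\overline{\pi(\chi)h},\sigma)\,d\chi$, with boundedness and continuity of $\chi\mapsto(\overline{\pi(\chi)h},\sigma)$, comes from applying (ii) to $\overline F,\overline\sigma$ and conjugating via \eqref{eq:conjugation-distribution}.

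Finally, for (iv) I would observe that $\widetilde{\mathcal{V}}_h^*\widetilde{\mathcal{V}}_g$ is a composition of two weak$^*$-weak$^*$ continuous maps which, on the weak$^*$-dense subspace $\iota(L^2(G))$, equals $\langle h,g\rangle\,\iota$ by the $L^2$-identity $\mathcal{V}_h^*\mathcal{V}_g=\langle h,g\rangle\,\mathrm{Id}$ recorded after \eqref{eq:STFT-dual}; a density argument (weak$^*$-continuous maps agreeing on a weak$^*$-dense set coincide) then extends this to $\widetilde{\mathcal{V}}_h^*\widetilde{\mathcal{V}}_g\sigma=\langle h,g\rangle\sigma$ on all of $\SOprime(G)$, which incidentally supplies the surjectivity asserted in (iii). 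Unwinding this identity through the defining formula of $\widetilde{\mathcal{V}}_h^*$ from (iii) and the integral representation from (i) gives the two displayed scalar identities. The main obstacle I anticipate is bookkeeping rather than depth: keeping the three pairings $(\cdot,\cdot)_{\SO,\SOprime(G)}$, $(\cdot,\cdot)_{\SO,\SOprime(G\times\ghat)}$ and $(\cdot,\cdot)_{L^1,(L^1)'}$ consistent under the conjugations of \eqref{eq:conjugation-distribution} and under the embedding $(L^1)'\hookrightarrow\SOprime(G\times\ghat)$ (Lemma~\ref{le:Bprime-embeddedinto-Soprime}), and carefully justifying the Bochner integrability in (ii) on which the integral formulas hinge.
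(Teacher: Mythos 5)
Your proposal is correct in substance, but it replaces the paper's central technical device with a genuinely different one. The paper obtains the integral representation $(\mathcal{V}_h^* F,\sigma)=\int_{G\times\ghat}F(\chi)\,(\pi(\chi)h,\sigma)\,d\chi$ by introducing the auxiliary operator $T_h:L^1(G\times\ghat)\to \SO''(G)$ and proving, for each $F\in L^1$, that the functional $T_hF$ on $\SOprime(G)$ is weak$^*$ continuous: this is done by checking it on indicator functions of compact sets via uniform equicontinuity of $\chi\mapsto(\overline{\pi(\chi)h},\sigma_\lambda)$ along bounded weak$^*$ convergent nets (using \cite[Theorem 2.7.8]{me98}), then passing to simple functions and finally to all of $L^1$ by an $\epsilon/3$ argument; the canonical identification of weak$^*$ continuous functionals on $\SOprime(G)$ with $\SO(G)$ then lands $T_hF$ in $\SO(G)$, and the identity $T_hF=\overline{\mathcal{V}_h^*\overline F}$ is verified by testing against $L^2$. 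You instead realize $\mathcal{V}_h^*F$ directly as the $\SO(G)$-valued Bochner integral $\int F(\chi)\pi(\chi)h\,d\chi$ (using $\Vert\pi(\chi)h\Vert_{\SO,g}=\Vert h\Vert_{\SO,g}$ from \eqref{eq:0812a} and continuity of $\chi\mapsto\pi(\chi)h$), after which the integral formula is just the commutation of a bounded functional with a Bochner integral. This is shorter and conceptually cleaner, and it makes Remark~\ref{rem:adjoint-Vgf-as-integral} literal; what it buys the paper's approach in return is that the equicontinuity argument never invokes vector-valued integration and so sidesteps measurability questions entirely. The one point you must not gloss over is strong measurability of $\chi\mapsto F(\chi)\pi(\chi)h$: for a general (possibly non-$\sigma$-compact, non-separable) $G\times\ghat$ you need that $F\in L^1$ is, up to a null set, concentrated on a $\sigma$-compact set (inner regularity of Haar measure on sets of finite measure) and that the continuous image of a compact set in the Banach space $\SO(G)$ is compact, hence separable, so the integrand is essentially separably valued. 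With that supplied, your proofs of (i), (iii) and (iv) — Banach space adjoint of $F\mapsto\overline{\mathcal{V}_h^*\overline F}$ for (i), Lemma~\ref{le:banach-vs-hilbert-adjoint} applied to $\mathcal{V}_h^*$ for (iii), and weak$^*$ density for (iv) in place of the paper's one-line direct computation via \eqref{eq:STFT-dual} — all go through, modulo the pairing/conjugation bookkeeping you already flag.
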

\begin{proof}(i). Note that $\widetilde{\mathcal{V}}_{h}$ is the Banach space adjoint operator of $F\mapsto \overline{\mathcal{V}_{h}^{*}\overline{F}}$, $F\in L^{1}(G\times\ghat)$, which is a linear and bounded operator from $L^{1}(G\times\ghat)$ onto $\SO(G)$. Hence $\widetilde{\mathcal{V}}_{h}$ is well-defined, linear, bounded and weak$^{*}$-weak$^{*}$ continuous from $\SOprime(G)$ into $(L^{1}(G\times\ghat))'$. Let now $f$ and $F$ be functions in $\SO(G)$ and $\SO(G\times\ghat)$, respectively. Then
\begin{align*}
& (F, \widetilde{\mathcal{V}}_{h}\iota(f))_{L^{1},(L^{1})'} = (\overline{\mathcal{V}_{h}^{*} \overline{F}}, \iota(f))_{\SO,\SOprime(G)} = \langle f, \mathcal{V}_{h}^{*} \overline{F}\rangle  \\
& = \langle \mathcal{V}_{h}f, \overline{F}\rangle = \int_{G\times\ghat} F(\chi) \, \langle f, \pi(\chi)h\rangle \, d\chi = (F,j(\mathcal{V}_{h}f))_{L^{1},(L^{1})'}.
\end{align*}
Since $F$ was an arbitrary function in $\SO(G\times\ghat)$ and $\SO(G\times\ghat)$ is dense in $L^{1}(G\times\ghat)$ it follows that $\widetilde{\mathcal{V}}_{h}\iota(f)$ and $j(\mathcal{V}_{h}f)$ are the same elements in $(L^{1}(G\times\ghat))'$. Hence $\widetilde{\mathcal{V}}_{h}$ is an extension of $j\circ \mathcal{V}_{h}:\SO(G)\to (L^{1}(G\times\ghat))'$. The uniqueness of this extension follows as in the proof of Lemma \ref{le:banach-vs-hilbert-adjoint} and the fact that $\SO(G)$ is weak$^{*}$ dense in $\SOprime(G)$ (Lemma \ref{le:weak-star-dense-soprime}). Let us now prove the moreover part. To this end we define the operator 
\[ T_{h}: L^{1}(G\times\ghat)\to \SO''(G), \ T_{h}(F) =\sigma \mapsto \int_{G\times\ghat} F(\chi) \, (\overline{\pi(\chi)h},\sigma)_{\SO,\SOprime} \, d\chi.\]
It is straight forward to show that this is a well-defined, linear and bounded operator. In particular, for some $g\in \SO(G)\backslash\{0\}$,
\[ \vert T_{h} (F)(\sigma) \vert \le \Vert F \Vert_{1} \, \Vert \overline{h} \Vert_{\SO,g} \, \Vert \sigma \Vert_{\SOprime,g}\]
and $\Vert T_{h} \Vert_{\textnormal{op},L^{1}\to \SO''} \le \Vert \overline{h} \Vert_{\SO,g}$. Let us now show that for all $F\in L^{1}(G\times\ghat)$ the functional $T_{h}F:\SOprime(G)\to \C$ is in fact weak$^{*}$ continuous. I.e., $T_{h}F$ maps weak$^{*}$ convergent nets in $\SOprime(G)$ into weak$^{*}$ convergent (and since $\C$ is finite dimensional also norm convergent) nets in $\C$.
Since we consider weak$^{*}$-weak$^{*}$ continuity for a functional, it follows by \cite[Theorem 2.7.8]{me98} that it is necessary and sufficient to confirm the weak$^{*}$ continuity for every \emph{bounded} weak$^{*}$ convergent net $(\sigma_{\lambda})$. Hence, if $(\sigma_{\lambda})$ is a bounded weak$^{*}$ convergent net in $\SOprime(G)$ with limit $\sigma\in\SOprime(G)$ we wish to show that
\[ \lim_{\lambda} T_{h}(F)(\sigma_{\lambda}) = T_{h}(F)(\sigma).\]  
For the moment let us restrict ourselves to functions $F$ which are the indicator function on a compact set $K\subseteq G\times\ghat$. We can then make the estimate
\begin{align*} 
\lim_{\lambda} \vert T_{h}(F)(\sigma_{\lambda}) - T_{h}(F)(\sigma)\vert & = \lim_{\lambda} \Big\vert \int_{K} (\overline{\pi(\chi)h},\sigma_{\lambda})_{\SO,\SOprime} - (\overline{\pi(\chi)h},\sigma)_{\SO,\SOprime} \Big\vert \\
& \le \mu_{G\times\ghat}(K) \, \lim_{\lambda} \sup_{\chi\in K} \vert (\overline{\pi(\chi)h},\sigma_{\lambda}-\sigma)_{\SO,\SOprime}\vert.
\end{align*}
Thus, if we can verify uniform convergence of $(\overline{\pi(\chi)h},\sigma_{\lambda})_{\SO,\SOprime}$ on the compact set $K$, then we are done. 
Let $c = \sup_{\lambda} \Vert \sigma_{\lambda} \Vert_{\SOprime,g}<\infty$ and let $U_{\epsilon}$ be a neighbourhood around the identity of $G\times\ghat$ such that $\Vert \overline{\pi(\chi) h -  h} \Vert_{\SO,g} < \epsilon / c$ for all $\chi\in U_{\epsilon}$ (this is possible due to Lemma \ref{le:time-freq-shift-continuous-on-SO}). Then, for all $\chi_{0}\in G \times\ghat$ and for all $\lambda$, we establish that
\[ \vert (\overline{\pi(\chi_{0}+\chi) h},\sigma_{\lambda})-(\overline{\pi(\chi_{0}) h},\sigma_{\lambda})\vert \le \Vert \overline{\pi(\chi)h-h }\Vert_{\SO,g} \, \Vert \sigma_{\lambda} \Vert_{\SOprime,g} < \epsilon \ \ \textnormal{for all} \ \ \chi\in U_{\epsilon}. \]
Hence the functions $\{(\overline{\pi(\chi)h},\sigma_{\lambda})_{\SO,\SOprime}\}_{\lambda}$ are equicontinuous. Since this estimate also does not depend on $\chi_{0}$, the functions are, in fact, uniformly equicontinuous. 
Because of this, the a-priori pointwise convergence $\lim_{\lambda}(\overline{\pi(\chi)h},\sigma_{\lambda}) =(\overline{\pi(\chi)h},\sigma)$ guaranteed by the assumed weak$^{*}$ convergence for all $\chi\in G\times\ghat$ implies uniform convergence over compact sets. I.e.,
\[ \lim_{\lambda} \sup_{\chi\in K} \vert (\overline{\pi(\chi)h},\sigma_{\lambda}-\sigma)_{\SO,\SOprime}\vert = 0. \]
We have thus shown that for the indicator function on any compact set $F$ the functional $T_{h}(F):\SOprime(G)\to \C$ is weak$^{*}$ continuous. Due to linearity of $T_{h}$ the weak$^{*}$ continuity of $T_{h}(F)$ extends from constant functions on compact sets to all simple functions $F$, i.e., piecewise constant functions on compact sets with finitely many distinct function values. This space is dense in $L^{1}(G\times\ghat)$. By a standard $\epsilon/3$ proof we can pass the weak$^{*}$ continuity of $T_{h}F$ from simple functions to all functions in $L^{1}(G\times\ghat)$. We have thus shown that $T_{h}$ maps every integrable function into a weak$^{*}$ continuous functional on $\SOprime(G)$. As for every Banach space, the space of all weak$^{*}$ continuous functionals on $\SOprime(G)$ is isometrically isomorphic to $\SO(G)$ itself. We can therefore think of $T_{h}$ as a linear and bounded operator from $L^{1}(G\times\ghat)$ into $\SO(G)$ and we write
\[ T_{h}:L^{1}(G\times\ghat)\to \SO(G), \ T_{h} F = \int_{G\times\ghat} F(\chi) \, \overline{\pi(\chi) h} \, d\chi.\]  
The integral is to be understood exactly as in the definition of $T_{h}$ from before, i.e., $T_{h}F$, $F\in L^{1}(G\times\ghat)$ is the unique element in $\SO(G)$ such that
\[ (T_{h}F, \sigma)_{\SO,\SOprime(G)} = \int_{G\times\ghat} F(\chi) \, (\overline{\pi(\chi)h}, \sigma)_{\SO,\SOprime(G)} \ \ \text{for all} \ \ \sigma\in \SOprime(G).\]
It is straight forward to show that $T_{h}F = \overline{\mathcal{V}_{h}^{*}\overline{F}}$ for all $F\in \SO(G\times\ghat)$. Indeed, for all $f\in L^{2}(G)$,  
\begin{align*} \langle \overline{\mathcal{V}_{h}^{*} \overline{F}}, f\rangle & = \overline{\langle \overline{F}, \mathcal{V}_{h} \overline{f} \rangle} = \int_{G\times\ghat} F(\chi) \, \langle \overline{f},\pi(\chi)h \rangle \, d\chi = \int_{G\times\ghat} F(\chi) \, ( \overline{\pi(\chi) h}, \iota(\overline{f}) ) \, d\chi \\
& = (T_{h} F, \iota(\overline{f}) ) = \langle T_{h}F,f\rangle.\end{align*}
Hence $T_{h}F = \overline{\mathcal{V}_{h}^{*}\overline{F}}$. From this equality and the definition of $\widetilde{\mathcal{V}}_{h}$, we conclude that
\[ (F,\widetilde{\mathcal{V}}_{h}\sigma)_{L^{1},(L^{1})'} = \int_{G\times\ghat} F(\chi) \, (\overline{\pi(\chi)h},\sigma)_{\SO,\SOprime} \, d\chi\]
and so
the functional $\widetilde{\mathcal{V}_{h}}\sigma\in (L^{1}(G\times\ghat))'$ is induced by the uniform continuous and bounded function $\chi\mapsto (\overline{\pi(\chi)h},\sigma)$. \\
(ii). By the just established equality $T_{h} F = \overline{\mathcal{V}_{h}^{*} \overline{F}}$ we can easily show that, for all $F\in L^{1}(G\times\ghat)$, $h\in \SO(G)$ and $\sigma\in \SOprime(G)$,
\begin{align*} (\mathcal{V}_{h}^{*} F,\sigma) &= (\overline{T_{h} \overline{F}},\sigma) = \overline{(T_{h} \overline{F},\overline{\sigma})} = \overline{\int_{G\times\ghat} \overline{F(\chi)} \, (\overline{\pi(\chi) h},\overline{\sigma} ) \, d\chi } \\
& = \int_{G\times\ghat} F(\chi) \, (\pi(\chi)h, \sigma) \, d\chi. \end{align*}
(iii). If $f\in \SO(G)$ and $F\in L^{1}(G\times\ghat)$, then 
\begin{align} (f, \widetilde{\mathcal{V}}_{h}^{*} \iota(F) ) & = (\overline{\mathcal{V}_{h}\overline{f}}, \iota(F))_{\SO,\SOprime(G\times\ghat)} = \int_{G\times\ghat} F(\chi) \, \langle \pi(\chi) h, \overline{f}\rangle \, d\chi \nonumber \\
& = \int_{G\times\ghat} F(\chi) \, ( \pi(\chi) h, \iota(f))_{\SO,\SOprime(G)} \, d\chi \label{eq:1307aaa} \end{align}
On the other hand, using the statement in (ii) and the equality $(f, \iota(h))_{\SO,\SOprime(G)} = (h,\iota(f))_{\SO,\SOprime(G)}$ for all $f,h\in \SO(G)$, we establish that
\begin{equation} (f, \iota(\mathcal{V}_{h}^{*} F))_{\SO,\SOprime(G)} = (\mathcal{V}_{h}^{*}F,\iota(f))_{\SO,\SOprime(G)} = \int_{G\times\ghat} F(\chi) \, ( \pi(\chi) h, \iota(f))_{\SO,\SOprime(G)} \, d\chi. \label{eq:1307bbb} \end{equation}
Combining \eqref{eq:1307aaa} and \eqref{eq:1307bbb} we conclude that $\widetilde{\mathcal{V}}_{h}^{*}\iota(F)$ and $\iota(\mathcal{V}_{h}^{*}F)$ define the same elements in $\SOprime(G)$.
The remainder of the statement follows as in (i), where we need to know that the short-time Fourier transform $\mathcal{V}_{h}$, $h\in \SO(G)$ is a linear and bounded operator from  $\SO(G)$ into $\SO(G\times\ghat)$ (Theorem \ref{th:stft-and-s0}). 
Finally we prove (iv). By definition of $\widetilde{\mathcal{V}}_{h}^{*}$, $\widetilde{\mathcal{V}}_{g}$ and \eqref{eq:STFT-dual},
\[ (f,\widetilde{\mathcal{V}}_{h}^{*}\widetilde{\mathcal{V}}_{g}\sigma)_{\SO,\SOprime} = (\overline{ \mathcal{V}_{g}^{*} \mathcal{V}_{h} \overline{f}},\sigma)_{\SO,\SOprime} = (\overline{ \langle g,h\rangle \overline{f}},\sigma)_{\SO,\SOprime} = \langle h,g\rangle (f,\sigma)_{\SO,\SOprime}.\]
Hence $\widetilde{\mathcal{V}}_{h}^{*}\widetilde{\mathcal{V}}_{g}\sigma = \langle h,g\rangle \, \sigma$. Using this identity with the roles $g$ and $h$ interchanged, we immediately have the equalities
\[ \langle g,h\rangle \, (\overline{f}, \sigma)_{\SO,\SOprime} = ( \overline{\mathcal{V}_{g}f} , \widetilde{\mathcal{V}}_{h} \sigma)_{L^{1},(L^{1})'} = \int_{G\times\ghat} \langle \pi(\chi) g, f\rangle \, (\overline{\pi(\chi)h},\sigma)_{\SO,\SOprime} \, d\chi .\]
Applying a complex conjugation yields the equality
\[ \langle h,g\rangle (f,\overline{\sigma})_{\SO,\SOprime(G)} = ( \mathcal{V}_{g} f,\overline{\widetilde{\mathcal{V}}_{h}\sigma})_{L^{1},(L^{1})'} = \int_{G\times\ghat} \langle f, \pi(\chi) g\rangle \, ( \pi(\chi)h,\overline{\sigma})_{\SO,\SOprime}\, d\chi.\]
Finally, replacing $\sigma$ by $\overline{\sigma}$ and using that $\overline{\overline{\sigma}}=\sigma$ yields the desired equality.
\end{proof}


The following result is an important statement concerning $\SOprime(G)$, which will be essential for the proof of Theorem \ref{th:SO-expansion}.


\begin{proposition} \label{pr:SOprime-equiv-norms} Fix any $g\in \SO(G)\backslash\{0\}$, then
\[ \Vert \cdot \Vert_{M^{\infty},g} : \SOprime(G) \to \R^{+}_{0}, \ \Vert \sigma \Vert_{M^{\infty},g} =  \sup_{\chi\in G\times\ghat} \vert (\pi(\chi)g , \sigma)_{\SO,\SOprime(G)} \vert \]
defines a norm on $\SOprime(G)$ which is equivalent to $\Vert \cdot \Vert_{\SOprime,g}$. In particular, for all $\sigma\in\SOprime(G)$,
\[  \Vert g \Vert_{\SO,g}^{-1} \, \Vert \sigma \Vert_{M^{\infty},g} \le \Vert \sigma \Vert_{\SOprime,g} \le  \Vert g \Vert_{2}^{-2} \, \Vert \sigma \Vert_{M^{\infty},g}.\]
\end{proposition}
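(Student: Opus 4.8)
The plan is to establish the two displayed inequalities directly; norm equivalence is then immediate, and the norm axioms for $\Vert\cdot\Vert_{M^{\infty},g}$ come out as a byproduct. The whole argument rests on two earlier ingredients: the translation-invariance relation \eqref{eq:0812a} for the $\SO$-norm and the reconstruction identity of Lemma~\ref{le:STFT-on-SOprime}(iv).

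For the lower bound $\Vert g\Vert_{\SO,g}^{-1}\Vert\sigma\Vert_{M^{\infty},g}\le\Vert\sigma\Vert_{\SOprime,g}$, I would fix $\chi\in G\times\ghat$ and apply the defining operator-norm estimate $\vert(\pi(\chi)g,\sigma)_{\SO,\SOprime}\vert\le\Vert\sigma\Vert_{\SOprime,g}\,\Vert\pi(\chi)g\Vert_{\SO,g}$. By \eqref{eq:0812a} with $\nu_{2}=0$ (so that $\pi(0)g=g$) one has $\Vert\pi(\chi)g\Vert_{\SO,g}=\Vert g\Vert_{\SO,g}$, hence $\vert(\pi(\chi)g,\sigma)\vert\le\Vert g\Vert_{\SO,g}\,\Vert\sigma\Vert_{\SOprime,g}$. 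Taking the supremum over $\chi$ gives the claim; in particular this already shows $\Vert\sigma\Vert_{M^{\infty},g}<\infty$ for every $\sigma\in\SOprime(G)$.

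For the upper bound I would specialize the identity in Lemma~\ref{le:STFT-on-SOprime}(iv) to $h=g$, which yields, for every $f\in\SO(G)$, the formula $\Vert g\Vert_{2}^{2}\,(f,\sigma)_{\SO,\SOprime}=\int_{G\times\ghat}\langle f,\pi(\chi)g\rangle\,(\pi(\chi)g,\sigma)_{\SO,\SOprime}\,d\chi$, where $\langle g,g\rangle=\Vert g\Vert_{2}^{2}\neq0$ because $g\neq0$. Bounding the integrand by $\vert\langle f,\pi(\chi)g\rangle\vert\,\Vert\sigma\Vert_{M^{\infty},g}$ and recalling from Definition~\ref{def:s0-norm} that $\int_{G\times\ghat}\vert\langle f,\pi(\chi)g\rangle\vert\,d\chi=\Vert\mathcal{V}_{g}f\Vert_{1}=\Vert f\Vert_{\SO,g}$, I obtain $\Vert g\Vert_{2}^{2}\,\vert(f,\sigma)\vert\le\Vert\sigma\Vert_{M^{\infty},g}\,\Vert f\Vert_{\SO,g}$. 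Dividing by $\Vert g\Vert_{2}^{2}$ and taking the supremum over $f$ with $\Vert f\Vert_{\SO,g}=1$ produces $\Vert\sigma\Vert_{\SOprime,g}\le\Vert g\Vert_{2}^{-2}\,\Vert\sigma\Vert_{M^{\infty},g}$.

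It then remains to note that $\Vert\cdot\Vert_{M^{\infty},g}$ really is a norm: absolute homogeneity and the triangle inequality are inherited from the linearity of $(\,\cdot\,,\,\cdot\,)_{\SO,\SOprime}$ in its second argument together with the subadditivity of the supremum, while positive definiteness follows from the upper bound, since $\Vert\sigma\Vert_{M^{\infty},g}=0$ forces $\Vert\sigma\Vert_{\SOprime,g}\le\Vert g\Vert_{2}^{-2}\cdot0=0$ and hence $\sigma=0$. The two inequalities taken together are exactly the asserted equivalence of the norms. There is no genuinely hard step once Lemma~\ref{le:STFT-on-SOprime}(iv) is in hand; the only points that need a little care are the bookkeeping that $\Vert\pi(\chi)g\Vert_{\SO,g}$ is independent of $\chi$ and the identification of $\int\vert\langle f,\pi(\chi)g\rangle\vert\,d\chi$ with $\Vert f\Vert_{\SO,g}$, both of which are direct consequences of earlier definitions.
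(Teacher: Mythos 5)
Your argument is correct and follows the same route as the paper: both bounds come from the reconstruction identity of Lemma~\ref{le:STFT-on-SOprime}(iv) for the upper inequality and the translation-invariance of $\Vert\cdot\Vert_{\SO,g}$ for the lower one. The only difference is that you spell out the norm axioms (which the paper explicitly omits), and your derivation of positive definiteness from the upper bound is exactly the intended shortcut.
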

\begin{proof} 
By Lemma \ref{le:STFT-on-SOprime}(iv) we have that for all $f\in \SO(G)$ and $\sigma\in\SOprime(G)$,
\[ (f,\sigma) = \Vert g \Vert_{2}^{-2}\int_{G\times\ghat} \langle f, \pi(\chi) g \rangle \, (\pi(\chi) g, \sigma) \, d\chi\]
With these relations the upper inequality follows easily:
\begin{align*}
\Vert \sigma \Vert_{\SOprime,g} & = \sup_{\substack{f\in \SO(G) \\ \Vert f \Vert_{\SO,g}=1}} \vert ( f, \sigma) \vert = \Vert g \Vert_{2}^{-2} \, \sup_{\substack{f\in \SO(G) \\ \Vert f \Vert_{\SO,g}=1}} \Big\vert \int_{G\times\ghat} \langle f, \pi(\chi) g \rangle \, (\pi(\chi) g, \sigma) \, d\chi \, \Big\vert \\
& \le \Vert g \Vert_{2}^{-2} \sup_{\substack{f\in \SO(G) \\ \Vert f \Vert_{\SO,g}=1}}  \Vert f \Vert_{\SO,g}   \, \sup_{\chi\in G\times\ghat} \vert (\pi(\chi) g, \sigma) \vert = \Vert g \Vert_{2}^{-2} \, \Vert \sigma\Vert_{M^{\infty},g}. \end{align*}
The lower equality follows from the straight forward estimate
\[ \Vert \sigma\Vert_{M^{\infty},g} = \sup_{\chi\in G\times\ghat} \vert (\pi(\chi)g,\sigma) \vert \le \Vert g\Vert_{\SO,g} \, \Vert \sigma\Vert_{\SOprime,g}.\]
We omit the details concerning the norm-axioms.
\end{proof}

The ``$M^{\infty},g$'' in the index of the norm in Proposition \ref{pr:SOprime-equiv-norms} is related to the modulation space of order $\infty$ and will be explained shortly.

\subsection{Characterization of $\SO(G)$ as elements of $\SOprime(G)$}
\label{sec:modulation-spaces}
The results on the short-time Fourier transform in the previous section allow us to show further characterizations of $\SO(G)$ and they allow us to introduce the \emph{modulation spaces}.

It is useful to let $\widetilde{\mathcal{V}}_{g}\sigma$, $g\in \SO(G)$, $\sigma\in\SOprime(G)$, \emph{not} denote a functional in $(L^{1}(G\times\ghat))'$ as defined in Lemma \ref{le:STFT-on-SOprime}(i) but rather the uniform continuous and bounded function that induces this functional, i.e.,
\[ \widetilde{\mathcal{V}}_{g}\sigma (\chi) = (\overline{\pi(\chi)g},\sigma)_{\SO,\SOprime(G)}, \ \sigma\in \SOprime(G), \ \chi\in G\times\ghat.\]

For a fixed function $g\in \SO(G)\backslash\{0\}$ we consider the following two sets from Definition \ref{def:S0-bigdef}:
\begin{enumerate}[]
\item $\mathscr{J} = \{ \sigma \in \SOprime(G) \, : \, \widetilde{\mathcal{V}}_{g} \sigma \in \SO(G\times\ghat)\}$,
\item $\mathscr{K} = \{ \sigma \in \SOprime(G) \, : \, \widetilde{\mathcal{V}}_{g} \sigma \in L^{1}(G\times\ghat)\}$. 
\end{enumerate}

\begin{theorem} \label{th:SO-definition-via-SOprime} Let $g$ be a function in $\SO(G)\backslash\{0\}$. For any locally compact abelian group $G$ it holds that $\iota(\SO(G)) = \mathscr{J} = \mathscr{K}$. That is, a distribution $\sigma\in \SOprime(G)$ is induced by a function $f\in\SO(G)$ such that $\sigma=\iota(f)$, if and only if $\widetilde{\mathcal{V}}_{g}\sigma$ is a function in $\SO(G\times\ghat)$, or, equivalently, $\widetilde{\mathcal{V}}_{g}\sigma$ is a function in $L^{1}(G\times\ghat)$. Furthermore, the vector space $\mathscr{K}\subseteq \SOprime(G)$ with the norm
\[ \Vert \cdot \Vert_{\mathscr{K},g} : \mathscr{K} \to \R_{0}^{+}, \ \Vert \sigma \Vert_{\mathscr{K},g} = \Vert \widetilde{\mathcal{V}}_{g}\sigma \Vert_{1} = \int_{G\times\ghat} \vert ( \overline{\pi(\chi) g}, \sigma)_{\SO,\SOprime(G)} \vert \, d\chi \]
is isometric isomorphic to $(\SO(G),\Vert \cdot \Vert_{\SO(G),g})$.
\end{theorem}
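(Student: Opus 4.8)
The plan is to establish the two equalities through the cyclic chain of inclusions $\iota(\SO(G)) \subseteq \mathscr{J} \subseteq \mathscr{K} \subseteq \iota(\SO(G))$, and then to exhibit $\iota$ itself as the desired isometry. Throughout I use that, under the present convention, $\widetilde{\mathcal{V}}_g\sigma$ denotes the bounded continuous function $\chi \mapsto (\overline{\pi(\chi)g},\sigma)_{\SO,\SOprime(G)}$, and that for $f\in\SO(G)$ one has $\widetilde{\mathcal{V}}_g\iota(f)(\chi) = \langle f,\pi(\chi)g\rangle = \mathcal{V}_gf(\chi)$ by Lemma~\ref{le:STFT-on-SOprime}(i).

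The two easy inclusions come for free from the mapping properties of the short-time Fourier transform. For $\iota(\SO(G))\subseteq\mathscr{J}$, let $f\in\SO(G)$; then $\widetilde{\mathcal{V}}_g\iota(f) = \mathcal{V}_gf$, which lies in $\SO(G\times\ghat)$ by Theorem~\ref{th:stft-and-s0}(ii), so $\iota(f)\in\mathscr{J}$. The inclusion $\mathscr{J}\subseteq\mathscr{K}$ is immediate from $\SO(G\times\ghat)\subseteq L^1(G\times\ghat)$ (Corollary~\ref{co:f-in-s0-properties}(iii) together with Lemma~\ref{le:cont-embedded-in-Lp}).

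The crux is $\mathscr{K}\subseteq\iota(\SO(G))$, for which I use the inversion formula. Fix $\sigma\in\mathscr{K}$ and put $F = \widetilde{\mathcal{V}}_g\sigma$, which by hypothesis is an element of $L^1(G\times\ghat)$. Lemma~\ref{le:STFT-on-SOprime}(iv) with $h=g$ gives the operator identity $\widetilde{\mathcal{V}}_g^*\widetilde{\mathcal{V}}_g\sigma = \langle g,g\rangle\,\sigma = \Vert g\Vert_2^2\,\sigma$ in $\SOprime(G)$, where $\widetilde{\mathcal{V}}_g\sigma$ is read as the element of $\SOprime(G\times\ghat)$ induced by the function $F$. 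Since $F\in L^1(G\times\ghat)$, this inducing element is exactly $\iota(F)$, and Lemma~\ref{le:STFT-on-SOprime}(iii) converts the abstract adjoint into the concrete synthesis operator: $\widetilde{\mathcal{V}}_g^*\iota(F) = \iota(\mathcal{V}_g^*F)$. Combining the two identities yields $\Vert g\Vert_2^2\,\sigma = \iota(\mathcal{V}_g^*F)$, so $\sigma = \iota\big(\Vert g\Vert_2^{-2}\mathcal{V}_g^*F\big)$; and $\mathcal{V}_g^*F\in\SO(G)$ by Theorem~\ref{th:stft-and-s0}(iii). Hence $\sigma\in\iota(\SO(G))$, closing the chain and proving $\iota(\SO(G)) = \mathscr{J} = \mathscr{K}$.

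Finally, for the isometric isomorphism I take the map $\iota\colon\SO(G)\to\mathscr{K}$, which is linear and, by the chain just proved, onto $\mathscr{K}$. The norm equality is a one-line computation: for $f\in\SO(G)$, $\Vert\iota(f)\Vert_{\mathscr{K},g} = \Vert\widetilde{\mathcal{V}}_g\iota(f)\Vert_1 = \Vert\mathcal{V}_gf\Vert_1 = \Vert f\Vert_{\SO(G),g}$, using $\widetilde{\mathcal{V}}_g\iota(f)=\mathcal{V}_gf$ and Definition~\ref{def:s0-norm}. This identity simultaneously shows that $\iota$ is isometric (hence injective) and that $\Vert\cdot\Vert_{\mathscr{K},g}$ satisfies the norm axioms, as these are inherited from $\Vert\cdot\Vert_{\SO(G),g}$ through the bijection $\iota$. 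The main obstacle throughout is purely notational bookkeeping: one must keep careful track of the three avatars of $\widetilde{\mathcal{V}}_g\sigma$ --- the bounded continuous function, the functional in $(L^1(G\times\ghat))'$, and, when $\sigma\in\mathscr{K}$, the element $\iota(F)\in\SOprime(G\times\ghat)$ obtained by re-embedding the $L^1$ function $F$ --- so that the reconstruction identity of Lemma~\ref{le:STFT-on-SOprime}(iv) correctly feeds into the concrete operator $\mathcal{V}_g^*\colon L^1(G\times\ghat)\to\SO(G)$ of Theorem~\ref{th:stft-and-s0}(iii).
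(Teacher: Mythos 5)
Your proposal is correct and follows essentially the same route as the paper: the same two easy inclusions via Theorem~\ref{th:stft-and-s0}(ii), the same candidate preimage $\Vert g\Vert_2^{-2}\mathcal{V}_g^{*}\widetilde{\mathcal{V}}_g\sigma$ for the inclusion $\mathscr{K}\subseteq\iota(\SO(G))$, and the same isometry computation $\Vert\widetilde{\mathcal{V}}_g\iota(f)\Vert_1=\Vert\mathcal{V}_gf\Vert_1=\Vert f\Vert_{\SO,g}$. The only (cosmetic) difference is that you obtain $\sigma=\iota(\Vert g\Vert_2^{-2}\mathcal{V}_g^{*}F)$ by composing the ready-made identities of Lemma~\ref{le:STFT-on-SOprime}(iii) and (iv), whereas the paper re-derives this by an explicit chain of pairings; your bookkeeping of the three incarnations of $\widetilde{\mathcal{V}}_g\sigma$ is handled correctly.
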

\begin{proof} If $f\in\SO(G)$, then, for the distribution $\iota(f)$ induced by $f$,
\begin{equation} \label{eq:1206a} \widetilde{\mathcal{V}}_{g}\iota(f)(\chi) = ( \overline{\pi(\chi) g}, \iota(f))_{\SO,\SOprime} = \langle f, \pi(\chi) g \rangle =  \mathcal{V}_{g}f(\chi).\end{equation}
By Lemma \ref{le:cont-embedded-in-Lp} and Theorem \ref{th:stft-and-s0}(ii) we know that $\mathcal{V}_{g}f\in \SO(G\times\ghat)\subseteq L^{1}(G\times\ghat)$. 
This proves that $\iota(\SO(G))\subseteq \mathscr{J}\subseteq \mathscr{K}$. 

Let now $\sigma$ be an element in $\mathscr{K}$. That is, $\widetilde{\mathcal{V}}_{g}\sigma\in L^{1}(G\times\ghat)$. Then, since $\mathcal{V}_{g}^{*}$ maps $L^{1}(G\times\ghat)$ onto $\SO(G)$, we can define a function $h\in \SO(G)$ by $h=\Vert g \Vert_{2}^{-2} \mathcal{V}_{g}^{*} \widetilde{\mathcal{V}}_{g} \sigma$. As it turns out $\sigma = \iota(h)$. Indeed, for all $f\in \SO(G)$,
\begin{align}
(f,\iota(h))_{\SO,\SOprime} & = (h,\iota(f))_{\SO,\SOprime} = \Vert g \Vert_{2}^{-2} ( \mathcal{V}_{g}^{*} \widetilde{\mathcal{V}}_{g} \sigma, \iota(f))_{\SO,\SOprime} \nonumber \\
{\small\{\textnormal{Lemma \ref{le:STFT-on-SOprime}(ii)}\}} \ \ \ & = \Vert g \Vert_{2}^{-2} \int_{G\times\ghat} (\overline{\pi(\chi) g}, \sigma) \, \langle \pi(\chi) g, \overline{f}\rangle \, d\chi \nonumber \\
& = \Vert g \Vert_{2}^{-2} \overline{\int_{G\times\ghat} (\pi(\chi) g, \overline{\sigma}) \, \langle\overline{f}, \pi(\chi) g\rangle \, d\chi} \nonumber \\
{\small{\{\textnormal{Lemma \ref{le:STFT-on-SOprime}(iv)}\}}} \ \ \ & = \overline{(\overline{f},\overline{\sigma})} = (f,\sigma). \label{eq:1206b}
\end{align}
We conclude that $\mathscr{J}\subseteq \iota(\SO(G))$. It remains to show the furthermore part.
To this end define two operators,
\begin{align*}
d & : \SO(G) \to \mathscr{K}, \ d(f) = \iota(f),\\
 e & : \mathscr{K} \to \SO(G), \ e (\sigma) = \Vert g \Vert_{2}^{-2} \, \mathcal{V}_{g}^{*} \widetilde{\mathcal{V}}_{g} \sigma.
\end{align*}
It is clear that these are well-defined and linear. It remains to show that they are bounded and that
$d\circ e (\sigma) = \sigma$ and $e\circ d(f) = f$ for all $f\in \SO(G)$ and $\sigma\in \mathscr{K}$.
By use of $\eqref{eq:1206a}$ we easily establish that, for all $f\in \SO(G)$,
\[ \int_{G\times\ghat} \vert ( \overline{\pi(\chi)g},\iota(f))_{\SO,\SOprime(G)} \vert \, d\chi = \int_{G\times\ghat} \vert \mathcal{V}_{g}f(\chi) \vert \, d\chi  = \Vert f \Vert_{\SO,g}. \]
Hence $\Vert d(f) \Vert_{\mathscr{K},g} = \Vert f \Vert_{\SO,g}$.
By \eqref{eq:1206b} we have the equality $\Vert g\Vert_{2}^{-2} (\mathcal{V}_{g}^{*}\widetilde{\mathcal{V}}_{g}\sigma, \iota(f)) = (f,\sigma)$. Therefore, with $f=\overline{\pi(\chi)g}$, we can conclude that
\begin{align*} \int_{G\times\ghat} \vert \langle e(\sigma), \pi(\chi)g\rangle \vert \, d\chi & = \int_{G\times\ghat} \vert \Vert g \Vert_{2}^{-2} ( \mathcal{V}_{g}^{*}\widetilde{\mathcal{V}}_{g}\sigma, \iota(\overline{\pi(\chi)g}) \vert \, d\chi \\
& = \int_{G\times\ghat} \vert ( \overline{\pi(\chi)g},\sigma)\vert \, d\chi = \Vert \sigma \Vert_{\mathscr{K},g}.\end{align*}
Thus $\Vert e (\sigma) \Vert_{\SO,g} = \Vert \sigma \Vert_{\mathscr{K},g}$. We have thus shown that both $d$ and $e$ are isometries. Using the calculation in \eqref{eq:1206b} one can show that $d\circ e(\sigma) = \sigma$ for all $\sigma\in \mathscr{K}$. By use of Lemma \ref{le:STFT-on-SOprime}(ii)+(iv) one can verify that $e\circ d(f) = f$ for all $f\in \SO(G)$. This completes the proof.
\end{proof}

Following the characterization of $\SO(G)$ via the set $\mathscr{K}$ and Proposition \ref{pr:SOprime-equiv-norms}, it is at this point natural to define the modulation spaces. 
\begin{definition} \label{def:mod-space} Fix a function $g\in \SO(G)\backslash\{0\}$ and let $p,q\in [1,\infty]$. The \emph{modulation space of order} $(p,q)$, $\textstyle{M}^{p,q}(G)$, is the set of distributions
$\sigma\in \SOprime(G)$ for which  
\[ \int_{\ghat} \Big( \int_{G} \vert (E_{\omega}T_{x} g, \sigma)_{\SO,\SOprime(G)} \vert^{p} \, dx\Big)^{q/p} \, d\omega  < \infty,\]
with the obvious modification for the situation where either or both $p$ an $q$ equal $\infty$.
\end{definition}

By use of Lemma \ref{le:STFT-on-SOprime}(iv) one can, as with $\SO(G)$ in Proposition \ref{pr:equivalent-norm-on-s0}, show that the definition of $M^{p,q}$ does not depend on the function $g$. In particular, different $g\in \SO(G)\backslash\{0\}$ induce equivalent norms on $M^{p,q}(G)$ via
\[ \Vert \sigma \Vert_{M^{p,q},g} = \bigg( \int_{\ghat} \Big( \int_{G} \vert (E_{\omega}T_{x}g,\sigma)_{\SO,\SOprime} \vert^{p} \, dx\Big)^{q/p} \, d\omega \bigg)^{1/q}.\]
Note that $(E_{\omega}T_{x}g,\sigma)=\widetilde{\mathcal{V}}_{\overline{g}}\sigma(x,-\omega)$. Since $\SO(G)$ is invariant under complex conjugation and the mixed norm spaces $L^{p,q}(G\times\ghat)$ are invariant under automorphisms in the second variable it follows that $\sigma\in \SOprime(G)$ belongs to $M^{p,q}(G)$ if and only if for any (and thus all) $g\in \SO(G)\backslash\{0\}$ the short-time Fourier transform $\widetilde{\mathcal{V}}_{g}\sigma$ belongs to the mixed norm space $L^{p,q}(G\times\ghat)$. If $p=q$, we write $M^{p}(G)$ instead of $M^{p,p}(G)$. Note that $M^{1}(G)\cong\SO(G)$ and $M^{\infty}(G)=\SOprime(G)$ by Theorem \ref{th:SO-definition-via-SOprime} and Proposition \ref{pr:SOprime-equiv-norms}, respectively.
Furthermore, it is possible to show that if $p_{1}\le p_{2}$ and $q_{1}\le q_{2}$, then $M^{p_{1},q_{1}}(G) \subseteq M^{p_{2},q_{2}}(G)$. Note that the inequalities in Corollary~\ref{co:f-in-s0-properties}(x) imply that, for a fixed $g\in \SO(G)\backslash\{0\}$, $\Vert \iota(f) \Vert_{M^{p},g} \le \Vert f \Vert_{\SO,g}$ for all $f\in \SO(G)$, $p\in [1,\infty]$. Moreover, one can show that $M^{\infty,1} \cdot \SO \subseteq \SO$ and $M^{1,\infty} * \SO \subseteq \SO$. For more on the modulation spaces see \cite{fegr92-1,fe03-1,fe06} and the relavant chapters in the books \cite{MR1843717} and \cite{go11}. The modulation spaces are a incredibly fruitful setting for time-frequency analysis and for the theory of pseudo-differential operators, cf.\ the references provided in Section~\ref{sec:history}. 

Incidentally, the name \emph{modulation space} originates from the original definition, which, similar to the characterization of $\SO(G)$ via the sets $\mathscr{A,D}$ and $\mathscr{G}$, makes use of the convolution and the \emph{modulation} operator,
\[ M^{p,q}(G) = \{ \sigma\in\SOprime(G) \, : \, \int_{\ghat} \Vert E_{\omega} g * \sigma \Vert_{p}^{q} \, d\omega < \infty\}. \] 
Lastly, we mention that the modulation spaces can equally be described as the Fourier transform of the Wiener amalgam spaces $W(\mathcal{F}L^{p},L^{q})$ \cite{fe83-4,fe03-1}.

As an application of the characterization of $\SO(G)$ in Theorem~\ref{th:SO-definition-via-SOprime}, we prove Lemma~\ref{le:SOprime-time-SO-conv-S0-is-S0} below. 

\begin{lemma} \label{le:SOprime-time-SO-conv-S0-is-S0} If $\sigma\in \SOprime(G)$ and $f, h \in \SO(G)$, then $(\sigma * f)\cdot h$ and $(\sigma \cdot h) * f$ are distributions that are induced by functions in $\SO(G)$. I.e., $(\sigma * f)\cdot h, \, (\sigma \cdot h) * f\in \mathscr{K}$.
\end{lemma}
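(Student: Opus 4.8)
The plan is to prove the two assertions in turn, reducing the second to the first by Fourier duality and attacking the first through the characterisation $\iota(\SO(G))=\mathscr{K}$ of Theorem~\ref{th:SO-definition-via-SOprime}. First I would observe that $(\sigma*f)\cdot h$ and $(\sigma\cdot h)*f$ are interchanged by the extended Fourier transform $\widetilde{\mathcal{F}}$ of Example~\ref{ex:FT}: the rules $\widetilde{\mathcal{F}}(u\cdot\tau)=\mathcal{F}u*\widetilde{\mathcal{F}}\tau$ and $\widetilde{\mathcal{F}}(u*\tau)=\mathcal{F}u\cdot\widetilde{\mathcal{F}}\tau$ give $\widetilde{\mathcal{F}}\big((\sigma*f)\cdot h\big)=(\widetilde{\mathcal{F}}\sigma\cdot\hat f)*\hat h$, which is an object of the second type on $\ghat$. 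Since $\widetilde{\mathcal{F}}$ is a weak$^*$-isomorphism carrying $\iota(\SO(G))$ onto $\iota(\SO(\ghat))$, it suffices to treat $(\sigma\cdot h)*f$.

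For that, fix $\gamma\in\SO(G)$ with $\Vert\gamma\Vert_2=1$ and consider the $\SO(G)$-valued Bochner integral $F=\int_{G\times\ghat}\widetilde{\mathcal{V}}_\gamma\sigma(\eta)\,\big((\pi(\eta)\gamma\cdot h)*f\big)\,d\eta$; each integrand lies in $\SO(G)$ by Corollary~\ref{co:S0-ideal-of-L1-AG}, and $\eta\mapsto\pi(\eta)\gamma$ is continuous into $\SO(G)$ by Lemma~\ref{le:time-freq-shift-continuous-on-SO}. Because $|\widetilde{\mathcal{V}}_\gamma\sigma(\eta)|\le\Vert\sigma\Vert_{M^{\infty},\gamma}$ is bounded, absolute convergence of this integral (hence $F\in\SO(G)$ by completeness, Theorem~\ref{th:s0-banach-space}) follows once I establish the key estimate
\[ \int_{G\times\ghat}\big\Vert(\pi(\eta)\gamma\cdot h)*f\big\Vert_{\SO,g}\,d\eta<\infty. \qquad(\star) \]
I would then check $\iota(F)=(\sigma\cdot h)*f$ by pairing with an arbitrary $\phi\in\SO(G)$: using \eqref{eq:convolution-distribution} and \eqref{eq:multiplication-distribution} the left side becomes $\int_{G\times\ghat}\widetilde{\mathcal{V}}_\gamma\sigma(\eta)\,\langle(\phi*f^r)\cdot h,\overline{\pi(\eta)\gamma}\rangle\,d\eta$, and the reconstruction identity of Lemma~\ref{le:STFT-on-SOprime}(iv) (applied with window $\bar\gamma$, after the substitution $\xi\mapsto-\xi$ in $\eta=(y,\xi)$) collapses this to $((\phi*f^r)\cdot h,\sigma)=(\phi,(\sigma\cdot h)*f)$. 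This places $(\sigma\cdot h)*f$ in $\iota(\SO(G))=\mathscr{K}$.

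The hard part is $(\star)$, and it is genuinely a \emph{simultaneous} time-frequency localisation statement: the crude module bounds of Proposition~\ref{pr:s0-l1-ag-ideal} only yield a constant bound in $\eta$ and therefore diverge upon integration. The mechanism is that multiplication by $h$ forces localisation in the time component of $\eta$ while convolution with $f$ forces localisation in the frequency component. I would make this precise by writing $\eta=(y,\xi)$, $\pi(\eta)\gamma=E_\xi T_y\gamma$, and peeling off the outer modulation: since time-frequency shifts act isometrically on $\Vert\cdot\Vert_{\SO,g}$ by \eqref{eq:0812a}, one has $\Vert(\pi(\eta)\gamma\cdot h)*f\Vert_{\SO,g}=\Vert(T_y\gamma\cdot h)*E_{-\xi}f\Vert_{\SO,g}$, and Fourier invariance \eqref{eq:2711a} together with the convolution theorem turns this into $\Vert\mathcal{F}(T_y\gamma\cdot h)\cdot T_{-\xi}\hat f\Vert_{\SO(\ghat),\hat g}$. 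Thus $(\star)$ becomes an iterated integral that reduces, once over $\xi\in\ghat$ (to the translate $T_{-\xi}\hat f$) and once over $y\in G$ (to the translate $T_y\gamma$), to the single auxiliary inequality
\[ \int_G \Vert(T_x u)\cdot v\Vert_{\SO,w}\,dx\le \Vert u\Vert_{\SO,w_1}\,\Vert v\Vert_{\SO,w_2},\qquad w=w_1w_2, \qquad(\mathrm{L}) \]
for $u,v\in\SO(G)$, the finiteness being reference-independent by Proposition~\ref{pr:equivalent-norm-on-s0} (so one is free to use a factorisable window).

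Inequality $(\mathrm{L})$ is the crux. I would derive it from the multiplication counterpart of Lemma~\ref{le:stft-S0-inequality}, namely the pointwise identity $\mathcal{V}_{w}(u\cdot v)(x',\omega')=\int_{\ghat}\mathcal{V}_{w_1}u(x',\eta)\,\mathcal{V}_{w_2}v(x',\omega'-\eta)\,d\eta$ obtained by writing the short-time Fourier transform of a product as a partial convolution in the frequency variable. Taking absolute values, integrating first in $\omega'$ and using translation invariance of Haar measure gives $\Vert(T_xu)\cdot v\Vert_{\SO,w}\le\int_G\widetilde U(x'-x)\,\widetilde V(x')\,dx'$, where $\widetilde U,\widetilde V$ are the $G$-marginals of $|\mathcal{V}_{w_1}u|$ and $|\mathcal{V}_{w_2}v|$ with $\Vert\widetilde U\Vert_1=\Vert u\Vert_{\SO,w_1}$ and $\Vert\widetilde V\Vert_1=\Vert v\Vert_{\SO,w_2}$; integrating this cross-correlation in $x$ and applying Fubini yields $\Vert\widetilde U\Vert_1\,\Vert\widetilde V\Vert_1$, which is $(\mathrm{L})$. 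Applying $(\mathrm{L})$ first on $\ghat$ and then on $G$ closes $(\star)$, and the whole argument then completes the proof.
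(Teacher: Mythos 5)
Your argument is correct, but it travels a genuinely different road than the paper's. The paper's proof is a six-line computation entirely inside the characterization $\iota(\SO(G))=\mathscr{K}$: it bounds $\int_{G\times\ghat}\vert\widetilde{\mathcal{V}}_{g}((\sigma*f)\cdot h)(\chi)\vert\,d\chi$ by $\Vert\sigma\Vert_{\SOprime,g}\int_{G\times\ghat}\Vert(\overline{\pi(\chi)g}\cdot h)*f^{r}\Vert_{\SO,g}\,d\chi$ and then recognizes this double integral, after unfolding both $\SO$-norms, as $\Vert\sigma\Vert_{\SOprime,g}\,\Vert\tau_{a}(h\otimes f)\Vert_{\SO(G\times G),g\otimes g}$ --- finite by the tensor property (Theorem~\ref{th:stft-and-s0}(i)) and the invariance of $\SO(G\times G)$ under the asymmetric coordinate transform (Example~\ref{ex:unitaries-on-S0}(ii)). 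Note that this double integral is exactly your estimate $(\star)$ (with window $\overline{g}$ and $f$ reflected), so the two proofs hinge on the very same quantitative fact; what you call the ``simultaneous time-frequency localisation'' is packaged by the paper into the single statement $\tau_{a}(h\otimes f)\in\SO(G\times G)$, whereas you re-derive it from scratch through the auxiliary inequality $(\mathrm{L})$, applied once on $\ghat$ and once on $G$. Your $(\mathrm{L})$ is correct (the product-STFT convolution identity and the subsequent Fubini argument check out) and is a clean statement of independent interest, essentially $\SO\cdot\SO\subseteq W(\SO,L^{1})$; the price is extra bookkeeping with factorisable windows and, in your construction of the representing function $F$ as a vector-valued integral, some routine care with measurability and with interchanging the pairing against $\phi$ with the integral (all of which the paper sidesteps by never constructing $F$ explicitly, only verifying $\widetilde{\mathcal{V}}_{g}\sigma'\in L^{1}$ and invoking Theorem~\ref{th:SO-definition-via-SOprime}). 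Your Fourier reduction of the first assertion to the second is also fine and mirrors the paper's ``similar calculation'' remark. In short: same analytic core, but the paper buys brevity by leaning on already-proved structural results, while you buy self-containedness and an explicit reconstruction of the function at the cost of length.
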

\begin{proof} By the definitions in \eqref{eq:multiplication-distribution} and \eqref{eq:convolution-distribution} it is clear that $(\sigma*f)\cdot h$ and $(\sigma \cdot f)* h$ belong to $\SOprime(G)$. According to Theorem~\ref{th:SO-definition-via-SOprime} we need to check that their short-time Fourier transform with respect to a non-zero function $g\in \SO(G)$ is integrable. Straightforward calculations show that this is the case. 
\begin{align*}
& \int_{G\times\ghat} \vert \widetilde{\mathcal{V}}_{g} \big( (\sigma*f)\cdot h \big) (\chi) \vert \, d\chi = \int_{G\times\ghat} \vert ( \overline{\pi(\chi)g}, (\sigma*f)\cdot h) \vert \, d\chi \\
& = \int_{G\times\ghat} \vert ( (\overline{\pi(\chi)g} \cdot h )* f^{r}, \sigma ) \vert \, d\chi \le \Vert \sigma \Vert_{\SOprime,g} \int_{G\times\ghat} \Vert (\overline{\pi(\chi)g} \cdot h )* f^{r} \Vert_{\SO,g} \, d\chi \\
& = \Vert \sigma\Vert_{\SOprime,g} \int_{G\times\ghat} \int_{G\times\ghat} \vert \langle (\overline{\pi(\chi)g} \cdot h)*f^{r}, \pi(\widetilde{\chi})g\rangle \vert \, d\widetilde{\chi} \, d\chi\\
& = \Vert \sigma\Vert_{\SOprime,g} \int_{G\times\ghat} \int_{G\times\ghat} \big\vert \int_{G}\int_{G} h(s) f(s-t) \overline{\pi(\widetilde{\chi})g(t)} \overline{\pi(\chi)g(s)} \, dt \, ds \big\vert \, d\widetilde{\chi} \, d\chi \\
& = \Vert \sigma \Vert_{\SOprime(G),g} \, \Vert \tau_{a} ( h \otimes f) \Vert_{\SO(G\times G),g\otimes g} <\infty. 
\end{align*}
A similar calculation yields that 
\[\int_{G\times\ghat} \vert \widetilde{\mathcal{V}}_{g} \big( (\sigma\cdot h)* f \big) (\chi) \vert \, d\chi\le \Vert \sigma \Vert_{\SOprime(G),g} \, \Vert \tau_{a} ( h \otimes f^{r}) \Vert_{\SO(G\times G),g\otimes g} <\infty. 
\]
\end{proof}

As remarked in \cite{fezi98}, one can use Lemma \ref{le:sufficient-cond-to-be-in-S0}(vii)+(viii) and Lemma \ref{le:SOprime-time-SO-conv-S0-is-S0} to construct nets of functions in $\SO(G)$ that, under the embedding $\iota$ of $\SO$ in $\SOprime$,  converge to any given $\sigma\in \SOprime(G)$ in the weak$^{*}$ sense. 

In Section \ref{sec:banach-space-adjoin} we considered linear and bounded operators $T$ from $L^{2}(G_{1})$ into $L^{2}(G_{2})$ and characterized when $\iota\circ T$ can be extended to an operator from $\SOprime(G_{1})$ into $\SOprime(G_{2})$. Let us answer the same question for operators that are only defined on $\SO$.

\begin{lemma} \label{le:SOop-to-SOprime} Let $T$ be a linear and bounded operator from $\SO(G_{1})$ into $\SO(G_{2})$. The operator $T^{\times}$ defines a linear and bounded operator from $M^{1}(G_{2})$ into $M^{1}(G_{1})$, if, and only if, the operator $\iota\circ T:\SO(G_{1})\to \SOprime(G_{2})$ has a unique weak$^{*}$-weak$^{*}$ continuous extension $\widetilde{T}:\SOprime(G_{1})\to \SOprime(G_{2})$. In that case, for any $\sigma_{1}\in\SOprime(G_{1})$ and $f_{2}\in \SO(G_{2})$, $\widetilde{T}$ is defined by the equality
\begin{equation} \label{eq:1807d} (f_{2}, \widetilde{T} \sigma_{1})_{\SO,\SOprime(G_{2})} =         \Vert g \Vert_{2}^{-2} \,  \int_{G_{1}\times\ghat_{1}} ( T\,  \overline{\pi(\chi)g}, \iota(f_{2})) _{\SO,\SOprime(G_{2})} \, (\pi(\chi)g, \sigma_{1})_{\SO,\SOprime(G_{1})} \, d\chi,  \end{equation}
where $g$ is any non-zero function $g\in \SO(G_{1})$.
\end{lemma}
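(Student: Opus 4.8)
The plan is to reduce everything to the general principle, recalled in Section~\ref{sec:banach-space-adjoin}, that a linear bounded operator between dual Banach spaces is weak$^{*}$-weak$^{*}$ continuous if and only if it is a Banach space adjoint. The crucial identification is $M^{1}(G_{i}) = \iota(\SO(G_{i}))$ as a subspace of $\SOprime(G_{i})$, which holds by Theorem~\ref{th:SO-definition-via-SOprime} (together with $M^{1} = \mathscr{K}$), with the $M^{1}$-norm equivalent to the $\SO$-norm transported through $\iota$. Under this identification the hypothesis ``$T^{\times}$ maps $M^{1}(G_{2})$ boundedly into $M^{1}(G_{1})$'' is precisely the statement that there is a bounded operator $R:\SO(G_{2})\to\SO(G_{1})$ with $\iota\circ R = T^{\times}\circ\iota$ on $\SO(G_{2})$. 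First I would record the transpose relation this forces: since $(f_{1},T^{\times}\iota(f_{2})) = (Tf_{1},\iota(f_{2}))$ by definition of the Banach adjoint, the identity $\iota(Rf_{2}) = T^{\times}\iota(f_{2})$ is equivalent to $(f_{1},\iota(Rf_{2})) = (Tf_{1},\iota(f_{2}))$ for all $f_{1}\in\SO(G_{1})$, $f_{2}\in\SO(G_{2})$, where I repeatedly use the symmetry $(a,\iota(b)) = (b,\iota(a))$.

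For the forward direction I would set $\widetilde{T} = R^{\times}:\SOprime(G_{1})\to\SOprime(G_{2})$; as a Banach adjoint it is automatically weak$^{*}$-weak$^{*}$ continuous. To see that it extends $\iota\circ T$, compute for $f_{1}\in\SO(G_{1})$ and $f_{2}\in\SO(G_{2})$ that $(f_{2},R^{\times}\iota(f_{1})) = (Rf_{2},\iota(f_{1})) = (f_{1},\iota(Rf_{2})) = (Tf_{1},\iota(f_{2})) = (f_{2},\iota(Tf_{1}))$, so $\widetilde{T}\iota(f_{1}) = \iota(Tf_{1})$. Uniqueness then follows exactly as in Lemma~\ref{le:banach-vs-hilbert-adjoint}: two weak$^{*}$-weak$^{*}$ continuous extensions agree on the weak$^{*}$-dense set $\iota(\SO(G_{1}))$ (Lemma~\ref{le:weak-star-dense-soprime}), hence everywhere. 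For the converse, given a weak$^{*}$-weak$^{*}$ continuous extension $\widetilde{T}$, the cited principle provides a bounded pre-adjoint $R = \widetilde{T}^{\times}:\SO(G_{2})\to\SO(G_{1})$ with $(\widetilde{T}^{\times})^{\times} = \widetilde{T}$; unwinding $(f_{1},\iota(Rf_{2})) = (Rf_{2},\iota(f_{1})) = (f_{2},\widetilde{T}\iota(f_{1})) = (f_{2},\iota(Tf_{1})) = (Tf_{1},\iota(f_{2})) = (f_{1},T^{\times}\iota(f_{2}))$ shows $\iota\circ R = T^{\times}\circ\iota$, so $T^{\times}$ carries $M^{1}(G_{2})$ boundedly into $M^{1}(G_{1})$.

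Finally I would derive the explicit formula~\eqref{eq:1807d}. Starting from $(f_{2},\widetilde{T}\sigma_{1}) = (Rf_{2},\sigma_{1})$ and applying the short-time Fourier transform reconstruction of Lemma~\ref{le:STFT-on-SOprime}(iv) with $h = g$ to the element $Rf_{2}\in\SO(G_{1})$, I obtain $\Vert g\Vert_{2}^{2}\,(Rf_{2},\sigma_{1}) = \int_{G_{1}\times\ghat_{1}} \langle Rf_{2},\pi(\chi)g\rangle\,(\pi(\chi)g,\sigma_{1})\,d\chi$. The coefficient is rewritten via the transpose relation: $\langle Rf_{2},\pi(\chi)g\rangle = (\overline{\pi(\chi)g},\iota(Rf_{2})) = (T\overline{\pi(\chi)g},\iota(f_{2}))$, where the first equality uses $\langle u,v\rangle = (u,\iota(\overline{v}))$ together with symmetry of $\iota$, and the second is the transpose relation with $f_{1} = \overline{\pi(\chi)g}$. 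Substituting yields exactly~\eqref{eq:1807d}, and absolute convergence of the integral as well as independence of the chosen $g$ are inherited directly from the reconstruction formula.

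I expect the only real obstacle to be bookkeeping: one must keep the bilinear pairing $(\,\cdot\,,\iota(\,\cdot\,))$ with its symmetry carefully separated from the sesquilinear inner product $\langle\,\cdot\,,\,\cdot\,\rangle$, and track the complex conjugations so that the coefficient $\langle Rf_{2},\pi(\chi)g\rangle$ turns into the term $(T\overline{\pi(\chi)g},\iota(f_{2}))$ appearing in~\eqref{eq:1807d}. Everything else is a direct application of results already established.
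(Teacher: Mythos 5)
Your proof is correct and follows essentially the same route as the paper: both arguments rest on the identification $\iota:\SO(G_{i})\to M^{1}(G_{i})$ from Theorem~\ref{th:SO-definition-via-SOprime}, the adjoint/pre-adjoint duality for weak$^{*}$-weak$^{*}$ continuous operators, weak$^{*}$-density for uniqueness, and Lemma~\ref{le:STFT-on-SOprime}(iv) for the reconstruction formula. The only difference is cosmetic: where you work with the abstract inverse $R=\iota^{-1}\circ T^{\times}\circ\iota$, the paper writes this operator out explicitly as $\Vert g\Vert_{2}^{-2}\,\mathcal{V}_{g}^{*}\circ\widetilde{\mathcal{V}}_{g}\circ T^{\times}\circ\iota$ and carries the STFT computation through each step, so your version is a slightly cleaner packaging of the identical argument.
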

\begin{proof}
Assume that the operator $T^{\times}$ defines a linear and bounded operator from $M^{1}(G_{2})$ into $M^{1}(G_{1})$. By Theorem  \ref{th:SO-definition-via-SOprime} and Definition \ref{def:mod-space} we have that that $M^{1}(G) = \mathscr{K} \cong \SO(G)$. It follows from the operators $d$ and $e$ in the proof of Theorem \ref{th:SO-definition-via-SOprime} that, for any $g\in \SO(G_{1})\backslash\{0\}$, the operator $S = \Vert g \Vert_{2}^{-2} \, \mathcal{V}_{g}^{*} \circ  \widetilde{\mathcal{V}}_{g} \circ T^{\times} \circ \iota$ is well-defined, linear and bounded from $\SO(G_{2})$ into $\SO(G_{1})$ (here $\iota$ is the embedding of $\SO(G_{2})$ into $M^{1}(G_{2})\subseteq\SOprime(G_{2})$). Let us show that the weak$^{*}$-weak$^{*}$ continuous operator $S^{\times}:\SOprime(G_{1})\to \SOprime(G_{2})$ is an extension of $\iota \circ T$. To that end, let $f_{i}\in \SO(G_{i})$, $i=1,2$. By use of Lemma \ref{le:STFT-on-SOprime} we establish that
\begin{align*} & \quad \ (f_{2}, S^{\times} \iota(f_{1}))_{\SO,\SOprime(G_{2})} \\ & = \Vert g\Vert_{2}^{-2} \, (\mathcal{V}_{g}^{*}\widetilde{\mathcal{V}}_{g}  T^{\times} \iota(f_{2}), \iota(f_{1}))_{\SO,\SOprime(G_{1})} \\
& = \Vert g \Vert_{2}^{-2} \, \int_{G_{1}\times\ghat_{1}} (\overline{\pi(\chi) g} , T^{\times}\iota(f_{2}))_{\SO,\SOprime(G_{1})} \, ( \pi(\chi) g, \iota(f_{1}))_{\SO,\SOprime(G_{1})} \, d\chi \\
& = \Vert g \Vert_{2}^{-2} \, \overline{\int_{G_{1}\times\ghat_{1}} (\pi(\chi) g , \overline{T^{\times}\iota(f_{2})})_{\SO,\SOprime(G_{1})} \, ( \overline{\pi(\chi) g}, \overline{\iota(f_{1})})_{\SO,\SOprime(G_{1})} \, d\chi} \\
& = \Vert g \Vert_{2}^{-2} \, \overline{\int_{G_{1}\times\ghat_{1}} \langle \overline{f_{1}} , \pi(\chi)g\rangle \,  (\pi(\chi) g , \overline{T^{\times}\iota(f_{2})})_{\SO,\SOprime(G_{1})} \,  \, d\chi} \\
& = \overline{(\overline{f_{1}}, \overline{T^{\times}\iota(f_{2})})}_{\SO,\SOprime(G_{1})} = (f_{1}, T^{\times}\iota(f_{2}))_{\SO,\SOprime(G_{1})} = (Tf_{1},\iota(f_{2}))_{\SO,\SOprime(G_{2})} \\
& = (f_{2}, \iota( Tf_{1}))_{\SO,\SOprime(G_{2})}.
\end{align*} 
Hence $S^{\times} \iota (f_{1}) = \iota( T f_{1})$ and we therefore conclude that $S^{\times}$ is a weak$^{*}$-weak$^{*}$ continuous extension of the operator $\iota\circ T: \SO(G_{1})\to \SOprime(G_{2})$ to an operator from $\SOprime(G_{1})$ to $\SOprime(G_{2})$. Since $\SO$ is weak$^{*}$ dense in $\SOprime$ it follows that this extension of $\iota\circ T$ is unique. We therefore define $\widetilde{T} = S^{\times}$. The first two steps in the calculation above applied to $(f_{2}, \widetilde{T} \sigma_{1})_{\SO,\SOprime(G_{2})}$, $f_{2}\in \SO(G_{2})$, $\sigma_{1}\in \SOprime(G_{1})$ yield \eqref{eq:1807d}.

Conversely, assume now that $\iota\circ T$ has a weak$^{*}$-weak$^{*}$ continuous extension $\widetilde{T}$ from $\SOprime(G_{1})$ into $\SOprime(G_{2})$. Then $(\widetilde{T})^{\times}$ is a linear and bounded operator from $\SO(G_{2})$ into $\SO(G_{1})$. For any $g\in \SO(G_{2})\backslash\{0\}$ the operator $S = \Vert g \Vert_{2}^{-2} \iota \circ (\widetilde{T})^{\times} \mathcal{V}_{g}^{*} \widetilde{\mathcal{V}}_{g}$ defines a linear and bounded operator from $M^{1}(G_{2})$ into $M^{1}(G_{2})$. Let now $f_{1}\in \SO(G_{1})$ and $\sigma_{2}\in M^{1}(G_{2})$. Then, by similar steps as above, we establish that
\begin{align*}
& \quad \ (f_{1}, S\sigma_{2})_{\SO,\SOprime(G_{1})} \\
& = \Vert g \Vert_{2}^{-2} \int_{G_{2}\times\ghat_{2}} (\overline{\pi(\chi) g} , \sigma_{2})_{\SO,\SOprime(G_{2})} \, ( \pi(\chi) g, \iota(T f_{1}))_{\SO,\SOprime(G_{2})} \, d\chi \\
& = (f_{1}, T^{\times} \sigma_{2})_{\SO,\SOprime(G_{1})}. 
\end{align*}
This shows that $T^{\times}\vert_{M^{1}(G_{2})} = S$ and we conclude that $T^{\times}$ is a linear and bounded operator from $M^{1}(G_{2})$ into $M^{1}(G_{1})$.
\end{proof}

\section{The minimality of the Feichtinger algebra} \label{sec:minimal}

In this section we prove two major results concerning the Feichtinger algebra. Namely, the characterization of $\SO$ by the set $\mathscr{L}$:
given a function $g\in \SO(G)\backslash\{0\}$ then $\SO(G)$ coincides with the set
\begin{enumerate}[]
\item $\mathscr{L} = \{ f \in L^{1}(G) \, : \, f = \sum_{n\in\N} c_{n} E_{\omega_{n}}T_{x_{n}} g \ \text{with} \ (c_{n})_{n\in\N} \in \ell^{1}(\N) \ \text{and} \  (x_{n},\omega_n)\subseteq G\times\ghat\}$.  
\end{enumerate}
Then, using this characterization of $\SO$ we show that the Feichtinger algebra is the smallest time-frequency shift invariant Banach space. 
We apply the minimality of the Feichtinger algebra in Section \ref{sec:apply-minimal} to establish the tensor factorization property of $\SO$, Theorem \ref{th:tensor-property-s0}, a new characterization of $\SO$ among all Banach spaces in $L^{1}$, Theorem \ref{th:S0-Banach-Space-Characterization}, and a characterization of $\SO$ by open subgroups, Theorem \ref{th:reiter-so}.

We need the following result by Bonsall \cite{bo86-1,bo91-2}.

\begin{lemma} \label{le:bonsall} Let $E$ be a subset of a Banach space $B$. The following statements are equivalent:
\begin{enumerate}
\item[(i)] For some $c,C>0$ one has that $ c \, \Vert x' \Vert_{B'}\le \displaystyle{\sup_{x\in E}} \vert x'(x) \vert \le C \, \Vert x' \Vert_{B'}$ for all $x'\in B'$.
\item[(ii)] For all $x \in B$ there is a sequence $(c_{n})\in \ell^{1}(\N)$ and a sequence $(x_{n})_{n\in \N} \subseteq E$ such that $x = \sum_{n\in\N} c_{n} x_{n}$. Furthermore, 
\[ \Vert x \Vert_{\widetilde{B}} = \inf \big\{ \Vert (c_{n})\Vert_{1} \, : \, x = \sum_{n\in\N} c_{n} x_{n}, \ (c_{n})\in \ell^{1}(\N), \ (x_{n})_{n\in \N}\subseteq E\,\big\} \]
defines an equivalent norm on $B$. In particular, for all $x\in B$, $C^{-1} \, \Vert x \Vert_{B} \le \Vert x \Vert_{\widetilde{B}}  \le c^{-1} \, \Vert x \Vert_{B}$.
\end{enumerate}
\end{lemma}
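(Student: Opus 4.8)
The plan is to prove the equivalence by passing through the geometry of polars and the bipolar theorem, which converts the two-sided norming estimate in (i) into a two-sided sandwiching of the unit ball of $B$ by the closed absolutely convex hull of $E$. Writing $U_B$ and $U_{B'}$ for the closed unit balls of $B$ and $B'$, and $E^{\circ}$ for the polar of $E$ in $B'$, I would first observe that the upper estimate in (i) is equivalent to $E\subseteq C\,U_B$ (equivalently $\tfrac1C U_{B'}\subseteq E^{\circ}$), while the lower estimate is equivalent to $E^{\circ}\subseteq \tfrac1c U_{B'}$. Taking polars and invoking the bipolar theorem, $E^{\circ\circ}=\overline{\mathrm{aco}}(E)$, these two inclusions become
\[ c\,U_B \subseteq \overline{\mathrm{aco}}(E) \subseteq C\,U_B. \]
This reformulation is the backbone of both implications.

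For (i) $\Rightarrow$ (ii) the right-hand inclusion immediately gives $\Vert x_n\Vert_B\le C$ for every $x_n\in E$, so any representation $x=\sum_n c_n x_n$ satisfies $\Vert x\Vert_B\le C\sum_n\vert c_n\vert$ and therefore $\Vert x\Vert_B\le C\,\Vert x\Vert_{\widetilde B}$. The substance lies in the left-hand inclusion $c\,U_B\subseteq\overline{\mathrm{aco}}(E)$, from which I would manufacture the atomic series. Fixing $x$ with $\Vert x\Vert_B\le c$ and a parameter $\theta\in(0,1)$, I would run a successive-approximation scheme: approximate $x$ to within $c\theta$ by a finite absolutely convex combination of elements of $E$ (possible since $x\in\overline{\mathrm{aco}}(E)$), rescale the remainder by $\theta^{-1}$ so that it again lies in $c\,U_B$, approximate again, and iterate. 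The $k$-th correction carries total $\ell^{1}$-weight at most $\theta^{k-1}$ while the remainders shrink like $c\theta^{k}\to 0$, so concatenating all the finite combinations yields $x=\sum_n c_n x_n$ with $\sum_n\vert c_n\vert\le(1-\theta)^{-1}$. Letting $\theta\to 0$ and rescaling a general $x$ by $c/\Vert x\Vert_B$ gives $\Vert x\Vert_{\widetilde B}\le c^{-1}\Vert x\Vert_B$, which together with the previous bound establishes the norm equivalence with exactly the stated constants.

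The converse (ii) $\Rightarrow$ (i) is comparatively soft. Given an equivalence $\alpha\Vert x\Vert_B\le\Vert x\Vert_{\widetilde B}\le\beta\Vert x\Vert_B$, I would obtain the upper bound in (i) from the trivial representation $x=1\cdot x$ of a point $x\in E$, which shows $\Vert x\Vert_{\widetilde B}\le 1$ and hence $E\subseteq\alpha^{-1}U_B$, so that $\sup_{x\in E}\vert x'(x)\vert\le\alpha^{-1}\Vert x'\Vert_{B'}$. For the lower bound I would take $y\in U_B$, choose a near-optimal representation $y=\sum_n c_n x_n$ with $\sum_n\vert c_n\vert\le\beta+\varepsilon$, and estimate $\vert x'(y)\vert\le(\beta+\varepsilon)\sup_{x\in E}\vert x'(x)\vert$; taking the supremum over $y\in U_B$ and letting $\varepsilon\to 0$ gives $\Vert x'\Vert_{B'}\le\beta\sup_{x\in E}\vert x'(x)\vert$, which is the required lower norming inequality.

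I expect the main obstacle to be the successive-approximation construction in (i) $\Rightarrow$ (ii): getting from the closed-hull inclusion to an honest $\ell^{1}$-convergent series requires careful bookkeeping of the partial remainders, and squeezing the constant down to the sharp $c^{-1}$ (rather than a lossy $2c^{-1}$) forces the refinement with $\theta\to 0$ in place of a naive halving scheme. The remaining ingredients — the bipolar theorem and the elementary duality estimates — are entirely standard.
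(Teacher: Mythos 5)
The paper does not prove this lemma at all --- it is quoted verbatim from Bonsall \cite{bo86-1,bo91-2} and used as a black box in the proof of Theorem~\ref{th:SO-expansion} --- so there is no in-paper argument to compare against. Your proof is correct and is essentially Bonsall's own: the reduction of (i) to the sandwich $c\,U_{B}\subseteq \overline{\mathrm{aco}}(E)\subseteq C\,U_{B}$ via the bipolar theorem, followed by the geometric-series successive-approximation scheme with ratio $\theta$ and the limit $\theta\to 0$ to recover the sharp constant $c^{-1}$, is exactly the standard route, and the converse direction is handled correctly. Two small points worth making explicit in a write-up: the degenerate case $\sup_{x\in E}\vert x'(x)\vert=0$ in the equivalence ``lower estimate $\Leftrightarrow E^{\circ}\subseteq \tfrac1c U_{B'}$'' (it forces $x'=0$), and the fact that the concatenated series converges to $x$ because it is absolutely convergent (terms bounded by $C$, weights in $\ell^{1}$) and a subsequence of its partial sums --- the block sums --- already converges to $x$.
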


With Lemma \ref{le:bonsall} in hand we can show the above mentioned characterization of $\SO(G)$ via the set $\mathscr{L}$.

\begin{theorem} \label{th:SO-expansion} For any locally compact abelian group $G$ it holds that $\SO(G) = \mathscr{L}$. Moreover
\[ \Vert f \Vert_{\mathscr{L},g} = \inf \Big\{ \Vert (c_{n}) \Vert_{1} : \, f = \textstyle\sum_{n\in\N} c_{n} E_{\omega_{n}}T_{x_{n}} g \ \text{with} \ (c_{n})\in \ell^{1}(\N) \ \text{and} \  (x_{n},\omega_n)\subseteq G\times\ghat\Big\}\]
defines an equivalent norm on $\SO(G)$.
Specifically,
\begin{equation} \label{eq:3004b1} \Vert g \Vert_{2}^{2} \, \Vert f \Vert_{\mathscr{L},g} \le \Vert f \Vert_{\SO,g} \le \Vert g \Vert_{\SO,g} \, \Vert f \Vert_{\mathscr{L},g} \ \ \text{for all} \ f\in \SO(G).\end{equation}
\end{theorem}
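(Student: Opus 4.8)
The plan is to apply Bonsall's Lemma (Lemma~\ref{le:bonsall}) with the Banach space $B=\SO(G)$ equipped with $\Vert\cdot\Vert_{\SO,g}$ and the subset
$$E = \{\pi(\chi)g : \chi\in G\times\ghat\} = \{E_{\omega}T_{x}g : (x,\omega)\in G\times\ghat\}.$$
Each element of $E$ lies in $\SO(G)$ by Corollary~\ref{co:f-in-s0-properties}(ii), so $E\subseteq B$ as required. Since $B'=\SOprime(G)$, the quantity in Bonsall's criterion is, for $\sigma\in\SOprime(G)$,
$$\sup_{x\in E}|\sigma(x)| = \sup_{\chi\in G\times\ghat}|(\pi(\chi)g,\sigma)_{\SO,\SOprime}| = \Vert\sigma\Vert_{M^{\infty},g}.$$

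First I would verify condition (i) of Lemma~\ref{le:bonsall}, which is precisely the content of Proposition~\ref{pr:SOprime-equiv-norms}. Rearranging the two inequalities stated there gives
$$\Vert g\Vert_{2}^{2}\,\Vert\sigma\Vert_{\SOprime,g} \le \Vert\sigma\Vert_{M^{\infty},g} \le \Vert g\Vert_{\SO,g}\,\Vert\sigma\Vert_{\SOprime,g}\quad\text{for all }\sigma\in\SOprime(G),$$
so (i) holds with $c=\Vert g\Vert_{2}^{2}$ and $C=\Vert g\Vert_{\SO,g}$. Bonsall's Lemma then delivers condition (ii): every $f\in\SO(G)$ admits a representation $f=\sum_{n\in\N}c_{n}E_{\omega_{n}}T_{x_{n}}g$ with $(c_{n})\in\ell^{1}(\N)$, proving $\SO(G)\subseteq\mathscr{L}$, and moreover the infimum norm $\Vert\cdot\Vert_{\mathscr{L},g}$ is equivalent to $\Vert\cdot\Vert_{\SO,g}$ with $C^{-1}\Vert f\Vert_{\SO,g}\le\Vert f\Vert_{\mathscr{L},g}\le c^{-1}\Vert f\Vert_{\SO,g}$, which is exactly \eqref{eq:3004b1} after clearing denominators.

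It remains to establish the reverse inclusion $\mathscr{L}\subseteq\SO(G)$, which Bonsall's Lemma does not supply directly. Here I would use that time-frequency shifts are $\SO$-norm isometries: by \eqref{eq:0812a} one has $\Vert\pi(\chi)g\Vert_{\SO,g}=\Vert g\Vert_{\SO,g}$ for every $\chi$. Hence for any $(c_{n})\in\ell^{1}(\N)$ the series $\sum_{n}c_{n}E_{\omega_{n}}T_{x_{n}}g$ is absolutely convergent in $\SO(G)$, since $\sum_{n}|c_{n}|\,\Vert E_{\omega_{n}}T_{x_{n}}g\Vert_{\SO,g}=\Vert g\Vert_{\SO,g}\sum_{n}|c_{n}|<\infty$. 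By completeness of $\SO(G)$ (Theorem~\ref{th:s0-banach-space}) the series converges in $\SO(G)$, and since $\SO(G)$ embeds continuously into $L^{1}(G)$ (Lemma~\ref{le:cont-embedded-in-Lp}) its $\SO$-limit coincides with the $L^{1}$-function defining the corresponding element of $\mathscr{L}$. Thus every element of $\mathscr{L}$ lies in $\SO(G)$, giving $\mathscr{L}=\SO(G)$.

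The proof is therefore largely a matter of recognizing that the heavy lifting has already been done: the substantive input is Proposition~\ref{pr:SOprime-equiv-norms}, which verifies Bonsall's duality criterion. The only genuinely separate point, and the one I would be most careful about, is the inclusion $\mathscr{L}\subseteq\SO(G)$ together with checking that the constants $\Vert g\Vert_{2}^{2}$ and $\Vert g\Vert_{\SO,g}$ coming from Proposition~\ref{pr:SOprime-equiv-norms} align with $c$ and $C$ in Lemma~\ref{le:bonsall} so as to reproduce \eqref{eq:3004b1} with the stated sharp values.
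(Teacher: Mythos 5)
Your proof is correct and follows essentially the same route as the paper: apply Bonsall's lemma to $B=\SO(G)$ with $E=\{\pi(\chi)g\}_{\chi\in G\times\ghat}$, using Proposition~\ref{pr:SOprime-equiv-norms} to verify condition (i), and the constants you extract match \eqref{eq:3004b1} exactly. Your additional absolute-convergence argument for the inclusion $\mathscr{L}\subseteq\SO(G)$ is a point the paper leaves implicit, and spelling it out is a sensible precaution.
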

\begin{proof}
Use Lemma \ref{le:bonsall} with $B=\SO(G)$ and 
$E=\{\pi(\chi)g\}_{\chi\in G\times\ghat}$. Proposition \ref{pr:SOprime-equiv-norms} states the inequalities in Lemma \ref{le:bonsall}(i). Hence Lemma \ref{le:bonsall}(ii) holds, which is the theorem.\end{proof}

The result of Theorem~\ref{th:SO-expansion} first appeared in \cite{fe87-1}. A generalization for modulation spaces is available in \cite{fe89-1,fegr92-1}. The proof based on the result by Bonsall first appeared in \cite{MR1843717}.

With the characterization of $\SO(G)$ by the set $\mathscr{L}$ it is rather easy to show the minimality of $\SO(G)$.


\begin{theorem} \label{th:SO-minimality-bochner}
Let $B$ be a Banach space and assume that there is a non-zero function $g\in \SO(G)$ such that $\{\pi(\chi)g \, : \, \chi\in G\times\ghat\}\subseteq B$. If there is a constant $c>0$ such that $\Vert \pi(\chi) g \Vert_{B} \le c \, \Vert g \Vert_{B}$ for all $\chi\in G\times \ghat$, then $\SO(G)\subseteq B$ and
\[ \Vert f \Vert_{B} \le c \, \Vert g \Vert_{B} \, \Vert g \Vert_{2}^{-2} \, \Vert f \Vert_{\SO,g} \ \ \text{for all} \ \ f\in \SO(G).\]
\end{theorem}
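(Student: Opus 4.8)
The plan is to reduce the statement to the series characterization $\SO(G)=\mathscr{L}$ from Theorem~\ref{th:SO-expansion} and then to transport that expansion into $B$, exploiting the hypothesis $\Vert\pi(\chi)g\Vert_B\le c\,\Vert g\Vert_B$ together with the completeness of $B$. The point is that $\mathscr{L}$ writes every element of $\SO(G)$ as an $\ell^{1}$-superposition of time-frequency shifts of $g$, and these shifts are precisely the vectors whose $B$-norm we control. (An equivalent route, which presumably motivates the label ``bochner'', replaces the discrete sum by the Bochner integral $f=\Vert g\Vert_2^{-2}\int_{G\times\ghat}\mathcal{V}_gf(\chi)\,\pi(\chi)g\,d\chi$ coming from Lemma~\ref{le:STFT-on-SOprime}; that version would instead require checking $B$-valued measurability of $\chi\mapsto\pi(\chi)g$, so I prefer the cleaner series argument here.)

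First I would fix $f\in\SO(G)$ and, for $\epsilon>0$, invoke Theorem~\ref{th:SO-expansion} to choose coefficients $(c_n)\in\ell^{1}(\N)$ and points $(\chi_n)\subseteq G\times\ghat$ with $f=\sum_{n\in\N}c_n\,\pi(\chi_n)g$ and $\sum_{n\in\N}|c_n|\le\Vert f\Vert_{\mathscr{L},g}+\epsilon$. Each vector $\pi(\chi_n)g$ lies in $B$ by hypothesis, so the partial sums $S_N=\sum_{n=1}^N c_n\,\pi(\chi_n)g$ satisfy $\sum_{n\in\N}\Vert c_n\,\pi(\chi_n)g\Vert_B\le c\,\Vert g\Vert_B\sum_{n\in\N}|c_n|<\infty$; hence $(S_N)$ is absolutely convergent in $B$, and by completeness it converges to some $\Phi\in B$ with $\Vert\Phi\Vert_B\le c\,\Vert g\Vert_B\,(\Vert f\Vert_{\mathscr{L},g}+\epsilon)$. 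Letting $\epsilon\to0$ and substituting the inequality $\Vert f\Vert_{\mathscr{L},g}\le\Vert g\Vert_2^{-2}\,\Vert f\Vert_{\SO,g}$ furnished by \eqref{eq:3004b1} yields exactly $\Vert\Phi\Vert_B\le c\,\Vert g\Vert_B\,\Vert g\Vert_2^{-2}\,\Vert f\Vert_{\SO,g}$, which is the desired estimate.

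The hard part will be to identify the limit $\Phi$, computed in the $B$-norm, with the original function $f$, computed in the $\SO$-norm, so that the inclusion $\SO(G)\subseteq B$ and the displayed bound hold for $f$ itself. The same partial sums $S_N$ converge to $f$ in $\SO(G)$ and hence, by Lemma~\ref{le:cont-embedded-in-Lp} (which bounds $\Vert\cdot\Vert_{\infty}$ by a constant multiple of $\Vert\cdot\Vert_{\SO,g}$), uniformly and in particular pointwise. To conclude $\Phi=f$ as elements of $B$ I would use that $B$ is continuously embedded into a common Hausdorff topological vector space that also receives $\SO(G)$; in the situations of interest one has $B\hookrightarrow\SOprime(G)$, where both $B$-convergence and $\SO$-convergence force weak$^{*}$ convergence, so the two limits coincide by uniqueness. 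This compatibility of the ambient topologies is the only genuinely delicate point, and it is exactly what makes the assertion ``$\SO(G)\subseteq B$'' meaningful; once it is in place, the estimate above delivers $f=\Phi\in B$ together with the continuous embedding, completing the proof.
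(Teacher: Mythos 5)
Your proof is correct and follows essentially the same route as the paper: expand $f=\sum_n c_n\pi(\chi_n)g$ via Theorem~\ref{th:SO-expansion}, use the hypothesis and completeness of $B$ to sum the series in $B$, and convert $\Vert f\Vert_{\mathscr{L},g}$ into $\Vert g\Vert_2^{-2}\Vert f\Vert_{\SO,g}$ by \eqref{eq:3004b1}. You are in fact slightly more careful than the paper in flagging that identifying the $B$-limit of the partial sums with $f$ requires a common Hausdorff ambient space (implicitly assumed in the paper's phrasing ``$\SO(G)\subseteq B$''); this is a legitimate refinement, not a deviation.
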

\begin{proof} By Theorem~\ref{th:SO-expansion} any $f\in \SO(G)$ can be written as $f = \sum_{n\in\N} c_{n} \pi(\chi_{n}) g $
for some suitable $(c_{n})_{n\in\N}\in \ell^{1}(\N)$ and $(\chi_{n})_{n\in\N}\subseteq G\times\ghat$. This implies that any $f\in \SO(G)$ is the limit of a Cauchy sequence in $B$ and thus $\SO(G)\subseteq B$. Moreover, for all $f\in \SO(G)$,
\[ \Vert f \Vert_{B} \le \sum_{n\in\N} \vert c_{n} \vert \, \Vert \pi(\chi_{n})g \Vert_{B} \le c \, \Vert g \Vert_{B}  \, \Vert (c_{n}) \Vert_{\ell^{1}(\N)}.\]
If we take the infimum over all admissible representations $f = \sum_{n\in\N} c_{n} \pi(\chi_{n}) g $, then we establish the inequality
\[ \Vert f \Vert_{B} \le c \, \Vert g \Vert_{B} \Vert f \Vert_{\mathscr{L},g}.\]
The norm equivalence stated in Theorem~\ref{th:SO-expansion} yields the desired inequality.\end{proof}

The minimality of $\SO(G)$ expressed in Theorem~\ref{th:SO-minimality-bochner} is an extension of the early result concerning the Feichtinger algebra which established that $\SO(G)$ is the smallest among all time-frequency shift invariant Segal algebras \cite{Feichtinger1981}, cf.\ Proposition \ref{pr:min-segal}. An alternative proof of Theorem \ref{th:SO-minimality-bochner} can be found in \cite{fezi98}.

\subsection{Applying the minimality of the Feichtinger algebra} \label{sec:apply-minimal}

As a first application of the minimality of the Feichtinger algebra, let us finish the proof of Theorem \ref{th:periodization-restricion-map-in-SO} and show that the periodization and restriction operator with respect to a closed subgroup $H$ of $G$  map \emph{onto} $\SO(G/H)$ and $\SO(H)$, respectively.

\begin{proof}[Proof of Theorem \ref{th:periodization-restricion-map-in-SO}, part 2]
We have already shown that the periodization operator 
\[ P_{H}: \SO(G) \to \SO(G/H), \ P_{H}f(\dot{x}) = \int_{H} f(x+h) \, d\mu_{H}(h), \ \dot{x}=x+H, \ x\in G,\] 
is a linear and bounded operator. In order to show that $P_{H}$ is surjective we argue as follows. Consider $P_H(\SO(G))$ with the quotient norm. That is, for a function $\varphi\in P_H(\SO(G))$, with $\varphi = P_{H}f$ for some $f\in \SO(G)$, we measure its norm by
\[ \Vert \varphi \Vert_{P_H(\SO(G))} = \inf_{f_{0}\in \text{ker}\,P_H} \Vert f +f_{0}\Vert_{\SO}. \]
One can verify that the norm turns $P_{H}(\SO(G))$ into a Banach space and that the norm is invariant under time-frequency shifts $E_{\gamma}T_{\lambda}$ with $\gamma\in H^{\perp}$ and $\lambda\in G/H$. Furthermore we already showed that $P_{H}(\SO(G))\subseteq \SO(G/H)$. Theorem~\ref{th:SO-minimality-bochner} now implies that the time-frequency shift invariant Banach space $P_{H}(\SO(G))$ contains $\SO(G/H)$. It follows that $P_{H} (\SO(G)) = \SO(G/H)$ and $P_{H}$ must be surjective. That the restriction operator is surjective follows as remarked in the (first part of the) proof of Theorem \ref{th:periodization-restricion-map-in-SO}. 
\end{proof}

We will now show the tensor factorization property of the Feichtinger algebra.
In Theorem~\ref{th:stft-and-s0} we established that the tensor product $\otimes$ maps $\SO(G_{1})\times \SO(G_{2})$ into $\SO(G_{1}\times G_{2})$. 
The tensor product can be used to construct the \emph{projective tensor product} of the spaces $\SO(G_{1})$ and $\SO(G_{2})$. For fixed $g_{i}\in \SO(G_{i})$, $i=1,2$, we define
\begin{align*}
  \SO(G_{1})& \hat{\otimes} \, \SO(G_{2}) \\
  = \ & \{ F\in \SO(G_{1}\times G_{2}) \, : \, F = \textstyle\sum_{j\in \N} f_{1,j} \otimes f_{2,j} \ \text{with} \ (f_{i,j})_{j\in\N} \subseteq \SO(G_{i}), i=1,2 \\
 & \hspace{5cm} \text{ for which } \ \textstyle\sum_{j\in \N} \Vert f_{1,j}  \Vert_{\SO(G_{1}),g_{1}} \Vert f_{2,j} \Vert_{\SO(G_{2}),g_{2}} < \infty\}.
\end{align*}
The projective tensor product $\SO(G_{1})\hat{\otimes}\, \SO(G_{2})$ becomes a Banach space under the norm given by
\[ \Vert F \Vert_{\SO(G_{1})\hat{\otimes} \SO(G_{2}), g_{1}\otimes g_{2}} = \inf \ \sum_{j\in \N} \Vert f_{1,j}  \Vert_{\SO(G_{1}),g_{1}} \Vert f_{2,j} \Vert_{\SO(G_{2}),g_{2}}, \]
where the infimum is taken over all representations $F = \sum_{j\in \N} f_{1,j} \otimes f_{2,j}$. For more on tensor products of Banach spaces, see, \eg \cite{MR1888309}.

From the definition of the projective tensor product it is clear that 
\[\SO(G_{1}) \hat{\otimes}\, \SO(G_{2})\subseteq \SO(G_{1}\times G_{2}).\]
We will now show that these two sets coincide. The result goes back to \cite{Feichtinger1981}.

\begin{theorem} \label{th:tensor-property-s0} Let $G_{1}$ and $G_{2}$ be two locally compact abelian groups. Then 
\[ \SO(G_{1}\times G_{2}) = \SO(G_{1}) \hat{\otimes}\, \SO(G_{2}).\]
 Furthermore, with $g_{i}\in \SO(G_{i})\backslash\{0\}$, $i=1,2$, and for all $F\in \SO(G_{1}\times G_{2})$
\begin{equation} \label{eq:2402a} c \, \Vert F \Vert_{\SO(G_{1})\hat{\otimes}\, \SO(G_{2}),g_{1}\otimes g_{2}} \le \Vert F \Vert_{\SO(G_{1}\times G_{2}),g_{1}\otimes g_{2}} \le \Vert F \Vert_{\SO(G_{1})\widehat{\otimes} \SO(G_{2}),g_{1}\otimes g_{2}}, \end{equation}
with $c=\Vert g_{1} \Vert_{\SO,g_{1}}^{-1} \Vert g_{2} \Vert_{\SO,g_{2}}^{-1} \Vert g_{1} \Vert_{2}^{2} \Vert g_{2} \Vert_{2}^{2}$.
\end{theorem}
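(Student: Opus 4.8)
The plan is to treat the two inclusions separately, with the containment $\SO(G_{1})\hat{\otimes}\,\SO(G_{2})\subseteq\SO(G_{1}\times G_{2})$ together with the right-hand inequality of \eqref{eq:2402a} being the routine direction, and the reverse containment being the substance, obtained from the minimality theorem, Theorem~\ref{th:SO-minimality-bochner}. For the routine direction, let $F\in\SO(G_{1})\hat{\otimes}\,\SO(G_{2})$ with any admissible representation $F=\sum_{j\in\N}f_{1,j}\otimes f_{2,j}$. Since the tensor product maps into $\SO(G_{1}\times G_{2})$ and satisfies the exact norm identity of Theorem~\ref{th:stft-and-s0}(i), the triangle inequality gives $\Vert F\Vert_{\SO(G_{1}\times G_{2}),g_{1}\otimes g_{2}}\le\sum_{j}\Vert f_{1,j}\Vert_{\SO(G_{1}),g_{1}}\Vert f_{2,j}\Vert_{\SO(G_{2}),g_{2}}$; taking the infimum over all representations yields the right inequality in \eqref{eq:2402a}, and completeness of $\SO(G_{1}\times G_{2})$ (Theorem~\ref{th:s0-banach-space}) guarantees the series converges there, giving the inclusion.

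For the reverse inclusion I would apply Theorem~\ref{th:SO-minimality-bochner} to the Banach space $B=\SO(G_{1})\hat{\otimes}\,\SO(G_{2})$ over the group $G_{1}\times G_{2}$, using the window $g=g_{1}\otimes g_{2}$. The key structural observation is that a time-frequency shift splits across the tensor factors: writing $\chi=(\chi_{1},\chi_{2})$ with $\chi_{i}\in G_{i}\times\ghat_{i}$, one has $\pi(\chi)(g_{1}\otimes g_{2})=(\pi(\chi_{1})g_{1})\otimes(\pi(\chi_{2})g_{2})$, which is again an elementary tensor and hence lies in $B$. To verify the hypothesis $\Vert\pi(\chi)g\Vert_{B}\le c\,\Vert g\Vert_{B}$ I would bound both sides by the same quantity. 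From this single-term representation and the time-frequency invariance of the $\SO$-norm \eqref{eq:0812a} (with trivial shift of the window) I get $\Vert\pi(\chi)(g_{1}\otimes g_{2})\Vert_{B}\le\Vert g_{1}\Vert_{\SO,g_{1}}\Vert g_{2}\Vert_{\SO,g_{2}}$, uniformly in $\chi$; conversely, the right inequality just established together with Theorem~\ref{th:stft-and-s0}(i) gives the lower bound $\Vert g_{1}\otimes g_{2}\Vert_{B}\ge\Vert g_{1}\otimes g_{2}\Vert_{\SO(G_{1}\times G_{2}),g_{1}\otimes g_{2}}=\Vert g_{1}\Vert_{\SO,g_{1}}\Vert g_{2}\Vert_{\SO,g_{2}}$. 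Thus the hypothesis holds with $c=1$.

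Theorem~\ref{th:SO-minimality-bochner} then yields $\SO(G_{1}\times G_{2})\subseteq B$ together with $\Vert F\Vert_{B}\le\Vert g\Vert_{B}\,\Vert g\Vert_{2}^{-2}\,\Vert F\Vert_{\SO(G_{1}\times G_{2}),g_{1}\otimes g_{2}}$ for all $F$. To reach the stated constant I substitute $\Vert g\Vert_{2}^{2}=\Vert g_{1}\Vert_{2}^{2}\,\Vert g_{2}\Vert_{2}^{2}$ and the upper bound $\Vert g\Vert_{B}\le\Vert g_{1}\Vert_{\SO,g_{1}}\Vert g_{2}\Vert_{\SO,g_{2}}$, and rearrange to obtain the left inequality of \eqref{eq:2402a} with $c=\Vert g_{1}\Vert_{\SO,g_{1}}^{-1}\Vert g_{2}\Vert_{\SO,g_{2}}^{-1}\Vert g_{1}\Vert_{2}^{2}\Vert g_{2}\Vert_{2}^{2}$. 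Combining the two inclusions gives $\SO(G_{1}\times G_{2})=\SO(G_{1})\hat{\otimes}\,\SO(G_{2})$ as sets, with equivalent norms. I expect the main obstacle to be purely bookkeeping: confirming that $B$ genuinely satisfies the hypotheses of the minimality theorem, especially producing the lower bound on $\Vert g\Vert_{B}$ self-referentially from the already-proved right inequality, and tracking the constants so that they collapse to exactly the claimed $c$.
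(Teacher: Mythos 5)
Your proposal is correct and follows essentially the same route as the paper: the easy inclusion via the tensor norm identity of Theorem~\ref{th:stft-and-s0}(i) and the triangle inequality, and the reverse inclusion by applying the minimality theorem (Theorem~\ref{th:SO-minimality-bochner}) to the time-frequency-shift-invariant Banach space $\SO(G_{1})\hat{\otimes}\,\SO(G_{2})$ with window $g_{1}\otimes g_{2}$. Your extra care in verifying $\Vert \pi(\chi)(g_{1}\otimes g_{2})\Vert_{B}\le \Vert g_{1}\otimes g_{2}\Vert_{B}$ and in tracking the constant is sound and merely makes explicit what the paper leaves terse.
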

\begin{proof}
Consider a function $F\in \SO(G_{1})\hat{\otimes}\, \SO(G_{2})$. Then
\[ \Vert F \Vert_{\SO(G_{1}\times G_{2}), g_{1}\otimes g_{2}} \le \sum_{j\in \N} \Vert f_{1,j} \otimes f_{2,j} \Vert_{\SO(G_{1}\times G_{2}),g_{1}\otimes g_{2}}  = \sum_{j\in \N} \Vert f_{1,j} \Vert_{\SO(G_{1}),g_{1}} \Vert f_{2,j} \Vert_{\SO(G_{2}),g_{2}}. \]
Taking the infimum over all representations of $F\in \SO(G_{1})\hat{\otimes}\, \SO(G_{2})$ yields the upper inequality in \eqref{eq:2402a} and shows that $\SO(G_{1})\hat{\otimes}\, \SO(G_{2}) \subseteq \SO(G_{1}\times G_{2})$. For the other inclusion we note that $\SO(G_{1})\hat{\otimes}\,\SO(G_{2})$ is invariant under time-frequency shifts. Indeed, for all $F\in \SO(G_{1})\hat{\otimes}\,\SO(G_{2})$, we have that
\[ \Vert \pi(\chi) F \Vert_{\SO(G_{1})\hat{\otimes}\, \SO(G_{2}),g_{1}\otimes g_{2}} = \Vert F \Vert_{\SO(G_{1})\hat{\otimes}\, \SO(G_{2}),g_{1}\otimes g_{2}} \ \ \text{for all} \ \ \chi\in G_{1}\times G_{2} \times \widehat{G}_{1} \times \widehat{G}_{2}.\]
The minimality of $\SO$ expressed in Theorem~\ref{th:SO-minimality-bochner} implies that $\SO(G_{1}\times G_{2})\subseteq \SO(G_{1})\hat{\otimes}\, \SO(G_{2})$ as well as the lower inequality.
\end{proof}


Next to the tensor factorization property in Theorem \ref{th:tensor-property-s0}, we have shown that the Fourier transform, topological group automorphisms and the translation operator are linear and bounded operators on $\SO$. Furthermore, $\SO(G)$ is continuously embedded into $L^{1}(G)$. As it turns out, these properties characterize the Feichtinger algebra among all Banach spaces contained in $L^{1}(G)$.

\begin{theorem}\label{th:S0-Banach-Space-Characterization} Let $\{ B(G) \}$ be a family of non-trivial Banach spaces defined for all locally compact abelian groups $G$ such that $B(G)\subseteq L^{1}(G)$. Consider the following properties for $\{B(G)\}$.

\begin{enumerate}
\item[(i)] There is a constant $c>0$ such that $\Vert f \Vert_{L^{1}(G)} \le c\, \Vert f \Vert_{B(G)}$ for all $f\in B(G)$.
\item[(ii)] For all $x\in G$ the translation operator $T_x:B(G)\to B(G), \, T_x f(t) = f(t-x)$ is a linear and bounded operator with uniformly bounded operator norm over all $x\in G$.
\item[(iii)] For all topological group automorphisms $\alpha$ of $G$ the operator $f\mapsto f\circ \alpha$ is linear and bounded on $B(G)$.
\item[(iv)] $B(G_{1})\widehat{\otimes} B(G_{2}) = B(G_{1}\times G_{2})$ for all locally compact abelian groups $G_{1}$ and $G_{2}$.
\item[(v)] The Fourier transform is a linear and bounded operator from $B(G)$ into $B(\ghat)$.
\end{enumerate}
If (i)-(v) hold, then $B(G)=\SO(G)$ for all $G$ and the norms on $B(G)$ and $\SO(G)$ are equivalent.
\end{theorem}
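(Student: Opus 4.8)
The plan is to prove the two set inclusions $B(G)\subseteq\SO(G)$ and $\SO(G)\subseteq B(G)$ separately and then upgrade to an equivalence of norms by an abstract Banach-space argument. The less obvious inclusion, and the step I expect to be the crux, is $B(G)\subseteq\SO(G)$; once it is available the reverse inclusion follows almost for free from the minimality of $\SO(G)$ established in Theorem~\ref{th:SO-minimality-bochner}. First I would record the operator-theoretic consequences of (i)--(v) that are needed. Property~(ii) makes the modulation operators uniformly bounded on $B$: since $\mathcal F E_{\omega}=T_{\omega}\mathcal F$ and $\mathcal F^{-1}f=\mathcal F(f^{r})$, one writes $E_{\omega}=\mathcal F^{-1}T_{\omega}\mathcal F$ with $\mathcal F\colon B(G)\to B(\ghat)$ bounded by~(v), with $\sup_{\omega}\Vert T_{\omega}\Vert_{B(\ghat)}<\infty$ by~(ii) applied to $\ghat$, and with $\mathcal F^{-1}\colon B(\ghat)\to B(G)$ bounded because it is the composition of the reflection $x\mapsto -x$ (bounded by~(iii)) with the Fourier transform on $\ghat$ (bounded by~(v)). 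Combined with~(ii) this gives a constant $\kappa$ such that $\Vert\pi(\chi)h\Vert_{B}\le\kappa\,\Vert h\Vert_{B}$ for every $h\in B(G)$ and every $\chi\in G\times\ghat$.

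I would also note that the shear $\beta(s,t)=(s,t-s)$ is a topological automorphism of $G\times G$, and that the partial Fourier transform $\mathcal F_{1}$ maps $B(G\times G)$ boundedly into $B(\ghat\times G)$: by~(iv) it suffices to check this on elementary tensors, where $\mathcal F_{1}(f_{1}\otimes f_{2})=\hat f_{1}\otimes f_{2}$ and boundedness follows from~(v) and the definition of the projective tensor norm.

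For the inclusion $B(G)\subseteq\SO(G)$ I would use the characterisation $\SO(G)=\mathscr D$ from Theorem~\ref{th:SO-six-def}, which is built from $f*f$ and thereby avoids complex conjugation, an operation not obviously bounded on $B$. Given $f\in B(G)$, set $F=(f\otimes f)\circ\beta$, so that $F(s,t)=f(s)\,f(t-s)$. By~(iv) we have $f\otimes f\in B(G\times G)$, and by~(iii) also $F\in B(G\times G)$; hence $\mathcal F_{1}F\in B(\ghat\times G)$, which by~(i) embeds continuously into $L^{1}(\ghat\times G)$. The key computation is the identity $\mathcal F_{1}F(\omega,t)=(E_{-\omega}f*f)(t)$, which together with the invariance of the Haar measure under inversion yields
\[ \int_{\ghat}\Vert E_{\omega} f*f\Vert_{1}\,d\omega=\Vert \mathcal F_{1} F\Vert_{L^{1}(\ghat\times G)}<\infty, \]
so that $f\in\mathscr D=\SO(G)$. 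A technical point to address carefully, which I regard as the main obstacle after the identification itself, is that the operator obtained on $B(G\times G)$ by tensor extension genuinely coincides with the usual partial Fourier transform; this follows because the two agree on the dense subspace of finite tensors and both map continuously into $L^{1}(\ghat\times G)$ (resp.\ $C_{0}(\ghat\times G)$), so a subsequential a.e.\ comparison forces equality.

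With $B(G)\subseteq\SO(G)$ in hand, the reverse inclusion is immediate: $B(G)$ is non-trivial, so it contains some $g\neq 0$, and by the previous step $g\in\SO(G)$; the uniform bound $\Vert\pi(\chi)g\Vert_{B}\le\kappa\,\Vert g\Vert_{B}$ is exactly the hypothesis of Theorem~\ref{th:SO-minimality-bochner}, whence $\SO(G)\subseteq B(G)$ with $\Vert\cdot\Vert_{B}\le c\,\Vert\cdot\Vert_{\SO}$. Thus $B(G)=\SO(G)$ as sets and the identity map $\SO(G)\to B(G)$ is a bounded linear bijection between Banach spaces; the bounded inverse theorem then supplies the reverse estimate $\Vert\cdot\Vert_{\SO}\le C\,\Vert\cdot\Vert_{B}$, giving the claimed equivalence of norms. (Alternatively one may finish with the closed graph theorem, using that both $B(G)$ and $\SO(G)$ embed continuously into $L^{1}(G)$ by~(i) and Lemma~\ref{le:cont-embedded-in-Lp}.) As $G$ was arbitrary, $B(G)=\SO(G)$ for all $G$.
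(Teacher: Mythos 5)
Your proposal is correct and follows essentially the same route as the paper: both directions are obtained exactly as there, namely $B(G)\subseteq\SO(G)$ by pushing a sheared tensor $f\otimes g$ through (iii), (iv), a termwise partial Fourier transform justified by the projective tensor decomposition and (v), and finally the embedding (i) to land in the $\mathscr{A}$/$\mathscr{D}$ characterization; and $\SO(G)\subseteq B(G)$ by writing $E_{\omega}=\mathcal{F}^{-1}T_{\omega}\mathcal{F}$ to get uniformly bounded time-frequency shifts and invoking Theorem~\ref{th:SO-minimality-bochner}. The only deviations are cosmetic (using $\mathscr{D}$ with $f*f$ instead of $\mathscr{A}$ with $f*g^{r}$, transforming the first rather than the second coordinate, and closing the norm equivalence with the bounded inverse theorem instead of tracking the operator norms directly), none of which changes the argument.
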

\begin{proof} Let $f,g\in B(G)$. By property (iii) also $g^{r}\in B(G)$. By property (iv) it follows that the function $f\otimes g^{r}$ belongs to $B(G\times G)$. Since the asymmetric coordinate transform $\tau_{a}$ is an automorphism of $G\times G$ statement (iii) implies that $\tau_{a}(f\otimes g^{r})\in B(G\times G)$. By the tensor factorization property (iv), we have that
\[ \tau_{a}(f\otimes g^{r}) = \sum_{j\in\N} h_{1,j} \otimes h_{2,j},\]
for some sequences $\{h_{i,j}\}_{j\in\N}\subseteq B(G)$, $i=1,2$. Hence, if we apply the partial Fourier transform in the second coordinate, $\mathcal{F}_{2}$, we can establish that
\[ \mathcal{F}_{2} \tau_{a}(f\otimes g^{r}) = \sum_{j\in\N} h_{1,j} \otimes \mathcal
F h_{2,j}.\]
Since $\mathcal{F}$ is assumed to be a linear and bounded operator from $B(G)$ into $B(\ghat)$, it is clear that $\mathcal{F}_{2} \tau_{a}(f\otimes g) \in B(G\times\ghat)$.
Lastly, the property in (i) implies that $\mathcal{F}_{2} \tau_{a}(f\otimes g^{r})$ is a function in  $L^{1}(G\times\ghat)$, i.e., 
\[ \int_{G\times\ghat} \big\vert \int_{G} \omega(t) f(t)g(x-t) \, dt \big\vert \, d(x,\omega) = \int_{\ghat} \Vert E_{\omega} f*g\Vert_{1} \, d\omega < \infty.\] The characterization of $\SO(G)$ by the set $\mathscr{A}$ implies that $f\in\SO(G)$. Thus $B(G)\subseteq \SO(G)$. Since all the used operators are linear and bounded there is some constant $c>0$ such that
\[ \Vert f \Vert_{\SO,g} \le c \, \Vert f\Vert_{B} \ \ \text{for all} \ \ f\in B(G).\] 

In order to show the converse inclusion, we note that
$E_{\omega} = \mathcal{F}^{-1} T_{\omega} \mathcal{F}$.
Since $\mathcal{F}^{-1}h(t) = \mathcal{F}_{\ghat}h(-t)$ for all $h\in L^{1}(\ghat)$ and $t\mapsto -t$ is an automorphism of $G$ it follows from (ii), (iii) and (v) that $E_{\omega}$ is a linear and bounded operator from $B(G)$ into itself with uniformly bounded operator norm over all $\omega\in \ghat$. Hence $B(G)$ is a Banach space that satisfies the assumptions of Theorem~\ref{th:SO-minimality-bochner}. Therefore $\SO(G)$ is continuously embedded into $B(G)$. 
\end{proof}

Theorem~\ref{th:S0-Banach-Space-Characterization} is an improvement of the characterization of $\SO(G)$ among all Segal algebras given by Losert in \cite{lo80}. Note that Segal algebras automatically satisfy assumptions (i) and (ii) in Theorem \ref{th:S0-Banach-Space-Characterization}. Losert also assumes (iii), a weaker version of (iv) and a stronger version of (v). Additionally, what is not done here, in \cite{lo80} certain assumptions on the restriction operator are also made.

Note that $L^{1}(G)$ itself satisfies assumption (i)-(iv) in Theorem \ref{th:S0-Banach-Space-Characterization}. It is thus the fifth assertion of Theorem \ref{th:S0-Banach-Space-Characterization} which, clearly, distinguishes $\SO(G)$ from $L^{1}(G)$.

If one weakens assumption (v) so that it only requires the Fourier transform to be a linear and bounded operator from $B(G)$ into $L^{1}(\ghat)$, but adds an assumption for the modulation operator as in (ii) for the translation operator, then the statement of Theorem \ref{th:S0-Banach-Space-Characterization} still holds. 

As a final application of the minimality of the Feichtinger algebra we show the following characterization of $\SO(G)$ from \cite[\S 2.9]{re89}.

\begin{theorem} \label{th:reiter-so} Let $G$ be a locally compact abelian group and let $H$ be an open subgroup of $G$. Let $\Gamma$ be a set of coset representatives of the discrete quotient group $G/H$. For a function $f\in L^{1}(G)$ let $f_{\gamma}$, $\gamma\in \Gamma$ be the function $f_{\gamma}(x) = f(\gamma+x)$, $x\in H$. 
A function $f\in L^{1}(G)$ belongs to $\SO(G)$ if, and only if, the functions $\{f_{\gamma}\}_{\gamma\in \Gamma}$ belong to $\SO(H)$ and $\sum_{\gamma\in \Gamma} \Vert f_{\gamma} \Vert_{\SO(H)} < \infty$.
Moreover, $\Vert f \Vert_{\SO,H} = \sum_{\gamma\in \Gamma} \Vert f_{\gamma} \Vert_{\SO(H)}$ defines an equivalent norm on $\SO(G)$.
\end{theorem}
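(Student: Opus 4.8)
The plan is to recognise both conditions as the statement that $f$ is the ``$\ell^1$-gluing'' of its coset-pieces $f_\gamma$, and then to identify the candidate norm $\Vert f\Vert_{\SO,H}=\sum_{\gamma\in\Gamma}\Vert f_\gamma\Vert_{\SO(H)}$ with the norm of a time-frequency-shift invariant Banach space, so that both inclusions follow from the zero-extension operator of Proposition~\ref{pr:zero-ext} together with the minimality theorem, Theorem~\ref{th:SO-minimality-bochner}. Throughout I fix a nonzero $g_0\in\SO(H)$ and use $g:=Q_Hg_0\in\SO(G)\backslash\{0\}$ as the window defining $\Vert\cdot\Vert_{\SO(G),g}$, so that Proposition~\ref{pr:zero-ext} reads $\Vert Q_Hh\Vert_{\SO(G),g}=\Vert h\Vert_{\SO(H),g_0}$ for $h\in\SO(H)$. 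Since $H$ is open, $G/H$ is discrete and the cosets $\{\gamma+H\}_{\gamma\in\Gamma}$ partition $G$; the first observation is that $f_\gamma=R_H(T_{-\gamma}f)$ and that $f=\sum_{\gamma\in\Gamma}T_\gamma Q_H f_\gamma$, where each summand is supported on $\gamma+H$.

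First I would settle the ``if'' direction. Assuming $f_\gamma\in\SO(H)$ and $\sum_\gamma\Vert f_\gamma\Vert_{\SO(H),g_0}<\infty$, Proposition~\ref{pr:zero-ext} and the translation invariance of the $\SO(G)$-norm (equation~\eqref{eq:0812a} with $\nu_2=0$) give $\Vert T_\gamma Q_H f_\gamma\Vert_{\SO(G),g}=\Vert f_\gamma\Vert_{\SO(H),g_0}$, so the series $\sum_\gamma T_\gamma Q_H f_\gamma$ is absolutely convergent in $\SO(G)$; by completeness (Theorem~\ref{th:s0-banach-space}) it has a limit in $\SO(G)$, and since $\SO(G)\hookrightarrow L^1(G)$ (Lemma~\ref{le:cont-embedded-in-Lp}) this limit agrees in $L^1$ with the coset reconstruction of $f$. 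Hence $f\in\SO(G)$ with $\Vert f\Vert_{\SO(G),g}\le\sum_\gamma\Vert f_\gamma\Vert_{\SO(H),g_0}=\Vert f\Vert_{\SO,H}$. This computation simultaneously shows that $(h_\gamma)_\gamma\mapsto\sum_\gamma T_\gamma Q_H h_\gamma$ is an isometric isomorphism from the $\ell^1$-direct sum $\bigoplus^{\ell^1}_{\gamma\in\Gamma}\SO(H)$ onto the space $B:=\{f\in L^1(G):\Vert f\Vert_{\SO,H}<\infty\}$ equipped with $\Vert\cdot\Vert_{\SO,H}$; in particular $(B,\Vert\cdot\Vert_{\SO,H})$ is a Banach space contained in $\SO(G)$.

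For the ``only if'' direction and the reverse norm inequality I would invoke minimality. For $f\in\SO(G)$, Theorem~\ref{th:periodization-restricion-map-in-SO}(ii) together with $T_{-\gamma}f\in\SO(G)$ already give $f_\gamma=R_H(T_{-\gamma}f)\in\SO(H)$; what remains is the bound $\Vert f\Vert_{\SO,H}\le C\,\Vert f\Vert_{\SO(G),g}$, i.e. that $\SO(G)\subseteq B$ continuously. I would deduce this from Theorem~\ref{th:SO-minimality-bochner} applied to the Banach space $B$, once I verify that $B$ is invariant under all time-frequency shifts $\pi(\chi)$, $\chi\in G\times\ghat$, with uniformly bounded (indeed unit) operator norm. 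The key computation is to express the coset-pieces of $\pi(\chi)f$: writing $\chi=(x,\omega)$ and $x=\gamma_0+x_0$ with $\gamma_0\in\Gamma$, $x_0\in H$, and letting $\gamma'\in\Gamma$ be the representative of $\gamma-\gamma_0+H$ with $\gamma-\gamma_0=\gamma'+k$, $k\in H$, one finds $(\pi(\chi)f)_\gamma=\omega(\gamma)\,E_{\omega|_H}T_{x_0-k}\,f_{\gamma'}$, a time-frequency shift on $H$ of $f_{\gamma'}$ times the unimodular constant $\omega(\gamma)$. Since $\omega|_H$ is a character of $H$, since the $\SO(H)$-norm is time-frequency-shift invariant (again \eqref{eq:0812a} with $\nu_2=0$), and since $\gamma\mapsto\gamma'$ is a bijection of $\Gamma$, summing over $\gamma$ yields $\Vert\pi(\chi)f\Vert_{\SO,H}=\Vert f\Vert_{\SO,H}$. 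With this in hand, $g=Q_Hg_0\in B$ and all its time-frequency shifts lie in $B$ with norm $\Vert g\Vert_{\SO,H}$, so Theorem~\ref{th:SO-minimality-bochner} gives $\SO(G)\subseteq B$ together with $\Vert f\Vert_{\SO,H}\le\Vert g\Vert_{\SO,H}\,\Vert g\Vert_2^{-2}\,\Vert f\Vert_{\SO(G),g}$. Combining the two inclusions yields $B=\SO(G)$ with equivalent norms, which is the assertion.

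I expect the main obstacle to be precisely the coset bookkeeping in the time-frequency shift computation: keeping track of how a shift by an arbitrary $x\in G$ permutes the cosets while producing a genuine $H$-translation on each piece, and checking that the leftover character factor $\omega(\gamma)$ is unimodular so that the $\SO(H)$-norms are preserved term by term. Everything else is a direct application of the zero-extension isometry, the restriction operator, completeness of $\SO(G)$, and the minimality theorem, all of which are already available.
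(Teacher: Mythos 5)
Your proof is correct and follows essentially the same route as the paper: the ``if'' direction via the decomposition $f=\sum_{\gamma}T_{\gamma}Q_{H}f_{\gamma}$ and the zero-extension isometry of Proposition~\ref{pr:zero-ext}, and the ``only if'' direction by recognising the candidate space with the norm $\Vert\cdot\Vert_{\SO,H}$ as a time-frequency-shift invariant Banach space and invoking the minimality theorem, Theorem~\ref{th:SO-minimality-bochner}. The explicit coset bookkeeping for $(\pi(\chi)f)_{\gamma}$ that you carry out is exactly the verification the paper states without proof, and your computation of it is correct.
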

\begin{proof}
Let $Q_{H}$ be the zero-extension operator as in Proposition \ref{pr:zero-ext}. Let $f\in L^{1}(G)$ be such that $\{f_{\gamma}\}_{\gamma\in \Gamma} \subseteq \SO(H)$ and $\sum_{\gamma\in\Gamma} \Vert f_{\gamma} \Vert_{\SO(H)}<\infty$. 
Note that $f=\sum_{\gamma}T_{\gamma} Q_{H} f_{\gamma}$. It follows from Proposition \ref{pr:zero-ext} that $f\in \SO(G)$ and that $\Vert f \Vert_{\SO,g} \le c \, \sum_{\gamma\in\Gamma} \Vert f_{\gamma} \Vert_{\SO(H)}$ for some $c>0$. For the converse inclusion we observe that 
\[ \{ f \in L^{1}(G) \, : \, \{f_{\gamma}\}_{\gamma\in \Gamma}\subseteq\SO(H), \ \sum_{\gamma\in \Gamma} \Vert f_{\gamma} \Vert_{\SO(H)} < \infty\}\]
with the proposed norm is a time-frequency shift invariant Banach space. The minimality of $\SO(G)$, Theorem \ref{th:SO-minimality-bochner}, now implies that these two spaces coincide and have equivalent norms.
\end{proof}

\begin{remark} By the structure theory for locally compact abelian groups \emph{all} locally compact abelian groups, except the Euclidean space, contain a \emph{compact} open subgroup $H$. Hence, for those groups, we have that $f\in L^{1}(G)$ belongs to $\SO(G)$ if, and only if,  $f_{\gamma}\in A(H)$ for all $\gamma\in \Gamma$ and $\sum_{\gamma\in \Gamma} \Vert f_{\gamma} \Vert_{A(H)}< \infty$. I.e., $\SO(G)$ consists of all functions that, when restricted to each coset of the compact open subgroup $H$, are functions with absolutely convergent Fourier series and the Fourier coefficients of all the restrictions over all cosets are absolutely summable. This characterization of $\SO(G)$ is related to the characterization of $\SO(G)$ as the Wiener amalgam space with local component in the Fourier algebra and global component in $L^{1}$, i.e., the characterization by the set $\mathscr{T}$ in Proposition \ref{pr:bupu-time}.
\end{remark}

\section{Series expansions of functions in $\SO(G)$ and BUPUs}
\label{sec:series-expansions}

In this section we add more characterizations of $\SO(G)$ by series representations and we characterize $\SO(G)$ by use of bounded uniform partitions of unity. 

For the following result we ask the reader to recall the sets $\mathscr{M}$ to $\mathscr{R}$ from Definition~\ref{def:S0-bigdef}. 
The set $\mathscr{M}$ is the original definition of $\SO(G)$ used by Feichtinger in \cite{Feichtinger1981}, whereas the set $\mathscr{N}$ is closely related to the definition of $\SO(G)$ used in \cite{re89},\cite{MR1802924}. It is no restriction to assume that the sequences of function $(g_{n})_{n\in\N}$ and $(f_{n})_{n\in\N}$ in the sets $\mathscr{M,N,O,P,Q}$ and $\mathscr{R}$ belong to $C_{c}(G)\cap A(G)$ or that they are functions in $L^{1}(G)$ with compactly supported Fourier transform. Most of these series repesentations can be found in \cite{Feichtinger1981}.
In the following $K$ and $\tilde{K}$ are compact sets with non-void interior in $G$ and $\ghat$, respectively.

\begin{proposition} \label{pr:SO-expansion} For any locally compact abelian group $G$ it holds that $\SO(G)=\mathscr{M}=\mathscr{N}=\mathscr{O}=\mathscr{P}=\mathscr{Q}=\mathscr{R}$. Let $g$ be a fixed, non-zero function in $\SO(G)$. Then the following $\SO$-norms are all equivalent to $\Vert \cdot \Vert_{\SO(G),g}$.
\begin{align*}
\textstyle\Vert f \Vert_{\mathscr{M},K} & = \inf\big\{ \sum\limits_{n\in\N} \Vert g_{n} \Vert_{A(G)} \, : \,  f = \sum\limits_{n\in \N} T_{x_{n}} g_{n} , \, (g_{n})_{n\in\N}\subseteq A(G), \, \supp\,g_{n} \subseteq K \, \forall \, n\in\N \\
& \hspace{4.2cm}\text{ with } (x_{n})_{n\in \N} \subseteq G \text{ and } \textstyle\sum\limits_{n\in \N} \Vert g_{n} \Vert_{A(G)} < \infty\big\} \, ; \\
\Vert f \Vert_{\mathscr{N},\tilde{K}} & = \inf\big\{ \textstyle\sum\limits_{n\in\N} \Vert g_{n} \Vert_{1} \, : \, f = \sum\limits_{n\in \N} E_{\omega_{n}} g_{n} , \, (g_{n})_{n\in\N}\subseteq L^{1}(G), \, \supp\,\hat{g}_{n} \subseteq \tilde{K} \, \forall\, n\in\N \\
& \hspace{4.2cm}\text{ with } (\omega_{n})_{n\in \N} \subseteq \ghat \text{ and } \textstyle\sum\limits_{n\in \N} \Vert g_{n} \Vert_{1} < \infty\big\} \, ; \\
 \Vert f \Vert_{\mathscr{O},g}  & = \inf\big\{ \textstyle\sum\limits_{n\in\N} \Vert f_{n} \Vert_{1} \, : \, f = \sum\limits_{n\in \N} f_{n} * E_{\omega_{n}} g, \ (f_{n})_{n\in\N}\subseteq L^{1}(G) \\ 
& \hspace{4.2cm}\text{with} \ \ (\omega_{n})_{n\in \N} \subseteq \ghat, \sum\limits_{n\in \N} \Vert f_{n} \Vert_{1} < \infty\big\} \, ;  \\
 \Vert f \Vert_{\mathscr{P},g} & = \inf\big\{ \sum\limits_{n\in\N} \Vert f_{n} \Vert_{A(G)} \, : \, f = \sum\limits_{n\in \N} f_{n} \cdot T_{x_{n}} g, \ (f_{n})_{n\in\N}\subseteq A(G) \\
& \hspace{4.2cm}\text{with} \ \ (x_{n})_{n\in \N} \subseteq G, \textstyle\sum\limits_{n\in \N} \Vert f_{n} \Vert_{A(G)} < \infty\big\} \, ; \\
 \Vert f \Vert_{\mathscr{Q},g} & = \inf \big\{ \sum_{n\in\N} \Vert f_{n} \Vert_{\SO,g} \, : \, f = R_{\{0\}\times G} \Big( \textstyle\sum\limits_{n\in\N} T_{(\omega_{n},0)} \mathcal{V}_{\hat{g}}\hat{f}_{n}\Big), \ (f_{n})_{n\in\N} \subseteq \SO(G)\\
& \hspace{4.2cm}\text{with} \  (\omega_{n})_{n\in\N} \subseteq \ghat, \ \textstyle\sum\limits_{n\in\N} \Vert f_{n} \Vert_{\SO,g} < \infty\big\} \, ; \\
\Vert f \Vert_{\mathscr{R},g} & = \inf \big\{ \sum_{n\in\N} \Vert f_{n} \Vert_{\SO,g} \, \Vert g_{n} \Vert_{\SO,g} \, : \, f = \sum_{n\in\N} f_{n} *  g_{n} , \ (f_{n})_{n\in\N},(g_{n})_{n\in\N} \subseteq \SO(G) \\
& \hspace{4.2cm}\text{with} \ \ \sum\limits_{n\in\N} \Vert f_{n} \Vert_{\SO,g} \, \Vert g_{n} \Vert_{\SO,g} < \infty \big\}.
\end{align*} 
\end{proposition}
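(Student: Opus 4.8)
The plan is to show, for each set $\mathscr{X}\in\{\mathscr{M},\mathscr{N},\mathscr{O},\mathscr{P},\mathscr{Q},\mathscr{R}\}$, the two inclusions $\mathscr{X}\subseteq\SO(G)$ and $\SO(G)\subseteq\mathscr{X}$ with matching norm bounds, so that each coincides with $\SO(G)$ and carries an equivalent norm. First I would remove $\mathscr{N}$ and $\mathscr{P}$ by Fourier duality: applying $\mathcal{F}$ to $f=\sum_n E_{\omega_n}g_n$ and using $\widehat{E_{\omega_n}g_n}=T_{\omega_n}\hat g_n$, together with $\hat g_n=\mathcal{F}_{\widehat{\ghat}}(g_n)$ and hence $\|\hat g_n\|_{A(\ghat)}=\|g_n\|_1$, turns a defining representation of $\mathscr{N}(G)$ into a defining representation of $\mathscr{M}(\ghat)$ (the support condition on $\hat g_n$ becoming the support condition on the atoms); likewise a representation of $\mathscr{P}(G)$ maps to one of $\mathscr{O}(\ghat)$ with generator $\hat g$. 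Since $\mathcal{F}$ is an isometric isomorphism $\SO(G)\to\SO(\ghat)$ by Corollary~\ref{co:f-in-s0-properties}(i) and \eqref{eq:2711a}, the assertions for $\mathscr{N},\mathscr{P}$ reduce to those for $\mathscr{M},\mathscr{O}$. It then remains to handle $\mathscr{M},\mathscr{O},\mathscr{Q},\mathscr{R}$.

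For the inclusion $\mathscr{X}\subseteq\SO(G)$ I would estimate each summand. For $\mathscr{O}$, $\|f_n*E_{\omega_n}g\|_{\SO,g}\le\|f_n\|_1\,\|g\|_{\SO,g}$ by Proposition~\ref{pr:s0-l1-ag-ideal}(i) and the $\pi(\nu)$-invariance of the norm \eqref{eq:0812a}; for $\mathscr{R}$, $\|f_n*g_n\|_{\SO,g}\le c\,\|f_n\|_{\SO,g}\|g_n\|_{\SO,g}$ by Corollary~\ref{co:S0-ideal-of-L1-AG}; for $\mathscr{M}$, fixing $h\in\SO(G)$ with $h\equiv 1$ on $K$ (Lemma~\ref{le:S0-contains-piecewise-constant-functions}(iv)) and writing $g_n=g_n\cdot h$, Proposition~\ref{pr:s0-l1-ag-ideal}(ii) gives $\|g_n\|_{\SO,g}\le\|h\|_{\SO,g}\,\|g_n\|_{A(G)}$, after which \eqref{eq:0812a} controls $\|T_{x_n}g_n\|_{\SO,g}$; and for $\mathscr{Q}$ one combines $\|\mathcal{V}_{\hat g}\hat f_n\|_{\SO}=\|f_n\|_{\SO}\|g\|_{\SO}$ (Theorem~\ref{th:stft-and-s0}(ii) with \eqref{eq:2711a}), the isometry of $T_{(\omega_n,0)}$, and boundedness of the restriction $R_{\{0\}\times G}:\SO(\ghat\times G)\to\SO(G)$ from Theorem~\ref{th:periodization-restricion-map-in-SO}(ii). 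In each case $\sum_n(\text{coefficient})<\infty$ makes the series absolutely convergent in the Banach space $\SO(G)$ (Theorem~\ref{th:s0-banach-space}), so the sum lies in $\SO(G)$; taking the infimum over representations yields $\|f\|_{\SO,g}\le C_{\mathscr{X}}\|f\|_{\mathscr{X}}$, which in particular shows that $\|\cdot\|_{\mathscr{X}}$ separates points and is a genuine norm.

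For the reverse inclusion I would invoke the minimality theorem. The crucial reduction is that it suffices to verify that $(\mathscr{X},\|\cdot\|_{\mathscr{X}})$ is a Banach space, isometrically invariant under all time-frequency shifts $\pi(\nu)$, and containing at least one nonzero function; since $\mathscr{X}\subseteq\SO(G)$, any nonzero $g_0\in\mathscr{X}$ is then an admissible generator (with constant $c=1$) in Theorem~\ref{th:SO-minimality-bochner}, giving $\SO(G)\subseteq\mathscr{X}$ with $\|f\|_{\mathscr{X}}\le C'_{\mathscr{X}}\|f\|_{\SO,g}$. Isometric $\pi(\nu)$-invariance follows from the covariance of the defining operation: for instance $\pi(\nu)(f_n*E_{\omega_n}g)=(\pi(\nu)f_n)*E_{\omega'}g$ with $\|\pi(\nu)f_n\|_1=\|f_n\|_1$ (and analogously for $\mathscr{M}$ and $\mathscr{R}$), so a representation of $f$ is carried to one of $\pi(\nu)f$ with the same coefficient sum. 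Completeness I would establish by the standard concatenation argument: for a Cauchy sequence, pass to a subsequence with consecutive $\mathscr{X}$-distances below $2^{-k}$, pick near-optimal representations of the differences, and merge all atoms into a single representation; this converges absolutely in $L^1(G)$ (hence defines a function, which lies in $\mathscr{X}$), and subadditivity of $\|\cdot\|_{\mathscr{X}}$ gives convergence in $\mathscr{X}$-norm.

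The main obstacle is $\mathscr{Q}$, whose members are produced by the layered recipe restriction $\circ$ translation $\circ\ \mathcal{V}_{\hat g}\circ\mathcal{F}$. Both the atom estimate above and, especially, the verification of isometric $\pi(\nu)$-invariance require pushing a time-frequency shift on $G$ successively through $R_{\{0\}\times G}$, through $T_{(\omega_n,0)}$, through $\mathcal{V}_{\hat g}$, and finally through $\mathcal{F}$; I expect this to be a mechanical but bookkeeping-heavy computation, relying on the covariance identities for $\mathcal{V}_g$ under $T_x$ and $E_\omega$ (as used in the proof of Lemma~\ref{le:time-freq-shift-continuous-on-SO}) together with $\mathcal{F}T_x=E_{-x}\mathcal{F}$ and $\mathcal{F}E_\omega=T_\omega\mathcal{F}$. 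Once this covariance is in place, nonemptiness of $\mathscr{Q}$ (take a single $f_1$) supplies a generator, and minimality closes the argument exactly as for the remaining sets.
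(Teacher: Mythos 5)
Your proposal is correct, but for the inclusion $\SO(G)\subseteq\mathscr{X}$ it takes a genuinely different route from the paper. The paper's proof transplants the atomic decomposition of Theorem~\ref{th:SO-expansion} directly: for each set it chooses the generating window of $\mathscr{L}$ cleverly (a $g_0\in C_c(G)\cap A(G)$ supported in $K$ for $\mathscr{M}$, a window of the form $h*g$ for $\mathscr{O}$, $h\cdot g$ for $\mathscr{P}$, $h*g^{\dagger}$ for $\mathscr{Q}$) and then regroups $f=\sum_n c_n E_{\omega_n}T_{x_n}g_0$ termwise into a representation of the required shape, e.g.\ $T_{x_n}(\omega_n(x_n)c_nE_{\omega_n}g_0)$ for $\mathscr{M}$ or $(c_nE_{\omega_n}T_{x_n}h)*(E_{\omega_n}g)$ for $\mathscr{O}$. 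This yields explicit constants and entirely avoids having to prove that the infimum expressions define complete norms. You instead invoke Theorem~\ref{th:SO-minimality-bochner}, which obliges you to verify that each $(\mathscr{X},\Vert\cdot\Vert_{\mathscr{X}})$ is a Banach space (your concatenation argument is standard and works for all six atomic-type norms) and is isometrically $\pi(\nu)$-invariant; the latter is immediate for $\mathscr{M},\mathscr{N},\mathscr{O},\mathscr{P},\mathscr{R}$ but for $\mathscr{Q}$ requires the covariance bookkeeping you only sketch --- though once one observes that the $\mathscr{Q}$-atoms $R_{\{0\}\times G}(T_{(\omega_n,0)}\mathcal{V}_{\hat g}\hat f_n)$ are, after unwinding the partial Fourier transforms, just $f_n*E_{\omega_n}g^{\dagger}$ up to reflection, this reduces to the $\mathscr{O}$ case. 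Since the minimality theorem is itself a corollary of Theorem~\ref{th:SO-expansion}, the two routes rest on the same Bonsall decomposition; yours is more uniform across the six sets but front-loads work into completeness and covariance, while the paper's is more constructive and gives the sharper constants recorded in the subsequent corollary. Your Fourier-duality reduction of $\mathscr{N}$ and $\mathscr{P}$ to $\mathscr{M}$ and $\mathscr{O}$ is a clean way to dispose of the cases the paper dismisses as ``similar and thus omitted.''
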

\begin{proof}
Let $f\in \SO(G)$, let the set $K$ be given and take a function $g\in C_{c}(G)\cap A(G)$ with $\supp\, g\subseteq K$. By Lemma~\ref{le:sufficient-cond-to-be-in-S0} we have that $g\in \SO(G)$. Theorem~\ref{th:SO-expansion} implies that $f$ can be written in the form
\begin{equation} \label{eq:3004a1} f = \sum_{n\in\N} c_{n} E_{\omega_{n}}T_{x_{n}} g \end{equation}
for some sequence $(c_{n})_{n\in\N}\in \ell^{1}(\N)$ and $(x_{n},\omega_{n})_{n\in\N}\subseteq G\times\ghat$. Hence $f$ can be written as $f = \sum_{n\in\N} T_{x_{n}} g_{n}$ with $g_{n}=  \omega_{n}(x_{n}) \, c_{n} \, E_{\omega_{n}} g$. This shows that $f\in \mathscr{M}$. Since $f$ was arbitrary this implies that $\SO(G)\subseteq \mathscr{M}$. Furthermore, we find that
\[ \Vert f \Vert_{\mathscr{M},K} = \inf \Big\lbrace \sum_{n} \Vert g_{n} \Vert_{A(G)} \, : \, f = \sum_{n} \ldots \Big\rbrace \le \sum_{n} \Vert \omega_{n}(x_{n}) \, c_{n} \, E_{\omega_{n}} g \Vert_{A(G)} = \Vert g \Vert_{A(G)} \sum_{n} \vert c_{n} \vert.\]
This inequality holds for any representation \eqref{eq:3004a1}. Hence
\[ \Vert f \Vert_{\mathscr{M},K} \le \Vert g \Vert_{A(G)} \, \Vert f \Vert_{\mathscr{L},g}
.\]
We now show the converse inclusion $\mathscr{M} \subseteq \SO(G)$. Let therefore $g$ be any function in $\SO(G)\backslash\{0\}$ and take a function $h\in \SO(G)$ such that $h= 1$ on $K$, \eg as in Lemma~\ref{le:S0-contains-piecewise-constant-functions}. We then find that for any $f\in \mathscr{M}$
\[ \Vert f \Vert_{\SO,g} \le \sum_{n\in\N} \Vert T_{x_{n}} g_{n} \Vert_{\SO,g} = \sum_{n\in\N} \Vert g_{n} \Vert_{\SO,g} = \sum_{n\in\N} \Vert g_{n} \cdot h \Vert_{\SO,g} \le \sum_{n\in\N} \Vert g_{n} \Vert_{A(G)} \, \Vert h \Vert_{\SO,g} < \infty.\]
This shows that $\mathscr{M} \subseteq \SO(G)$. Moreover, the above estimate holds for any representation of $f\in\mathscr{M}$. Hence $\Vert f \Vert_{\SO,g} \le \Vert h \Vert_{\SO,g} \Vert f \Vert_{\mathscr{M},K}$. The proof for the set $\mathscr{N}$ is similar and thus omitted.

Let $f\in \SO(G)$, fix $h\in L^{1}(G)\backslash\{0\}$ and $g\in \SO(G)\backslash\{0\}$. By Proposition~\ref{pr:s0-l1-ag-ideal} $h*g\in \SO(G)$. Furthermore,
Theorem~\ref{th:SO-expansion} states that $f$ can be written in the form
\[ f = \sum_{n\in\N} c_{n} E_{\omega_{n}}T_{x_{n}} (h*g) = \sum_{n\in\N} (c_{n} E_{\omega_{n}}T_{x_{n}} h)* (E_{\omega_{n}}g)\]
for some sequence $(c_{n})_{n\in\N}\in \ell^{1}(\N)$ and $(x_{n},\omega_{n})_{n\in\N}\subseteq G\times\ghat$. Hence $f$ can be written in the form $f= \sum_{n\in\N} f_{n} * E_{\omega_{n}}g$ with $f_{n}=c_{n} E_{\omega_{n}}T_{x_{n}} h$. This shows that $f\in \mathscr{O}$ and thus $\SO(G)\subseteq \mathscr{O}$. Furthermore, we find that for all $f\in \SO(G)$
\[ \Vert f \Vert_{\mathscr{O},g} = \inf\Big\{ \sum_{n\in\N} \Vert f_{n} \Vert_{1} \, : \, f = \sum_{n} \ldots \Big\} \le \sum_{n\in\N} \Vert c_{n} E_{\omega_{n}} T_{x_{n}}h \Vert_{1} = \Vert h \Vert_{1} \, \sum_{n\in\N} \vert c_{n} \vert.\]
Theorem~\ref{th:SO-expansion} implies that
\[ \Vert f \Vert_{\mathscr{O},g} \le \Vert h \Vert_{1} \Vert f \Vert_{\mathscr{L},h*g}.\]
On the other hand, for any $f\in \mathscr{O}$ we have that
\[ \Vert f \Vert_{\SO,g} \le \sum_{n\in\N} \Vert f_{n} * E_{\omega_{n}}g \Vert_{\SO,g} \le \sum_{n\in\N} \Vert f_{n} \Vert_{1} \, \Vert g \Vert_{\SO,g}.\]
This shows that $\mathscr{O}\subseteq \SO(G)$. Combining these results with Proposition~\ref{pr:equivalent-norm-on-s0} yields the desired conclusion.  The result for $\mathscr{P}$ is shown in a similar way. 

Let now $g,h\in \SO(G)\backslash\{0\}$ be given. Then $h*g^{\dagger}\in\SO(G)$ and Theorem~\ref{th:SO-expansion} implies that any $f\in \SO(G)$, in particular, the reflection of $f$, $f^{r}(t) = f(-t)$, can be written in the form
\begin{equation} \label{eq:Qeq} f^r(t) = \sum_{n\in\N} c_{n} E_{-\omega_{n}}T_{x_{n}} (h*g^{\dagger})(t) = \sum_{n\in\N} T_{(\omega_{n},0)} \mathcal{V}_{\hat{g}} \hat{f}_{n}(0,-t),\end{equation}
where $(c_{n})_{n}\in \ell^{1}(\N)$, $(x_{n},\omega_{n})_{n} \subseteq G\times\ghat$ and $f_{n}=c_{n} E_{-\omega_{n}} T_{x_{n}}h$. This implies that every function $f\in \SO(G)$ can be written as $f = R_{\{0\}\times G} ( \sum_{n\in\N} T_{(\omega_{n},0}) \mathcal{V}_{\hat{g}}\hat{f}_{n} )$, where $f_{n}$ is as above. 
Furthermore, the following inequality holds:
\[ \Vert f \Vert_{\mathscr{Q},g} = \inf\Big\{ \sum_{n} \Vert f_{n} \Vert_{\SO,g} \, : f = \sum_{n} \ldots \Big\} \le \sum_{n} \Vert c_{n} E_{-\omega_{n}} T_{x_{n}}h \Vert_{\SO,g} = \sum_{n} \vert c_{n} \vert \, \Vert h \Vert_{\SO,g}.\]
This estimate is independent of the representation in \eqref{eq:Qeq}, thus
\[ \Vert f \Vert_{\mathscr{Q},g} \le \Vert f \Vert_{\mathscr{L},h*g^{\dagger}} \, \Vert h \Vert_{\SO,g}.\]
On the other hand, for all $f\in \mathscr{Q}$ have that
\begin{align*}
 \Vert f \Vert_{\SO,g} & = \Vert R_{\{0\}\times G} \sum_{n} T_{\omega_{n}} \mathcal{V}_{\hat{g}}\hat{f}_{n} \Vert_{\SO,g} \le C \, \sum_{n} \Vert \mathcal{V}_{\hat{g}}\hat{f}_{n} \Vert_{\SO(\ghat\times G),\mathcal{V}_{\hat{g}}\hat{g}} \\
 & = C \sum_{n} \Vert \hat{g} \Vert_{\SO(\ghat),\hat{g}} \, \Vert \hat{f}_{n} \Vert_{\SO(\ghat),\hat{g}} = C \Vert g \Vert_{\SO,g} \, \sum_{n} \Vert f_{n} \Vert_{\SO,g},
\end{align*}
where $C$ is the operator norm of the restriction operator $R_{\{0\}\times G}$ from $\SO(G\times\ghat)$ with the norm $\Vert \cdot \Vert_{\SO,\mathcal{V}_{\hat{g}}\hat{g}}$ onto $\SO(G)$ with the norm $\Vert \cdot \Vert_{\SO,g}$.
Since the above inequality holds for any representation of $f\in \mathscr{Q}$ we conclude that
\[ \Vert f \Vert_{\SO,g} \le C \Vert g \Vert_{\SO,g} \, \Vert f \Vert_{\mathscr{Q},g}.\]
The equality $\SO(G)=\mathscr{R}$ and the equivalence of their norms is shown in a similar fashion as the previous results.
\end{proof}

\begin{corollary}
For all functions $f\in \SO(G)$ the following inequalities hold.
\begin{enumerate}
\item[(i)] $ \ \ c \, \Vert f \Vert_{\SO,g} \le \Vert f \Vert_{\mathscr{M},K} \le C \, \Vert f \Vert_{\mathscr{L},g} \, $, with $c = \Vert h \Vert_{\SO,g}^{-1} $, $C=\Vert g \Vert_{A(G)}$ and where $h$ is a function in $\SO(G)$ such that $h= 1$ on the set $K\subseteq G$.
\item[(ii)] $ \ \  c \, \Vert f \Vert_{\SO,g} \le \Vert f \Vert_{\mathscr{N},\tilde{K}} \le C \, \Vert f \Vert_{\mathscr{L},g} \,$,
with $c = \Vert h \Vert_{\SO,g}^{-1} $, $C=\Vert g \Vert_{1}$ and where $h$ is a function in $\SO(G)$ such that $\hat{h}=1$ on the set $\tilde{K}\subseteq \ghat$.
\item[(iii)] $ \ \  c \, \Vert f \Vert_{\SO,g} \le \Vert f \Vert_{\mathscr{O},g} \le C \, \Vert f \Vert_{\mathscr{L},h*g} \, $, with $c = \Vert g \Vert_{\SO,g}^{-1}$, $C = \Vert h \Vert_{1}$ and where $h$ is any function in $L^{1}(G)\backslash\{0\}$.
\item[(iv)] $ \ \ c \, \Vert f \Vert_{\SO,g} \le \Vert f \Vert_{\mathscr{P},g} \le C \, \Vert f \Vert_{\mathscr{L},h\cdot g} \, $,
with $c = \Vert g \Vert_{\SO,g}^{-1}$, $C = \Vert h \Vert_{A(G)}$ and where $h$ is any function in $A(G)\backslash\{0\}$.
\item[(v)] $ \ \ c \, \Vert f \Vert_{\SO,g} \le \Vert f \Vert_{\mathscr{Q},g} \le C \, \Vert f \Vert_{\mathscr{L},h*g^{\dagger}} \, $,
where $c =  \Vert R_{\{0\}\times G } \Vert^{-1}_{\textnormal{op},\SO,\mathcal{V}_{\hat{g}}\hat{g}\to \SO,g} \Vert g \Vert_{\SO,g}^{-1}$, $C = \Vert h \Vert_{\SO,g}$ and $h$ is any function in $\SO(G)\backslash\{0\}$.
\item[(vi)] $ \ \ c \, \Vert f \Vert_{\SO,g} \le \Vert f \Vert_{\mathscr{R},g} \le C \, \Vert f \Vert_{\mathscr{L},h_{1}*h_{2}} \, $,
where $c= \Vert g \Vert_{\infty}^{1}$, $C = \Vert h_{1} \Vert_{\SO,g} \,\Vert h_{2} \Vert_{\SO,g}$ and $h_{1},h_{2}\in \SO(G)\backslash\{0\}$.
\end{enumerate}
\end{corollary}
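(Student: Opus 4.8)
The plan is to observe that the six pairs of inequalities are nothing more than the quantitative bookkeeping already carried out inside the proof of Proposition~\ref{pr:SO-expansion}; the corollary simply isolates the explicit constants. For each set $\mathscr{X}\in\{\mathscr{M},\mathscr{N},\mathscr{O},\mathscr{P},\mathscr{Q},\mathscr{R}\}$ the two halves share a uniform structure: an \emph{upper} bound $\Vert f\Vert_{\mathscr{X}}\le C\,\Vert f\Vert_{\mathscr{L},\cdot}$, obtained by inserting a concrete $\mathscr{L}$-series representation of $f$ (guaranteed by Theorem~\ref{th:SO-expansion}) into the infimum defining $\Vert\cdot\Vert_{\mathscr{X}}$, and a \emph{lower} bound $c\,\Vert f\Vert_{\SO,g}\le\Vert f\Vert_{\mathscr{X}}$, obtained by applying the triangle inequality term by term to an arbitrary $\mathscr{X}$-representation and estimating each summand with the relevant module or algebra inequality. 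For $\mathscr{M}$, $\mathscr{O}$ and $\mathscr{Q}$ the displayed constants appear verbatim in that proof, so here there is nothing to do but read them off: for (i) the bounds $\Vert f\Vert_{\mathscr{M},K}\le\Vert g\Vert_{A(G)}\,\Vert f\Vert_{\mathscr{L},g}$ and $\Vert f\Vert_{\SO,g}\le\Vert h\Vert_{\SO,g}\,\Vert f\Vert_{\mathscr{M},K}$ give $C=\Vert g\Vert_{A(G)}$ and $c=\Vert h\Vert_{\SO,g}^{-1}$; for (iii) one gets $C=\Vert h\Vert_1$ and $c=\Vert g\Vert_{\SO,g}^{-1}$; for (v) the restriction-operator estimate yields $C=\Vert h\Vert_{\SO,g}$ and $c=\Vert R_{\{0\}\times G}\Vert_{\textnormal{op}}^{-1}\Vert g\Vert_{\SO,g}^{-1}$.

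For the remaining cases (ii), (iv) and (vi), which Proposition~\ref{pr:SO-expansion} dispatches ``in a similar fashion'', I would carry out the analogous two-step argument and track the constant. Case (ii) is the Fourier-dual of (i): using the invariance $\Vert f\Vert_{\SO,g}=\Vert\hat f\Vert_{\SO(\ghat),\hat g}$ from \eqref{eq:2711a}, the role of $\Vert g\Vert_{A(G)}$ is played by $\Vert g\Vert_1$ (translation being an $L^1$-isometry), and the lower bound uses $\Vert g_n\Vert_{\SO,g}=\Vert g_n*h\Vert_{\SO,g}\le\Vert h\Vert_{\SO,g}\Vert g_n\Vert_1$ via Proposition~\ref{pr:s0-l1-ag-ideal}(i), giving $C=\Vert g\Vert_1$ and $c=\Vert h\Vert_{\SO,g}^{-1}$. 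Case (iv) mirrors (iii) with pointwise multiplication in place of convolution, so Proposition~\ref{pr:s0-l1-ag-ideal}(ii) replaces part~(i) of that proposition and produces $C=\Vert h\Vert_{A(G)}$ and $c=\Vert g\Vert_{\SO,g}^{-1}$.

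Case (vi) is the only one whose lower constant is not of the form $(\text{norm})^{-1}$, and it is the one place where a little care is genuinely needed. Applying the convolution-algebra inequality of Corollary~\ref{co:S0-ideal-of-L1-AG} to an arbitrary representation $f=\sum_n f_n*g_n$ gives $\Vert f\Vert_{\SO,g}\le\Vert g\Vert_\infty^{-1}\sum_n\Vert f_n\Vert_{\SO,g}\,\Vert g_n\Vert_{\SO,g}$, whence $c=\Vert g\Vert_\infty$, while expanding $h_1*h_2$ through Theorem~\ref{th:SO-expansion} yields $C=\Vert h_1\Vert_{\SO,g}\,\Vert h_2\Vert_{\SO,g}$.

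The main obstacle is not conceptual but consists in keeping the constants honest across the Fourier-duality arguments of (ii) and (iv) and, above all, in case (v): one must confirm that the restriction operator $R_{\{0\}\times G}$ is measured between the correct pair of weighted $\SO$-norms, namely from $\Vert\cdot\Vert_{\SO(\ghat\times G),\mathcal{V}_{\hat g}\hat g}$ to $\Vert\cdot\Vert_{\SO(G),g}$, since the short-time Fourier transform $\mathcal{V}_{\hat g}\hat g$ supplies the window on the product group via Theorem~\ref{th:stft-and-s0}(ii). Once these norm identifications are pinned down, every inequality follows by collecting the estimates already present in the proof of Proposition~\ref{pr:SO-expansion}.
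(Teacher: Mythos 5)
Your proposal is correct and follows essentially the same route as the paper: the corollary is precisely the collection of constants already tracked in the proof of Proposition~\ref{pr:SO-expansion}, and for the cases that proof dispatches as ``similar'' (namely $\mathscr{N}$, $\mathscr{P}$ and $\mathscr{R}$) you supply exactly the intended arguments, using Proposition~\ref{pr:s0-l1-ag-ideal} for the lower bounds in (ii) and (iv) and Corollary~\ref{co:S0-ideal-of-L1-AG} together with Theorem~\ref{th:SO-expansion} for (vi). Your reading of the constant in (vi) as $c=\Vert g \Vert_{\infty}$ and your identification of the operator norm of $R_{\{0\}\times G}$ between the norms $\Vert \cdot \Vert_{\SO(G\times\ghat),\mathcal{V}_{\hat{g}}\hat{g}}$ and $\Vert \cdot \Vert_{\SO(G),g}$ in (v) are both consistent with the paper.
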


Let us turn to the characterization of the Feichtinger algebra given by the set $\mathscr{T}$. Recall that a family of functions $(\psi_{i})_{i\in I}\subseteq A(G)$ is a \emph{bounded uniform partition of unity of $G$} if there exists a compact set $W\subseteq G$ and a discrete subset $(x_{i})\subseteq G$ such that
\begin{enumerate}[]
\item [(a.i)] $\sum_{i\in I} \psi_{i} (x) = 1 $ for all $x\in G$,
\item [(a.ii)]$\sup_{i\in I} \Vert \psi_{i} \Vert_{A(G)} < \infty$,
\item [(a.iii)]$\supp \psi_{i} \subseteq x_{i} + W$ for all $i\in I$,
\item [(a.iv)]$\sup_{x\in G} \# \{ i\in I \, : \, (x +K) \cap (x_{i}+ W) \ne \emptyset\} < \infty$ for any compact set $K\subseteq G$.
\end{enumerate} 
BUPUs are easily constructed for $G=\R$ by use of triangular functions and in a similar way for in $\R^n$. For general locally compact abelian groups constructions of such BUPUs are also possible, see \cite{st79,fe81}. BUPUs are an essential part in the theory of the Wiener amalgam spaces \cite{fe81-1,MR751019}.  

If $(\psi_{i})_{i\in I}\subset A(G)$ is a bounded uniform partition of unity of $G$, then, as in Definition \ref{def:S0-bigdef}, we define
\begin{enumerate}[]
\item $\mathscr{T} = \{ f \in A(G) \, : \, \sum_{i\in I} \Vert f \psi_{i} \Vert_{A(G)} < \infty\}$.
\end{enumerate}

The space $\mathscr{T}$ is $W(A(G),L^{1})$, the Wiener amalgam space with local component in the Fourier algebra and global component in $L^{1}$.

\begin{proposition} \label{pr:bupu-time} Let $(\psi_{i})_{i\in I}\subseteq A(G)$ be a bounded uniform partition of unity of $G$ as described in (a.i)-(a.iv) above. For any locally compact abelian group $G$ it holds that $\SO(G) = \mathscr{T}$. Moreover, the norm given by
\[ \Vert f \Vert_{\mathscr{T},\psi_{i}} = \sum_{i\in I} \Vert f \psi_{i} \Vert_{A(G)}\]
 is an equivalent norm on $\SO(G)$. Specifically,
\[ \Vert f \Vert_{\mathscr{M},W} \le \Vert f \Vert_{\mathscr{T},\psi_{i}} \le c_{1} \, c_{2} \Vert f \Vert_{\mathscr{M},K}   \]
where
\[ c_{1} = \sup_{x\in G}\# \{ i \in I \, : \, (x+K) \cap (x_{i}+W)\ne \emptyset\} \ \ \text{and} \ \ c_{2} = \sup_{i\in I} \Vert \psi_{i} \Vert_{A(G)}. \]
\end{proposition}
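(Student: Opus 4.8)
The plan is to prove the two-sided estimate directly; the set equality $\SO(G)=\mathscr{T}$ and the claimed norm equivalence then both follow from Proposition~\ref{pr:SO-expansion}, which already gives $\SO(G)=\mathscr{M}$ together with the fact that $\Vert\cdot\Vert_{\mathscr{M},K}$ and $\Vert\cdot\Vert_{\mathscr{M},W}$ are equivalent to $\Vert\cdot\Vert_{\SO,g}$ (here $W$ has non-void interior, since the translates $x_i+W$ must cover $G$, so $\Vert\cdot\Vert_{\mathscr{M},W}$ is a legitimate norm of the type treated there). Thus it suffices to bridge between $\mathscr{T}$ and $\mathscr{M}$.

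For the lower bound $\Vert f\Vert_{\mathscr{M},W}\le\Vert f\Vert_{\mathscr{T},\psi_i}$, I would take $f\in\mathscr{T}$. Since $\sum_{i\in I}\Vert f\psi_i\Vert_{A(G)}<\infty$ and $A(G)$ is a Banach space, the series $\sum_{i\in I}f\psi_i$ converges absolutely in $A(G)$, and by the partition property (a.i) its sum is $f$. By (a.iii) each $f\psi_i$ is supported in $x_i+W$, so $g_i:=T_{-x_i}(f\psi_i)$ has $\supp g_i\subseteq W$ and $f=\sum_{i\in I}T_{x_i}g_i$. As translation is isometric on $A(G)$, we have $\sum_{i\in I}\Vert g_i\Vert_{A(G)}=\sum_{i\in I}\Vert f\psi_i\Vert_{A(G)}$, exhibiting $f$ as an element of $\mathscr{M}$ (with compact set $W$) and giving $\Vert f\Vert_{\mathscr{M},W}\le\Vert f\Vert_{\mathscr{T},\psi_i}$. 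In particular $\mathscr{T}\subseteq\mathscr{M}=\SO(G)$.

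For the upper bound, I would take $f\in\SO(G)=\mathscr{M}$ with an arbitrary admissible representation $f=\sum_{n\in\N}T_{x_n}g_n$, where $\supp g_n\subseteq K$ and $\sum_n\Vert g_n\Vert_{A(G)}<\infty$. Using that $A(G)$ is a Banach algebra under pointwise multiplication and that translation is isometric, I estimate $\Vert f\psi_i\Vert_{A(G)}\le\sum_{n}\Vert(T_{x_n}g_n)\psi_i\Vert_{A(G)}$, where the $n$-th term vanishes unless $(x_n+K)\cap(x_i+W)\ne\emptyset$ and is otherwise bounded by $\Vert g_n\Vert_{A(G)}\Vert\psi_i\Vert_{A(G)}\le c_2\Vert g_n\Vert_{A(G)}$. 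Summing over $i$, interchanging the two nonnegative sums by Tonelli, and invoking (a.iv) in the form $\#\{i\in I:(x_n+K)\cap(x_i+W)\ne\emptyset\}\le c_1$ (the condition evaluated at $x=x_n$), I obtain $\sum_{i\in I}\Vert f\psi_i\Vert_{A(G)}\le c_1c_2\sum_n\Vert g_n\Vert_{A(G)}$. Taking the infimum over all admissible representations yields $\Vert f\Vert_{\mathscr{T},\psi_i}\le c_1c_2\Vert f\Vert_{\mathscr{M},K}$; since $f\in A(G)$ by Corollary~\ref{co:f-in-s0-properties}(iii), this also shows $\SO(G)\subseteq\mathscr{T}$.

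All the steps are elementary, and the convergence justifications are routine because every series involved is absolutely convergent in the Banach space $A(G)$. The only point requiring genuine care is the bookkeeping in the upper bound: for each fixed building block $T_{x_n}g_n$ one must control the number of partition functions $\psi_i$ whose support meets $x_n+K$, and the local finiteness condition (a.iv) is precisely what caps this count by $c_1$. I expect this interchange-and-count step to be the main (though modest) obstacle.
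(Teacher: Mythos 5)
Your proof is correct and takes essentially the same approach as the paper: the lower bound via the decomposition $f=\sum_{i\in I}T_{x_i}\bigl(T_{-x_i}f\cdot T_{-x_i}\psi_i\bigr)$ with $\supp\bigl(T_{-x_i}(f\psi_i)\bigr)\subseteq W$, and the upper bound via the interchange-and-count argument using (a.iv) at $x=x_n$, are exactly the paper's steps. The only addition is your explicit (and correct) remark that $W$ must have non-void interior so that $\Vert\cdot\Vert_{\mathscr{M},W}$ falls under Proposition~\ref{pr:SO-expansion}, a point the paper leaves implicit.
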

\begin{proof} Let $f\in \mathscr{T}$. Then 
$f= \sum_{i\in I} f \psi_{i} = \sum_{i\in I} T_{x_{i}} (T_{-x_{i}} f \cdot T_{-x_{i}}\psi_{i})$.
Note that $\supp T_{-x_{i}} \psi_{i} = \supp \psi_{i} - x_{i} \subseteq W$. Hence the support of the functions $g_{i} = T_{-x_{i}}f \cdot T_{-x_{i}} \psi_{i}$ is a subset of $W$ for all $i\in I$. Moreover, we have that 
\begin{equation} \label{eq:bupu1} \sum_{i\in I} \Vert g_{i} \Vert_{A(G)} = \sum_{i\in I} \Vert f \psi_{i} \Vert_{A(G)} < \infty.\end{equation} 
By the characterization of $\SO(G)$ via the set $\mathscr{M}$, we conclude that $f \in \SO(G)$. Hence, $\mathscr{T} \subseteq \SO(G)$. Moreover, the calculation in \eqref{eq:bupu1} implies that $\Vert f \Vert_{\mathscr{M},W} \le \Vert f \Vert_{\mathscr{T},\psi_{i}}$. 

Conversely assume now that $f\in \SO(G)$. By the characterization of $\SO(G)$ by the set $\mathscr{M}$, we know that $f= \sum_{n\in \N} T_{x_{n}} g_{n}$ with $(x_{n})_{n\in\N}\subseteq G$, $(g_{n})_{n\in\N}\subseteq A(G)$, $\supp \, g_{n}$ in a compact set $K$ and $\sum_{n\in\N} \Vert g_{n} \Vert_{A(G)} < \infty$.
We can now make the following estimates:
\begin{align*}
 & \sum_{i\in I} \Vert f \psi_{i} \Vert_{A(G)} \le \sum_{n\in\N} \sum_{i\in I} \Vert (T_{x_{n}}g_{n}) \psi_{i} \Vert_{A(G)} \\
 & = \sum_{n\in\N} \sum_{\substack{i\in I\\ (x_{n}+K) \cap (x_{i}+W)\ne \emptyset}} \Vert (T_{x_{n}}g_{n}) \psi_{i} \Vert_{A(G)} \le \sum_{n\in\N} \sum_{\substack{i\in I\\ (x_{n}+K) \cap (x_{i}+W)\ne \emptyset}} \Vert g_{n} \Vert_{A(G)} \, \Vert \psi_{i} \Vert_{A(G)}.
\end{align*}
With the help of the constant $c_{1}$ and $c_{2}$ we find that
\[ \sum_{i\in I} \Vert f \psi_{i} \Vert_{A(G)} \le c_{1} c_{2} \sum_{n\in \N} \Vert g_{n} \Vert_{A(G)} < \infty.\]
It follows that
\[ \Vert f \Vert_{\mathscr{T},\psi_{i}} \le c_{1} \, c_{2} \Vert f \Vert_{\mathscr{M},K} \ \ \text{for all} \ \ f\in \SO(G).\]
Since $f\in \SO(G)$ was arbitrary, we have shown that the norm $\Vert f \Vert_{\mathscr{T},\psi_{i}}$ is equivalent to $\Vert \cdot \Vert_{\mathscr{M},K}$. 
\end{proof}

Similarly, let $(\varphi_{i})_{i\in I}\subseteq L^{1}(G)$ be a \emph{bounded uniform partition of unity of $\ghat$} as described in 
Definition~\ref{def:S0-bigdef}, and consider the set
\begin{enumerate}[]
\item $\mathscr{U} = \{ f \in L^{1}(G) \, : \, \sum_{i\in I} \Vert f *\varphi_{i} \Vert_{1} < \infty\}$.
\end{enumerate}

\begin{proposition} For any locally compact abelian group $G$ it holds that $\SO(G) = \mathscr{U}$. Moreover, the norm given by
\[ \Vert f \Vert_{\mathscr{U},\varphi_{i}} = \sum_{i\in I} \Vert f * \varphi_{i} \Vert_{1}\]
 is an equivalent norm on $\SO(G)$. Specifically,
\[ \Vert f \Vert_{\mathscr{N},V} \le \Vert f \Vert_{\mathscr{U},\varphi_{i}} \le c_{1} \, c_{2} \Vert f \Vert_{\mathscr{N},\tilde{K}} \]
with
\[ c_{1} = \sup_{\omega\in \ghat}\# \{ i \in I \, : \, (\omega+\tilde{K}) \cap (\omega_{i}+V)\ne \emptyset\} \ \ \text{and} \ \ c_{2} = \sup_{i\in I} \Vert \varphi_{i} \Vert_{1}. \]
\end{proposition}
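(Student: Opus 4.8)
The plan is to deduce this statement from its time-side counterpart, Proposition~\ref{pr:bupu-time}, by passing to the Fourier transform; the key observation is that, under $\mathcal{F}$, the set $\mathscr{U}$ on $G$ is exactly the set $\mathscr{T}$ on the dual group $\ghat$. First I would verify that the family $(\hat\varphi_i)_{i\in I}\subseteq A(\ghat)$ is a bounded uniform partition of unity of $\ghat$ in the sense of (a.i)--(a.iv), with the compact set $V$ playing the role of $W$ and the discrete set $(\omega_i)$ playing the role of $(x_i)$. Indeed, (b.i) is (a.i), (b.iii) is (a.iii), and (b.iv) is (a.iv); for (a.ii) one uses $\Vert \hat\varphi_i\Vert_{A(\ghat)} = \Vert \varphi_i\Vert_1$ from the definition of the Fourier algebra, so that (b.ii) yields $\sup_i\Vert\hat\varphi_i\Vert_{A(\ghat)}<\infty$.

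Next, using $\mathcal{F}(f*\varphi_i) = \hat f\cdot\hat\varphi_i$ together with $\Vert \mathcal{F}h\Vert_{A(\ghat)} = \Vert h\Vert_{1}$, I would record the pointwise-in-$i$ identity
\[ \Vert f*\varphi_i\Vert_1 = \Vert \hat f\cdot\hat\varphi_i\Vert_{A(\ghat)}, \]
whose summation over $i\in I$ gives $\Vert f\Vert_{\mathscr{U},\varphi_i} = \Vert \hat f\Vert_{\mathscr{T}(\ghat),\hat\varphi_i}$. Hence $f\in\mathscr{U}$ if and only if $\hat f\in\mathscr{T}(\ghat)$. Applying Proposition~\ref{pr:bupu-time} to the group $\ghat$ gives $\mathscr{T}(\ghat) = \SO(\ghat)$, and the Fourier invariance of $\SO$ from Example~\ref{ex:unitaries-on-S0}(iii) gives $\hat f\in\SO(\ghat)$ if and only if $f\in\SO(G)$. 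Chaining these equivalences yields $\mathscr{U} = \SO(G)$.

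For the explicit constants I would transport the inequality of Proposition~\ref{pr:bupu-time}, which reads $\Vert\hat f\Vert_{\mathscr{M}(\ghat),V}\le\Vert\hat f\Vert_{\mathscr{T}(\ghat),\hat\varphi_i}\le c_1 c_2\,\Vert\hat f\Vert_{\mathscr{M}(\ghat),\tilde K}$ with $c_1 = \sup_\omega\#\{i:(\omega+\tilde K)\cap(\omega_i+V)\neq\emptyset\}$ and $c_2 = \sup_i\Vert\hat\varphi_i\Vert_{A(\ghat)} = \sup_i\Vert\varphi_i\Vert_1$. It then remains to match the $\mathscr{M}(\ghat)$-norms with the $\mathscr{N}(G)$-norms: since $\mathcal{F}E_{\omega_n} = T_{\omega_n}\mathcal{F}$, a representation $f = \sum_n E_{\omega_n}g_n$ with $\supp\hat g_n\subseteq\tilde K$ corresponds term by term to $\hat f = \sum_n T_{\omega_n}\hat g_n$ with $\hat g_n\in A(\ghat)$, $\supp\hat g_n\subseteq\tilde K$ and $\Vert g_n\Vert_1 = \Vert\hat g_n\Vert_{A(\ghat)}$. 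Taking infima over representations gives $\Vert f\Vert_{\mathscr{N}(G),\tilde K} = \Vert\hat f\Vert_{\mathscr{M}(\ghat),\tilde K}$, and similarly with $V$ in place of $\tilde K$. Substituting these equalities produces the asserted chain $\Vert f\Vert_{\mathscr{N},V}\le\Vert f\Vert_{\mathscr{U},\varphi_i}\le c_1 c_2\,\Vert f\Vert_{\mathscr{N},\tilde K}$.

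The only real care required is bookkeeping: confirming that each dual condition (b.i)--(b.iv) matches the corresponding (a.$\ast$) condition on $\ghat$, and that $\mathcal{F}$ converts the defining data of $\mathscr{U}$ and $\mathscr{N}$ on $G$ exactly into that of $\mathscr{T}$ and $\mathscr{M}$ on $\ghat$. I expect no genuine analytic obstacle beyond Proposition~\ref{pr:bupu-time} itself, which carries the substance of the argument.
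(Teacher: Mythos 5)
Your argument is correct, and it is worth noting that it is not quite what the paper intends: the paper omits the proof with the remark that one should redo the argument of Proposition~\ref{pr:bupu-time} \emph{mutatis mutandis} on the frequency side (decompose $f=\sum_i f*\varphi_i$, observe that $\mathcal{F}(f*\varphi_i)=\hat f\hat\varphi_i$ is supported in $\omega_i+V$ so that each piece is an admissible term for the set $\mathscr{N}$, and run the overlap estimate in the other direction), whereas you instead conjugate the entire statement by the Fourier transform and invoke Proposition~\ref{pr:bupu-time} verbatim on the group $\ghat$. The two routes are intertwined by the same duality, but yours has the advantage of proving nothing twice: once you have checked that $A(\ghat)=\mathcal{F}(L^{1}(G))$ with $\Vert\hat\varphi_i\Vert_{A(\ghat)}=\Vert\varphi_i\Vert_1$, that conditions (b.i)--(b.iv) are exactly (a.i)--(a.iv) for $(\hat\varphi_i)$ on $\ghat$ (modulo the paper's typo $\hat\psi_i$ for $\hat\varphi_i$ in (b.iii)), that $\Vert f\Vert_{\mathscr{U},\varphi_i}=\Vert\hat f\Vert_{\mathscr{T}(\ghat),\hat\varphi_i}$, and that $\Vert f\Vert_{\mathscr{N}(G),\tilde K}=\Vert\hat f\Vert_{\mathscr{M}(\ghat),\tilde K}$ via $\mathcal{F}E_{\omega}=T_{\omega}\mathcal{F}$, the identity $\mathscr{U}=\SO(G)$ and the stated constants $c_1$, $c_2$ drop out of the already-proven time-side result together with the Fourier invariance of $\SO$ from Example~\ref{ex:unitaries-on-S0}(iii). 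All of these bookkeeping identifications are verified correctly in your write-up, so the proof is complete; the only cost relative to the paper's intended direct argument is that the reader must trust the exact matching of the dual data, which you have made explicit.
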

\begin{proof} The proof is very similar to the one for 
Proposition~\ref{pr:bupu-time} and is therefore omitted.
\end{proof}

\section{The Kernel Theorem for the Feichtinger algebra}
\label{sec:kernel}
In this section we show that the Banach space $\SO$ allows for the formulation analogue to the classical kernel theorem due to Schwartz for the space of test functions $C_{c}^{\infty}(\R^n)$ \cite{MR0045307}. 
In fact, the already established tensor factorization property of $\SO$, 
\[ \SO(G_{1})\widehat{\otimes}\SO(G_{2}) = \SO(G_{1}\times G_{2})\]
for all locally compact abelian groups $G_{1}$ and $G_{2}$ (Theorem \ref{th:tensor-property-s0}) together with results on tensor products of Banach spaces \cite{MR1888309} imply the kernel theorem, which is what we describe below.

The kernel theorem is a central result in functional analysis and in the theory of generalized functions with applications in the theory of pseudo-differential operators.

We begin by establishing a one-to-one correspondence between all bilinear and bounded operators from $\SO(G_{1})\times \SO(G_{2})$ into a normed vector space $V$, and all linear and bounded operators from $\SO(G_{1}\times G_{2})$ into $V$.

\begin{lemma} \label{le:SO-tensor-operator} Let $G_{1}$ and $G_{2}$ be two locally compact abelian groups and $V$ a normed vector space. For every bilinear and bounded operator $A: \SO(G_{1})\times \SO(G_{2})\to V$ there exists a unique linear and bounded operator $T: \SO(G_{1}\times G_{2})\to V$ satisfying 
\[ A(f_{1},f_{2})=T(f_{1}\otimes f_{2}) \ \ \text{for all} \ \ f_{i}\in \SO(G_{i}), \ i=1,2.\] The correspondence $A \longleftrightarrow T$ is an isomorphism between the normed vector spaces $\textnormal{Bil}(\SO(G_{1})\times\SO(G_{2}),V)$ and $\textnormal{Lin}(\SO(G_{1}\times G_{2}),V)$.
\end{lemma}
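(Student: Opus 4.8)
The plan is to recognise the assertion as the universal property of the projective tensor product, transported to $\SO(G_1\times G_2)$ through the factorisation $\SO(G_1\times G_2)=\SO(G_1)\hat{\otimes}\,\SO(G_2)$ established in Theorem~\ref{th:tensor-property-s0}. Concretely, to each bilinear bounded $A$ I would associate the linear map determined on elementary tensors by $T(f_1\otimes f_2)=A(f_1,f_2)$, and conversely to each linear bounded $T$ the bilinear map $A(f_1,f_2)=T(f_1\otimes f_2)$; the bulk of the work is to check that the first assignment produces a well-defined bounded operator on all of $\SO(G_1\times G_2)$.

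For the forward direction I would first invoke the purely algebraic universal property of the tensor product to obtain a well-defined linear map $T_0$ on the subspace of finite sums of elementary tensors, satisfying $T_0(f_1\otimes f_2)=A(f_1,f_2)$. The bilinear bound $\Vert A(f_1,f_2)\Vert_V\le \Vert A\Vert\,\Vert f_1\Vert_{\SO(G_1),g_1}\,\Vert f_2\Vert_{\SO(G_2),g_2}$, together with the definition of the projective norm, gives $\Vert T_0 u\Vert_V\le \Vert A\Vert\,\Vert u\Vert_{\SO(G_1)\hat{\otimes}\,\SO(G_2)}$ on that subspace. By Theorem~\ref{th:tensor-property-s0} this subspace is dense in $\SO(G_1\times G_2)$ and its projective norm is equivalent to $\Vert\cdot\Vert_{\SO(G_1\times G_2)}$ via \eqref{eq:2402a}, so (with $V$ complete, as in the kernel-theorem applications) $T_0$ extends by continuity to a bounded operator $T$ with $\Vert T\Vert\le c^{-1}\Vert A\Vert$, where $c$ is the constant of \eqref{eq:2402a}. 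Equivalently, writing any $F\in\SO(G_1\times G_2)$ as an absolutely convergent series $F=\sum_n f_{1,n}\otimes f_{2,n}$, one sets $T(F)=\sum_n A(f_{1,n},f_{2,n})$, the value being independent of the representation.

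For the reverse direction, given linear bounded $T$ the map $A(f_1,f_2)=T(f_1\otimes f_2)$ is clearly bilinear, and the norm identity $\Vert f_1\otimes f_2\Vert_{\SO(G_1\times G_2),g_1\otimes g_2}=\Vert f_1\Vert_{\SO(G_1),g_1}\,\Vert f_2\Vert_{\SO(G_2),g_2}$ from Theorem~\ref{th:stft-and-s0}(i) yields $\Vert A\Vert\le\Vert T\Vert$. The two assignments are linear and mutually inverse: $A\mapsto T\mapsto A$ is an exact identity on elementary tensors, while $T\mapsto A\mapsto T$ agrees with $T$ on the elementary tensors, which are total in $\SO(G_1\times G_2)$, hence coincides with $T$ by continuity. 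Combined with $\Vert A\Vert\le\Vert T\Vert\le c^{-1}\Vert A\Vert$, this shows that $A\leftrightarrow T$ is a topological isomorphism of $\textnormal{Bil}(\SO(G_1)\times\SO(G_2),V)$ onto $\textnormal{Lin}(\SO(G_1\times G_2),V)$; it is not isometric in general, since Theorem~\ref{th:tensor-property-s0} only supplies equivalent norms.

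The main obstacle is the well-definedness of $T$: that $\sum_n A(f_{1,n},f_{2,n})$ does not depend on the chosen expansion of $F$, equivalently that $T_0$ is bounded for the projective norm so that its continuous extension exists and is unique. This is precisely the standard universal property of the projective tensor product of Banach spaces, which I would cite from \cite{MR1888309} rather than reprove; once it is in place, the remaining steps amount to routine bookkeeping with the norm equivalence \eqref{eq:2402a}.
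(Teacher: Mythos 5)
Your proposal is correct and follows essentially the same route as the paper: both rest on the tensor factorization $\SO(G_{1}\times G_{2})=\SO(G_{1})\hat{\otimes}\,\SO(G_{2})$ of Theorem~\ref{th:tensor-property-s0} together with the universal property of the projective tensor product, the paper constructing the two mutually inverse maps $e$ and $e^{-1}$ directly and settling the well-definedness of $e^{-1}(A)(F)=\sum_{j}A(f_{1,j},f_{2,j})$ by uniqueness of inverses, while you extend from finite sums of elementary tensors by density and the projective-norm bound. The one caveat --- that convergence of the series $\sum_{j}A(f_{1,j},f_{2,j})$, equivalently extension by continuity, really needs $V$ complete although the lemma only assumes $V$ normed --- is one you flag explicitly and one the paper's own proof shares, and it is harmless in the intended application where $V=\C$.
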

\begin{proof}

Consider the operators
\begin{align*} & e:\textnormal{Lin}(\SO(G_{1}\times G_{2}),V) \to \textnormal{Bil}(\SO(G_{1})\times\SO(G_{2}),V), \ e(T)= (f_{1}, f_{2}) \mapsto T(f_{1}\otimes f_{2}), \\
& e^{-1}: \textnormal{Bil}(\SO(G_{1})\times\SO(G_{2}),V)\to \textnormal{Lin}(\SO(G_{1}\times G_{2}),V), \ e^{-1}(A) = \Big( F \mapsto \sum_{j\in\N} A(f_{1,j},f_{2,j}) \Big),
\end{align*}
where $F = \sum_{j\in\N} f_{1,j}\otimes f_{2,j}$ for some $\{f_{i,j}\}_{j\in\N}\subset \SO(G_{i})$, $i=1,2$.
It is a straight forward to show that $e$ and $e^{-1}$ are linear and bounded operators.
Note that the value of $e^{-1}(A)(F)$, $A\in\textnormal{Bil}(\SO(G_{1})\times\SO(G_{2}),V)$, $F\in  \SO(G_{1}\times G_{2})$ may depend on the used representation of $F$. However, one can easily verify that $e$ and $e^{-1}$ are inverses of one another. Since the inverse is unique the operator $e^{-1}(A)$ is a unique and consequently $e^{-1}(A)(F)$ does not depend on the particular representation of the function $F$ by its tensor-factorization.\end{proof}

In Lemma~\ref{le:SO-tensor-operator}, if the Banach space $\SO(G_{1}\times G_{2})$ is equipped with the projective tensor norm $\Vert \cdot \Vert_{\SO(G_{1})\hat{\otimes}\,\SO(G_{2})}$ instead of the usual norm $\Vert \cdot \Vert_{\SO(G_{1}\times G_{2})}$, then the mapping $e$ in the proof of Lemma~\ref{le:SO-tensor-operator} establishes an \emph{isometric} isomorphism between the normed vector spaces $\textnormal{Bil}(\SO(G_{1})\times\SO(G_{2}),V)$ and $\textnormal{Lin}(\SO(G_{1} \times G_{2}),V)$.

Lemma~\ref{le:SO-tensor-operator} can be generalized to show that for two Banach spaces $X$ and $Y$ and a normed vector space $V$, the
normed vector spaces $\textnormal{Bil}(X\times Y,V)$ and $\textnormal{Lin}(X\hat{\otimes} Y,V)$ are isometrically isomorphic. In fact, tensor products are defined by this property; multi-linear operators on products of spaces are in one-to-one correspondence with linear operators on the projective tensor product of the spaces. For more on this see, \eg \cite{MR1888309}.

Let us state an immediate consequence of Lemma~\ref{le:SO-tensor-operator}.
\begin{corollary} \label{cor:tensor-of-dist} Let $\sigma_{1} \in \SOprime(G_{1})$ and $\sigma_{2}\in \SOprime(G_{2})$ be given. There exists a unique element $\sigma_{1}\otimes \sigma_{2}\in \SOprime(G_{1}\times G_{2})$, which we call \emph{the tensor product} of $\sigma_{1}$ and $\sigma_{2}$, such that
\begin{equation} \label{eq:2506g} ( f_{1}\otimes f_{2}, \sigma_{1}\otimes \sigma_{2})_{\SO,\SOprime(G_{1}\times G_{2})} = (f_{1}, \sigma_{1})_{\SO,\SOprime(G_{1})} \, ( f_{2}, \sigma_{2})_{\SO,\SOprime(G_{2})} \end{equation}
for all $f_{1}\in \SO(G_{1})$ and $f_{2}\in \SO(G_{2})$. Moreover, for any $g_{i}\in\SO(G_{i})\backslash\{0\}$, $i=1,2$,
\[ \Vert \sigma_{1}\otimes \sigma_{2}\Vert_{M^{\infty},g_{1}\otimes g_{2}} = \Vert \sigma_{1}\Vert_{M^{\infty}(G_{1}),g_{1}} \, \Vert \sigma_{2} \Vert_{M^{\infty}(G_{2}),g_{2}}.\]
\end{corollary}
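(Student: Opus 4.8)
The plan is to apply Lemma~\ref{le:SO-tensor-operator} with the target space $V=\C$. First I would introduce the map
\[ A:\SO(G_{1})\times\SO(G_{2})\to\C, \quad A(f_{1},f_{2})=(f_{1},\sigma_{1})_{\SO,\SOprime(G_{1})}\,(f_{2},\sigma_{2})_{\SO,\SOprime(G_{2})}. \]
This is manifestly bilinear, and it is bounded since, for any $g_{i}\in\SO(G_{i})\backslash\{0\}$, one has $\vert A(f_{1},f_{2})\vert\le \Vert\sigma_{1}\Vert_{\SOprime,g_{1}}\,\Vert\sigma_{2}\Vert_{\SOprime,g_{2}}\,\Vert f_{1}\Vert_{\SO,g_{1}}\,\Vert f_{2}\Vert_{\SO,g_{2}}$. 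Lemma~\ref{le:SO-tensor-operator}, applied with $V=\C$ and using that $\textnormal{Lin}(\SO(G_{1}\times G_{2}),\C)$ equipped with the operator norm is exactly $\SOprime(G_{1}\times G_{2})$, then provides a unique linear and bounded functional $T\in\SOprime(G_{1}\times G_{2})$ with $T(f_{1}\otimes f_{2})=A(f_{1},f_{2})$. Defining $\sigma_{1}\otimes\sigma_{2}:=T$ yields precisely the relation \eqref{eq:2506g}, and uniqueness is inherited from the uniqueness clause of the lemma (equivalently, from the density of $\textnormal{span}\{f_{1}\otimes f_{2}\}$ in $\SO(G_{1}\times G_{2})$ guaranteed by Theorem~\ref{th:tensor-property-s0}).

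For the norm identity I would invoke the description of the $M^{\infty}$-norm from Proposition~\ref{pr:SOprime-equiv-norms}, namely $\Vert\sigma\Vert_{M^{\infty},g}=\sup_{\chi}\vert(\pi(\chi)g,\sigma)_{\SO,\SOprime}\vert$. The key point is that time-frequency shifts on $G_{1}\times G_{2}$ factor through the tensor product: writing $\chi=(\chi_{1},\chi_{2})$ with $\chi_{i}\in G_{i}\times\ghat_{i}$, and using the identification $\widehat{G_{1}\times G_{2}}\cong\ghat_{1}\times\ghat_{2}$ together with $(\omega_{1},\omega_{2})(s_{1},s_{2})=\omega_{1}(s_{1})\,\omega_{2}(s_{2})$, one checks directly that $\pi(\chi)(g_{1}\otimes g_{2})=(\pi(\chi_{1})g_{1})\otimes(\pi(\chi_{2})g_{2})$. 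Combining this with the defining relation \eqref{eq:2506g} gives
\[ (\pi(\chi)(g_{1}\otimes g_{2}),\sigma_{1}\otimes\sigma_{2})_{\SO,\SOprime(G_{1}\times G_{2})}=(\pi(\chi_{1})g_{1},\sigma_{1})_{\SO,\SOprime(G_{1})}\,(\pi(\chi_{2})g_{2},\sigma_{2})_{\SO,\SOprime(G_{2})}. \]
Taking absolute values and suprema, and using that the supremum of a product of two independent nonnegative quantities factors as a product of suprema, yields $\Vert\sigma_{1}\otimes\sigma_{2}\Vert_{M^{\infty},g_{1}\otimes g_{2}}=\Vert\sigma_{1}\Vert_{M^{\infty}(G_{1}),g_{1}}\,\Vert\sigma_{2}\Vert_{M^{\infty}(G_{2}),g_{2}}$.

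I expect no genuine obstacle here, since existence and the pairing formula are immediate transcriptions of Lemma~\ref{le:SO-tensor-operator}. The only two points deserving a line of justification are (a) the identification of $\textnormal{Lin}(\SO(G_{1}\times G_{2}),\C)$ with $\SOprime(G_{1}\times G_{2})$ as normed spaces, so that the functional produced by the lemma really is a distribution, and (b) the factorization of $\pi(\chi)$ over the tensor product, which rests on the standard identification of the dual of a product group. Both are routine, so the corollary is indeed an immediate consequence of the preceding lemma and Proposition~\ref{pr:SOprime-equiv-norms}.
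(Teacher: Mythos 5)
Your proposal is correct and follows essentially the same route as the paper: both define the bilinear functional $A(f_{1},f_{2})=(f_{1},\sigma_{1})(f_{2},\sigma_{2})$, invoke Lemma~\ref{le:SO-tensor-operator} with $V=\C$ to obtain the unique element of $\SOprime(G_{1}\times G_{2})$, and then verify the $M^{\infty}$-norm identity by factoring $\pi(\chi)(g_{1}\otimes g_{2})=\pi(\chi_{1})g_{1}\otimes\pi(\chi_{2})g_{2}$ and splitting the supremum over the independent variables $\chi_{1},\chi_{2}$. Your added remarks on boundedness of $A$ and on the identification of $\textnormal{Lin}(\SO(G_{1}\times G_{2}),\C)$ with $\SOprime(G_{1}\times G_{2})$ only make explicit what the paper leaves as routine.
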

\begin{proof} Define 
\[ A: \SO(G_{1})\times \SO(G_{2}) \to \C, \ A(f_{1},f_{2})= (f_{1}, \sigma_{1})_{\SO,\SOprime(G_{1})} \, ( f_{2}, \sigma_{2})_{\SO,\SOprime(G_{2})}.\]
It is straightforward to show that $A\in \text{Bil}(\SO(G_{1})\times \SO(G_{2}), \C)$. By Lemma~\ref{le:SO-tensor-operator} there exists a unique element in $\SOprime(G_{1}\times G_{2})$, which we denote by $\sigma_{1}\otimes \sigma_{2}$, such that \eqref{eq:2506g} holds. Concerning the moreover part, we observe that,
\begin{align*}
\Vert \sigma_{1}\otimes \sigma_{2}\Vert_{M^{\infty},g_{1}\otimes g_{2}} & = \sup_{\substack{\chi_{1}\in G_{1}\times\ghat_{1} \\ \chi_{2}\in G_{2}\times\ghat_{2} } } \vert ( \pi(\chi_{1}) g_{1} \otimes \pi(\chi_{2})g_{2},\sigma_{1}\otimes \sigma_{2} )_{\SO,\SOprime(G_{1}\times G_{2})} \vert \\
& = \sup_{\substack{\chi_{1}\in G_{1}\times\ghat_{1} \\ \chi_{2}\in G_{2}\times\ghat_{2} } } \vert ( \pi(\chi_{1}) g_{1} ,\sigma_{1})_{\SO,\SOprime(G_{1})} \, (  \pi(\chi_{2})g_{2},\sigma_{2} )_{\SO,\SOprime(G_{2})}\vert \\
& = \Vert \sigma_{1}\Vert_{M^{\infty}(G_{1}),g_{1}} \, \Vert \sigma_{2} \Vert_{M^{\infty}(G_{2}),g_{2}}.
\end{align*}
\end{proof}

We now turn to the kernel theorem of $\SO$. 
Theorem~\ref{th:SO-kernel-theorem} was stated already in \cite{fe80} and later in \cite{ho89}, albeit without a proof.  
It appears in \cite{ke03} (with $G_{1}=G_{2}$) where the reasoning is the same as here: as soon as one has established that $\SO(G_{1}\times G_{2})= \SO(G_{1})\hat{\otimes}\, \SO(G_{2})$, then Theorem~\ref{th:SO-kernel-theorem} follows by the theory of projective tensor products, i.e., Lemma~\ref{le:SO-tensor-operator}.

\begin{theorem}\label{th:SO-kernel-theorem} 
The linear and bounded (i.e., norm-norm continuous) operators $T$ from $\SO(G_{1})$ into $\SOprime(G_{2})$ are exactly those which satisfy  
\begin{equation} \label{eq:0302a}  ( f_{2}, T f_{1})_{\SO,\SOprime(G_{2})} = ( f_{1} \otimes f_{2}, \sigma )_{\SO,\SOprime(G_{1}\times G_{2})} \ \ \text{for all} \ \ f_{1}\in \SO(G_{1}),\, f_{2}\in\SO(G_{2}), \end{equation}
for some $\sigma\in \SOprime(G_{1}\times G_{2})$.
The correspondence $\sigma \longleftrightarrow T$ is an isomorphism between the Banach spaces $\SOprime(G_{1}\times G_{2})$ and $\textnormal{Lin}(\SO(G_{1}),\SOprime(G_{2}))$.
\end{theorem}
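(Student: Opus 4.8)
The plan is to obtain the kernel theorem by chaining together two canonical identifications, so that no genuinely new analysis is needed beyond what Lemma~\ref{le:SO-tensor-operator} and Theorem~\ref{th:tensor-property-s0} already supply. The first identification is the elementary Banach-space fact that, for any Banach spaces $X$ and $Y$, a linear and bounded operator $T\colon X\to Y'$ corresponds bijectively to a bounded bilinear form $B\colon X\times Y\to\C$ via $B(x,y)=(y,Tx)_{Y,Y'}$, and that this correspondence is an isometric isomorphism of the normed spaces $\textnormal{Lin}(X,Y')$ and $\textnormal{Bil}(X\times Y,\C)$; indeed, boundedness of $T$ with operator norm $\Vert T\Vert$ is precisely boundedness of $B$ with the same constant. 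Applying this with $X=\SO(G_{1})$ and $Y=\SO(G_{2})$, so that $Y'=\SOprime(G_{2})$, gives an identification $\textnormal{Lin}(\SO(G_{1}),\SOprime(G_{2}))\cong\textnormal{Bil}(\SO(G_{1})\times\SO(G_{2}),\C)$.

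The second identification is Lemma~\ref{le:SO-tensor-operator} applied with target $V=\C$ (which is a permissible normed vector space): it yields an isomorphism between $\textnormal{Bil}(\SO(G_{1})\times\SO(G_{2}),\C)$ and $\textnormal{Lin}(\SO(G_{1}\times G_{2}),\C)=\SO(G_{1}\times G_{2})'=\SOprime(G_{1}\times G_{2})$. Composing the two identifications produces the desired bijection between $\SOprime(G_{1}\times G_{2})$ and $\textnormal{Lin}(\SO(G_{1}),\SOprime(G_{2}))$. To make the correspondence explicit and confirm it is the one asserted in the statement, I would start from a linear bounded $T$ and set $A(f_{1},f_{2})=(f_{2},Tf_{1})_{\SO,\SOprime(G_{2})}$; this is bilinear by bilinearity of the duality pairing together with linearity of $T$, and bounded since $\vert A(f_{1},f_{2})\vert\le\Vert Tf_{1}\Vert_{\SOprime,g}\,\Vert f_{2}\Vert_{\SO,g}\le\Vert T\Vert\,\Vert f_{1}\Vert_{\SO,g}\,\Vert f_{2}\Vert_{\SO,g}$. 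Lemma~\ref{le:SO-tensor-operator} then provides a unique $\sigma\in\SOprime(G_{1}\times G_{2})$ with $A(f_{1},f_{2})=(f_{1}\otimes f_{2},\sigma)_{\SO,\SOprime(G_{1}\times G_{2})}$, which is exactly relation \eqref{eq:0302a}. Conversely, given $\sigma\in\SOprime(G_{1}\times G_{2})$, the form $A(f_{1},f_{2})=(f_{1}\otimes f_{2},\sigma)$ is bounded because $\Vert f_{1}\otimes f_{2}\Vert_{\SO(G_{1}\times G_{2})}$ is controlled by $\Vert f_{1}\Vert_{\SO}\,\Vert f_{2}\Vert_{\SO}$ (equation \eqref{eq:0403}), and for fixed $f_{1}$ the map $f_{2}\mapsto A(f_{1},f_{2})$ defines an element $Tf_{1}\in\SOprime(G_{2})$, with $T$ linear and bounded.

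The final bookkeeping step is to record that the composite correspondence is an isomorphism of \emph{Banach} spaces. Here I would note that Lemma~\ref{le:SO-tensor-operator} uses the ordinary norm on $\SO(G_{1}\times G_{2})$; by Theorem~\ref{th:tensor-property-s0} this norm is equivalent to the projective tensor norm $\Vert\cdot\Vert_{\SO(G_{1})\hat{\otimes}\,\SO(G_{2})}$, so the resulting map is a topological Banach-space isomorphism (and even isometric if the projective norm is used, as in the remark following Lemma~\ref{le:SO-tensor-operator}), combined with the isometry of the first identification. I do not expect any deep obstacle in this argument, since the substantive content is entirely carried by the tensor factorization property and Lemma~\ref{le:SO-tensor-operator}; the only points that genuinely require care are verifying that the two identifications compose to yield precisely \eqref{eq:0302a} rather than some variant, and that the norm equivalences combine to give a bounded inverse, so that $\sigma\longleftrightarrow T$ is an isomorphism in the stated sense.
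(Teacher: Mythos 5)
Your proposal is correct and follows essentially the same route as the paper: the paper's proof likewise composes the canonical identification $\textnormal{Lin}(\SO(G_{1}),\SOprime(G_{2}))\cong\textnormal{Bil}(\SO(G_{1})\times\SO(G_{2}),\C)$ (via the pair $d$, $d^{-1}$ with $d(T)=(f_{1},f_{2})\mapsto(f_{2},Tf_{1})$) with Lemma~\ref{le:SO-tensor-operator} applied to $V=\C$. Your additional explicit verification of \eqref{eq:0302a} and of the norm bookkeeping is consistent with, and slightly more detailed than, what the paper records.
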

\begin{proof} 
Define the operators
\begin{align*} & d:\textnormal{Lin}(\SO(G_{1}),\SOprime(G_{2}))\to \textnormal{Bil}(\SO(G_{1})\times \SO(G_{2}),\C), \ d(T) = (f_{1},f_{2})\mapsto (f_{2},Tf_{1})_{\SO,\SOprime(G_{2})}, \\
& d^{-1}: \textnormal{Bil}(\SO(G_{1})\times \SO(G_{2}),\C)  \to \textnormal{Lin}(\SO(G_{1}),\SOprime(G_{2})), \ d^{-1}(A) = f_{1}\mapsto \big( f_{2}\mapsto A(f_{1},f_{2})\big).\end{align*}
It is a matter of routine to show that these operators are well-defined, linear, bounded and inverses of one another. This shows that $\textnormal{Bil}(\SO(G_{1})\times \SO(G_{2}),\C)\cong \textnormal{Lin}(\SO(G_{1}),\SOprime(G_{2}))$. By Lemma \ref{le:SO-tensor-operator} we know that $\textnormal{Bil}(\SO(G_{1})\times \SO(G_{2}),\C)\cong \SOprime(G_{1}\times G_{2})$. The result now follows.
\end{proof}

An alternative proof strategy of Theorem~\ref{th:SO-kernel-theorem}, not relying on Lemma~\ref{le:SO-tensor-operator}, is to show directly that $\SOprime(G_{1}\times G_{2})$ and $\textnormal{Lin}(\SO(G_{1}),\SOprime(G_{2}))$ can be identified with one another. In order to do this, one has to consider the operators
\[ d: \SOprime(G_{1}\times G_{2}) \to \textnormal{Lin}(\SO(G_{1}),\SOprime(G_{2})), \ d(\sigma) = \Big( f_{1} \mapsto \big( f_{2} \mapsto ( f_{1} \otimes f_{2}, \sigma )_{\SO,\SOprime(G_{1}\times G_{2})} \big) \Big)\]
and 
\[ d^{-1} : \textnormal{Lin}(\SO(G_{1}),\SOprime(G_{2})) \to \SOprime(G_{1}\times G_{2}), \ d^{-1}(T) = \Big( F \mapsto \sum_{j\in \N} ( f_{2,j} , T f_{1,j} )_{\SO,\SOprime(G_{2})}\Big), \]
where $F = \sum_{j\in \N} f_{1,j} \otimes f_{2,j}$ for some $(f_{i,j})_{j\in\N} \subseteq \SO(G_{i})$, $i=1,2$.
One then needs to show that $d$ and $d^{-1}$ are well-defined, linear and bounded operators and indeed are inverses of one another (which, as in Lemma \ref{le:SO-tensor-operator}, implies that the value of $d^{-1}(T)(F)$, $T\in \textnormal{Lin}(\SO(G_{1}),\SOprime(G_{2}))$, $F\in \SO(G_{1}\times G_{2})$, is not dependent on the particular tensor factorization of $F$).

Note that in Theorem~\ref{th:SO-kernel-theorem} it is possible to interchange the role of $G_{1}$ and $G_{2}$ so that 
\[ \textnormal{Bil}(\SO(G_{1})\times \SO(G_{2}), \C) \cong \SOprime(G_{1}\times G_{2}) \cong \textnormal{Lin}(\SO(G_{1}),\SOprime(G_{2})) \cong \textnormal{Lin}(\SO(G_{2}),\SOprime(G_{1})).\]

In case one has a basis for $\SO(G)$ there are proofs available where one does not need to refer to Lemma~\ref{le:SO-tensor-operator}, see \cite{feko98} (for \emph{elementary} locally compact abelian groups) and \cite{MR1843717} (for $G=\R^d$). Yet a different way of proof can be found in \cite{fe89-2,fegr92-1} (for $G=\R^d$), where a sequence of elements in $\SO$ is constructed such that it converges in the weak$^{*}$-topology towards the correct  $\sigma\in \SOprime$ such that \eqref{eq:0302a} holds (see the paragraph after Lemma \ref{le:SOprime-time-SO-conv-S0-is-S0}).

Theorem~\ref{th:SO-kernel-theorem} is the exact analogue to the famous Schwartz-kernel Theorem for the space of test functions $C_{c}^{\infty}(\R^n)$ by Schwartz \cite[Thrm.\ II]{MR0045307}. Later, more elementary proofs of this result appeared in 
\cite{MR0082637} and \cite{MR0125438}. See also the book by H\"ormander \cite{MR1996773} for more on the Schwartz space and its kernel theorem. 
It is remarkable that $\SO$ allows for a kernel theorem, as such a result is usually associated to, so-called (and hence their name), nuclear spaces, see \cite{gr55,MR0173945,tr67}.

\section*{Acknowledgements}

  The author thanks Ole Christensen for continuous support with the writing and the presentation of the material. Furthermore, the author thanks Hans G.\ Feichtinger for many invaluable discussions on his algebra and related topics. Thanks also goes to Franz Luef, Jakob Lemvig and Jordy T.\ van Velthoven for helpful comments. 


{\footnotesize

}

\end{document}